\DeclareFontFamily{U}{dutchcal}{\skewchar\font=45 }
\DeclareFontShape{U}{dutchcal}{m}{n}{<-> s*[1.0] dutchcal-r}{}
\DeclareFontShape{U}{dutchcal}{b}{n}{<-> s*[1.0] dutchcal-b}{}
\DeclareMathAlphabet{\mathlcal}{U}{dutchcal}{m}{n}
\SetMathAlphabet{\mathlcal}{bold}{U}{dutchcal}{b}{n}
\DeclareOldFontCommand{\rm}{\normalfont\rmfamily}{\mathrm}
\DeclareOldFontCommand{\sf}{\normalfont\sffamily}{\mathsf}
\DeclareOldFontCommand{\tt}{\normalfont\ttfamily}{\mathtt}
\DeclareOldFontCommand{\bf}{\normalfont\bfseries}{\mathbf}
\DeclareOldFontCommand{\it}{\normalfont\itshape}{\mathit}
\DeclareOldFontCommand{\sl}{\normalfont\slshape}{\@nomath\sl}
\DeclareOldFontCommand{\sc}{\normalfont\scshape}{\@nomath\sc}
\DeclareOldFontCommand{\sfb}{\normalfont\sffamily\bfseries}{\@nomath\sfb}
\newcommand{\ltext}[2]{%
  \@bsphack
  \csname phantomsection\endcsname 
  \def\@currentlabel{#1}{\label{#2}}%
  \@esphack
}
\newcommand\RedeclareMathOperator{%
  \@ifstar{\def\rmo@s{m}\rmo@redeclare}{\def\rmo@s{o}\rmo@redeclare}%
}
\newcommand\rmo@redeclare[2]{%
  \begingroup \escapechar\m@ne\xdef\@gtempa{{\string#1}}\endgroup
  \expandafter\@ifundefined\@gtempa
     {\@latex@error{\noexpand#1undefined}\@ehc}%
     \relax
  \expandafter\rmo@declmathop\rmo@s{#1}{#2}}
\newcommand\rmo@declmathop[3]{%
  \DeclareRobustCommand{#2}{\qopname\newmcodes@#1{#3}}%
}
\newcommand\blfootnote[1]{%
  \begingroup
  \renewcommand\thefootnote{}\footnote{#1}%
  \addtocounter{footnote}{-1}%
  \endgroup
}
\numberwithin{equation}{chapter}
\newcommand\thickbar[1]{\accentset{\rule{.4em}{.5pt}}{#1}}
\theoremstyle{plain}
\newtheorem{theorem}{Theorem}[chapter]
\newtheorem{lemma}[theorem]{Lemma}
\newtheorem{proposition}[theorem]{Proposition}
\newtheorem{proposition*}{Proposition}
\newtheorem{corollary}[theorem]{Corollary}
\newtheorem{claim}{Claim}
\theoremstyle{definition}
\newtheorem*{definition}{Definition}
\newtheorem{example}{Example}[chapter]
\theoremstyle{remark}
\newtheorem{remark}{Remark}[chapter]
\numberwithin{equation}{chapter}
\DeclareMathOperator{\N}{\mathbb{N}}
\DeclareMathOperator{\Z}{\mathbb{Z}}
\DeclareMathOperator{\R}{\mathbb{R}}
\DeclareMathOperator{\Gr}{\Gamma}
\DeclareMathOperator{\tht}{\upvartheta}
\DeclareMathOperator{\Om}{\Upomega}
\DeclareMathOperator{\F}{\mathscr{F}}
\DeclareMathOperator{\A}{\mathscr{A}}
\DeclareMathOperator{\p}{\mathbb{P}}
\DeclareMathOperator{\E}{\mathbb{E}}
\DeclareMathOperator{\g}{\mathfrak{g}}
\DeclareMathOperator{\valg}{\mathfrak{v}}
\DeclareMathOperator{\bull}{{\scriptscriptstyle\bullet}}
\DeclareMathOperator{\ab}{\operatorname{ab}}
\DeclareMathOperator{\GHto}{\xrightarrow{\text{\footnotesize GH}}}
\DeclareMathOperator{\tor}{\operatorname{tor}}
\DeclareMathOperator*{\argmin}{arg\,min}
\DeclareMathOperator{\G}{\mathcal{G}}
\DeclareMathOperator{\Hh}{\mathcal{H}}
\DeclareMathOperator{\Poi}{\mathcal{P}}
\DeclareMathOperator{\K}{\mathtt{K}}
\DeclareMathOperator{\Kprime}{\mathtt{K}^\prime}
\newcommand{\autor}{Lucas Roberto de Lima}
\newcommand{\titulo}{Asymptotic Shape of Subadditive Processes on Groups and on Random Geometric Graphs}
\newcommand{\orientador}{Cristian Favio Coletti}
\newcommand{\coorientador}{Daniel Rodrigues Valesin}
\newcommand{\fomento}{ \href{https://bv.fapesp.br/en/bolsas/190142/shape-theorem-for-the-contact-process-in-random-environment-on-groups-with-polynomial-growth/}{\#2019/19056-2} and \href{https://bv.fapesp.br/en/bolsas/195701/limiting-shape-for-the-contact-process-on-random-geometric-graphs/}{\#2020/12868-9} São Paulo Research Foundation (FAPESP).}
\def\versaofinal{}
\newcommand{\ano}{2024}
\newcommand{\centro}{Center for Mathematics, Computation, and Cognition \xspace}
\newcommand{\titulacao}{Doctor in Mathematics \xspace}
\newcommand{\palavraschaves}{cociclos subaditivos, forma limite, grupos virtualmente nilpotentes, grafos de Cayley, percolação, teoria ergódica, disco de Gilbert, crescimento aleatório, árvores geradoras, geodésicas, competição, coexistência.}
\newcommand{\keywords}{subadditive cocycles, limiting shape, virtually nilpotent groups, Cayley graphs, percolation, ergodic theory, moderated deviations, Gilbert disk model, random growth, spanning trees, geodesics, competition, coexistence.} 
\begin{document}
\frontmatter
\thispagestyle{empty}
\begin{center}

  {\large \normalfont \sffamily FEDERAL UNIVERSITY OF ABC

  \vspace{-0.1cm}
  {\normalsize\renewcommand{\sfdefault}{lmss} \normalfont \sffamily \centro}
  
  \renewcommand{\sfdefault}{lmss} \normalfont \sffamily Graduate Program in Mathematics}

\vspace{-2.8cm}
\end{center}

\hspace{-0.7cm} \includegraphics[height=2.7cm, keepaspectratio=true]{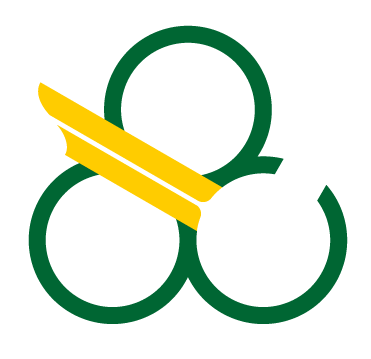} \hfill \raisebox{1.2\height}{\includegraphics[height=0.85cm, keepaspectratio=true]{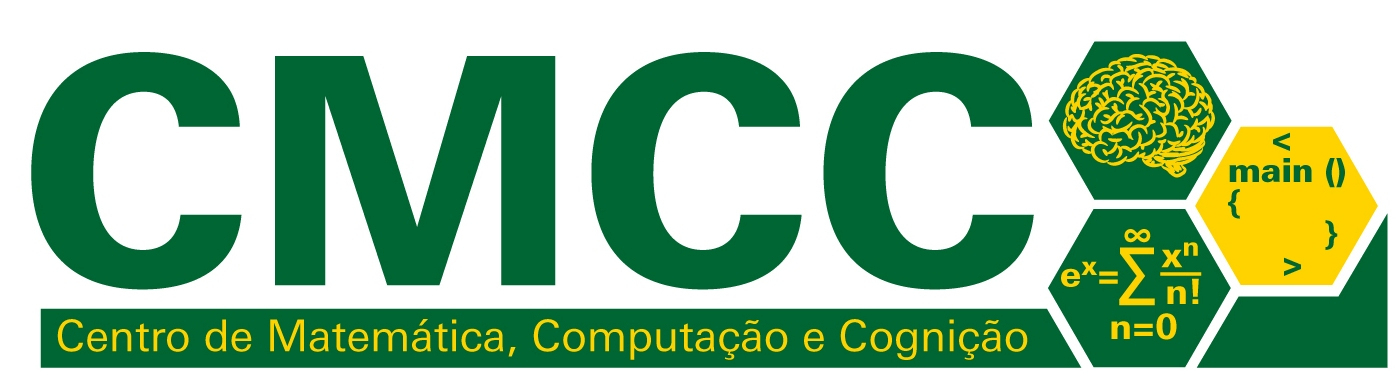}}

\vspace{2cm}
\begin{center}
  {\Large \normalfont \sffamily  \autor}
\end{center}
\vspace{2cm}
\begin{center}
  \begin{onehalfspace}
    \LARGE \normalfont \sffamily \textbf{\titulo}
  \end{onehalfspace}
\end{center}
\vfill
\begin{center}
  {\renewcommand{\sfdefault}{lmss} \normalfont \sffamily Research supported by grants \fomento \\}
  \vspace{0.8cm}
  {\normalfont \sffamily \textbf{Santo  André, SP, Brazil \\\ano}}
\end{center}

\mainmatter
\thispagestyle{empty}
    


\begin{center}
  {\Large \normalfont \sffamily \autor}
\end{center}
\vspace{4cm}
\begin{center}
\begin{onehalfspace}
  \LARGE  \normalfont \sffamily \textbf{\titulo}
\end{onehalfspace}
\end{center}
\vspace{1cm}
{\normalfont \sffamily
\noindent
 \textbf{Supervisor:} Prof. Dr. \orientador
\vspace{.25cm}

\ifx\coorientador\undefined
\else
\noindent
{\bfseries Co-supervisor}\ifx\femaleCoorientador\undefined
\else
a\fi\textbf{:} Prof\ifx\femaleCoorientador\undefined
\else
a\fi. Dr\ifx\femaleCoorientador\undefined
\else
a\fi. \coorientador
\fi
}

\vspace{1cm}
\begin{flushright}
  \begin{minipage}[c]{.6\textwidth}
    \begin{flushleft} 
      \ifx\mestrado\undefined
      \noindent Doctoral thesis
      \else
    \noindent   Master's dissertation
      \fi
      presented to \centro in order to obtain the\\ \noindent  title of
      \titulacao
    \end{flushleft}
  \end{minipage}
\end{flushright}
\vspace{1cm}
  \ifx\versaofinal\undefined
\noindent 
{\footnotesize \scshape
This is the original version of the thesis, as\\
submitted to the Evaluation Committee.\\
}
\else
\noindent 
{\footnotesize \scshape
This copy corresponds to the final version of the \\
\ifx\mestrado\undefined
thesis
\else
dissertation%
\fi 
defendend by the author.
}
\fi

\begin{center}
   \includegraphics[height=3cm, keepaspectratio=true]{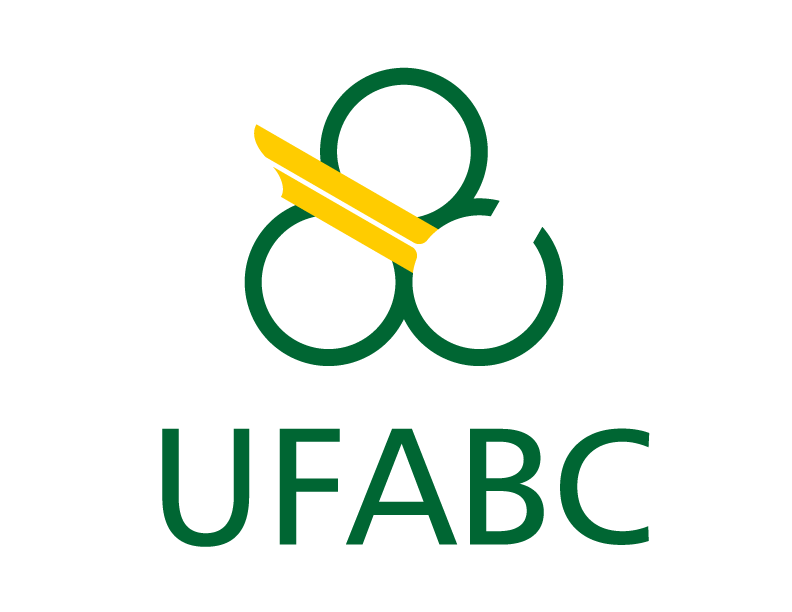}  
\end{center}

\vspace{-0.2cm}

\vfill
\begin{center}
  {\normalfont \sffamily \textbf{Santo André, SP, Brazil\\ \ano}}
\end{center}

\includepdf{ficha}
%
\includepdf[pages=-]{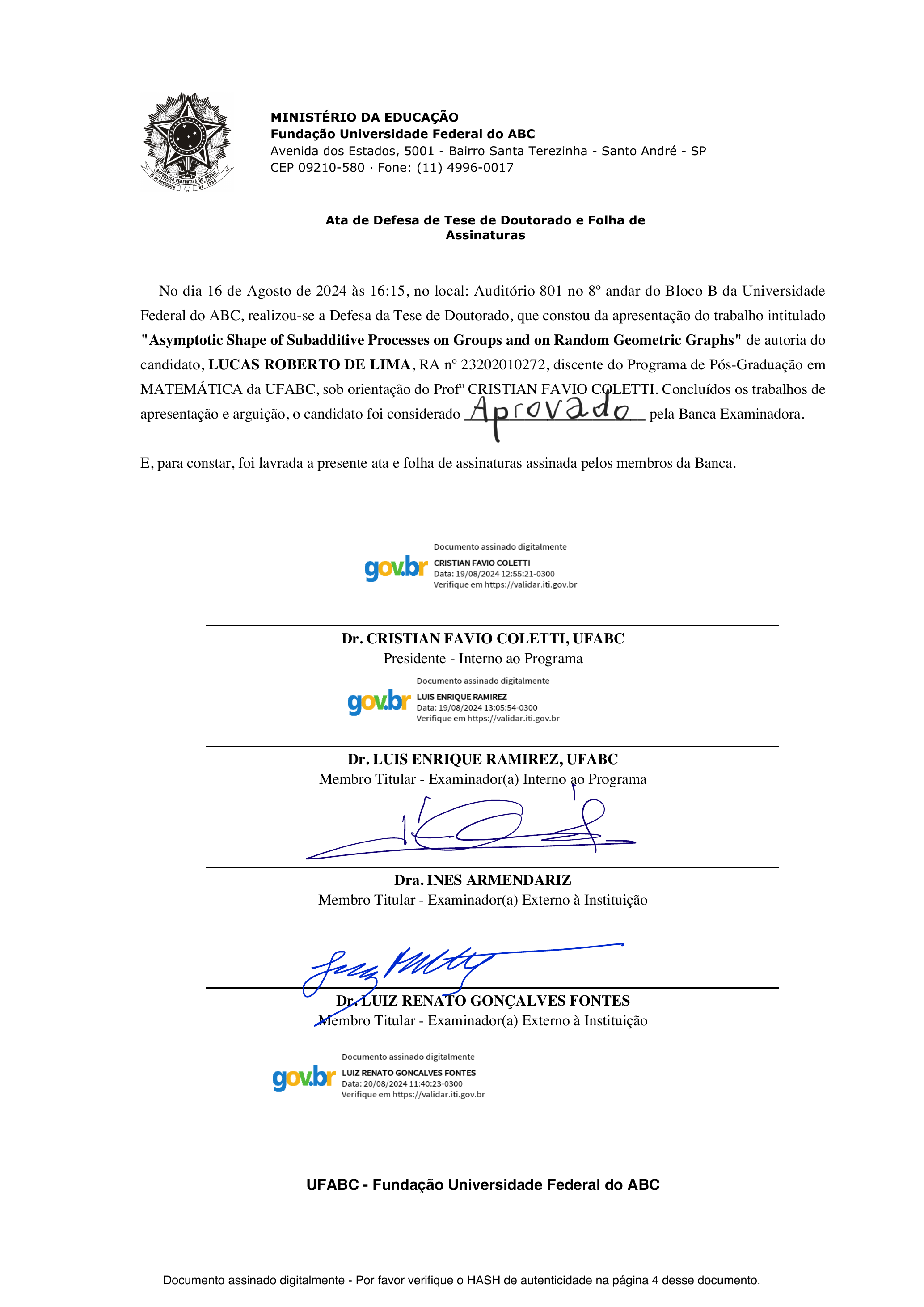}
%
\includepdf{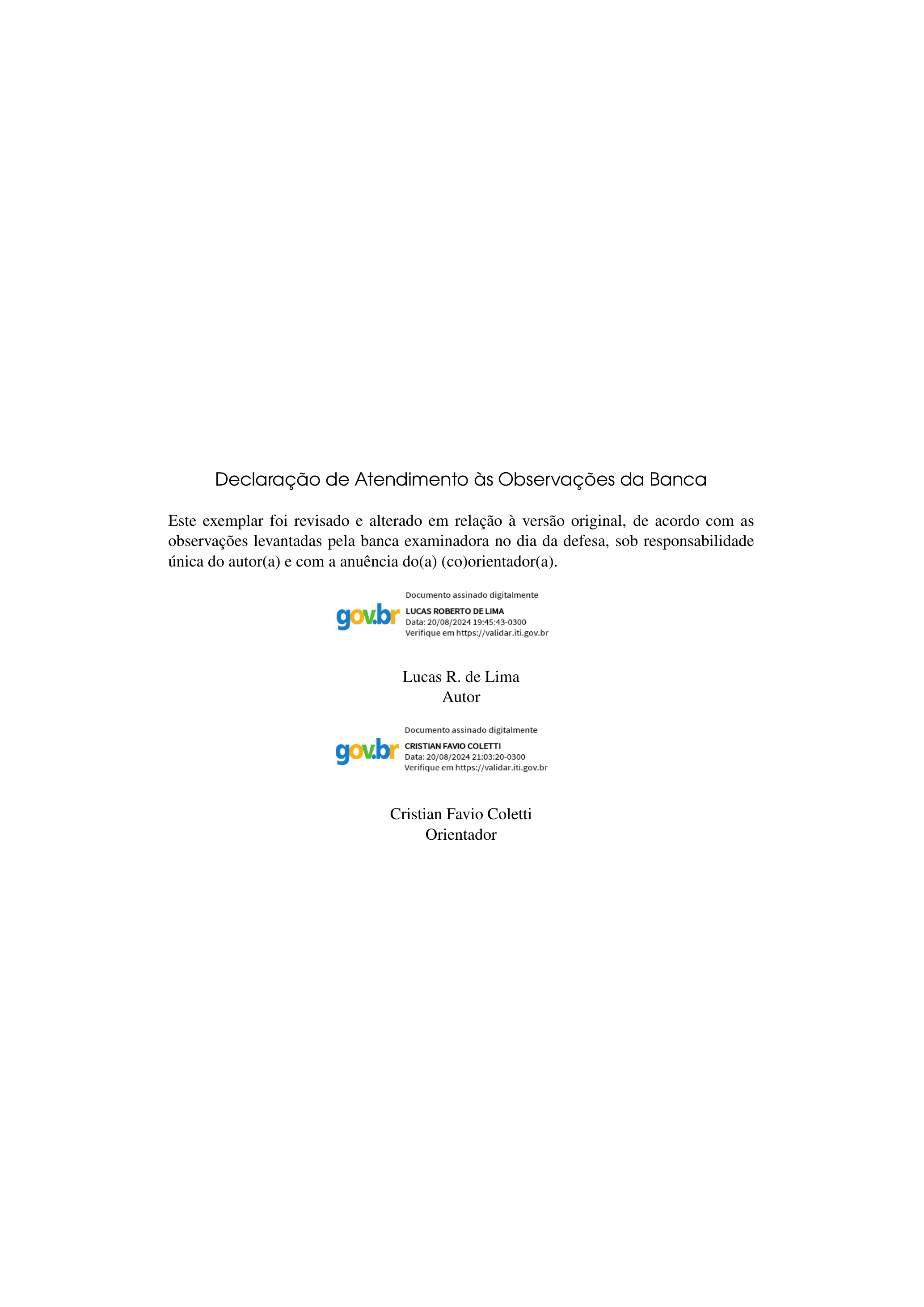}
%
%
\chapter*{Acknowledgements}
\thispagestyle{empty}

I would like to take this opportunity to express my deepest gratitude for all those who have  helped me in my academic journey. Without their support, guidance, and encouragement, it would not have been possible for me to see this endeavor through to completion.

I am grateful to \textit{the S\~ao Paulo Research Foundation (FAPESP)}  that provided funding for this project. FAPESP's support and financial assistance were essential in providing me with the opportunities I needed to pursue challenging projects and take more risks.  I am truly grateful for their belief in my potential.

I would like to express my heartfelt gratitude to my supervisor, \textit{Cristian F. Coletti}, for his support and invaluable feedback throughout this entire journey. His exceptional expertise and mentorship played a pivotal role in this study and guiding me through various challenges along the way. I am deeply grateful for the valuable insights beyond academia that I gained from him, which I will carry with me for a lifetime. Furthermore, I want to acknowledge the assistance received from the \textit{Federal University of ABC (UFABC)} and its faculty who shaped my academic and my professional growth.

It is with great appreciation that I thank my co-supervisor \textit{Daniel Valesin} for his guidance and hospitality during my research internship at the \textit{University of Groningen}, for a year-long project, and later at the \textit{University of Warwick}. In addition, I am thankful for his counseling and collaborative efforts, which significantly contributed to my enhanced professional growth and provided clarity throughout the research process.I would also like to acknowledge the support and resources provided by both universities. I want to extend my sincere thanks to \textit{Benedikt Jahnel}, from the Weierstrass Institute of Berlin and Braunschweig University, and his former PhD student \textit{Alexander Hinsen}, with whom we collaborated on a joint work.

I want to thank \textit{Pablo Groisman} for his warm welcome and support during my technical research visit to the \textit{University of Buenos Aires}. More locally, I extend my appreciation to \textit{Denis A. Luiz}, \textit{Diego S. de Oliveira}, \textit{Daniel Miranda Machado} and \textit{Marcus A.M. Marrocos} —colleagues whose friendship and professionalism made this experience more enjoyable and enriching. Their constructive criticism and helpful insights were extremely beneficial in refining my ideas and arguments. I am grateful to \textit{Ana Carolina Boero} for introducing me to diverse mathematical topics and for her decisive role in shaping my development as a mathematician.

I also wish to express my gratitude to \textit{Quasar Space} for giving me the opportunity to work on ambitious projects with them, in particular, \textit{Caio Fagonde} and \textit{Thais C. Franco}. Thais, a dear friend, played a significant role in showcasing the attainability of numerous goals.

The steadfast support of numerous friends has been an indispensable source of strength during my academic journey. Special mention goes to \textit{Karen}, whose unwavering presence brought lightness to the most challenging moments. Also, I thank \textit{Alana} for bringing magic amidst uncertainty. The camaraderie of my lab friends \textit{Aline}, \textit{Denilson}, and \textit{Denis} has been also vital. Their shared enthusiasm, constructive criticism, and insightful discussions not only fostered a stimulating intellectual environment but have also played a significant role in honing my ideas and arguments.

Lastly, I want to express my gratitude to my family and friends for their belief in me and their constant encouragement have been instrumental in my life journey. Among them all, my mother, \textit{Raquel}, who is an exemplary educator, has always provided me with great educational foundations and incentives for personal freedom. This was the cornerstone for building my character and my academic career.

\thispagestyle{empty}

\blfootnote{{\bfseries Funding:} Research supported by grants \fomento This study was financed in part by the Coordenação de Aperfeiçoamento de Pessoal de Nível Superior – Brasil (CAPES) – Finance Code 001.}

%
\chapter*{}
\thispagestyle{empty}
\vfill
\hspace{30pt}\includegraphics[scale=0.55]{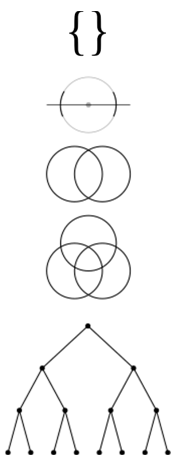}

\begin{flushright}
\vspace{1cm}

\textit{``The infinite is that in which one sees nothing else, hears nothing else,\phantom{''}\\
and knows nothing else. But the finite is that in which one sees\phantom{''}\\
something else, hears something else, and knows something else.\phantom{''}\\
That which is infinite is immortal, and that which is finite is mortal.''}

\vspace{0.3cm}
{\normalsize\renewcommand{\sfdefault}{lmss} \normalfont\sffamily -- Chandogya Upanishad,  7.24.1}



\end{flushright}
\selectlanguage{english}

\chapter*{Abstract}
\thispagestyle{empty}

This doctoral thesis undertakes an in-depth exploration of limiting shape theorems across diverse mathematical structures, with a specific focus on subadditive processes within finitely generated groups exhibiting polynomial growth rates, as well as standard First-Passage Percolation (FPP) models applied to Random Geometric Graphs (RGGs). Employing a diverse range of techniques, including subadditive ergodic theorems and tailored modifications suited for polygonal paths within groups, the thesis examines the asymptotic shape under varying conditions. The investigation extends to subadditive cocycles characterized by at least and at most linear growth. Moreover, the study delves into moderate deviations for FPP models on RGGs, refining previous results with theorems that quantify its speed of convergence to the limiting shape, the fluctuation of the geodesics, and its spanning trees. Additionally, we apply the obtained results in a competition model to verify the positive probability of coexistence of two species competing for territory in a random geometric graph.

\vspace{.5cm}
\textbf{Keywords}: \keywords

\selectlanguage{portuguese}

\chapter*{Resumo}
\thispagestyle{empty}

Esta tese de doutorado apresenta uma investigação aprofundada sobre teoremas da forma limite em diversas estruturas matemáticas, com um foco especial nos processos subaditivos em grupos finitamente gerados que exibem taxas de crescimento polinomial, além dos modelos padrão de Percolação de Primeira Passagem (FPP) aplicados aos Grafos Geométricos Aleatórios (RGGs). Utilizando uma ampla gama de técnicas, que vão desde teoremas ergódicos subaditivos até modificações adaptadas para caminhos poligonais dentro de grupos, a tese investiga a forma assintótica sob diferentes condições. O estudo se estende a cociclos subaditivos caracterizados por crescimento linear no mínimo e no máximo. Ademais, o estudo se estede a desvios moderados para modelos FPP em RGGs, refinando resultados anteriores com teoremas que quantificam sua velocidade de convergência para a forma limite, a flutuação das geodésicas e suas árvores geradoras. Por fim, aplicamos os resultados obtidos em um modelo de competição para verificar a probabilidade positiva de coexistência de duas espécies disputando território em um grafo geométrico aleatório.

\vspace{.5cm}
\textbf{Palavras-chave}: \palavraschaves
\selectlanguage{english}
\addtocontents{toc}{\protect\thispagestyle{empty}}
\tableofcontents
\thispagestyle{empty}
%

\chapter{Introduction}

\nocite{*}

\section{Background and Motivation}

In this doctoral thesis, we have studied the limiting shape phenomenon for subadditive processes on groups and random geometric graphs. We have obtained several results on the existence, uniqueness and convergence of the limiting shape for various models of interest. We have also explored some connections between the limiting shape and other topics in geometric group theory. Our methods combine probabilistic, combinatorial and geometric techniques. It led us to contribute with advances of research in this area and we hope that them inspire further studies. 

The asymptotic shape theorem for random processes is a fundamental concept in probability theory and stochastic processes. It provides a framework for understanding the long-term behavior of random processes and has found applications in various areas such as mathematics, ergodic theory, and contact processes. The theorem establishes conditions under which the set of points reachable within a given time from the origin, once appropriately rescaled, converges to a compact and convex limiting shape. This result is significant as it allows for the characterization of the macroscopic behavior of random processes, describing their global properties and providing insights into their long-term dynamics.

Historically, the development of the asymptotic shape theorem has been marked by significant advancements in the understanding of random processes. Early work by Richardson \cite{richardson1973} and Cox and Durrett \cite{cox1981} laid the foundation for the Shape Theorem, providing precise conditions for the convergence of rescaled sets of points in First-Passage Percolation to a deterministic compact and convex set. Subsequent research extended the applicability of the theorem to various models, such as the contact process in random environment, where Asymptotic Shape Theorems were proven, demonstrating the existence of a norm such that the hitting time is asymptotically equivalent to the norm when the process survives.

In recent years, the investigation of the limiting shape theorem for random processes has seen significant advancements. 
Garet and Marchand \cite{garet2012} provided an asymptotic shape theorem for the contact process in random environment, demonstrating the convergence of rescaled sets of points to a compact and convex limiting shape. Benjamini and Tessera \cite{benjamini2015} focused on first passage percolation on nilpotent Cayley graphs, proving an asymptotic shape theorem for the first-passage percolation in the case of independent and identically distributed weights on the edges. Cantrell and Furman \cite{cantrell2017} extended this concept to ergodic families of metrics on nilpotent groups. Additionally, Coletti and de Lima \cite{coletti2021} focused on the frog model on finitely generated abelian groups. Ahlberg et al. \cite{ahlberg2016} investigated inhomogeneous first-passage percolation, showing that the asymptotic growth of the resulting process obeys a shape theorem. Björklund \cite{bjoerklund2010} contributed to the field by proving further structure theorems and providing rates of convergence for the asymptotic shape theorem in certain classes of generalized first passage percolation. These recent investigations have expanded the understanding of the asymptotic shape theorem for random processes, demonstrating its applicability across various models and providing insights into their long-term dynamics.

In particular, one can also consider the speed of convergence for the limiting shape, also referred to as the quantitative asymptotic shape theorem. This result, delineated by Kesten \cite{kesten1993} and later subject of interest in many other studies (see, for instance, \cite{bjoerklund2010,pimentel2011,tessera2018}), represents a refinement of the standard shape theorem, necessitating sophisticated techniques to effectively govern the stochastic growth of these processes.

Overall, the evolution of the asymptotic shape theorem has been developed by contributions from diverse mathematical subfields, playing a pivotal role in elucidating the long-term behavior of random processes. Its historical development underscores the interdisciplinary essence of probability theory, manifesting in its multifaceted applications across various realms of research. The methodologies employed in proving such results hinge upon subadditive ergodic theorems, concentration inequalities, or moderated deviations of random growth.

In the context of this doctoral thesis, the geometric properties of the limiting space sometimes do not admit a direct application of ergodic theorems. This challenge leads us to explore alternative techniques. By employing distinct methods, we extend the results found in the literature to study this phenomenon in groups and within a family of random graphs. The approaches are shaped by the geometric structures we are investigating, as highlighted in the next section.

\section{Objectives and Contributions}

The primary objective of this thesis is to explore limiting shape theorems using various techniques applied to structures of interest in contemporary mathematical studies. These techniques range from the straightforward application of subadditive ergodic theorems to modified versions of the ergodic theorem tailored for polygonal paths in groups. Additionally, the thesis presents a study of moderated convergence for First-Passage Percolation (FPP) models on Random Geometric Graphs (RGGs), resulting in a quantitative shape theorem that refines previous results under stronger conditions.

In particular, we first extend shape theorems found in \cite{benjamini2015, cantrell2017, coletti2021} by obtaining results for subadditive cocycles with conditions of at least and at most linear growth, replacing the hypotheses of i.i.d. or $L^\infty$ random processes. The application of these new theorems is illustrated through examples. Furthermore, in the case of FPP models on RGGs, we present the limiting shape result that we obtained in \cite{coletti2023}, representing an advancement over prior works such as \cite{hirsch2015, yao2013, yao2011}. However, we focus on refining the results obtained in \cite{coletti2023} under stronger conditions. 
The study we conducted in the last chapters of the thesis delved into moderate deviations and related topics, leading to an investigation of the speed of convergence. This enabled us to quantify rates of convergence, estimate fluctuations of geodesics and spanning trees within FPP models on RGGs. Moreover, we study a two-species competition model on the infinite connected component of an RGG, where the growth and competition of the species are determined by Richardson's and Voter's models. We show that there is a positive probability of coexistence of the two species at any time.

The theoretical implications of this work may lead to advancements in the field, paving the way for future research related to this subject matter.

\section{Overview of the Thesis}

The main topics examined are divided into two parts, with the third section dedicated to concluding remarks and a comparative discussion.

\paragraph{\cref{part:groups}} This part of the doctoral thesis dedicated to the study of the limiting shape for subadditive random processes on finitely generated groups with polynomial growth rates. It begins by laying down the foundational groundwork in \cref{basic:def} with preliminary concepts and notation, encompassing basic algebraic background, groups with polynomial growth rates, subadditive random processes, and the metric geometry of locally compact groups. Through these foundational elements, the thesis establishes a solid framework for the subsequent exploration.

Moving forward, \cref{ch:shape.groups} delves into the heart of the matter with the investigation of the asymptotic shape. It commences with an introduction, and intermediate results are then presented, focusing on the establishment of a candidate for the limiting shape and the approximation of admissible curves along polygonal paths. \cref{sec:limiting.shape} includes detailed proofs of the first and second theorems, accompanied by an additional result for First-Passage Percolation (FPP) models. The results obtained in this part extend the theorems found in the literature \cite{benjamini2015,cantrell2017,coletti2021}. Throughout these discussions, the thesis showcases applications to random growth models in \cref{sec:examples}, illustrating the broader relevance and implications of the derived results. This part corresponds to the findings reported in \cite{coletti2023asymptotic}.

\paragraph{\cref{part:rgg}} Our focus shifts towards investigating the asymptotic shape theorem for standard FPP models on RGGs, with a particular emphasis on studying their speed of convergence and geodesics. \cref{ch:rgg} lays the groundwork by providing essential definitions and auxiliary results on RGGs.

In \cref{ch:shape.fpp.rgg}, we delve into the existence of the limiting shape of FPP models on RGGs. This chapter includes a proof of the standard shape theorem, demonstrating its validity under specified conditions using classical techniques. While this work builds upon the findings presented in our previous publication \cite{coletti2023}, we selectively omit certain intermediate details to allow a more focused exploration of the improved version presented in the subsequent chapter.

\cref{ch:moderate} further explores moderate deviation for FPP models on RGGs and its implications. Here, we impose stronger conditions on the random variables involved. By utilizing moderate deviations, we ascertain the speed of convergence to the asymptotic shape, offering a quantification of convergence that does not rely on ergodic theorems. Additionally, the chapter delves into fluctuations of geodesics and spanning trees within the context of FPP models on RGGs, providing valuable insights into their behavior.

As a consequence of the theorems obtained in the previous chapters, in in \cref{ch:competition}, we apply our results to the study of a two-species competition model. The growth and competition dynamics are determined by Richardson's and Voter's models, and we demonstrate that the probability of coexistence of both species is positive. This is due to the limiting shape being a Euclidean ball and the moderate deviations, along with several other properties studied in this thesis. 

\paragraph{\cref{part:conclusion}} We conclude by commenting and comparing the main results with an additional discussion on possible further research topics in this area.

\part{Limiting Shape for Subadditive Random Processes on Finitely Generated Groups with Polynomial Growth Rate} \label{part:groups}

\chapter{Preliminaries and Notation} \label{basic:def}

\vspace{0.5cm}
In this chapter, we delve into the fundamental concepts of geometric group theory, a field that provides tools to comprehend the relationship between algebraic properties and geometric structures. We begin by establishing the basic definitions that serve as the cornerstone for our exploration. Central to our analysis is the construction of the asymptotic cone, a powerful tool that reveals the geometric behavior of groups at infinity. To illustrate the versatility of our framework, we present concrete examples of groups that satisfy the conditions under consideration.

To simplify notation, we use $\N$ to represent the set $\{1,2, \dots\}$ and $\N_0=\N\cup\{0\}$. Let us denote $a\wedge b := \min\{a,b\}$ and $a \vee b := \max\{a,b\}$ for $a,b \in \R$. Consider $\mathcal{O}\big(f(t)\big)$ as the class of functions given by the big $\mathcal{O}$ notation. In other words, $g(t)\in \mathcal{O}\big(f(t)\big)$ as $t\to a$ exactly when $\limsup\limits_{t\to a} \left\vert\frac{g(t)}{f(t)}\right\vert < +\infty$. Additionaly, the cardinality of a set $A$ is denoted by $|A|$.

\section{Basic Algebraic Background} \label{sec:basic.algebra}

This section explores fundamental concepts essential to our study. We commence with a brief review of the basic notions surrounding groups, establishing a common language and notation. Subsequently, we introduce key definitions that serve as building blocks for the subsequent development in our thesis. While some concepts may appear elementary, their explicit formulation ensures clarity and coherence in our exploration of more advanced topics. 

For a comprehensive treatment of group theory and abstract algebra, we recommend consulting in-depth texts such as \cite{dummit2004,lang2002}.

\begin{definition}[Groups ans Subgroups]
A \textit{group} $(\Gr, .)$ is a set $\Gr$ equipped with a binary operation $.:\Gr\times\Gr\to\Gr$ satisfying the following properties:
\begin{itemize}
    \item (\textit{Associativity}) For all $x, y, z \in \Gr$, $(x . y) . z = x . (y . z)$.
    \item (\textit{Neutral Element}) There exists $e \in \Gr$ such that, for all $x \in \Gr$, $x . e = e . x = x$.
    \item (\textit{Inverse Element}) For each $x \in \Gr$, there exists $x^{-1} \in \Gr$ such that $x . x^{-1} = x^{-1} . x = e$.
\end{itemize}

A \textit{subgroup} $H$ of a group $\Gr$ is a subset of $\Gr$ that is a group with the same binary operation restricted to $H \times H$, denoted as $H \leq \Gr$.
\end{definition}

The pair $(\Gr, \cdot)$ is commonly denoted as $\Gr$, omitting the binary operator. In our operations, we often skip the use of the dot when working with elements of the group. Here, the symbol $+$ serves as a binary operator to indicate that the group is abelian, i.e., when $x+y=y+x$ for all $x, y \in \Gr$. For $n\in \N$, the repeated operation of $x \in \Gr$ $n$ times is given by 
\[x^n := \underbrace{x.x. \cdots .x}_{n \text{ times}} \quad\text{and} \quad n\cdot x := \underbrace{x+x+ \cdots +x}_{n \text{ times}}.\]

Furthermore, the \textit{trivial group} is a group $(\Gr, \cdot)$ with $\Gr=\{e\}$. We continue defining basic structures of group theory.

\begin{definition}[Cosets and Normal Subgroups]
For a subgroup $H \leq \Gr$, the \textit{left coset} of $H$ in $\Gr$ containing $x \in \Gr$ is given by $x.H = \{x . h : h \in H\}$. Similarly, $H.x = \{ x. g : h \in H\}$ is the \textit{right coset}. A subgroup $N \leq \Gr$ is \textit{normal}, denoted as $N \unlhd \Gr$, if $x.N = N.x$ for all $x \in \Gr$.
\end{definition}

Normal subgroups play a central role in the decomposition of the group, and their properties are repeatedly applied throughout this text. An immediate consequence of the definition above is that $N\unlhd \Gr$ if, and only if, for all $x \in \Gr$, one has $N= x.N.{x^{-1}}$.

\begin{definition}[Quotients]
The \textit{quotient} of a group $\Gr$ by its subgroup $H$ is $\Gr/H$, which consists of left cosets of $H$ in $\Gr$, \textit{i.e.},
\[\Gr/H:=\{x.H : x \in \Gr\}.\]
\end{definition}

In particular, when $H \unlhd \Gr$, the quotient group $(\Gr/H, \star)$ emerges, defined by the binary operation $x.H \star y.H = xy.H$. Additionally, for $N \unlhd \Gr$, the cosets $xN$ and $yN$ are either equal or disjoint for all $x, y \in \Gr$. Consequently, $\Gr/N$ forms a partition of $\Gr$. Observe that, if $H \leq \Gr$, then for all $x\in\Gr$ and $y\in x.H$, we have $x.H=y.H$ and, therefore, every element of a coset is called a representative of the coset.

The finite index property will be a recurring necessity throughout the text. The index of a subgroup can be defined as follows.

\begin{definition}[Subgroup Index]
The \textit{index} of a subgroup $H \leq \Gr$ is the cardinality of the set of left cosets (or right cosets) of $H$ in $\Gr$ and it is denoted by $[\Gr : H]$, \textit{i.e.},
\[[\Gr : H] := \left\vert \Gr/H \right\vert.\]
\end{definition}

The groups and graph structure linked to our random processes are characterized by finite symmetric generating sets. Please find the respective definitions outlined below.

\begin{definition}[Symmetric Set]
A subset $S \subseteq \Gr$ is \textit{symmetric} if $S = S^{-1}$, where $S^{-1} = \{s^{-1} \colon s \in S\}$.
\end{definition}

\begin{definition}[Generated Subgroup]
The \textit{subgroup generated} by a (non-empty) subset $S \subseteq \Gr$ is denoted as $\langle S \rangle$ and is the smallest subgroup of $\Gr$ containing $S$. A set $S \subseteq \Gr$ is a \textit{generating set of ~$\Gr$} when $\Gr=\langle S \rangle$.
\end{definition}

A straightforward consequence of the definition above is that every element $x \in \langle S \rangle$ is so that,  when $S$ is symmetric, $x=s_1.s_2.\cdots s_m$ with $s_i \in S$ for $i \in \{1, 2, \dots, m\}$ and $m\in\N$.

\begin{definition}[Order and Torsion Subgroup]
The \textit{order} of an element $x \in \Gr$ is the smallest positive integer given by $o(x) := \inf\{n\in\N : x^n = e\}$ The \textit{torsion subgroup} of $\Gr$, denoted as $\tor \Gr$, is the set of all elements in $\Gr$ with finite order. Alternatively, \[\tor\Gr:=\langle x\in\Gr: o(x)<+\infty\rangle.\]

The group $\Gr$ is called \textit{torsion-free} when $\tor \Gr$ is the trivial group.
\end{definition}

Group actions are fundamental for defining subadditive cocycles, a cornerstone in the study undertaken in this thesis.

\begin{definition}[Group Action]
A \textit{group action} of a group $\Gr$ on a set $\Om$ is a map $\tht: \Gr \times \Om \rightarrow \Om$ satisfying:
\begin{itemize}
    \item (\textit{Identity}) For all $\omega \in \Om$, $\tht(e, \omega)=\omega$;
    \item (\textit{Compatibility}) For all $x,y \in \Gr$ and each $\omega \in \Om$, $\tht(x,\tht(y,\omega))=\tht(xy,\omega)$.
\end{itemize}
\end{definition}
To simplify notation, we write $\tht:\Gr\curvearrowright\Om$ for a group action of $\Gr$ on $\Om$ and $\tht_x(\omega):=\tht(x,\omega)$. In particular, we also write $\tht:\Gr\curvearrowright(\Om,\F,\p)$ for a group action of $\Gr$ on $\Om$ when more properties of $\tht$ are associated with the probability space $(\Om,\F,\p)$.

\begin{definition}[Ergodic and Probability Measure-Preserving Group Actions]
A group action on a probability measure space $\tht \curvearrowright (\Om,\F,\p)$ is \textit{ergodic} \[\text{if } E \in \F \text{ is such that }\tht_x(E)=E\quad\text{ for all }x \in \Gr, \text{  then }\p(E)\in\{0,1\}.\] 
The group action $\tht \curvearrowright (\Om,\F,\p)$ is $\p$-preserving (\textit{probability measure preserving} or \textit{p.m.p.}) if, for all $x \in \Gr$ and every $E \in \F$, one has $\p\left(\tht_x^{-1}(E)\right)=\p(E)$.
\end{definition}

\begin{definition}[Group Homomorphisms and Isomorphisms]
Let $(\Gr, .)$ and $(\Psi, \ast)$ be groups. A \textit{group homomorphism} $\psi: \Gr \to \Psi$ is a map such that for all $x, y \in \Gr$, \[\psi(x . y) = \psi(x) \ast \psi(y).\]

An \textit{isomorphism} between groups $\Gr$ and $\Psi$ is a bijective group homomorphism. Furthermore, $\Gr$ and $\Psi$ are said to be isomorphic when there exists a group isomorphism between them, we denote it by $\Gr \simeq \Psi$.
\end{definition}

\begin{definition}[Commutator Element]
The \textit{commutator} of elements $x, y \in \Gr$ is defined as \[[x, y] := xyx^{-1}y^{-1}.\]
\end{definition}

The commutator elements are essential to describe the connection between algebraic and geometric properties of the structures considered in this text. The aforementioned properties are going to be explored by the following sequence of normal subsets. Let us first consider $U,V \subseteq \Gr$, then we write
$[U,V]:=\big\langle[u,v] : u \in U, v\in V\big\rangle$.

\begin{definition}[Commutator Subgroup and Lower Central Series]
Set $\Gr_0:=\Gr$ and let $\Gr_n := [\Gr,\Gr_{n-1}]$ for all $n \in \N$. Thus $\{\Gr_n\}_{n\in\N}$ is the \textit{lower central series} with $\Gr_n \unlhd \Gr_{n-1}$ for all $n\in\N$. 
The group $\Gr_1 = [\Gr,\Gr]$ is called the \textit{commutator subgroup} of $\Gr$.
\end{definition}

One can easily verify that $\Gr_n$ is indeed a normal subgroup of $\Gr_{n-1}$. In particular, the quotient of the group by its commutator subgroup is called the \textit{abelianization} of $\Gr$, which is denoted by $\Gr^{\ab}:=\Gr/[\Gr, \Gr]$.

\begin{definition}[Nilpotent and Virtually Nilpotent Groups]
A group $\Gr$ is \textit{nilpotent} when there is an $n \in \N$ such that $\Gr_n = \{e\}$, \textit{i.e.}, when its lower central series stabilizes in the trivial group. More specifically, $\Gr$ is nilpotent of class $n$ when $n$ is the minimal value such that $\Gr_n=\{e\}$.

The group $\Gr$ is \textit{virtually nilpotent} when there exists a normal subgroup $N \unlhd \Gr$ that is nilpotent with finite index $\kappa=[\Gr: N] < +\infty$. In particular, we are also interested in torsion-free nilpotent groups associated with a given virtually nilpotent group $\Gr$. Hence, let $\Gr':= N / \tor N$ where $N \unlhd \Gr$ is a nilpotent with finite index.
\end{definition}

Later in the text, we will observe that the groups under consideration are both finitely generated and virtually nilpotent. In  particular, they are also going to be associated with \textit{Lie groups}. Lie groups are mathematical structures that combines the smoothness of a manifold with the group structure. Intuitively, it is a space that, near each of its points, resembles the Euclidean space. 

We do not intend to introduce the concepts of Lie Theory here, for a broad introduction we recommend reading books for this purpose such as Hall \cite{hall2015}. We continue with a brief basic explanation of relevant elements of this theory.

\begin{definition}[Lie Group]
  A Lie group $G$ is a group that is also a smooth manifold, such that the group operations $(g,h) \mapsto gh$ and $g\mapsto g^{-1}$ are smooth.
\end{definition}

A Lie group $G$ is said to be a real Lie group when $G$ is also a real manifold. Similarly, $G$ is simply connected when the manifold is simply connected and its dimension is also determined by the manifold.

\begin{definition}[Lie Algebra]
  The \emph{Lie algebra} \(\g\) is algebra with a  anti-symmetric bilinear product $[-,-]:\g\times\g\to\g$ that satisfies the Jacobi identity,\textit{i.e.}, for all $X,Y,Z \in \g$,
  \[[X,[Y,Z]] + [Y,[Z,X]] + [Z,[X,Y]] =0.\]
\end{definition}

There is a correspondence between Lie groups and Lie algebras. In particular, one can say that the elements of the Lie algebra represent group elements that are infinitesimally close to the identity element of the Lie group. The Lie algebra provides a linear approximation to the group structure around the identity, consisting of tangent vectors at the identity equipped with a Lie bracket operation that captures the group commutator.

Lie's third theorem establishes that every finite-dimensional real Lie algebra $\g$ is associated to a Lie group $G$. This correspondence is given by the \emph{exponential Lie map}, denoted by $\exp: \mathfrak{g} \to G$. Moreover, if $G$ is a simply connected nilpotent Lie Group, then the exponential map is a bijective diffeomorphism. The \emph{logarithm map} \(\log: G \to \mathfrak{g}\) serves as the inverse of the exponential map. A remarkable property in this case is that the Lie algebra $\g$ is isomorphic to the tangent space $T_eG$.



A \emph{Carnot group} is a special type of Lie group that arises in the study of sub-Riemannian geometry. Carnot groups are also known as stratified Lie Groups, they have applications in the analysis of hypoelliptic differential operators and serve as fundamental examples in geometric group theory.

\begin{definition}[Carnot Group]
A \emph{Carnot group} $H$ is a simply connected, finite-dimensional Lie group whose corresponding Lie algebra $\mathfrak{h}$ admits a stratification by non-trivial linear subspaces $V_1, \dots, V_l$ such that
  \[\mathfrak{h}= \bigoplus_{i=1}^l V_i\]
  where $[V_1, V_i] = V_{i+1}$ for all $i \in {1, \dots, l-1}$, and $[V_1, V_l] = {0}$.
\end{definition}

In particular, the limiting spaces of interest in Part I of this thesis are Carnot groups, as highlited in \cref{sec:polynomial.growth.group}.

\section{Groups with Polynomial Growth Rate}

The interplay among finitely generated groups, Cayley graphs, word metrics, and the convergence of metric spaces establishes a bridge between the algebraic properties of groups and geometric structures.

\begin{definition}[Right-invariant Cayley graph]
Let $(\Gr,.)$ be a group generated by a finite symmetric set $S$. The associated Cayley graph $\mathcal{C}(\Gr, S)$ represents elements of $G$ as vertices, with edges connecting $x$ and $y$ if and only if $y = sx$ for some $s \in S$. Formally, the right-invariant Cayley graph $\mathcal{C}(\Gr,S)=(V,E)$ is defined by
\[
    V= \Gr \quad \text{and} \quad E=\big\{\{x,sx\}:x \in \Gr, s \in S\big\}.
\]
\end{definition}

Cayley graphs provide a visual representation of the group structure and are fundamental in the study of geometric group theory. Denote by $u\sim v$ the relation $\{u,v\} \in E$. 

\begin{definition}[Self-avoiding paths and its Length]
    Let $\mathscr{P}(x,y)$ be the set of self-avoiding paths from $x$ to $y$, where each $\gamma \in \mathscr{P}(x,y)$ follows $\gamma=(x_0,\dots, x_m)$ with $m \in \N$, $x_i\sim x_{i+1}$, $x_0=x$, $x_m=y$, and $x_i \neq x_j$ for all $i \neq j$. We write $\mathtt{e} \in \gamma$ for $\mathtt{e} = \{x_{i}, x_{i+1}\} \in E$, and $|\gamma|=m$ represents the length of the path.
\end{definition}

\begin{definition}[Word norm and Word Metric]
The word length on $\Gr$ with respect to $S$ is defined as follows: For any $x \in \Gr$, the length of the shortest word (or self-avoiding path) in $S$ that represents $x$ is its word length, denoted by \[\|x\|_S \ = \inf_{\gamma\in\mathscr{P}(e, x)}\vert\gamma\vert.\] The word metric $d_S$ on $\Gr$ is given by $d_S(x, y) = \|yx^{-1}\|_S$. Observe that, since $S$ generates $\Gr$, $\|x\|_S< +\infty$ for all $x\in \Gr$.
\end{definition}

Throughout this text, various distinct metrics will be considered. Therefore, let us consider a (semi-pseudo-quasi) metric \(d_\diamondsuit\) on a non-empty set \(\mathbb{X}\), where the metric is indexed by \(\diamondsuit\). We define \(B_\diamondsuit(x, r) := \{y \in \mathbb{X} : d_\diamondsuit(x, y) < r\}\) as the open \(d_\diamondsuit\)-ball centered at \(x \in \mathbb{X}\).

A finitely generated group $\Gr$ has polynomial growth with respect to $S$ when $|B_S(e, r)| \in \mathcal{O}\big(n^{D'}\big)$ for a $D'\in \N_0$ as $n\uparrow+\infty$. The growth is associated with the Cayley graph $\mathcal{C}(\Gr,S)$. The polynomial growth rate of $\Gr$ is a constant $D \in \N_0$ such that there exists $\mathtt{k}\in (1,+\infty)$ for all $r>1$ satisfying \[\mathtt{k}^{-1}r^D \leq \vert B_S(e,r) \vert \leq \mathtt{k}r^D.\]
Thus $D = \min \big\{D'\in\N_0: \vert B_S(e,r)\vert \in \mathcal{O}\big(n^{D'}\big)\big\}$. Moreover, one can verify that the polynomial growth rate of $\mathcal{C}(\Gr,S)$ does not depend on the choice of $S$.

A noteworthy result obtained by Gromov \cite{gromov1981} is that a finitely generated group has polynomial growth exactly when it is virtually nilpotent. Therefore, the growth established by word metrics is strongly related to algebraic properties of the group.

\section{Subadditive Random Processes}\label{sec:subadditive.cocycles}

The study of subadditive processes began with Hammersley and Welsh \cite{hammersley1965}, who examined problems related to percolation on graphs. They identified a subadditive relation in these problems, noting that if this relation were additive, it would produce a result similar to the Strong Law of Large Numbers through Birkhoff's Ergodic Theorem. This work led to the development of a new branch within Ergodic Theory, initially developed by Kingman \cite{kingman1968,kingman1976}, known as \textit{Subadditive Ergodic Theory}.

Before stating the main elements of the theory, let us define the deterministic counterpart of subadditive elements. A real sequence $\{a_n\}_{n\in\N}$ is considered subadditive when, for all $n, m \in \N$, it satisfies the inequality
\[a_{n+m} \leq a_m + a_n.\]
A notable result for subadditive sequences is given by Fekete's lemma, which ensures that $\lim_{n\uparrow+\infty}\frac{a_n}{n}= \inf_{n\in\N}\frac{a_n}{n}$.

Extending this concept to functions defined on a group, a function $f:(\Gr,.) \to (\mathbb{R},+, \cdot)$ is said to be subadditive if it satisfies, for all $x,y\in\Gr$,
\[f(xy) \leq f(y) + f(x).\]
Consequentially, if $+$ is the binary operation in $\Gr$, and $f:(\Gr,+) \to (\mathbb{R},+, \cdot)$ is such that $f(x+ y) \leq f(x) + f(y)$ for all $x,y\in \Gr$, then $f$ is subadditive.

Building upon these definitions, we introduce the concept of a subadditivity of the following random elements:

\begin{definition}[Subadditive Cocycles]
    Let $\{X_n\}_{n\in\mathbb{N}}$ be a collection of random variables and let $\theta:\Om\to\Om$ be a probability measure preserving measurable map on a $(\Om,\F,\p)$. Then $\{X_n\}_{n\in\mathbb{N}}$ is said to be a \textit{subadditive random process} (or \textit{subadditive cocycle}) on $(\Om, \F,\p, \theta)$ when it satisfies, $\p$-a.s., for all $n,m\in \N$,
    \[X_{n+m} \leq X_m + X_n \circ \theta^m.\]

    Consider a group $(\Gr,.)$ and let $\tht:\Gr \curvearrowright (\Om,\F,\p)$ a p.m.p. group action. A function $c:\Gr\times\Om\to(\R,+,\cdot)$ that satisfies, $\p$-a.s., for all $x,y\in \Gr$,
    \[c(xy) \leq c(y) + c(x)\circ\tht_y\]
    is denominated \textit{subadditive cocycle function}.
\end{definition}

For simplicity, we refer to \textit{subadditive cocycle functions} exclusively as \textit{subadditive cocycles} throughout the text. Note that $\{c(x^n)\}_{n\in\N}$ is a subadditive random process when $c$ is a subadditive cocycle. Furthermore, $\big\{\E[X_n]\big\}_{n\in\N}$ is a subadditive sequence and $\thickbar{c}:\Gr\to\R$ with $\thickbar{c}(x):= \E[c(x)]$ is a subadditive function. Hence, the correspondence between the random and deterministic subadditive properties can be explored.

Below, we present a version of Kingman's Subadditive Ergodic Theorem, as outlined in \cite{steele1989} and, to some extent, in \cite{liggett1985}, among other sources.

\begin{theorem}[Kingman's Subadditive Ergodic Theorem] \label{thm:kingman}
    Let $(\Om, \F,\p, \theta)$ be a probability space with a measurable map $\theta:\Om\to\Om$ which is probability measure preserving. Consider $\{X_n\}_{n\in\N}$ a collection of $L^1$ random variables so that, $\p$-a.s., for all $n,m \in \N$,
    \[
        X_{n+m} \leq X_m + X_n\circ \theta^m
    \]
    Then there exists a $\theta$-invariant random variable $Y \in [-\infty,+\infty)$ such that
    \begin{align*}
        Y &= \lim_{n\uparrow+\infty} \frac{1}{n}X_n \quad \p\text{-a.s. and in }L^1, \quad \text{and}\\
        \E[Y] &= \lim_{n \uparrow +\infty} \frac{1}{n}\E[X_n]= \inf_{n \in \N} \frac{1}{n}\E[X_n].
    \end{align*}

    Additionally, if $\theta$ is ergodic, then $Y$ is constant.
\end{theorem}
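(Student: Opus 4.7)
My plan is to follow Steele's (1989) reduction of Kingman's theorem to Birkhoff's pointwise ergodic theorem, supplemented by a truncation and block-decomposition argument for the complementary direction. First I would take expectations in the subadditive relation; the $\p$-preserving property of $\theta$ yields $\E[X_{n+m}] \leq \E[X_n] + \E[X_m]$, so Fekete's lemma produces $\gamma := \lim_n \E[X_n]/n = \inf_n \E[X_n]/n \in [-\infty, \E[X_1]]$. Setting $\overline{Y} := \limsup_n X_n/n$ and $\underline{Y} := \liminf_n X_n/n$, the $m = 1$ case of subadditivity yields $X_{n+1} \leq X_1 + X_n \circ \theta$; dividing by $n+1$ and passing to the $\limsup$ (resp.\ $\liminf$) gives $\overline{Y} \leq \overline{Y} \circ \theta$ (resp.\ $\underline{Y} \leq \underline{Y} \circ \theta$) a.s. Since $\theta$ is measure-preserving, both sides share the same distribution, forcing a.s.\ equality, so $\overline{Y}$ and $\underline{Y}$ are $\theta$-invariant.

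The first main step is to establish $\E[\overline{Y}] \leq \gamma$. Iterating subadditivity yields $X_{km} \leq \sum_{j=0}^{k-1} X_m \circ \theta^{jm}$ for all $k, m \in \N$. Applying Birkhoff's ergodic theorem to the integrable $X_m$ under the p.m.p.\ transformation $\theta^m$, the normalized partial sum converges a.s.\ to $\E[X_m \mid \mathcal{J}_m]$, where $\mathcal{J}_m$ denotes the $\sigma$-algebra of $\theta^m$-invariant sets. For arbitrary $n = km + r$ with $0 \leq r < m$, subadditivity also gives $X_n \leq X_{km} + X_r \circ \theta^{km}$, and the remainder $X_r \circ \theta^{km}/n$ is a.s.\ negligible as $k \to \infty$ by a Borel--Cantelli argument using $X_r \in L^1$. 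Therefore $\overline{Y} \leq \tfrac{1}{m}\E[X_m \mid \mathcal{J}_m]$ a.s.\ for every $m$; taking expectations produces $\E[\overline{Y}] \leq \E[X_m]/m$ for every $m$, and hence $\E[\overline{Y}] \leq \gamma$.

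The main obstacle is the complementary inequality $\gamma \leq \E[\underline{Y}]$, because Fatou's lemma does not apply directly to $X_n/n$: the subadditive structure yields upper bounds but no uniform integrable lower bound. I would address this via truncation and block-decomposition. Fix $\epsilon > 0$ and $M, N \in \N$; set $\underline{Y}^M := \underline{Y} \vee (-M)$ (still $\theta$-invariant), and define $\tau := \inf\{n \geq 1 : X_n \leq n(\underline{Y}^M + \epsilon)\} \wedge N$; by definition of $\liminf$, $\p(\tau = N) \to 0$ as $N \to \infty$. Iterating the stopping times $\tau_0 := 0$, $\tau_{k+1} := \tau_k + \tau \circ \theta^{\tau_k}$, applying subadditivity block by block, and using the defining inequality of $\tau$ together with $\theta$-invariance of $\underline{Y}^M$, one obtains $X_{\tau_K} \leq (\underline{Y}^M + \epsilon)\tau_K + \mathrm{Err}(N)$ on the good blocks, with the error stemming from exceptional blocks controlled by $X_N^+ \leq \sum_{k=0}^{N-1} X_1^+ \circ \theta^k$. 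Taking expectations, dividing by $n$, using $\tau_K/n \to 1$ a.s.\ with dominated convergence, and sending $n \to \infty$, $N \to \infty$, $\epsilon \to 0$, and $M \to \infty$ (with monotone convergence $\underline{Y}^M \downarrow \underline{Y}$) produces $\gamma \leq \E[\underline{Y}]$.

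Combining both bounds with the pointwise $\underline{Y} \leq \overline{Y}$ forces $\E[\overline{Y} - \underline{Y}] = 0$, so $\overline{Y} = \underline{Y} =: Y$ a.s.\ with $\E[Y] = \gamma$. For $L^1$ convergence, uniform integrability of $\{X_n/n\}$ follows from $X_n^+ \leq \sum_{k=0}^{n-1} X_1^+ \circ \theta^k$ combined with a maximal inequality on the positive part, together with the control on the negative part coming from the finite $\gamma$ and the established a.s.\ convergence; the Vitali convergence theorem then gives $X_n/n \to Y$ in $L^1$. Finally, if $\theta$ is ergodic, every $\theta$-invariant function is a.s.\ constant, so $Y \equiv \gamma$. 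The degenerate case $\gamma = -\infty$ is treated separately by applying the above to the truncated processes $X_n \vee (-n A)$ and letting $A \to \infty$ to conclude $X_n/n \to -\infty$ a.s.
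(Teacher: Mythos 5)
The paper does not prove this statement at all: Kingman's theorem is quoted as a classical result, with the reader referred to \cite{steele1989} and \cite{liggett1985}, so there is no internal argument to measure your proposal against. What you have written is essentially the standard proof underlying those citations: the ``easy'' inequality $\E[\limsup_n X_n/n]\le\inf_n\E[X_n]/n$ by iterating subadditivity along $\theta^m$-orbits, applying Birkhoff's theorem to $X_m$ under $\theta^m$, and disposing of the remainder $X_r\circ\theta^{km}/n$ by the usual Borel--Cantelli estimate; and the ``hard'' inequality $\inf_n\E[X_n]/n\le\E[\liminf_n X_n/n]$ by truncating the liminf and running a stopping-time block decomposition with the bad blocks controlled through $X_1^+\circ\theta^k$. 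This is the Liggett--Durrett form of the argument rather than Steele's orbit-covering variant, but the two are close relatives, and your invariance argument for $\limsup$ and $\liminf$, the identification $\E[Y]=\inf_n\E[X_n]/n$, the domination/Scheff\'e--Vitali route to $L^1$ convergence, and the reduction of the ergodic case to constancy of invariant functions are all correct as sketched.

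One caveat concerns your treatment of the degenerate case $\inf_n\E[X_n]/n=-\infty$. Applying the finite case to the (still subadditive) truncations $X_n\vee(-nA)$ and letting $A\uparrow\infty$ yields $X_n/n\to Y:=\inf_A Y_A$ almost surely with $\E[Y]=-\infty$; it does \emph{not} give $X_n/n\to-\infty$ almost surely unless $\theta$ is ergodic. Indeed, on a disjoint union of two invariant pieces of positive measure, with an additive process of finite mean on one piece and $X_n=-n^2$ on the other, one has $\inf_n\E[X_n]/n=-\infty$ while the limit is finite on the first piece. The fix is routine (state the conclusion as $Y=\inf_A Y_A$ with $\E[Y]=-\infty$, or restrict to invariant sets), and the $L^1$ part of the statement should correspondingly be read as pertaining to the case $\inf_n\E[X_n]/n>-\infty$; apart from this side issue the proposal is sound.
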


This theorem provides an exceptional methodology for studying the asymptotic behavior of subadditive cocycles. It demonstrates the feasibility of studying the asymptotic shape through subadditive cocycles, as their behavior can be correlated with that of a norm in space. Nevertheless, it remains essential to manage convergence in all directions or undertake the approximation of geodesic curves.

There are various versions of \cref{thm:kingman}, along with several improved subadditive ergodic theorems, such as those derived by Derriennic \cite{derriennic1983}, Liggett \cite{liggett1985}, and Kesten (refer to comments in \cite{kesten_discussion}). Our primary focus will be on examining processes that fit the hypotheses of Kingman's theorem. Nevertheless, these supplementary results suggest potential pathways for future investigations into the object under study.

\section[Metric Geometry of Locally Compact Groups]{Metric Geometry of Locally Compact \\ Groups}

A key focus of our investigation lies in the construction of the norm in the limiting space, which will be denoted as $G_\infty$, providing the foundation for defining the limiting shape. We explore crucial results and properties in the following subsections. For an in-depth discussion on this topic, we refer interested readers to \cite{breuillard2014,burago2001,decornulier2011,decornulier2016,raghunathan1972}. Through this lens, we gain a deeper understanding of the interplay between algebraic properties and geometric structures.

\subsection{Volume Growth of Cayley Graphs} \label{sec:polynomial.growth.group}

Let $[U]_\varepsilon$ to be the \textit{$\varepsilon$-neighborhood} of $U \subseteq \mathbb{X}$ of in a metric space $(\mathbb{X}, d_\diamondsuit)$, \textit{i.e.}, the set $[U]_\varepsilon = \bigcup_{u \in U} B_\diamondsuit(e,\varepsilon)$. The Hausdorff distance $d_H$ detects the largest variations between sets with respect to the given metric
\[d_H(U,V) := \inf\{\varepsilon>0 : U \subseteq [V]_\varepsilon \text{ and } V \subseteq [U]_\varepsilon\}.\]  
We define the convergence of metric spaces used in the main theorems employing the Hausdorff distance. Let $(\mathbb{X}_n, d_{\diamondsuit_n}, o_n)_{n \in \N}$ be a sequence of centered, locally compact metric spaces. Consider $\{\psi_n\}_{n \in \N}$ as a family of isometric embeddings $\psi_n : (\mathbb{X}_n, d_{\diamondsuit_n}, o_n) \to (\mathbb{X}, d_\diamondsuit, o)$.

The \textit{pointed Gromov-Hausdorff convergence} of $(\mathbb{X}_n, d_{\diamondsuit_n}, o_n)$ to $(\mathbb{X}, d_\diamondsuit, o)$ is denoted by
\[
    (\mathbb{X}_n, d_{\diamondsuit_n}, o_n) \GHto (\mathbb{X}, d_\diamondsuit, o)
\]
and it implies, for all $r>0$,
\[
    \lim_{n \uparrow +\infty} d_H \Big( \psi_n\big( B_{\diamondsuit_n}(o_n,r) \big), ~B_\diamondsuit(o,r)\Big) = 0.
\]
The definitions above are immediately extended to random semi-pseudo-quasi metrics, as employed in the main theorems (see \cref{shape.thm,thm:shape.polynomial,cor:fpp.virt.nil}). Here, a pseudo-quasi metric refers to a metric where the properties of positivity and symmetry do not necessarily hold true. It would also be possible to replace the triangle inequality with a weaker condition; however, this is beyond the scope of our discussion here.

The assumption of almost sure local compactness is also maintained. We are now prepared to present Pansu's theorem on the convergence of finitely generated virtually nilpotent groups.

\begin{theorem}[Pansu \cite{pansu1983}] \label{thm:Pansu}
    Let $\Gr$ be a virtually nilpotent group generated by a symmetric and finite $S \subseteq \Gr$. Then
    \[
        \left( \Gr, \frac{1}{n}d_S, e \right) \GHto (G_\infty, d_\infty, \mathlcal{e}),
    \]
    where $G_\infty$ is a simply connected real graded Lie group (Carnot group). The metric $d_\infty$ is a right-invariant sub-Riemannian (Carnot-Caratheodory) metric which is homogeneous with respect to a family of homotheties $\{\delta_t\}_{t>0}$, \textit{i.e.}, $d_\infty\big(\delta_t(\mathlcal{g}),\delta_t(\mathlcal{h})\big) = t~d_\infty(\mathlcal{g},\mathlcal{h})$ for all $t>0$ and $\mathlcal{g},\mathlcal{h} \in G_\infty$. 
\end{theorem}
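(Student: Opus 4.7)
The plan is to split the proof into an algebraic reduction, a construction of the candidate limit space, and a two-sided comparison of balls in the rescaled Cayley graph with balls in the limit.

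First, I would reduce the general virtually nilpotent case to the torsion-free nilpotent case. Since $\Gr$ has a finite index nilpotent normal subgroup $N\unlhd\Gr$ and $\tor N$ is finite in a finitely generated nilpotent group, the group $\Gr':=N/\tor N$ is a torsion-free finitely generated nilpotent group. The inclusion $\Gr'\hookrightarrow \Gr$ (after quotienting by the finite torsion) is a quasi-isometry, so both Cayley graphs have the same pointed Gromov-Hausdorff limit after the $1/n$ rescaling up to a bi-Lipschitz identification, and it suffices to prove the theorem for $\Gr'$ with some finite symmetric generating set $S'$ obtained from $S$. Then, by Mal'cev's theorem, $\Gr'$ embeds as a cocompact lattice in a unique simply connected nilpotent real Lie group $G$ with Lie algebra $\g$ of nilpotency class $s$.

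Next I would construct the candidate limit. Starting from the lower central series $\g=\g^{(1)}\supseteq\g^{(2)}\supseteq\cdots\supseteq\g^{(s+1)}=\{0\}$, I form the associated graded Lie algebra
\[
\g_\infty \;=\; \bigoplus_{i=1}^{s} V_i,\qquad V_i := \g^{(i)}/\g^{(i+1)},
\]
with bracket inherited from $\g$; this is stratified since $[V_1,V_i]=V_{i+1}$. Exponentiating yields a simply connected Carnot group $G_\infty$, and the dilations $\delta_t$ defined on $\g_\infty$ by $\delta_t|_{V_i}=t^i\,\mathrm{Id}$ descend to group automorphisms of $G_\infty$. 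To put a metric on $G_\infty$, I project the image of $S$ in $\g/\g^{(2)}\simeq V_1$ and use the convex hull of this image to define a norm $\|\cdot\|_{V_1}$ (the \emph{stable norm} associated to $S$); the Carnot--Carath\'eodory distance
\[
d_\infty(\mathlcal{g},\mathlcal{h})\;=\;\inf\left\{\int_0^1 \|\dot\gamma(t)\|_{V_1}\,dt\,:\,\gamma\text{ horizontal, }\gamma(0)=\mathlcal{g},\gamma(1)=\mathlcal{h}\right\}
\]
is right-invariant and $\delta_t$-homogeneous of degree $1$ by construction. Chow's theorem (applicable because $V_1$ generates $\g_\infty$ as a Lie algebra) guarantees that $d_\infty$ is finite and that $(G_\infty,d_\infty)$ is locally compact.

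The core of the argument, and its main obstacle, is the two-sided approximation between balls of $(\Gr', \tfrac1n d_{S'}, e)$ and $(G_\infty, d_\infty, \mathlcal{e})$. For the upper bound, given a horizontal piecewise-linear path $\gamma$ in $G_\infty$ of length $\leq R$, I would discretize it into $n$ small horizontal increments from $V_1$, lift each piece to a product of generators in $S'$ (using that $S'$ projects onto a spanning set of $V_1$), and show, via a careful application of the Baker--Campbell--Hausdorff formula and control on the nested commutators up to depth $s$, that the resulting word has $d_{S'}$-length $\leq nR+o(n)$ and endpoint within $o(n)$ of $\delta_n(\gamma(1))$. For the lower bound, I would take a word $w=s_{i_1}\cdots s_{i_k}$ with $k\leq nR$, view its partial products as a discrete path in $G$, and by averaging and rescaling (the telescoping ``nilpotent averaging'' argument based on the fact that commutators contribute at a strictly higher scale under $\delta_{1/n}$) show that the rescaled trajectory $\delta_{1/n}(\text{partial products})$ sub-converges to a horizontal curve in $G_\infty$ of length $\leq R+o(1)$. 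Combining these, for every $r>0$ and $\varepsilon>0$, for all large $n$ there exists an $\varepsilon$-isometry between $B_{S'/n}(e,r)$ and $B_\infty(\mathlcal{e},r)$, which is exactly pointed Gromov--Hausdorff convergence with Hausdorff error tending to $0$. The hard part is the nilpotent averaging/BCH bookkeeping, since products of generators create commutators of all depths that must be shown to rescale to the correct stratified bracket structure under $\delta_{1/n}$; this is precisely Pansu's ``differentiation'' observation.
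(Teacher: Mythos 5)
You should note at the outset that the paper does not prove this statement at all: it is imported from Pansu \cite{pansu1983}, and the surrounding text (\cref{sec:rescaled.dist}) merely assembles the limit objects $G_\infty$, $\{\delta_t\}_{t>0}$ and the Carnot--Carath\'eodory metric, so there is no in-paper proof to compare yours against. Your construction of $\g_\infty$ as the graded algebra of the Mal'cev completion, the dilations acting by $t^i$ on $V_i$, and the CC metric from a norm on $V_1$ coincides with the paper's own preliminaries, and your two-sided ball comparison (lifting horizontal polygonal paths via Baker--Campbell--Hausdorff, and rescaling word paths to extract horizontal limit curves) is the standard Pansu/Breuillard route.

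There is, however, a genuine gap in the step you treat as routine. You reduce the virtually nilpotent case to $\Gr'=N/\tor N$ by asserting that the two groups are quasi-isometric and therefore have ``the same pointed GH limit up to a bi-Lipschitz identification''. Quasi-isometry invariance only controls asymptotic cones up to bi-Lipschitz homeomorphism; it gives neither convergence (as opposed to subconvergence) of $\left(\Gr,\frac{1}{n}d_S,e\right)$ nor an identification of the limit metric $d_\infty$, whose horizontal norm depends on $S$ itself --- and the theorem you must prove specifies that metric and its exact $\delta_t$-homogeneity. What the reduction actually needs is the much stronger \emph{additive} comparison available in this special situation: every $x\in\Gr$ lies within uniformly bounded $d_S$-distance of a representative of $\llbracket x\rrbracket\in\Gr'$, since the coset representatives $z_{(j)}$ and the elements of $\tor N$ have bounded word length, so the rescaled metrics of $(\Gr,S)$ and of $\Gr'$ with the induced generating data differ by $O(1/n)$, and the horizontal norm must be built from the projection of $S$ through $\llbracket\,\cdot\,\rrbracket$ (the deterministic shadow of the paper's $\vert\llbracket\cdot\rrbracket\vert_S^{\inf}$ and $\vert\cdot\vert_S^{\ab}$), not from an arbitrary generating set $S'$. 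This is precisely the work the paper must do in \cref{sec:virt.nilpotent.proofs} (e.g.\ \cref{lm.T.bounded.nilpotent.as,lm.T.bounded.torsion.as,prop:asympt.equiv.c.cPrime}) for its random metrics, and its deterministic analogue cannot be skipped in your argument either. Finally, the BCH/commutator bookkeeping you defer to (``nilpotent averaging'') is where essentially all of Pansu's work lies; naming it is acceptable in a proposal, but nothing in your sketch yet controls the depth-$\geq 2$ commutator contributions uniformly along words of length of order $nR$, so the core of the theorem remains unproved as written.
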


Note that \cref{shape.thm,thm:shape.polynomial} are generalizations of the theorem above. Therefore, the shape theorems under investigation can be interpreted as the convergence of random metric spaces in large-scale geometry. The next subsection is dedicated to the construction of the asymptotic cone $G_\infty$ and related results.

\subsection{Rescaled Distance and Asymptotic Cone} \label{sec:rescaled.dist}

Consider for now $\Gr$  as a nilpotent and torsion-free group, unless stated otherwise. We also assume that its abelianization is torsion-free. In this subsection, we use $\Gr$ instead of $\Gr'$ to simplify notation, but we will subsequently extend the results to the more general case.

Let $G$ denote the real Mal'cev completion of $\Gr$. The group $G$ can be defined as the smallest simply connected real Lie group such that $\Gr \leq G$ and, for all $z \in \Gr$ and $n \in \N$, there exists $\mathlcal{z} \in G$ with $\mathlcal{z}^n = z$. In this case, $G$ is nilpotent of the same order of $\Gr$ and it is uniquely defined. Furthermore, $G$ is simply connected it is associated with the Lie algebra $(\g, [\cdot,\cdot]_1)$ where $\Gr$ is cocompact in $G$. We write $\log:G \to \g$ for the Lie logarithm map.

Define $\g^1 := \g$  and $\g^{i+1} := [\g,\g^i]_1$. It follows from the nilpotency of $\Gr$ that threre exists $l \in \N$ such that $\Gr_l = \{e\}$. Thus $\g^{l+1} = (0)$. Since $[\g^{i},\g^{j}]_1 \subseteq \g^{i+j}$ and, in particular, $[\g^{i+1},\g^{j}]_1,[\g^{i},\g^{j+1}]_1 \subseteq \g^{i+j+1}$, the Lie bracket on $\g$ determines a bilinear map
\[(\g^{i}/\g^{i+1}) \otimes (\g^{j}/\g^{j+1}) \longrightarrow \g^{i+j}/\g^{i+j+1}\]
which in turn defines a Lie bracket $[\cdot,\cdot]_\infty$ on

\[\g_\infty := \bigoplus\limits_{i=1}^l \valg_i~~~\mbox{with}~~ \valg_i := \g^i/\g^{i+1}.\]

Consider the decomposition $\g = V_1 \oplus \cdots \oplus V_l$ given by $\g^i:= V_i \oplus \cdots \oplus V_l$. Thus, $(\g_\infty, [\cdot,\cdot]_\infty)$ is a graded Lie algebra. Let us define a family of linear maps $\delta_t : \g_\infty \to \g_\infty$ given by
\[\delta_t(v_1+v_2+ \dots + v_l) = tv_1+t^2v_2+ \dots+ t^lv_l\]
for each $t >0$ and $v_i \in \mathfrak{v}_i$ with $i \in \{1,\dots,l\}$. It follows from the definition of $\delta_t$ that $\delta_t([u,v]_\infty) = [\delta_t(u),\delta_t(v)]_\infty$ and $\delta_{tt'} = \delta_t \circ \delta_{t'}$ for all $u,v\in \g_\infty$ and $t,t'>0$. Hence, $\{\delta_t\}_{t>0}$ defines a family of automorphisms in the graded Lie group $G_\infty := \exp_\infty\left[ \g_\infty \right]$. Here we write $\exp_\infty:\g_\infty \to G_\infty$ and $\exp:\g \to G$ to differentiate the distinct exponential maps of $\g_\infty$ and $\g$. Similarly, $\log_\infty$ and $\log$ stand for their correspondent Lie logarithm maps.

Let $\g = V_1 \oplus \cdots \oplus V_l$ be the decomposition given by $\g^i= V_i \oplus \cdots \oplus V_l$. Set $L: \g \to \g_\infty$ to be an linear map such that $L(V_i) = \valg_i$. Consider now $\sigma_t$ to be the linear automorphism on $\g$ so that $\sigma_t(v_i)=t^iv_i$ for each $v_i\in V_i$ and $i \in \{1, \dots, l\}$. Define the Lie brackets $[\cdot,\cdot]_t$ on $\g$ by
\[[v,w]_t = \sigma_{1/t}\big([\sigma_t(v),\sigma_t(w)]\big),~\mbox{ for all }t>0,\]
thus $(\g, [\cdot,\cdot]_t)$ is isomorphic to $(\g, [\cdot,\cdot]_1)$ via $\sigma_t$. Furthermore,
\[[L(v),L(w)]_\infty = \lim\limits_{t\uparrow+\infty} [v,w]_t\]
since, given $v \in V_i$ and $w \in V_j$, one has that the main term belongs to $V_{i+j}$, the other terms of superior order belong to $V_{i+j+1} \oplus \cdots \oplus V_l$ and it makes them insignificant in the rescaled limit (see \cite{breuillard2014,pansu1983} for a detailed discussion). Set
\[
	\frac{1}{t_n}\bull x_n := (\exp_\infty \circ L \circ \sigma_{1/t_n}\circ \log)(x_n).
\]
The convergence established by \cref{thm:Pansu} determines the metric $d_\infty$ such that
\[\left(\Gamma,\frac{1}{n}d_S,e\right) \GHto \left(G_\infty,d_\infty,\mathlcal{e}\right).\]
Hence, $\lim_{n\uparrow +\infty} \frac{1}{t_n}\bull x_n = \mathlcal{g}$ exactly when $\frac{1}{t_n}\bull x_n$ converges to $\mathlcal{g}$ in $(G_\infty,d_\infty)$. The corresponding metric statement shows that, given sequences $\{x_n\}_{n\in\N}$, $\{x_n'\}_{n\in\N}$ in $\Gamma$, and $t_n\uparrow+\infty$ as $n \uparrow +\infty$ with $\lim_{n\uparrow +\infty} \frac{1}{t_n}\bull x_n = \mathlcal{g}$ and $\lim_{n\uparrow +\infty} \frac{1}{t_n}\bull x_n' = \mathlcal{g}'$,
\begin{equation*}
    d_\infty (\mathlcal{g},\mathlcal{g}') = \lim_{n\uparrow +\infty} \frac{1}{t_n}d_S(x_n,x_n').
\end{equation*}

The abelianized Lie algebras are defined by $\g_\infty^{\ab} := \g_\infty/[\g_\infty,\g_\infty]_\infty \cong \mathfrak{v}_1$ and $\g^{\ab}:=\g/[\g,\g]_1$. In particular, $\g^{\ab}\cong\g_\infty^{\ab}$.  According to the Frobenius integrability criterion, the integrable curves in $G_\infty$ are those for which the tangent vectors at each point of the curve belong to $\mathfrak{v}_1$. An \textit{admissible} (or curve) in $G_\infty$ is a Lipschitz curve $\upgamma:[t_0,t_1] \to G_\infty$ such that the tangent vector $\upgamma' (t) \in \mathfrak{v}_1$ for all $t \in [t_0, t_1]$. Let $\phi:\g_\infty^{\ab}\to[0,+\infty)$ be a norm in the abelianized algebra. Then the $\ell_\phi$-length of the admissible $\upgamma$ is
\[\ell_\phi(\upgamma) := \int_{t_0}^{t_1}\phi\big(\upgamma'(t)\big)dt.\]
Set $d_\phi$ to be the inner metric of the length space $(G,\ell_\phi)$ given by
\begin{equation} \label{eq:def.d.phi}
    d_\phi(\mathlcal{g},\mathlcal{g}') :=\inf\big\{\ell_\phi(\upgamma): \upgamma \text{ is an admissible curve from } \mathlcal{g} \text{ to }\mathlcal{g}' \text{ in }G_\infty\big\}.
\end{equation}

In fact, the construction of $d_\phi$ can be employed to define $d_\infty$. The bi-Lipschitz property is a consequence of the results in \cref{sec:norms.and.mean}. One can also verify that the metric $d_\infty$ is right-invariant and homogeneous with respect to $\delta_t$. Let us define the projections
\[\pi:\g\to\g^{\ab} \quad \text{and}\quad\pi_\infty:\g_\infty \to \mathfrak{v}_1\cong \g_\infty^{\ab}\]
so that, if $v=\sum_{i=1}^l v_i \in \g_\infty$ with $v_i\in\mathfrak{v_i}$, then $\pi_\infty(v)=v_1$ and $\pi = L^{-1}\circ \pi_\infty\circ L$. The next lemma compiles several well-known results that will be employed throughout the text. We state the results and their proofs can be found in \cite{cantrell2017,decornulier2016,pansu1983}.

\begin{lemma} \label{lm:conv.seq.Gr}

    Consider $\Gr$ a finitely generated torsion-free nilpotent group, then all of the following hold true:
    \begin{itemize}
    \item[(i)] Let $\mathlcal{g} \in G_\infty$. Then there exists a sequence $\{x_n\}_{n \in \N} \subseteq \Gr$ such that \[\lim_{n\uparrow + \infty}\frac{1}{n}\bull x_n= \mathlcal{g}.\]

    \item[(ii)]
    Let $\{x_n\}_{n \in \N}, \{y_n\}_{n \in \N} \subseteq \Gr$, $\mathlcal{g}, \mathlcal{h} \in G_\infty$, and  $t_n \uparrow +\infty$ as $n \uparrow +\infty$ be such that\break
    $\lim_{n\uparrow + \infty}\frac{1}{t_n}\bull x_n = \mathlcal{g}$ and $\lim_{n\uparrow + \infty}\frac{1}{t_n}\bull y_n = \mathlcal{h}$. Then
    \[\lim_{n\uparrow + \infty}\frac{1}{t_n}\bull x_n y_n = \mathlcal{gh}.\]

    \item[(iii)]
    Let $x \in \Gr$, then
    \begin{align*}
        \lim_{n\uparrow + \infty}\frac{1}{n}\bull x^n &= \left(\exp_\infty \circ L \circ \pi \circ \log\right)(x)\\
        &= \left(\exp_\infty \circ ~\pi_\infty \circ L\circ \log\right)(x).
    \end{align*}
    \end{itemize}
\end{lemma}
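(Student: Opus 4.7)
The plan is to treat each item via a computation in the Lie algebra $\g$, rescaled by $\sigma_{1/n}$ and then transported by $L$ to $\g_\infty$, which is exactly the content of $\tfrac{1}{n}\bull x$. For item (i), I would exploit cocompactness of $\Gr$ inside its Mal'cev completion $G$. Given $\mathlcal{g}\in G_\infty$, set $v:=L^{-1}(\log_\infty(\mathlcal{g}))\in\g$ and $\mathlcal{z}_n:=\exp(\sigma_n(v))\in G$; cocompactness provides $x_n\in\Gr$ at uniformly bounded Riemannian distance from $\mathlcal{z}_n$. Writing $x_n=\mathlcal{z}_n u_n$ with $\log(u_n)$ in a bounded subset of $\g$, the Baker--Campbell--Hausdorff formula expands $\log(x_n)$ in terms of $\log(\mathlcal{z}_n)$ and $\log(u_n)$; the identity $\sigma_{1/n}[v,w]_1=[\sigma_{1/n}v,\sigma_{1/n}w]_n$ together with $\sigma_{1/n}\log(u_n)\to 0$ then implies $\sigma_{1/n}\log(x_n)\to v$, and applying $L$ and $\exp_\infty$ gives $\tfrac{1}{n}\bull x_n\to\mathlcal{g}$.

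For item (ii), the key tool is again BCH (truncated to finitely many terms by nilpotency of $\g$) combined with the rescaling identity $[v,w]_t=\sigma_{1/t}[\sigma_t(v),\sigma_t(w)]_1$. Writing $v_n:=\sigma_{1/t_n}\log(x_n)$ and $w_n:=\sigma_{1/t_n}\log(y_n)$, the hypothesis yields $L(v_n)\to\log_\infty(\mathlcal{g})$ and $L(w_n)\to\log_\infty(\mathlcal{h})$, while applying BCH and then $\sigma_{1/t_n}$ produces
\[\sigma_{1/t_n}\log(x_n y_n)=v_n+w_n+\tfrac{1}{2}[v_n,w_n]_{t_n}+\cdots,\]
where the remaining terms are the finitely many higher-order nested brackets evaluated under $[\cdot,\cdot]_{t_n}$. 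The limit identity $[L(v),L(w)]_\infty=\lim_{t\uparrow+\infty}[v,w]_t$ recalled in \cref{sec:rescaled.dist} transports each such term, after composition with $L$, to the corresponding term of the BCH expansion for the product in $G_\infty$, yielding $\tfrac{1}{t_n}\bull(x_n y_n)\to\mathlcal{g}\mathlcal{h}$.

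For item (iii), since $x\in\Gr\subseteq G$ lives in a Lie group, one has $\log(x^n)=n\log(x)$. Decomposing $\log(x)=\sum_{i=1}^l v_i$ with $v_i\in V_i$, linearity of $\sigma_{1/n}$ gives $\sigma_{1/n}\log(x^n)=n\,\sigma_{1/n}\log(x)=v_1+\sum_{i=2}^l n^{1-i}v_i$, which tends to $v_1=\pi(\log(x))$; applying $L$ and $\exp_\infty$ yields the first formula, and the equivalent second expression follows from $\pi_\infty\circ L=L\circ\pi$, a reformulation of $\pi=L^{-1}\circ\pi_\infty\circ L$ recorded in \cref{sec:rescaled.dist}. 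The main obstacle is (ii), where one must control uniformly in $n$ the finitely many higher-order BCH terms and verify that rescaled brackets $[\cdot,\cdot]_{t_n}$ of converging sequences truly converge to their $[\cdot,\cdot]_\infty$-counterparts; once that is in hand, items (i) and (iii) follow essentially from cocompactness (and the same rescaling behaviour used in a simpler form) and from $\log(x^n)=n\log(x)$, respectively.
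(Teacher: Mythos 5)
Your sketch is correct, but note that the paper does not prove \cref{lm:conv.seq.Gr} at all: it is stated as a compilation of known results with the proofs deferred to \cite{cantrell2017,decornulier2016,pansu1983}. Your argument is essentially the standard one found in those references: item (iii) is the computation $\log(x^n)=n\log(x)$ together with $\sigma_{1/n}$ killing the components in $V_2\oplus\cdots\oplus V_l$, and items (i)--(ii) follow from the Baker--Campbell--Hausdorff formula (a finite sum by nilpotency) conjugated by $\sigma_{1/t_n}$, so that the ordinary brackets become the rescaled brackets $[\cdot,\cdot]_{t_n}$, plus cocompactness of $\Gr$ in its Mal'cev completion for (i). The "main obstacle" you flag in (ii) is not a real one: the maps $[\cdot,\cdot]_t$ are bilinear on a finite-dimensional space and converge pointwise (hence in operator norm) to the limit bracket as $t\uparrow+\infty$, so each of the finitely many BCH terms converges when evaluated at convergent (hence bounded) sequences $v_n,w_n$, and in (i) the terms containing the factor $\sigma_{1/n}\log(u_n)\to 0$ vanish for the same reason. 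The only point worth making explicit in a full write-up is that convergence of $L(\sigma_{1/t_n}\log(x_n))$ in $\g_\infty$ is equivalent to convergence of $\tfrac{1}{t_n}\bull x_n$ with respect to $d_\infty$, since the Carnot--Carath\'eodory metric induces the manifold topology of $G_\infty$; also use $\pi=L^{-1}\circ\pi_\infty\circ L$ exactly as defined in \cref{sec:rescaled.dist}, which is what you do.
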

\begin{remark}
    The conditions imposed on $\Gr$ might appear somewhat restrictive. However, we will subsequently regain many properties by making necessary adjustments for virtually nilpotent $\Gr$ through the quotient $\Gr' = N/\tor N$ (see \cref{sec:virt.nilpotent.proofs}).
\end{remark}

The item (iii) in \cref{lm:conv.seq.Gr} has direct implications for the application of subadditive ergodic theorems. To address this constraint, we overcome it by approximating the lengths using polygonal curves. We present, without proof, Lemma 3.7 from \cite{cantrell2017}, which will be employed in the approximation.

\begin{lemma} \label{lm:small.perturbations}
    Consider $\Gr$ nilpotent. Let $\{y_i\}_{i=1}^m \subseteq \Gr$ and $\varepsilon>0$ be given. Then there exist $\xi>0$ and $\thickbar{n} \in \N$ so that, for all $n>\thickbar{n}$, for all $n_j \in \{0, 1, \dots, \lfloor \xi \thickbar{n}\rfloor\}$,
    \[
        \frac{1}{n}d_S\left(y_m^{n-n_m}y_{m-1}^{n-n_{m-1}}\dots y_1^{n-n_1},~y_m^{n}y_{m-1}^{n}\dots y_1^{n}\right)< \varepsilon.
    \]
\end{lemma}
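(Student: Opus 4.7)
The plan is to interpolate between the two words by replacing one exponent at a time, and to bound each incremental difference using Pansu's theorem (\cref{thm:Pansu}) together with the convergence rules of \cref{lm:conv.seq.Gr}. Fix any $\xi > 0$; the integer $\thickbar{n}$ will be chosen at the end. For $j = 0, 1, \ldots, m$, set
\[
c_n^{(j)} := y_m^{n-n_m}\cdots y_{j+1}^{n-n_{j+1}}\, y_j^{n}\, y_{j-1}^{n}\cdots y_1^{n},
\]
so that $c_n^{(0)}$ and $c_n^{(m)}$ coincide with the two words appearing in the statement. Writing $L_j^{(n)} := y_m^{n-n_m}\cdots y_{j+1}^{n-n_{j+1}}$ and $R_j^{(n)} := y_{j-1}^n\cdots y_1^n$, the words $c_n^{(j-1)} = L_j^{(n)}y_j^{n}R_j^{(n)}$ and $c_n^{(j)} = L_j^{(n)}y_j^{n-n_j}R_j^{(n)}$ differ only in the middle factor, and two applications of the right-invariance of $d_S$ (cancelling $R_j^{(n)}$ on the right, then multiplying both arguments by $y_j^{-(n-n_j)}$ on the right) yield
\[
d_S\bigl(c_n^{(j-1)},\, c_n^{(j)}\bigr) = d_S\bigl(L_j^{(n)} y_j^{n_j},\, L_j^{(n)}\bigr).
\]
The triangle inequality therefore reduces the lemma to showing that $\tfrac{1}{n} d_S(L_j^{(n)} y_j^{n_j}, L_j^{(n)}) < \varepsilon/m$ for every $j$, every $n > \thickbar{n}$, and every admissible tuple $(n_m, \ldots, n_j)$.

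Since each $n_i$ lies in $\{0, 1, \ldots, \lfloor\xi\thickbar{n}\rfloor\}$, the element $y_j^{n_j}$ has word length at most $\xi\thickbar{n}\|y_j\|_S$, so $\tfrac{1}{n}\bull y_j^{n_j} \to \mathlcal{e}$ in $G_\infty$ as $n \to \infty$. By \cref{lm:conv.seq.Gr}(iii), $\tfrac{1}{n}\bull y_i^{n}$ converges to some $\mathlcal{y}_i \in G_\infty$; writing $y_i^{n-n_i} = y_i^{n}\cdot y_i^{-n_i}$ with $y_i^{-n_i}$ of bounded word length, \cref{lm:conv.seq.Gr}(ii) gives $\tfrac{1}{n}\bull y_i^{n-n_i} \to \mathlcal{y}_i$ as well. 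Iterating (ii) yields $\tfrac{1}{n}\bull L_j^{(n)} \to \Lambda_j := \mathlcal{y}_m\cdots \mathlcal{y}_{j+1}$, and a further application of (ii) gives $\tfrac{1}{n}\bull (L_j^{(n)} y_j^{n_j}) \to \Lambda_j \cdot \mathlcal{e} = \Lambda_j$.

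By \cref{thm:Pansu}, in the rescaled-distance form recorded in \cref{sec:rescaled.dist},
\[
\lim_{n\to\infty}\frac{1}{n} d_S\bigl(L_j^{(n)} y_j^{n_j},\, L_j^{(n)}\bigr) = d_\infty(\Lambda_j, \Lambda_j) = 0.
\]
The main subtlety is the \emph{uniformity} of this convergence over the admissible tuples $(n_m, \ldots, n_j)$. This is automatic once $\thickbar{n}$ is fixed, since each $n_i$ then ranges over a finite set independent of $n$, and a finite maximum of convergent sequences still converges. Choosing $\thickbar{n}$ large enough that each of the $m$ terms is below $\varepsilon/m$ for all $n > \thickbar{n}$ and summing concludes the proof. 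Bounding the perturbations by $\lfloor\xi\thickbar{n}\rfloor$ rather than by a quantity growing with $n$ is exactly what makes this finiteness, and hence the uniformity, possible.
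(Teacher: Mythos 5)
The paper itself offers no proof of this lemma: it is imported verbatim, explicitly without proof, as Lemma 3.7 of \cite{cantrell2017}, so your argument can only be measured against that source and against how the lemma is used later in the thesis. Your telescoping interpolation and the two uses of right-invariance of $d_S$, reducing everything to $\frac{1}{n}d_S\big(L_j^{(n)}y_j^{n_j},L_j^{(n)}\big)$, are fine, and for each \emph{fixed} tuple $(n_1,\dots,n_m)$ the limit along $n$ is indeed $0$ by \cref{lm:conv.seq.Gr} and the metric form of \cref{thm:Pansu}. The gap is in the quantifiers. You announce ``fix any $\xi>0$'' and plan to absorb everything into a large choice of $\thickbar{n}$, but the statement is false for arbitrary $\xi$ once $\thickbar{n}$ is large: already for $\Gr=\Z$, $m=1$, $y_1$ a generator, take $n=\thickbar{n}+1$ and $n_1=\lfloor\xi\thickbar{n}\rfloor$; then $\frac{1}{n}d_S(y_1^{n-n_1},y_1^{n})=\frac{n_1}{n}\approx\xi$, which exceeds $\varepsilon$ whenever $\xi>\varepsilon$, no matter how large $\thickbar{n}$ is (the same happens in any $\Gr$ through the abelianization). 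The actual content of the lemma is that $\xi$ must be chosen small as a function of $\varepsilon$ and the $y_i$; an argument that never uses smallness of $\xi$ cannot prove it.

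The uniformity step is also circular rather than ``automatic''. The admissible tuples range over $\{0,\dots,\lfloor\xi\thickbar{n}\rfloor\}^m$, a set that grows with $\thickbar{n}$, and $\thickbar{n}$ is exactly what you still have to choose; pointwise convergence per tuple only yields, for each cap $N$ on the entries, a threshold $M(N)$ beyond which those finitely many tuples behave, whereas the lemma needs $M(\lfloor\xi\thickbar{n}\rfloor)\le\thickbar{n}$ for some $\thickbar{n}$, and nothing in a soft limit argument controls how $M(N)$ grows. What is needed is a quantitative bound relating the size of the perturbation to $n$ uniformly, e.g.\ nilpotent distortion estimates for conjugates (in the Heisenberg group $d_S(Ly^{k},L)\le Ck+C\sqrt{k\,\|L\|_S}$, whence $\frac1n d_S \lesssim \xi+\sqrt{\xi}$ when $n_j\le\xi n$), or the homogeneity of $d_\infty$ under the dilations $\{\delta_t\}$ combined with compactness of the rescaled exponents $n_j/n\in[0,\xi]$ --- which is how \cite{cantrell2017} argue. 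Note finally that the version the thesis actually invokes in the proof of \cref{prop:ergodic.thm.path} allows $n_i<\xi n$, i.e.\ perturbations growing linearly with $n$ (the bound $\lfloor\xi\thickbar{n}\rfloor$ in the displayed statement should be read as $\lfloor\xi n\rfloor$; as literally written the lemma is vacuous, since one may take $\xi\thickbar{n}<1$). Your closing remark that boundedness by $\lfloor\xi\thickbar{n}\rfloor$ is precisely what makes your finiteness argument possible therefore signals that the approach, not merely one step, must change.
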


One standout example that exemplifies several properties presented above is the discrete Heisenberg group. As a prime example of a nilpotent group, it offers valuable insights into the fusion of algebraic structures with geometric phenomena in both geometric group theory and metric geometry.

\begin{example}[The discrete Heisenberg group] \label{ex:Heisenberg.group}
    The discrete Heisenberg group can be visualized as a collection of integer lattice points in a three-dimensional space, with a unique group structure derived from matrix multiplication. The nilpotent nature is the key to understand its intricate geometric properties. Let $R$ be a commutative ring with identity and set $H_3(R) := \{(\mathtt{x}, \mathtt{y}, \mathtt{z}): \mathtt{x},\mathtt{y},\mathtt{z} \in R\}$ to be the set of upper triangular matrices with
    \[
        (\mathtt{x}, \mathtt{y}, \mathtt{z}) := 
        \begin{pmatrix}
        \mathtt{1} & \mathtt{x} & \mathtt{z}\\
        \mathtt{0} & \mathtt{1} & \mathtt{y} \\
        \mathtt{0} & \mathtt{0} & \mathtt{1}
        \end{pmatrix}.
    \]
    
    \begin{figure}[htb!]
    \centering
    \includegraphics[scale=0.16]{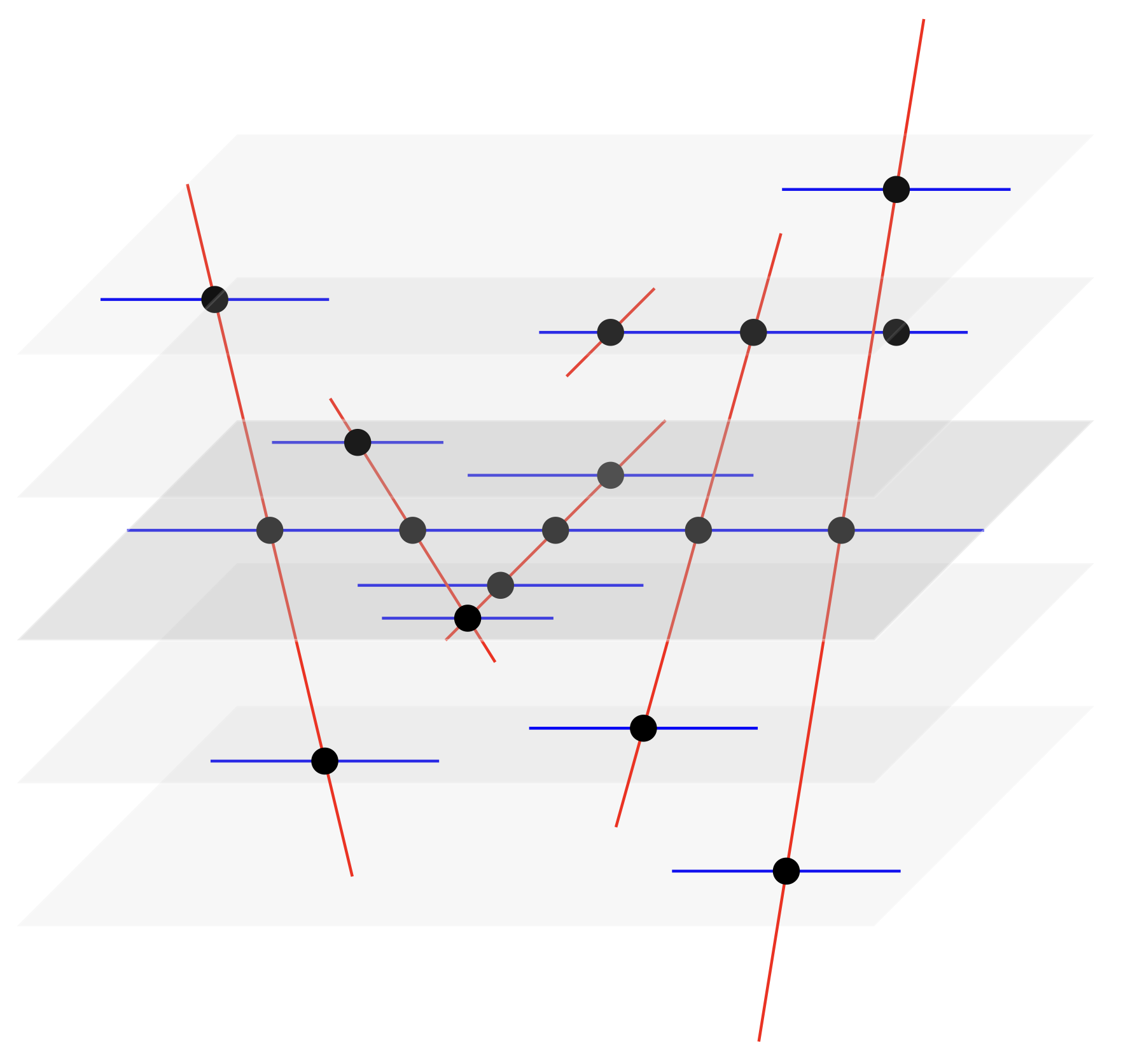}
    \caption{A section of the Heisenberg discrete Cayley graph $\mathcal{C}(H_3(\mathbb{Z}),S)$ embedded in $\R^3$. Fig. from \cite{coletti2023asymptotic}.}
    \end{figure}

    The Heisenberg group on $R$ is $H_3(R)$ with the matrix multiplication. In particular, $H_3(\mathbb{Z})$ is known as discrete Heisenberg group. Let $\Gr = H_3(\mathbb{Z})$, $\mathtt{X} = (1,0,0)$, $\mathtt{Y} = (0,1,0)$, $\mathtt{Z} = (0,0,1)$, and $S =\{\mathtt{X}^{\pm 1},\mathtt{Y}^{\pm 1}\}$. Observe that
    \begin{align*}
        (\mathtt{x}, \mathtt{y}, \mathtt{z}).(\mathtt{x}', \mathtt{y}', \mathtt{z}') &= (\mathtt{x} + \mathtt{x}', ~\mathtt{y} + \mathtt{y}', ~\mathtt{z} + \mathtt{z}' + \mathtt{x}\mathtt{y}'),\\
        (\mathtt{x}, \mathtt{y}, \mathtt{z})^{-1} &= (-\mathtt{x}, -\mathtt{y}, \mathtt{xy}-\mathtt{z}), \text{ and}\\
        \big[(\mathtt{x}, \mathtt{y}, \mathtt{z}),~(\mathtt{x}', \mathtt{y}', \mathtt{z}')\big] &= (0,0,\mathtt{xy}'-\mathtt{x}'\mathtt{y}).
    \end{align*}

    Therefore, for all $m,n\in \mathbb{Z}$,
    \begin{equation} \label{eq:Heisenberg.op}
        \mathtt{X}^m = (m,0,0), \quad \mathtt{Y}^n=(0,n,0),~~\text{and} \quad [\mathtt{X}^m,\mathtt{Y}^n]=\mathtt{Z}^{m\cdot n} = (0,0,m\cdot n).
    \end{equation}

    One can easily see that $S$ is a finite generating set of $\Gr$. Furthermore, $\Gr_1 = [\Gr,\Gr] = \langle\mathtt{Z}\rangle$ and $\Gr_2=[\Gr,\Gr_1] = \{e\}$. Hence, $\Gr$ is nilpotent of class $2$ and $S \subseteq \Gr \setminus [\Gr,\Gr]$. Consider $\|-\|_{S}$ the word norm of $\mathcal{C}(\Gr,S)$. It follows from \eqref{eq:Heisenberg.op} that
    \[\|\mathtt{Z}^{m}\|_S \in \mathcal{O}(\sqrt{m}) \quad \text{as }m \uparrow +\infty.\]
    It highlights how the rescaled norm $\frac{1}{n}\|x^n\|_S$ vanishes as $n \uparrow +\infty$ when $x \in [\Gr,\Gr]$. 
    
    Due to the properties above, one can write $(\mathtt{x},\mathtt{y})=(\mathtt{x},\mathtt{y},\mathtt{z})[\Gr,\Gr]$. Note that $S^{\ab} = \left\{(\pm 1,0), (0,\pm 1)\right\}$ is a finite generating set of the abelianized group $\Gr^{\ab}= \Gr/[\Gr,\Gr]$ which yields an isomorphism of $\mathcal{C}(\Gr^{\ab},S^{\ab})$ and the square $\mathbb{Z}^2$ lattice.

    By construction of the asymptotic cone, the Mal'cev completion $G$ of $\Gr\simeq\Gr'$ is the continuous Heisenberg group $H_3(\R)$ with its associated Lie algebra $\mathfrak{h}= \g$, in this case, $\g\simeq \g_\infty$ and $G \simeq G_\infty$. The Heisenberg algebra $\mathfrak{h}$ is given by $\mathfrak{h} = \operatorname{span}_{\R}\{e_{12},e_{13}, e_{23}\}$ with $\big\{e_{ij}:i,j \in \{1,2,3\}\big\}$ the canonical basis of $M_{3\times3}(\R)$.

    Since for all $\mathtt{A},\mathtt{B} \in \mathfrak{h}$ one has $[\mathtt{A},\mathtt{B}]_\infty = \mathtt{A}\mathtt{B}-\mathtt{B}\mathtt{A}\in\operatorname{span}_{\R}\{e_{13}\}$ by matrix multiplication, it then follows that $\mathfrak{h} = \mathfrak{v}_1 \oplus \mathfrak{v}_2$ with $\mathfrak{v}_1 \simeq \operatorname{span}_{\R}\{e_{12},e_{23}\}$ and $\mathfrak{h}^{\ab}\simeq \g^{\ab}_\infty \simeq\mathfrak{v}_1$. 
    
    Let $\mathtt{A} = \mathtt{u}\cdot e_{12} + \mathtt{v}\cdot e_{23} + \mathtt{w}\cdot e_{13}$, then $\exp_\infty (\mathtt{A})= \left(\mathtt{u},\mathtt{v}, \mathtt{w}+ \frac{1}{2}\mathtt{uv}\right)$. 
    
    Since $(\mathtt{x},\mathtt{y},\mathtt{z})^{n} = \left(n\mathtt{x},n\mathtt{y},n\mathtt{z} + \frac{n(n-1)}{2}\mathtt{xy}\right)$ one can verify by the procedure defined in this section that $\frac{1}{n} \bull (\mathtt{x},\mathtt{y},\mathtt{z})^{n} = \left(\mathtt{x},\mathtt{y},\frac{1}{n}\mathtt{z}-\frac{1}{2n}\mathtt{xy} + \frac{1}{2}\mathtt{xy}\right) \in G_\infty$. It implies that, for all $\mathtt{x},\mathtt{y},\mathtt{z} \in \mathbb{Z}$,
    \[
        \lim_{n \uparrow +\infty} \frac{1}{n} \bull (\mathtt{x},\mathtt{y},\mathtt{z})^{n} = \left(\mathtt{x},\mathtt{y},\frac{1}{2}\mathtt{xy}\right) = \exp_\infty \left(\pi_\infty\Big( \log(\mathtt{x},\mathtt{y},\mathtt{z}) \Big)\right).
    \]
\end{example}

\subsection{Some Examples of Virtually Nilpotent Groups} \label{sec:examples.virt.nil}
In this subsection, our focus shifts to examples of virtually nilpotent groups that can be constructed through direct and outer semidirect products. The discussion of the virtually nilpotent case will be explored more extensively later in the text.

Let $\mathrm{L}$ be a nilpotent group and consider $M$ a finite group. Then the direct product
\[
    K = \mathrm{L} \times M
\]
is a group with the binary operation given by $(x,m).(y,m') = (xy,mm')$. Note that the commutator is $\big[(x,m),(y,m')\big] = \big([x,y], [m,m']\big) $. It follows that, for all $A,A' \subseteq \mathrm{L}$ and $B,B' \subseteq M$,
\[
    \big[A \times B,A' \times B'\big]  = [A, B] \times [A', B'].
\]

Hence, $K$ is a nilpotent group if, and only if, $M$ is nilpotent. On the other hand, for all finite group $M$, $K$ is virtually nilpotent.

Set $S_{\mathrm{L}}$ and $S_M$ to be finite symmetric generating sets of $\mathrm{L}$ and $M$, respectively. \[\big(S_{\mathrm{L}}\times\{e\}\big) \cup \big(\{e\}\times S_M)\] is a finite generating set of $K$. We will consider another useful example of generating set of $K$. Let $S_\square^e$ stand for $S_\square\cup\{e\}$. Then 
\[S=S_{\mathrm{L}}\times S_M^e\]
is also a symmetric generating set of $K$. In \cref{ch:shape.groups}, we will define a set $\llbracket S \rrbracket$. Under the assumption that $\mathrm{L}$ is torsion-free, the set $\llbracket S \rrbracket$ is analogous to $S_{\mathrm{L}}$, where $\Gr' \simeq \mathrm{L}$.

\begin{example}
    Let $\mathrm{SL}(2,3)$ be the of degree two over a field of three elements determined by 
    \[\mathrm{SL}(2,3) = \left\langle \rho_1,\rho_2,\rho_3 : \rho_1^3=\rho_2^3=\rho_3^3 =\rho_1 \rho_2 \rho_3 \right\rangle\] 
    A remarkable property of $\mathrm{SL}(2,3)$ is that it is the smallest group that is not nilpotent. Let $\Z_m = \langle  \rho_0\rangle$ the cyclic group with $\rho_0^m=e$ and consider $H_3(\Z)$ to be the discrete Heisenberg group, as defined in \cref{ex:Heisenberg.group}. Set
    \[\Gr = \big(H_3(\Z)\times\Z_m\big)\times\mathrm{SL}(2,3).\]

    Then $\Gr$ is virtually nilpotent with $N=H_3(\Z)\times\Z_m\times\{e\} \unlhd \Gr$ such that $\kappa=[\Gr:N] = |\mathrm{SL}(2,3)| = 24$. Hence, considering this notation:
    \[N \simeq H_3(\Z)\times\Z_m, \quad \tor N = \{e\}\times\Z_3\times\{e\} \simeq \Z_3 \quad \Gr'=N/\tor N \simeq H_3(\Z).\]
    Let us write $\mathrm{SL}(2,3) = \{z_j\}_{j=1}^{24}$ and fix $z_{(j)} = (e,e,z_j)$ as representatives for each coset in $\Gr/N$. Thus,
    \[\uppi_N(x,y,z)=(x,y, e), \text{ and} \quad \big\llbracket (x,y,z) \big\rrbracket = \{x\}\times\Z_3\times\{e\} \cong x \in H_3(\Z).\]
    Now, set
    \[S_{H_3(\Z)}= \big\{\mathtt{X}^{\pm1},\mathtt{Y}^{\pm1}\big\}, \quad S_{\Z_m}=\left\{\rho_0^{\pm1}\right\}, \text{ and}\quad S_{\mathrm{SL}(2,3)}=\left\{\rho_1^{\pm1},\rho_2^{\pm1},\rho_3^{\pm1}\right\}.\]
    Then \[S=S_{H_3(\Z)} \times S_{\Z_m}^e \times S_{\mathrm{SL}(2,3)}^e\]
    is a finite symmetric generating set of $\Gr$. Moreover, the Cayley graph $\mathcal{C}(\Gr,S)$ is homomorphically equivalent to $\mathcal{C}(\Gr',\llbracket S \rrbracket)$, which is isomorphic to $\mathcal{C}\left(H_3(\Z),S_{H_3(\Z)}\right)$.
\end{example}

More generally, one can also obtain a virtually nilpotent group by the outer semidirect product. Consider $N$ a nilpotent and $H$ a finite group. Let $\varphi$ be a group homomorphism $\varphi:H \to \operatorname{Aut}(N)$, where $\operatorname{Aut}(N)$ is the automorphism group of $N$. Then the semidirect group is
\[
    \Gr = N \rtimes_\varphi H
\]
whose elements are the same of $N \times H$ but the binary operation is characterized by 
\begin{align*}
    (x,h).(y,h') &= \big(x\varphi_h(y), hh'\big),\\
    (x,h)^{-1} &= \big(\varphi_{h^{-1}}(x^{-1}), h^{-1}\big), \text{ and}\\
    \Big[(x,h),(y,h')\Big] &= \Big(x\varphi_h(y)\varphi_{hh'h^{-1}}(x^{-1})\varphi_{[h,h']}(y^{-1}), ~[h,h']\Big).
\end{align*}

Let $S_N$ and $S_H$ be finite symmetric generating sets of $N$ and $H$, respectively. Hence, similarly to the direct product, \[\big(S_N\times\{e\}\big)\cup\big(\{e\}\times S_H\big)\] is a finite symmetric generating set of $\Gr$. Moreover, $S_N \times S_H^e$ is also a finite generating set, but not necessarily symmetric. However,
\[\left(\bigcup_{h\in H}\varphi_h(S_N)\right)\times H\]
is finite, symmetric, and generates $\Gr$. The next example illustrates how some properties of the outer semidirect product groups change in comparison to the direct product.

\begin{example}[Generalized dihedral group] \label{ex:dihedral}
Let $(N,+)$ be a finitely generated abelian group with polynomial growth rate $D\geq 1$ and $(\Z_2,+)$ with $\Z_2=\{0,1\}$. Fix $\varphi:\Z_2\to\operatorname{Aut}(N)$ such that $\varphi_0=id$ and $\varphi_1=-id$. The generalized virtually nilpotent diheral group is \[\operatorname{Dih}(N):= N \rtimes_\varphi \Z_2.\]

Consider $\Gr=\operatorname{Dih}(N)$, then for all $(x,r),(y,r') \in \Gr$,
\begin{align*}
    (x,r).(y,r') &= \big(x+\varphi_r(y), r+r'\big),\\
    (x,r)^{-1} &= \big((-1)^{r+1}x, r\big),\\
    \Big[(x,r),(y,r')\Big] &= \Big(\big(1 -(-1)^{r'}\big)x - \big(1-(-1)^{r}\big)y,\ 0\Big).
\end{align*}

Therefore, $\Gr$ is non-abelian and $\Gr_1 =[\Gr,\Gr]= 2N\times\{0\}$. One can easily verify that all elements of $\Gr_2 = [\Gr, \Gr_1]$ are
\[\Big[(x,r),(2y,0)\Big] = \Big(2\big((-1)^{r}-1\big)y ,\ 0\Big).\]

Hence, for all $n\in \N$, one has $\Gr_{n} \simeq 2^nN$. We can conclude that $\Gr$ is not nilpotent while it is virtually nilpotent since $N \unlhd \Gr$.
\end{example}

\section{First-Passage Percolation Models} \label{sec:fpp} Hammersley and Welsh \cite{hammersley1965} introduced the First-Passage Percolation (FPP) as a mathematical model in 1965 to study the spread of fluid through a porous medium. In FPP models, a graph with random edge weights is considered, where these weights represent the time taken for the fluid to pass through the corresponding edge. These concepts will be revisited in \cref{sec:additional.FPP} and illustrated with examples in \cref{sec:examples}. Furthermore, the random processes considered in Part III are i.i.d. FPP models.

Let $\mathcal{G}=(V,E)$ be a graph and set $\tau=\{\tau(\mathtt{e})\}_{\mathtt{e} \in E}$ to be a collection of non-negative random variables. We may regard each $\tau(u,v)$ as random length (also \textit{passage time} or \textit{weight}) of an edge $\{u,v\} \in E$. It turns $(\mathcal{G},\tau)$ into a random length space and it motivates the following construction.

The random passage time of a path $\gamma \in \mathscr{P}(x,y)$ is given by $T(\gamma) = \sum_{\mathtt{e} \in \gamma} \tau(\mathtt{e})$. Let us now define the first-passage time of $y$ with the process starting at $x$ by
\[
    T(x,y) := \inf_{\gamma \in \mathscr{P}(x,y)} T(\gamma).
\]

The random variable $T(x,y)$ is also known as \textit{first-hitting time}. Observe that $T(x,y)$ is a random intrinsic pseudometric, \textit{i.e.}, $x\neq y$ does not imply in $T(x,y)>0$. We can now consider the group action $\tht:\Gr\curvearrowright(\Om,\F,\p) $ as a translation such that $c(x):= T(e,x)$ is a subadditive cocycle (see \eqref{eq:cocycle.metric}) with $\tau(x,sx)\circ\tht_{y} = \tau(xy^{-1},sxy^{-1})$ for all $x,y\in \Gr$ and $s \in S$ when $\mathcal{G}=\mathcal{C}(\Gr,S)$.

By requiring $\tht$ to be ergodic, we obtain for the FPP model that, for all $x \in \Gr$ and $s \in S$,
\[c(s)\circ\tht_x=\tau(x,sx)\sim  \tau(e,s)=c(s).\]
It also follows that $\tau(e, s) \sim \tau(e, s^{-1})$. Therefore, each direction of $\mathcal{C}(\Gr,S)$ determines a common distribution for its random lengths in a FPP model. \cref{ex:color} portraits an FPP model with dependend and identically distributed random lengths. While the random variables of the FPP model presented in \cref{ex:richardson} are independent but not identically distributed.

In the next chapter, we will introduce conditions to study a family of random processes on groups. Specifically, we will examine conditions \eqref{all}, \eqref{aml}, \eqref{innerness}, \eqref{aml2}, and \eqref{innerness2} (see \cref{ch:shape.groups}). Since passage times are preserved under translation, condition \eqref{innerness} is immediately satisfied when $S = F(\varepsilon)$, as it suffices to consider a geodesic path. However, other examples of subadditive interacting particle systems do not exhibit these properties. For instance, the Frog Model (see \cref{ex:frog}) can be described by a subadditive cocycle satisfying \eqref{all}, \eqref{aml}, and \eqref{aml2}. If we denote $\tau(x,sx) = |T(x)-T(sx)|$, then $\tau$ describes the growth of the process, and
\[ \tau(x,sx) \circ \theta_x \sim \tau(e,s) \quad\text{while} \quad \tau(x,sx) \not\sim \tau(e,s).\]

The results and properties highlighted above will be crucial in the study of the asymptotic shape and its applications in the subsequent discussions.

\chapter{Asymptotic Shape of Subadditive Processes on Groups with Polynomial Growth} \label{ch:shape.groups}

\vspace{2.1cm}
\section{Introduction}

The investigation of the asymptotic shape for subadditive processes on groups with polynomial growth, often synonymous with virtually nilpotent groups, has recently gained significant attention in the mathematical community. This is in part due to the fact that the usage of subadditive ergodic theorems for the limiting shape relies on vertex-transitive properties that are natural for group actions. Typically, these actions involve translations of the underlying space, providing motivation for the investigation of random processes defined on groups. Our study brings to light the algebraic structures inherent in a class of subadditive processes, offering a generalization beyond the fundamental settings of previously studied models.

The findings presented in this work hold the potential to deepen our comprehension of various mathematical and scientific phenomena. For instance, they could be instrumental in exploring the geometry of random surfaces or modeling the propagation of information or diseases through networks. The techniques used in this study could also be applied to other types of random processes on graphs or manifolds. 

Benjamini and Tessera \cite{benjamini2015} were the first to establish an asymptotic shape theorem for First-Passage Percolation (FPP) models on finitely generated groups of subexponential growth with i.i.d. random variables having finite exponential moments. Recently, Auffinger and Gorski \cite{auffinger2023} demonstrated a converse result, revealing that a Carnot-Carath\'eodory metric on the associated graded nilpotent Lie group serves as the scaling limit for certain FPP models on a Cayley graph under specified conditions. Broadening the investigation, Cantrell and Furman \cite{cantrell2017} explored the limiting shape for subadditive random processes on groups of polynomial growth, focusing on a class of processes satisfying an almost-surely bi-Lipschitz condition. From a probabilistic standpoint, there is significant interest in relaxing this bi-Lipschitz condition, which requires random variables to be in $L^\infty$. Here, we modify this hypothesis by replacing it with conditions of at least and at most linear growth. These new conditions widen the range of distributions of interest and improve the applicability to random processes within $L^p$ spaces for finite $p$. The implications and applicability of this new result are illustrated through examples presented at the end of the chapter. Notably, we enhance our previous result from \cite{coletti2021} on a limiting shape theorem obtained for the Frog Model, now extended to a broader class of non-abelian groups.

Addressing this challenge is primarily approached through the utilization of techniques from metric geometry and geometric group theory.
The existence of the limiting shape can be viewed as an extension of Pansu's theorem to random metrics. The primary strategy involves considering the subadditive cocycle determining a pseudo-quasi-random metric, with the standard case on $\Z^D$ and $\R^D$ extensively covered in the literature (see, for instance, \cite{bjoerklund2010, boivin1990}).

We describe the process and the obtained theorem below, more detailed definitions can be found in the next section.

\subsection*{Basic description and main results}

Let $(\Om,\F,\p)$ be a probability space and $(\Gr,.)$ a finitely generated group with polynomial growth rate. Set  $\tht: \Gr \curvearrowright (\Om,\F,\p)$ to be a $\p$-preserving (p.m.p.) 
ergodic group action. Consider the family $\{c(x)\}_{x \in \Gr}$ of non-negative random variables such that, $\p$-a.s.,
\begin{equation} \label{eq:subadditivity}
    {c}(xy) \leq c(y) + c(x)\circ\tht_y
\end{equation}

Write $c(x,\omega)$ for $c(x)(\omega)$ and let $z\cdot\omega:=\tht_z (\omega)$. Recall that function $c:\Gr\times\Om\to \R_{\geq 0}$ satisfying \eqref{eq:subadditivity} is referred to as a \emph{subadditive cocycle} (see \cref{sec:subadditive.cocycles}). Once given a subadditive cocycle $c$, there is a correspondent random pseudo-quasi metric $d_\omega$ defined by
\[
    d_{z\cdot\omega}(x,y):= \big(c(yx^{-1})\circ\tht_{x}\big)(z\cdot \omega),
\]
which is $\Gr$-right equivariant, \emph{i.e.}, for all $x,y,z \in \Gr$, and for every $ \omega \in \Om$,
\[
    d_\omega(x,y) = d_{z\cdot\omega}(xz^{-1},yz^{-1}).
\]
The correspondence is one-to-one since given a $\Gr$-right equivariant random pseudoquasimetric $d_\omega$, one can easily verify that
\begin{equation} \label{eq:cocycle.metric}
    c(x,\omega):= d_\omega(e, x)
\end{equation}
is a subadditive cocycle.

To avoid dealing with unnecessary technicalities, we initially consider $\Gr$ as a group of polynomial growth, which is nilpotent and torsion-free. Later, we address the more general case where $\Gr$ is virtually nilpotent. The essential definitions and notation are introduced as we proceed with the text. The group will be associated with a finite symmetric generating set $S \subseteq \Gr$. We write $\|-\|_S$ and $d_S$ for a word length and a word metric, respectively. The following conditions will be needed throughout the paper. We assume the existence of $\upbeta>0$ and $\upkappa>1$ such that, for all $x \in \Gr$,%
\begin{equation} \label{all} \tag{\scshape i}
    \p\big( c(x) \geq  t \big) \leq {g}(t) \quad \text{for all } t >\upbeta\|x\|_S
\end{equation}
where $g(t) \in \mathcal{O}\left(1/t^{2D+\upkappa}\right)$ as $t \uparrow +\infty$. 

Let $[\Gr,\Gr]$ be the commutator subgroup of $\Gr$ and set $\|x\|_S^{\ab} := \inf_{y \in x[\Gr,\Gr]}\|y\|_S$. Suppose that there exists $a >0$ such that, for all $x \in \Gr \setminus [\Gr,\Gr]$ there is a sequence $\{n_j\}_{j\in \N}$ of positive integers depending on $x[\Gr,\Gr]$ with $\lim_{j \uparrow + \infty} n_j = +\infty$ and, for all $y \in x[\Gr,\Gr]$ and every $j \in \N$,
\begin{equation} \label{aml} \tag{\scshape ii}
    a \|y^{n_j}\|_S^{\ab} \leq \E\left[c\left(y^{n_j}\right)\right].
\end{equation}
We say that the process grows \emph{at least linearly} when condition \eqref{all} is satisfied. Condition \eqref{aml} provides a lower bound for the norm of the rescaled process $\phi$, which will be defined later.

To obtain the asymptotic result, we will introduce an \textit{innerness assumption}. Specifically, for each $\upvarepsilon>0$, we require the existence of a finite generating set $F(\upvarepsilon) \subseteq \Gr\setminus[\Gr,\Gr]$ such that, for $\p$-\textit{a.s.} $\omega \in \Om$ and for every $x \in \Gr$, we can write $x = z_nz_{n-1}\dots z_1$  with $z_n,z_{n-1},\dots, z_1 \in F(\upvarepsilon)$ satisfying
\begin{equation} \label{innerness} \tag{\scshape iii}
    \sum_{i=1}^n c(z_i,{z_{i-1}\dots z_1}\cdot\omega) \leq (1+\upvarepsilon)c(x,\omega).
\end{equation}
When considering First-Passage Percolation models where $S \subseteq \Gr \setminus [\Gr,\Gr]$, condition \eqref{innerness} is automatically fulfilled (see \cref{sec:fpp}). Additionally, in the case where $\Gr$ is abelian, we can eliminate the need for hypothesis \eqref{innerness} in the main theorem altogether.

\begin{theorem}[Limiting Shape for Torsion-Free Nilpotent Groups] \label{shape.thm}
    Let $(\Gr,.)$ be a torsion-free nilpotent finitely generated group with polynomial growth rate $D \geq 1$ and torsion-free abelianization. Consider $c:\Gr\times\Om \to \R_{\geq0}$ to be a subadditive cocycle associated with $d_\omega$ and a p.m.p. ergodic group action $\tht$. 
    
    Suppose that conditions \eqref{all}, \eqref{aml}, and \eqref{innerness} are satisfied for a finite symmetric generating set $S \subseteq \Gr$. Then
    \begin{equation} \label{eq:GH:conv.THM}
        \quad \quad \left(\Gr,\frac{1}{n}d_\omega,e\right) \GHto \left(G_\infty,d_\phi,\mathlcal{e}\right) \quad\quad \p\text{-a.s.}
    \end{equation}
    where $G_\infty$ is a simply connected graded Lie group, and $d_\phi$ is a quasimetric homogeneous with respect to a family of homotheties $\{\delta_t\}_{t>0}$. Moreover, $d_\phi$ is bi-Lipschitz equivalent to $d_\infty$ on $G_\infty$.
    
     In addition, if $\Gr$ is abelian, then \eqref{eq:GH:conv.THM} remains true even when condition \eqref{innerness} is not valid.
\end{theorem}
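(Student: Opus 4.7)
The plan is to construct a norm $\phi$ on the abelianized Lie algebra $\g_\infty^{\ab}\cong\valg_1$, use it to define $d_\phi$ via admissible curves as in \eqref{eq:def.d.phi}, and then upgrade pointwise convergence of the rescaled cocycle to the pointed Gromov--Hausdorff statement \eqref{eq:GH:conv.THM}. First, for each fixed $x\in\Gr$, I would apply Kingman's Subadditive Ergodic Theorem (\cref{thm:kingman}) to the sequence $X_n = c(x^n)$ with the p.m.p.\ map $\tht_x$: the cocycle relation \eqref{eq:subadditivity} with $y=x^n$ and $z=x^m$ gives $X_{n+m}\leq X_m + X_n\circ\tht_x^{\,m}$, and $L^1$-integrability follows from \eqref{all} via $\E[c(x)]\leq \upbeta\|x\|_S + \int_{\upbeta\|x\|_S}^{+\infty}g(t)\,dt<+\infty$. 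Ergodicity of the full $\Gr$-action $\tht$ upgrades the $\tht_x$-invariant Kingman limit to a deterministic constant $\mu(x):=\lim_{n\uparrow+\infty}\tfrac{1}{n}c(x^n)$. By \cref{lm:conv.seq.Gr}(iii), $\tfrac{1}{n}\bull x^n$ converges in $G_\infty$ to $(\exp_\infty\circ\pi_\infty\circ L\circ\log)(x)$, so $\mu(x)=\mu(y)$ whenever $x[\Gr,\Gr]=y[\Gr,\Gr]$. Subadditivity thus descends to a subadditive, homogeneous function on $\Gr^{\ab}$ which, by density (\cref{lm:conv.seq.Gr}(i)) and the polynomial-growth upper bound on $\E[c(x)]$, extends continuously to the candidate seminorm $\phi:\valg_1\to[0,+\infty)$.

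Next, I would show $\phi$ is a genuine norm: sublinearity and $\delta_t$-homogeneity come directly from subadditivity and the definition of the rescaling, while positive-definiteness is provided by \eqref{aml}. Indeed, for $x\notin[\Gr,\Gr]$, the inequality $a\|x^{n_j}\|_S^{\ab}\leq\E[c(x^{n_j})]$ combined with the $L^1$ Kingman convergence forces $\phi\big((\exp_\infty\circ\pi_\infty\circ L\circ\log)(x)\big)>0$. It follows that $\phi$ is bi-Lipschitz equivalent to the Euclidean norm on $\valg_1$, so the length metric $d_\phi$ from \eqref{eq:def.d.phi} is a right-invariant quasimetric on $G_\infty$ that is homogeneous with respect to $\{\delta_t\}_{t>0}$ and bi-Lipschitz equivalent to Pansu's $d_\infty$ (\cref{thm:Pansu}).

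The technical heart is to promote this one-parameter convergence to
\[
\tfrac{1}{n}\,d_\omega(e,x_n)\longrightarrow d_\phi(\mathlcal{e},\mathlcal{g})\qquad\text{whenever}\qquad \tfrac{1}{n}\bull x_n\to\mathlcal{g},
\]
with uniformity over $B_S(e,n)$. For the upper bound, fix $\upvarepsilon>0$, approximate an optimal admissible curve from $\mathlcal{e}$ to $\mathlcal{g}$ by a polygonal path with successive tangent directions drawn from the finite set $F(\upvarepsilon)$ supplied by \eqref{innerness}, and rewrite $x_n$ as a product $y_m^{n-n_m}\cdots y_1^{n-n_1}$ with $y_i\in F(\upvarepsilon)$. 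Applying subadditivity of $c$ along $\tht$ bounds $c(x_n,\omega)$ by a sum of power-cocycle terms, whose exponent perturbations are absorbed into $\upvarepsilon$ via \cref{lm:small.perturbations}, and each power-cocycle term converges via the first paragraph to the corresponding $\phi$-length. The matching lower bound is obtained by projecting to the abelianization and invoking \eqref{aml}, which prevents any realized path from being cheaper than the candidate $\phi$-length. Simultaneous convergence over the ball $B_S(e,n)$ is enforced by a Borel--Cantelli argument: $|B_S(e,n)|\leq \mathtt{k}n^D$ together with the polynomial tail $g(t)\in\mathcal{O}(t^{-(2D+\upkappa)})$ makes $\sum_n n^{2D}\cdot g(\upvarepsilon n)<+\infty$, the exponent $2D+\upkappa$ having been calibrated precisely for this step.

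The main obstacle is the non-abelian polygonal approximation: arbitrary elements $x_n$ need not be close to iterated powers, so rewriting them as products along $F(\upvarepsilon)$ requires precisely the innerness hypothesis \eqref{innerness}, coupled with the delicate exponent-perturbation control from \cref{lm:small.perturbations}. In the abelian case the commutator subgroup is trivial, every coset is a singleton, the rescaling $\tfrac{1}{n}\bull x^n$ reduces to ordinary Euclidean scaling in $\Gr^{\ab}\otimes\R$, and word-metric geodesics in $S$ themselves lift to admissible polygonal paths in $G_\infty$. Consequently, the decomposition via $F(\upvarepsilon)$ is not needed, \eqref{innerness} becomes superfluous, and the argument reduces to the classical derivation along the lines of \cite{bjoerklund2010,boivin1990}.
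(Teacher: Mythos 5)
Your overall architecture (build $\phi$ on $\g_\infty^{\ab}$ from a subadditive ergodic theorem, define $d_\phi$ by admissible curves, approximate by polygonal paths, and get uniformity on $S$-balls by Borel--Cantelli using the tail exponent $2D+\upkappa$) matches the paper, but the roles you assign to hypotheses \eqref{aml} and \eqref{innerness} are inverted, and this is a genuine gap rather than a presentational one. For the upper bound you do not need $F(\upvarepsilon)$ at all: \cref{polygonal.path} already supplies group elements $y_j\in\Gr$ (with no relation to the innerness set) whose powers track the geodesic, and subadditivity plus a comparison via condition \eqref{all} handles $c(x_n)$; moreover, nothing in \eqref{innerness} lets you approximate the optimal curve's tangent directions by the finitely many abelianized directions of $F(\upvarepsilon)$, nor rewrite $x_n$ as a product of \emph{large powers} $y_m^{n-n_m}\cdots y_1^{n-n_1}$ of $F(\upvarepsilon)$-elements --- the hypothesis only gives a factorization $x=z_k\cdots z_1$ with controlled total cost, with no control on exponents or on the number of factors except a posteriori. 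Conversely, your lower bound (``projecting to the abelianization and invoking \eqref{aml}'') cannot work: \eqref{aml} is a statement about expectations of powers along a subsequence and only yields the linear bound $\liminf\frac1n c(x_n)\gtrsim a'\|\mathlcal{g}^{\ab}\|$ (i.e.\ positivity and the bi-Lipschitz estimate of \cref{lm:phi.bi-Lipschitz}), not the sharp bound $\liminf\frac1n c(x_n)\ge \ell_\phi(\upgamma)=d_\phi(\mathlcal{e},\mathlcal{g})$. The sharp lower bound is exactly where \eqref{innerness} enters in the paper (\cref{prop:asymptotic.approx}): one decomposes $x_n$ into $F(\upvarepsilon)$-factors with total cost at most $(1+\upvarepsilon)c(x_n)$, bounds the number of factors using \eqref{aml}, extracts a limiting Lipschitz curve from $\mathlcal{e}$ to $\mathlcal{g}$ by Arzel\`a--Ascoli, and re-applies the polygonal approximation to compare the accumulated cost with the $\phi$-length of that curve. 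This is also consistent with the statement of the theorem: in the non-abelian case the conclusion is expected to fail without \eqref{innerness}, so any argument deriving the lower bound from \eqref{all}--\eqref{aml} alone cannot be correct.

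A second, smaller but real gap: you claim each shifted term converges ``via the first paragraph,'' i.e.\ that $\frac1n c(y_j^n)\circ\tht_{y_{j-1}^n\cdots y_1^n}\to\phi(y_j^{\ab})$ follows from Kingman applied to $y_j$. It does not, because the base point $y_{j-1}^n\cdots y_1^n$ moves with $n$ and is not on the $\tht_{y_j}$-orbit; this is precisely what the paper's \cref{prop:ergodic.thm.path} establishes, using the visit-frequency estimate of \cref{lm:measure.approx} together with the almost-sure comparison \cref{lm:local.bound} (the substitute for the parallelogram inequality), and \cref{lm:small.perturbations} only controls word-metric distances under exponent perturbations, not the cocycle along shifted base points. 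Your first paragraph also quietly uses that the Kingman limit is invariant under the full $\Gr$-action and constant on commutator cosets; both facts require an extra estimate of the type $\frac1n c(y^{\pm1})\circ\tht_{x^n y}\to0$ via \eqref{all} and Borel--Cantelli (or simply citing \cref{prop:subadditive.ergodic.thm}), so they should be flagged rather than attributed to ergodicity alone. The abelian reduction in your last paragraph is correct and matches the paper.
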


The limit space $G_\infty$ is also known as a Carnot group and $d_\infty$ coincides with the Carnot-Carath\'eodory metric obtained by the asymptotic cone of $\Gr$ as the limit of $\frac{1}{n}d_S$. More details about its construction and properties can be found in \cref{basic:def,sec:norms.and.mean} along with the definitions of $\delta_t$ and $d_\phi$. The usage of the pointed Gromov-Hausdorff convergence arises naturally from its correspondence with geometric group theory.

Let now $(\Gr,.)$ be a finitely generated group with polynomial growth rate. Gromov's Theorem \cite{gromov1981} establishes the equivalence of polynomial growth and virtual nilpotency in finitely generated groups. Then there exists a normal nilpotent subgroup $N \unlhd \Gr$ with finite index $\kappa:=[\Gr:N] < + \infty$. Set $\tor N$ to be the torsion subgroup of $N$ and define \[\Gr' :=N/\tor N.\]

Pansu \cite{pansu1983} showed that $\Gr$ and $\Gr'$ share the same asymptotic cone. Let us fix $z_{(j)}$ as a representative of the coset $N_{(j)}= z_{(j)}N$ such that $\Gr = \bigcup_{j=1}^{\kappa} N_{(j)}$. Consider $z_{(j)}=e$ when $N_{(j)}=N$.  Set $\uppi_N :\Gr \to N$ to be given by $\uppi_N( x ) = z_{(j)}^{-1}x$ for $x \in N_{(j)}$. Define now $\llbracket - \rrbracket : \Gr \to \Gr'$ to be given by
\[\llbracket x \rrbracket := \uppi_N( x).\tor N.\]

To refine the first main theorem, let us introduce some new conditions. Suppose that there exists $a >0$ such that, for all $x \in \Gr$ there is a sequence $\{n_j\}_{j\in \N}$ of positive integers depending on $\llbracket x \rrbracket. [\Gr',\Gr']$ with $n_j \uparrow +\infty$ as $j \uparrow + \infty$,
\begin{equation} \label{aml2} \tag{\scshape ii$^\prime$}
    a \|x^{n_j}\|_S \leq \E\left[c\left(x^{n_j}\right)\right].
\end{equation}

Let $c':\Gr'\times\Om \to \R_{\geq 0}$ by 
\begin{equation} \label{eq:def.c.prime}
    c'\big(\llbracket x \rrbracket\big) := \max_{\substack{y \in \llbracket x \rrbracket\\ z \in \tor N}} c(y)\circ\tht_z.
\end{equation}

Fix, for each $\llbracket x\rrbracket \in \Gr'$, a $\upupsilon_x \in \llbracket x \rrbracket$ and consider $\theta:\Gr' \curvearrowright (\Om,\F,\p)$ given by $\theta_{\llbracket x\rrbracket} \equiv \tht_{\upupsilon_x}$ and $\theta_z(\omega) = z\ast\omega$ (see \cref{sec:virt.nilpotent.proofs} and \cref{rmk:c.prime} for a detailed discussion). We consider a similar \textit{innerness assumption} to replace \eqref{innerness}. Suppose that, for each $\upvarepsilon>0$, there exists a finite $F(\upvarepsilon) \subseteq N\setminus[N,N]$ which is a generating set of $\Gr'$ such that, $\p$-\textit{a.s.}, for every $x \in \Gr$, one can write $\llbracket x \rrbracket = z_nz_{n-1}\dots z_1$  with $z_n,z_{n-1},\dots, z_1 \in F(\upvarepsilon)$ satisfying
\begin{equation} \label{innerness2} \tag{\scshape iii$^\prime$}
    \sum_{i=1}^n c'(z_i,~{z_{i-1}\dots z_1}\ast\omega) \leq (1+\upvarepsilon)c'\big(\llbracket x \rrbracket,~\omega\big).
\end{equation}

Similar to \eqref{innerness}, First-Passage Percolation models satisfy \eqref{innerness2} under specific conditions. In the case where $\Gr=N$ is nilpotent, it suffices to have $S \subseteq N \setminus \big([N,N] \cup \tor N \big)$ for an FPP model to satisfy \eqref{innerness2}. The virtually nilpotent case is treated separately in \cref{sec:additional.FPP} with additional conditions imposed on $\llbracket S \rrbracket$ and $\tht$. Moreover, when $\Gr'$ is abelian, hypothesis \eqref{innerness2} is not required to verify the theorem below.

\begin{theorem}[Limiting Shape for Groups with Polynomial Growth] \label{thm:shape.polynomial}
    Let $(\Gr,.)$ be a finitely generated group with polynomial growth rate $D \geq 1$ and $\Gr'/[\Gr',\Gr']$ torsion-free. Consider $c:\Gr\times\Om \to \R_{\geq0}$ to be a subadditive cocycle associated with $d_\omega$ and a p.m.p. ergodic group action $\tht$. 
    
    Suppose that conditions \eqref{all}, \eqref{aml2}, and \eqref{innerness2} are satisfied for a finite symmetric generating set $S \subseteq \Gr$ such that $\llbracket S \rrbracket$ generates $\Gr'$. Then
    \begin{equation} \label{eq:GH:conv.THM2}
        \quad \quad \left(\Gr,\frac{1}{n}d_\omega,e\right) \GHto \left(G_\infty,d_\phi,\mathlcal{e}\right) \quad\quad \p\text{-a.s.}
    \end{equation}
    where $G_\infty$ is a simply connected graded Lie group, and $d_\phi$ is a quasimetric homogeneous with respect to a family of homotheties $\{\delta_t\}_{t>0}$. Moreover, $d_\phi$ is bi-Lipschitz equivalent to $d_\infty$ on $G_\infty$.
    
     Furthermore, if $\Gr'$ is abelian, then \eqref{eq:GH:conv.THM2} remains true even when condition \eqref{innerness2} is not valid.
\end{theorem}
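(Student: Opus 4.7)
The strategy is to reduce to the torsion-free nilpotent case already settled in \cref{shape.thm} by passing to the quotient $\Gr' := N/\tor N$, where $N\unlhd \Gr$ is the nilpotent finite-index normal subgroup provided by Gromov's theorem. By Pansu's theorem (\cref{thm:Pansu}) the asymptotic cones of $\Gr$ and of $\Gr'$ coincide, so the candidate limit space $(G_\infty,d_\infty,\mathlcal{e})$ is the same in both settings and only the construction of the random norm $\phi$ has to be transferred across the quotient.

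The core step is to analyse the auxiliary cocycle $c'$ on $\Gr'$ defined in \eqref{eq:def.c.prime}, together with the action $\theta$, and to check that $c'$ satisfies the hypotheses of \cref{shape.thm}. Since $N$ is finitely generated and nilpotent, $\tor N$ is finite, so the maximum in \eqref{eq:def.c.prime} is taken over at most $|\tor N|^2$ random variables, and the subadditivity of $c'$ on $\Gr'$ (with respect to $\theta$) follows from that of $c$ after using the uniform boundedness of the word lengths of the coset representatives $z_{(j)}$ and of elements of $\tor N$ to absorb the discrepancy between $\llbracket xy\rrbracket$ and $\llbracket x\rrbracket\cdot\llbracket y\rrbracket$. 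Hypothesis \eqref{all} for $c'$ will follow by a union bound applied to the maximum, since for $y\in\llbracket x\rrbracket$ one has $\|y\|_S\leq\|x\|_S+O(1)$, which preserves the polynomial tail up to constants. Hypothesis \eqref{aml} for $c'$ follows from the inequality $c'(\llbracket x^{n_j}\rrbracket)\geq c(x^{n_j})$ (take $y=\uppi_N(x^{n_j})$ and $z=e$ in the maximum) combined with \eqref{aml2}. Finally, \eqref{innerness} for $c'$ is exactly the content of \eqref{innerness2}.

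With these verifications in place, \cref{shape.thm} applied to $c'$ on $\Gr'$ (whose abelianization is torsion-free by hypothesis) produces a norm $\phi$ on $\g_\infty^{\ab}$ which is bi-Lipschitz equivalent to the norm defining $d_\infty$, and yields the pointed Gromov--Hausdorff convergence of $\Gr'$ equipped with the rescaled random metric associated with $c'$ to $(G_\infty,d_\phi,\mathlcal{e})$ almost surely. To transfer this back to $\Gr$, I would use that every $x\in\Gr$ can be written as $z_{(j)}\cdot \uppi_N(x)$ with $\|z_{(j)}\|_S\leq \max_i\|z_{(i)}\|_S<+\infty$, and that any two representatives of a coset of $\tor N$ in $N$ differ by an element of uniformly bounded word length. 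Consequently the rescaled random metrics on $\Gr$ and on $\Gr'$ differ by an error which vanishes under $1/n$ rescaling in the pointed Gromov--Hausdorff sense, giving \eqref{eq:GH:conv.THM2}. In the abelian case of $\Gr'$, the polygonal approximation argument based on \eqref{innerness} is not needed --- direct rational-direction approximation in $\g_\infty^{\ab}$ handles the density step --- so \eqref{innerness2} can be dropped, exactly as in the last clause of \cref{shape.thm}.

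The main obstacle I anticipate is the precise verification that $c'$ is a subadditive cocycle for the action $\theta$: the projection $\llbracket-\rrbracket$ is not a group homomorphism, so cocycle identities for $c'$ hold only modulo the ambiguity of choosing representatives $\upupsilon_x\in\llbracket x\rrbracket$ and coset lifts $z_{(j)}$. The maximum in \eqref{eq:def.c.prime} is the device designed to neutralise this ambiguity, but making this compatible simultaneously with the $\p$-preservation of $\theta$ and with the tail estimate \eqref{all} surviving the max requires careful bookkeeping, and this is where the bulk of the technical work in the virtually nilpotent extension is concentrated.
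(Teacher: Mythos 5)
Your overall strategy is the same as the paper's: pass to $\Gr'=N/\tor N$, study the auxiliary cocycle $c'$ of \eqref{eq:def.c.prime}, verify the hypotheses of \cref{shape.thm} for $c'$, and then transfer back to $\Gr$ by showing $c$ and $c'$ are asymptotically equivalent (the paper does this in \cref{lm.T.bounded.nilpotent.as,lm.T.bounded.torsion.as,prop:asympt.equiv.c.cPrime,lm:subaddtive.ergodic}). However, there is a genuine gap: you nowhere verify that the induced action $\theta$ of $\Gr'$ is \emph{ergodic}, which is a hypothesis of \cref{shape.thm}. The theorem only assumes the full action $\tht:\Gr\curvearrowright(\Om,\F,\p)$ is ergodic, and ergodicity does \emph{not} pass to the restriction to the finite-index subgroup $N$ (nor to the quotient action of $\Gr'=N/\tor N$): the $N$-action may split $\Om$ into several invariant pieces. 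This is precisely why the paper proves \cref{lm:erg.finite.index}, decomposing $\Om$ (up to a null set) into at most $\kappa=[\Gr:N]$ sets $B\in\mathfrak{B}_N$ of measure at least $1/\kappa$ on which $\tht\vert_N$ acts ergodically, and \cref{lm:erg.torsion.quotient}, showing the quotient action $\theta$ is p.m.p.\ ergodic on each $\big([B],\F_B',\p_B'\big)$. \cref{shape.thm} is then applied to $c'$ separately on each component, and the a.s.\ statements are patched together on $\Om$ (via quantities such as $\widehat{M}'$ in the paper's proof) and tied back to $c$ through the asymptotic equivalence; without this decomposition, applying \cref{shape.thm} to $(\Gr',c',\theta)$ on all of $(\Om,\F,\p)$ is unjustified, and the limit $\phi$ could a priori fail to be deterministic.

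A secondary, fixable inaccuracy: your verification of \eqref{aml} for $c'$ uses $c'(\llbracket x^{n_j}\rrbracket)\ge c(x^{n_j})$ by "taking $y=\uppi_N(x^{n_j})$ and $z=e$", but the maximum in \eqref{eq:def.c.prime} runs over $y$ in the coset $\uppi_N(x^{n_j})\tor N\subseteq N$, so $x^{n_j}$ itself is not among the admissible $y$ unless $x^{n_j}\in N$; one must compare $c(x^{n_j})$ with $c(\uppi_N(x^{n_j}))$ through the uniformly bounded representatives $z_{(j)}$ and work with the norms $\vert\cdot\vert_S^{\inf}$ and $\vert\cdot\vert_S^{\sup}$ as in \eqref{eq:equiv.nom.inf.sup} and \cref{lm:subaddtive.ergodic}. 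Your identification of the "main obstacle" (the cocycle identity for $c'$ under the choice of representatives) is handled in the paper by the max in \eqref{eq:def.c.prime} and \cref{rmk:c.prime}, but the ergodic decomposition described above is the essential missing ingredient in your outline.
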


The primary technique employed in this work involves the approximation of admissible curves through the use of polygonal paths and ergodic theory. In \cref{sec:limiting.shape}, we introduce and delve into these tools, presenting their application in proving the theorems and a corollary for FPP models. \cref{sec:examples} showcases examples dedicated to illustrating the applicability of the theorems.

\section{Preparatory and Intermediate Results}

This section is primarily dedicated to the establishment of a norm within $G_\infty$, a critical step for defining the subsequent limiting shape. The formulation of this norm draws upon insights from subadditive ergodic theorems, coupled with properties highlighted in \cref{basic:def}. This approach enables us to delve into the asymptotic behavior of sequences within the group through the examination of expected values associated with subadditive cocycles, a topic explored further in the subsequent sections. The convergence is not directly established as an uniform convergence in $\R^D$ because of the constraints imposed by admissible curves.

From this point until the proof of the first theorem in \cref{sec:main.proofs}, let us once again regard $\Gr$ as a finitely generated torsion-free nilpotent group.

\subsection{Establishing a Candidate for the Limiting Shape} \label{sec:norms.and.mean}

Set $\thickbar c(x) := \E[c(x)]$, due to the subadditivity of the cocycle
\begin{equation*}
\thickbar{c}(xy) \leq \thickbar{c}(y) + \thickbar{c}(x),
\end{equation*}
for all $x,y \in \Gr$. Thus $\thickbar{c}(x) \leq b\|x\|_S$
with $b= \max_{s \in S}\big\{\thickbar{c}(s)\big\}$. It follows from \eqref{aml} that there exists a subsequence of $c(x^n)/n$ such that $c(x^{n_j})/n_j \geq a \|x\|_S^{\ab}$ $\p$-\textit{a.s.} for sufficiently large $j$.

Recall that $\Gr^{\ab}= \Gr/[\Gr,\Gr]$ and consider $x^{\ab} = x[\Gr,\Gr]$, To simplify notation, we also use $x^{\ab}$ interchangeably with $(\pi_\infty \circ L \circ \log)(x)$ when it is clear from the context. Let
\[
    \|x\|_S^{\ab} := \inf_{y \in x[\Gr,\Gr]} \|y\|_S.
\]

Since $\|-\|_S^{\ab}$ is discrete, there exists $y \in x[\Gr,\Gr]$ such that $\|x\|_S^{\ab} = \|y\|_S$. Hence, for all $x,y\in \Gr$, there exist $x',x'' \in x[\Gr,\Gr]$ and $y',y'' \in y[\Gr,\Gr]$ such that
\[\|xy\|_S^{\ab} = \|x'y'\|_S, \quad \|x\|_S^{\ab} = \|x''\|_S, \text{ and} \quad \|y\|_S^{\ab}= \|y''\|_S;\]
which implies the subadditivity
\[\|xy\|_S^{\ab} = \|x'y'\|_S \leq \|x''y''\|_S \leq \|x''\|_S + \|y''\|_S =\|x\|_S^{\ab} + \|y\|_S^{\ab}.\]

Now, regarding $\|x\|_S^{\ab} = 0$ whenever $x \in [\Gr,\Gr]$, one has for all $x \in [\Gr,\Gr]$ and $y \in \Gr$, $\|xy\|_S^{\ab} = \|y\|_S^{\ab}$. Let $y = s_m \dots s_{j+1}s_js_{j-1} \dots s_1$ with $s_j\in[\Gr,\Gr]$. Since $[\Gr,\Gr]$ is a normal subgroup of $\Gr$, $\Bar{s}_j= (s_{j-1} \dots s_1)^{-1} s_j(s_{j-1} \dots s_1) \in [\Gr, \Gr]$ is such that $y = s_m \dots s_{j+1}s_{j-1} \dots s_1\Bar{s}_j$. Hence
\[\|y\|_S^{\ab} = \|s_m \dots s_{j+1}s_{j-1} \dots s_1\|_S^{\ab}.\]

Therefore, $\|y\|_S^{\ab} = \|y\|_S=m$ if, and only if, there exists $\{s_i\}_{i=1}^m \subseteq S\setminus[\Gr,\Gr]$ such that $y=s_m \dots s_1$. Observe that $\Gr^{\ab}$ is a topological lattice of $G^{\ab}$ and $G^{\ab} \simeq \Gr^{\ab} \otimes \R \simeq \R^{\operatorname{dim}\mathfrak{v}_1} \simeq \g^{\ab}$. Let $\|-\|$ be an Euclidean norm on $G^{\ab}$ and fix $\thickbar{a},\thickbar{b} >0$ such that
\[\thickbar{a}:= \min\big\{\|s[\Gr,\Gr]\| : s \in S \setminus[\Gr,\Gr]\big\}, \text{ and} \quad \thickbar{b}:= \max\big\{\|s[\Gr,\Gr]\| : s \in S \setminus[\Gr,\Gr]\big\},\]

Due to the properties of a normed vector space, one has, for all $x \in \Gr$,
\begin{equation} \label{eq:biLipschitz.abelian.norm}
    \thickbar{a} \|x\|_S^{\ab} \leq \big\|x[\Gr,\Gr]\big\| \leq \thickbar{b}\|x\|_S^{\ab}.
\end{equation}

Set $f : \Gr^{\ab} \to \R_{\geq 0}$ to be given by
\begin{equation*}
    f(x^{\ab}) = \inf\{\thickbar{c}(y): y\in x[\Gamma,\Gamma]\}.
\end{equation*}
It follows immediately from the subadditivity of $\thickbar{c}$ and the definition of $f$ that
\[
    f(x^{\ab}y^{\ab}) \leq f(x^{\ab})+ f(y^{\ab}).
\]

We are now able to state the following a subadditive ergodic theorem obtained by Austin \cite{austin2016} and improved by Cantrell and Furman \cite{cantrell2017}.

\begin{proposition}[Subadditive Ergodic Theorem] \label{prop:subadditive.ergodic.thm}
    Let the subadditive cocycle ${c:\Gr\times\Om \to \R_{\geq0}}$ associated with a p.m.p. ergodic group action $\tht:\Gr\curvearrowright\Om$ be such that $c(x) \in L^1(\Om,\F,\p)$ for all $x \in \Gr$. Then there exists a unique homogeneous subadditive function $\phi: \mathfrak{g}_ \infty^{\ab} \to \R_{\geq 0}$ such that, for every $x \in \Gr$, 
    \[
        \lim_{n \uparrow +\infty}\frac{1}{n}c(x^n)= \phi(x^{\ab}) \quad \p\text{-a.s. and in }L^1.
    \]
    Moreover, $\phi$ is given by
    \begin{equation} \label{eq:def_phi}
        \phi(x^{\ab}) = \lim_{n\uparrow+\infty} \frac{1}{n}f\left(n \cdot x^{\ab}\right) = \inf_{n \geq 1}  \frac{1}{n}f\left(n \cdot x^{\ab}\right).
    \end{equation}
\end{proposition}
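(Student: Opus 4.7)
The plan is to deduce the statement from Kingman's subadditive ergodic theorem (\cref{thm:kingman}). First I would fix $x \in \Gr$ and apply Kingman to the real sequence $\{c(x^n)\}_{n\in\N}$ with the probability-measure-preserving map $\theta = \tht_x$. Iterating the cocycle identity yields $c(x^{n+m}) \leq c(x^m) + c(x^n) \circ \tht_{x^m}$, which is exactly Kingman's form; integrability $c(x^n) \in L^1$ follows from $c(x^n) \leq \sum_{i=0}^{n-1} c(x) \circ \tht_{x^i}$ together with the $\tht_x$-invariance of $\p$. Kingman then supplies a $\tht_x$-invariant limit $Y_x = \lim \frac{1}{n} c(x^n)$ existing $\p$-a.s.\ and in $L^1$, with $\E[Y_x] = \inf_n \frac{1}{n} \thickbar c(x^n)$.

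The main obstacle is that $\tht_x$ is typically \emph{not} ergodic even though $\tht$ is, so Kingman only produces a random limit. To promote $Y_x$ to a deterministic constant I would show it is invariant under $\tht_y$ for every $y \in \Gr$ and then invoke ergodicity of the full group action. This is where nilpotence enters: for any $y \in \Gr$, the identity $y^{-1} x^n y = [x^n, y^{-1}]^{-1} x^n$ together with the standard fact that in a nilpotent group of class $l$ one has $\|[x^n, y^{-1}]\|_S = O(n^{1-1/l})$, hence sublinear in $n$, allows one to compare $c(x^n) \circ \tht_y$ with $c(x^n)$ via the cocycle inequality applied twice, up to additive errors involving $c(y)$, $c(y^{-1})$, and $c([x^n,y^{-1}])$. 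After dividing by $n$ and using the elementary bound $\thickbar c \leq b \|\cdot\|_S$, these errors vanish in $L^1$, so $Y_x \circ \tht_y = Y_x$ for every $y \in \Gr$, and ergodicity of $\tht$ forces $Y_x$ to be a deterministic constant which I call $\phi(x^{\ab})$.

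Next I would verify that this constant depends only on the coset $x[\Gr,\Gr]$, so $\phi$ descends to a well-defined function on $\Gr^{\ab}$, and hence, via the embedding $\Gr^{\ab} \hookrightarrow \g^{\ab} \cong \g_\infty^{\ab}$, onto a $\Q$-dense subset of $\g_\infty^{\ab}$. For $y = xz$ with $z \in [\Gr,\Gr]$, the nilpotent expansion $y^n = x^n z_n$ with $\|z_n\|_S = o(n)$, combined with subadditivity and the bound $\thickbar c \leq b\|\cdot\|_S$, forces $\lim \frac{1}{n} \thickbar c(y^n) = \lim \frac{1}{n} \thickbar c(x^n)$. Integer-homogeneity $\phi(k\,x^{\ab}) = k\,\phi(x^{\ab})$ is immediate from passing to the subsequence $\{c(x^{nk})\}_n$, and combined with torsion-freeness of $\Gr^{\ab}$ this extends $\phi$ to a $\Q$-homogeneous map on the rationalization.

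Finally I would establish the formula $\phi(x^{\ab}) = \lim \frac{1}{n} f(n\,x^{\ab}) = \inf_n \frac{1}{n} f(n\,x^{\ab})$ by two inequalities. Fekete's lemma applied to the already-verified subadditive function $f$ on the abelian group $\Gr^{\ab}$ yields the second equality and the existence of the limit. Since $f(n\,x^{\ab}) \leq \thickbar c(x^n)$ by definition of $f$, one gets $\inf_n \frac{1}{n} f(n\,x^{\ab}) \leq \inf_n \frac{1}{n} \thickbar c(x^n) = \phi(x^{\ab})$. For the reverse, any representative $y \in x^n[\Gr,\Gr]$ satisfies $\phi(y^{\ab}) = \phi((x^n)^{\ab}) = n\phi(x^{\ab})$ by the previous step, while subadditivity of $\thickbar c$ along powers of $y$ gives $\thickbar c(y) \geq \lim_m \frac{1}{m}\thickbar c(y^m) = \phi(y^{\ab}) = n\phi(x^{\ab})$; taking the infimum over $y$ produces $f(n\,x^{\ab}) \geq n\phi(x^{\ab})$. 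With this formula in hand, subadditivity of $\phi$ on $\Gr^{\ab}$ is inherited directly from subadditivity of $f$, and the Lipschitz bound $\phi \leq b\|\cdot\|_S^{\ab}$ together with \eqref{eq:biLipschitz.abelian.norm} gives uniform continuity on the $\Q$-dense subspace and hence a unique extension to a homogeneous subadditive $\phi : \g_\infty^{\ab} \to \R_{\geq 0}$, closing the loop.
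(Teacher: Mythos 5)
Your proposal is correct, but it is worth noting that the thesis itself offers no proof of \cref{prop:subadditive.ergodic.thm}: the result is imported verbatim from Austin and from Cantrell--Furman, so your argument is a genuine reconstruction rather than a parallel to an in-text proof. Your route --- Kingman (\cref{thm:kingman}) applied along the cyclic subgroup $\langle x\rangle$ with $\theta=\tht_x$, then upgrading the $\tht_x$-invariant limit $Y_x$ to full $\Gr$-invariance by sandwiching $c(x^n)\circ\tht_y$ between $c(x^n)$ plus error terms whose means are $O(\|y\|_S)+b\,\|x^n y^{\pm1}x^{-n}\|_S=o(n)$, invoking ergodicity to get a constant, and finally Fekete plus the two inequalities $f(n\,x^{\ab})\le \thickbar c(x^n)$ and $\thickbar c(y)\ge \phi(y^{\ab})=n\phi(x^{\ab})$ for $y\in x^n[\Gr,\Gr]$ to identify the constant with \eqref{eq:def_phi} --- is essentially the mechanism behind the cited references, and it buys the reader a self-contained derivation using only tools the thesis already states. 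Two small refinements would make it fully rigorous within the thesis's framework: the exponent $O(n^{1-1/l})$ for $\|[x^n,y^{-1}]\|_S$ is both unnecessary and not justified here; all you need is sublinearity, which follows directly from \cref{lm:conv.seq.Gr}(ii) and the metric statement of \cref{sec:rescaled.dist} (since $\frac1n\bull[x^n,y^{-1}]\to\mathlcal{e}$), and the same device justifies the asserted $\|z_n\|_S=o(n)$ in the well-definedness step, because $x^n$ and $(xz)^n$ converge to the same cone point by \cref{lm:conv.seq.Gr}(iii) when $z\in[\Gr,\Gr]$. Also, ergodicity as defined in the paper requires strictly invariant events, whereas your $Y_x$ is only invariant modulo null sets; for the countable group $\Gr$ this is repaired by the standard replacement $E'=\bigcup_{y\in\Gr}\tht_y(E)$, which is strictly invariant and differs from $E$ by a null set. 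With these points made explicit, the argument closes correctly, including the uniqueness and the extension of $\phi$ to a homogeneous subadditive function on $\g_\infty^{\ab}$ via the Lipschitz bound coming from subadditivity and \eqref{eq:biLipschitz.abelian.norm}.
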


\begin{remark}
    The function $\phi$ obtained above is naturally associated with the abelianized space considering the well-known fact of the convergence of $\frac{1}{n}\bull x^n$ to the projection of $x$ onto a subspace isomorphic to $\g_\infty^{\ab}$. It will allow us to measure distances in $G_\infty$ with $\ell_\phi$ by considering the rescaling of the subadditive cocyle.
\end{remark}

The bi-Lipschitz property established in the following lemma is crucial for the main results.

\begin{lemma} \label{lm:phi.bi-Lipschitz}
    Let $c: \Gr\times\Om\to\R_{\geq 0}$ be a subadditive cocycle under the assumptions of \cref{prop:subadditive.ergodic.thm}. Set $\phi$ as in \eqref{eq:def_phi}. Consider $c$ satisfying \eqref{all} and \eqref{aml}. Then there exist $a',b'>0$ such that, for all $x \in \Gr$,
    \[
    a'\|x^{\ab}\| \leq \phi(x^{\ab}) \leq b'\|x^{\ab}\|.
    \]
\end{lemma}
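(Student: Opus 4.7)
The strategy is to prove each inequality on the lattice of elements $\{x^{\ab}:x\in\Gr\}$ by combining the two characterizations of $\phi$ in \cref{prop:subadditive.ergodic.thm} --- the infimum expression for the upper bound and the $L^1$-limit for the lower bound --- with the quasi-isometry \eqref{eq:biLipschitz.abelian.norm} between $\|\cdot\|_S^{\ab}$ and the Euclidean norm $\|\cdot\|$ on $G^{\ab}$.

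For the upper bound, I begin from the estimate $\thickbar c(x)\leq b\|x\|_S$ with $b:=\max_{s\in S}\thickbar c(s)$, which was already noted right after the subadditivity of $\thickbar c$. Infimizing over the coset gives $f(x^{\ab})\leq b\|x\|_S^{\ab}$, and the left half of \eqref{eq:biLipschitz.abelian.norm} converts this to $f(x^{\ab})\leq (b/\thickbar a)\|x^{\ab}\|$. Since $x^n[\Gr,\Gr]=n\cdot x^{\ab}$ and the Euclidean norm is homogeneous, this yields $\tfrac{1}{n}f(n\cdot x^{\ab})\leq (b/\thickbar a)\|x^{\ab}\|$ for every $n$, and the infimum form of $\phi$ in \eqref{eq:def_phi} produces $\phi(x^{\ab})\leq b'\|x^{\ab}\|$ with $b':=b/\thickbar a$.

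For the lower bound I invoke condition \eqref{aml} at the \emph{single} representative $y=x$. The case $x\in[\Gr,\Gr]$ is trivial since $\|x^{\ab}\|=0$. Otherwise \eqref{aml} supplies a sequence $n_j\uparrow+\infty$ satisfying $a\|x^{n_j}\|_S^{\ab}\leq\thickbar c(x^{n_j})$. The $L^1$ part of \cref{prop:subadditive.ergodic.thm} gives $\phi(x^{\ab})=\lim_n\tfrac{1}{n}\thickbar c(x^n)$ (deterministic limit), and passing to the subsequence
\[\phi(x^{\ab})=\lim_{j\uparrow+\infty}\frac{\thickbar c(x^{n_j})}{n_j}\geq a\lim_{j\uparrow+\infty}\frac{\|x^{n_j}\|_S^{\ab}}{n_j}\geq \frac{a}{\thickbar b}\|x^{\ab}\|,\]
where the last inequality uses the right half of \eqref{eq:biLipschitz.abelian.norm} applied to $x^{n_j}$ together with $\|n_j\cdot x^{\ab}\|=n_j\|x^{\ab}\|$. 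Hence $a':=a/\thickbar b$ works.

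The main obstacle is conceptual rather than technical: one must resist comparing $\phi(x^{\ab})$ with the full infimum defining $f$ at $n_j\cdot x^{\ab}$ in the lower-bound direction, since \eqref{aml} gives no such control. The $L^1$-convergence in the subadditive ergodic theorem already bypasses the coset-infimum, so testing \eqref{aml} only at $y=x$ is enough to produce the Euclidean lower bound; the torsion-freeness of $\Gr^{\ab}$ is used implicitly in guaranteeing that \eqref{eq:biLipschitz.abelian.norm} is a genuine bi-Lipschitz comparison with a homogeneous Euclidean norm on $G^{\ab}\cong\R^{\dim\mathfrak v_1}$.
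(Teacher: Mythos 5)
Your proof is correct, and the upper bound is exactly the paper's argument (subadditivity plus stationarity gives $\thickbar{c}(x)\leq b\|x\|_S$, pass to the coset infimum, convert with \eqref{eq:biLipschitz.abelian.norm}, and use the infimum form of \eqref{eq:def_phi}), with the same constant $b'=b/\thickbar{a}$. The lower bound is where you genuinely deviate: the paper works with the coset function $f$, asserting $f(n_j\cdot x^{\ab})\geq a\|x^{n_j}\|_S^{\ab}\geq a' n_j\|x^{\ab}\|$ from \eqref{aml} and then using $\phi(x^{\ab})=\lim_j \tfrac{1}{n_j}f(n_j\cdot x^{\ab})$, whereas you invoke the $L^1$ part of \cref{prop:subadditive.ergodic.thm} to write $\phi(x^{\ab})=\lim_n\tfrac1n\thickbar{c}(x^n)$ and test \eqref{aml} only at the single representative $y=x$ along the subsequence $n_j$. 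Both yield $a'=a/\thickbar{b}$, but your route is in fact slightly cleaner: bounding $f(n_j\cdot x^{\ab})=\inf_{y\in x^{n_j}[\Gr,\Gr]}\thickbar{c}(y)$ from below requires control over every element of the coset $x^{n_j}[\Gr,\Gr]$, not only the $n_j$-th powers $y^{n_j}$ with $y\in x[\Gr,\Gr]$ that \eqref{aml} governs (and in a nonabelian nilpotent group these powers form a proper subset of the coset), a point the paper's display passes over silently; your use of the deterministic $L^1$ limit along $\{x^n\}$ sidesteps that issue entirely, at the cost of using the full strength of the ergodic theorem rather than only the formula \eqref{eq:def_phi}. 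Your handling of the trivial case $x\in[\Gr,\Gr]$ and the homogeneity identification $x^{n_j}[\Gr,\Gr]=n_j\cdot x^{\ab}$ is also consistent with how the paper uses these objects.
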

\begin{proof}
    Observe that condition \eqref{all} implies $c \in L^1(\Om,\F,\p)$. Consider $\thickbar{a},\thickbar{b}> 0$ as in \eqref{eq:biLipschitz.abelian.norm} and fix $a':= a/\thickbar{b}$. By \eqref{aml}, one has
    \[
        f(n_j\cdot x^{\ab}) = \inf_{y \in x[\Gr,\Gr]} \thickbar{c}(y) \geq a \|x^{n_j}\|_S^{\ab} \geq a'n_j\|x^{\ab}\|.
    \]

    We know by \cref{prop:subadditive.ergodic.thm} that $\phi$ exists and \[\phi(x^{\ab}) = \inf_{n\in\N}\frac{1}{n}\thickbar{c}(x^{\ab}) = \lim_{j\uparrow+\infty}
    \frac{1}{n_j}f(n_j\cdot x^{\ab}) \geq a'\|x^{\ab}\|.\]

    It follows from \eqref{all} and subaditivity that there exists $b>0$ such that, for all $x \in \Gr$,
    \[
        \thickbar{c}(x) \leq b \|x\|_S. 
    \]

    Let us fix $b':= b/\thickbar{a}$, then
    \[
        \phi(x^{\ab}) = \inf_{n\in\N}\frac{1}{n} \inf_{y\in x^n[\Gr,\Gr]}\thickbar{c}(y) \leq b \inf_{n\in\N}\frac{1}{n} \|x^n\|_S^{\ab} \leq b'\|x^{\ab}\|,
    \]
    which is our assertion.
\end{proof}

\begin{remark}
    The Subadditive Ergodic Theorem guarantees the $\p$-\textit{a.s.} existence of the $\lim_{n \uparrow +\infty}c(x^n)/n$. By combining this fact with previous assertions and the $L^1$ convergence, we obtain the existence of $0 <a \leq b<+\infty$ such that
    \begin{equation} \label{eq:mean.double.bound}
        a \|x\|_S^{\ab} \leq \thickbar{c}(x) \leq b \|x\|_S.
    \end{equation}
    
    Furthermore, one has from \eqref{eq:mean.double.bound} that $a \|x\|^{\ab}_S \leq \phi(x^{\ab})\leq  b \|x\|_S$ for all $x \in \Gr$. Since there exists $y \in x[\Gr,\Gr]$ with $\|x\|_S^{\ab} = \|y\|_S$ and $x^{\ab} = y^{\ab}$, one has by \eqref{eq:biLipschitz.abelian.norm}
    \[\frac{ ~ a ~ }{\thickbar{b}}\|x^{\ab}\| \leq a \|x\|^{\ab}_S \leq \phi(x^{\ab})\leq  b \|x\|_S^{\ab} \leq \frac{~ b ~}{\thickbar{a}}\|x^{\ab}\|.\]
\end{remark}

Recall the definition of $d_\phi$ in \eqref{eq:def.d.phi}. Therefore, there is a bi-Lipschitz relation between $d_\infty$ and $d_\phi$. We now define $\Phi: G_\infty \to [0,+\infty)$ by 
\[\Phi(\mathlcal{g}) :=d_\phi(\mathlcal{e},\mathlcal{g}).\]

\subsection{Approximation of Admissible Curves along Polygonal Paths}

The proof strategy for the main theorem involves approximating geodesic curves with polygonal paths. Throughout the following discussion, we assume that $c$ is a subadditive cocycle, and $\Gr$ is finitely generated by the symmetric set $S$ with polynomial growth rate $D \geq 1$. To set the stage, we begin by stating Proposition 3.1 from \cite{cantrell2017}.

\begin{proposition} \label{polygonal.path} Let $\upgamma:[0,1] \to G_\infty$ be a Lipschitz curve and let $\Hat{\varepsilon} \in (0, 1)$. Then there exists $k_0=k_0(\upgamma,\Hat{\varepsilon})>0$ so that one can find, for all $k>k_0$, $\{y_j\}_{j=1}^k \subseteq \Gr$, $p>0$ and $n_0>0$ such that,  for all $n > n_0$,
\[
    \sum_{j=1}^kd_\infty\left(\frac{1}{np}\bull y_j^n y_{j-1}^n\dots y_1^n,\upgamma\left(\frac{j}{k}\right) \right) < \Hat{\varepsilon}
\]
 Moreover, for $\upphi:\g_\infty^{\ab} \to \R_{\geq0}$ a subadditive homogeneous function bi-Lipschitz with respect to $\|-\|$, one has that
\[
\left|\frac{1}{p} \big(\upphi(y_k^{\text{ab}})+ \cdots + \upphi (y_1^{\text{ab}})\big)-\ell_\upphi(\upgamma)\right| < \Hat{\varepsilon}.
\]
\end{proposition}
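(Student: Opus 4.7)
The plan is to approximate $\upgamma$ by a piecewise horizontal polygonal curve in $G_\infty$, realize each horizontal segment as an $n \to \infty$ limit of $\tfrac{1}{np}\bull y_i^n$ for a carefully chosen $y_i \in \Gr$, and splice the pieces together using multiplicativity of the Pansu limit (\cref{lm:conv.seq.Gr}(ii)).

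First, I would partition $[0,1]$ into $k$ equal subintervals and invoke the standard fact from sub-Riemannian geometry that admissible Lipschitz curves are approximable by horizontal polygonal paths: for $k$ large enough, one can pick $\tilde v_1,\dots,\tilde v_k \in \mathfrak{v}_1$ so that the polygonal curve $\tilde\upgamma$ whose $j$-th vertex is $\exp_\infty(\tilde v_j/k)\cdots\exp_\infty(\tilde v_1/k)$ satisfies both $d_\infty(\tilde\upgamma(j/k),\upgamma(j/k)) < \Hat\varepsilon/(2k)$ for every $j \leq k$ and $|\ell_\upphi(\tilde\upgamma)-\ell_\upphi(\upgamma)| < \Hat\varepsilon/2$. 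The bi-Lipschitz equivalence of $\upphi$ with an Euclidean norm on $\mathfrak{v}_1$ lets one transfer Euclidean approximation arguments to this setting and control the length simultaneously with the vertex positions.

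Next, using the commutation $\delta_{1/p}\circ L = L\circ\sigma_{1/p}$ arising from the stratification $\g = V_1\oplus\cdots\oplus V_l$, I observe that $\tfrac{1}{np}\bull x = \delta_{1/p}\bigl(\tfrac{1}{n}\bull x\bigr)$. Since $\Gr^{\mathrm{ab}}$ is a cocompact lattice in $V_1 \cong \g^{\mathrm{ab}}$, for $p$ sufficiently large one can pick $y_i \in \Gr$ whose $V_1$-component of $\log y_i$ approximates $(p/k)L^{-1}(\tilde v_i)$ arbitrarily well. Then \cref{lm:conv.seq.Gr}(iii) gives $\lim_n \tfrac{1}{n}\bull y_i^n \approx \exp_\infty((p/k)\tilde v_i)$, whence $\lim_n \tfrac{1}{np}\bull y_i^n \approx \exp_\infty(\tilde v_i/k)$. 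Iterating \cref{lm:conv.seq.Gr}(ii) yields $\lim_n \tfrac{1}{np}\bull y_j^n y_{j-1}^n\cdots y_1^n \approx \tilde\upgamma(j/k)$, uniformly in $j\leq k$ once $n$ is large enough; combining with the polygonal approximation and summing over $j$ via the triangle inequality delivers the first bound.

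For the length estimate, the identification $y_i^{\mathrm{ab}} = (\pi_\infty\circ L\circ\log)(y_i) \approx (p/k)\tilde v_i$ together with the homogeneity of $\upphi$ yields $\upphi(y_i^{\mathrm{ab}}) \approx (p/k)\upphi(\tilde v_i)$, so that
\[
\frac{1}{p}\sum_{i=1}^k \upphi(y_i^{\mathrm{ab}}) \;\approx\; \frac{1}{k}\sum_{i=1}^k \upphi(\tilde v_i) \;=\; \ell_\upphi(\tilde\upgamma) \;\approx\; \ell_\upphi(\upgamma).
\]
The hardest step will be the horizontal polygonal approximation itself, since the Carnot–Carath\'eodory geometry on $G_\infty$ mixes the graded layers under the group product: any drift into the higher strata $\mathfrak{v}_i$ for $i\geq 2$ produced by composing the horizontal pieces must be kept uniformly small in $k$ at every vertex, not just at the endpoint, while simultaneously matching $\ell_\upphi(\upgamma)$ within $\Hat\varepsilon$.
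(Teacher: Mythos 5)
First, note that the paper does not prove this proposition at all: it is quoted verbatim as Proposition~3.1 of \cite{cantrell2017}, so there is no in-paper argument to compare yours against. Your surrounding machinery is sound and is indeed the standard mechanism one would expect (and the one used in the cited source): the identity $\frac{1}{np}\bull x=\delta_{1/p}\bigl(\frac{1}{n}\bull x\bigr)$ is correct, the cocompactness of the image of $\Gr$ in $V_1$ does let you realize a prescribed horizontal direction up to error $O(1/p)$ after rescaling, and \cref{lm:conv.seq.Gr}(ii)--(iii) together with homogeneity of $\upphi$ give both displayed estimates once the polygonal approximation of $\upgamma$ is granted.

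The genuine gap is precisely the step you defer: the claim that for all large $k$ one can choose $\tilde v_1,\dots,\tilde v_k\in\mathfrak{v}_1$ with $d_\infty\bigl(\tilde\upgamma(j/k),\upgamma(j/k)\bigr)<\Hat\varepsilon/(2k)$ at \emph{every} vertex while matching $\ell_\upphi$. This does not follow from the ``standard fact'' that horizontal polygonal paths are dense: density gives uniform closeness with no rate, whereas the stated sum over $k$ terms forces average vertex accuracy $O(\Hat\varepsilon/k)$, i.e.\ higher-stratum mismatches of Euclidean order $(\Hat\varepsilon/k)^i$ in $\mathfrak{v}_i$. Moreover the natural choice $\tilde v_j=k\bigl(\gamma_1(j/k)-\gamma_1((j-1)/k)\bigr)$, where $\gamma_1$ is the horizontal projection, provably fails: already in $H_3(\R)$, take $\gamma_1(t)=(t,y(t))$ with $y=\sum_i 2^{-i}w_i$, $w_i$ the periodic tent function of scale $2^{-i^2}$ (a fixed Lipschitz curve). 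At scale $k=2^{i^2}$ each step contributes a signed area defect of one sign of order $2^{-i}k^{-2}$, so the second-stratum discrepancy at vertex $j$ is of order $j\,2^{-i}k^{-2}$, the $d_\infty$ vertex error is of order $\sqrt{j\,2^{-i}}/k$, and the sum over $j\le k$ grows like $\sqrt{2^{-i}k}=2^{(i^2-i)/2}\to\infty$. Hence the increments must be chosen to \emph{actively compensate} the drift into $\mathfrak{v}_2,\dots,\mathfrak{v}_l$ (using the bracket-generating structure, together with a differentiation-type argument showing that for a fixed Lipschitz curve the per-step defects are small for $k$ large), and this steering must be done without spoiling the length estimate. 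That compensation argument is the actual content of the proposition, and your proposal assumes it rather than proving it; as written, the key assertion would fail for the curve above.
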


The approximation technique outlined in the upcoming proposition will be utilized in the subsequent subsections. It extends the guarantees of the subadditive ergodic theorem for the decomposition of polygonal paths under certain properties. In what follows, we write \(t \vee t' := \max\{t, t'\}\) and \(t \wedge t' := \min\{t, t'\}\).

\begin{proposition} \label{prop:ergodic.thm.path}
Let $\Gr$ be a torsion-free nilpotent finitely generated group with torsion-free abelianization. Consider $c:\Gr\times\Om \to\R_{\geq 0}$ a subadditive cocycle associated with an ergodic group action $\tht$ satisfying \eqref{all}. Then for all integer $j>1$ and  $\{y_i\}_{i=1}^j \subseteq \Gr$,
\[
	\lim_{n \uparrow \infty}\frac{1}{n}c(y_j^n) \circ\tht_{{y_{j-1}^n \dots y_1^n}} = \phi(y_j^{\text{ab}}) \quad \p-a.s.
\]
In particular, if we let $\Check{\varepsilon} \in (0,1)$, then there exists, $\p$-a.s., a random $M_0>0$ depending on $\Check{\varepsilon}$ and on $\sum_{i=1}^j\|y_i^{\ab}\|$ such that, for all $n > M_0$,
\[\left| \frac{1}{n}c(y_j^n, {y_{j-1}^n \dots y_1^n} \cdot \omega) ~- ~\phi(y_j^{\text{ab}})\right| < \Check{\varepsilon}.\]
\end{proposition}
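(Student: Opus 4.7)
The plan is to combine Proposition \ref{prop:subadditive.ergodic.thm} with a Borel--Cantelli argument exploiting the polynomial tail decay afforded by condition \eqref{all}. Setting $z_n := y_{j-1}^n y_{j-2}^n \cdots y_1^n$, Proposition \ref{prop:subadditive.ergodic.thm} applied to the cocycle $n \mapsto c(y_j^n)$ already gives $\tfrac{1}{n}c(y_j^n) \to \phi(y_j^{\ab})$ both $\p$-a.s.\ and in $L^1$. The subtlety is transferring this convergence to the shifted sequence $Y_n := \tfrac{1}{n}\bigl(c(y_j^n)\circ\tht_{z_n}\bigr)$: since $z_n$ varies with $n$, pointwise convergence of $\tfrac{1}{n}c(y_j^n)$ at a fixed $\omega$ does not automatically translate to convergence of $Y_n$ at the same $\omega$, because each $Y_n$ evaluates the cocycle at a different base point.

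The transfer proceeds by noting that $\tht$ is probability measure preserving, so $Y_n \stackrel{d}{=} \tfrac{1}{n}c(y_j^n)$ for every fixed $n$, and hence
\[\p\bigl(|Y_n - \phi(y_j^{\ab})| > \Check{\varepsilon}\bigr) = \p\bigl(|\tfrac{1}{n}c(y_j^n) - \phi(y_j^{\ab})| > \Check{\varepsilon}\bigr).\]
If one can show the right-hand side is summable in $n$, Borel--Cantelli applied to the events $\{|Y_n - \phi(y_j^{\ab})| > \Check{\varepsilon}\}$ yields, $\p$-a.s., that this event holds for only finitely many $n$; intersecting over $\Check{\varepsilon} = 1/k$ then delivers the claimed almost-sure convergence. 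For the upper deviation the bound $\|y_j^n\|_S \leq n\|y_j\|_S$ combined with Lemma \ref{lm:phi.bi-Lipschitz} ensures that $n(\phi(y_j^{\ab}) + \Check{\varepsilon})$ eventually exceeds $\upbeta \|y_j^n\|_S$, so \eqref{all} delivers
\[\p\bigl(c(y_j^n) \geq n(\phi(y_j^{\ab}) + \Check{\varepsilon})\bigr) \leq g\bigl(n(\phi(y_j^{\ab}) + \Check{\varepsilon})\bigr) = \mathcal{O}\bigl(n^{-(2D+\upkappa)}\bigr),\]
which is summable.

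The main obstacle is the matching lower-deviation bound $\p\bigl(c(y_j^n) < n(\phi(y_j^{\ab}) - \Check{\varepsilon})\bigr)$, since \eqref{all} controls only the upper tail of the cocycle. The plan is to handle this through a block decomposition: for a large fixed $M$, write $n = qM + r$ and use the cocycle inequality to compare $c(y_j^n)$ with a sum of $q$ shifted copies of $c(y_j^M)$ along the $\tht_{y_j^M}$-orbit; then combine a Birkhoff-type ergodic estimate for this orbit with the moment bound $\E\bigl[c(y_j^M)^p\bigr] = \mathcal{O}(M^p)$ inherited from \eqref{all} to derive a polynomially summable deviation estimate, choosing $M$ large enough that $\tfrac{1}{M}\E[c(y_j^M)]$ lies within $\Check{\varepsilon}/2$ of $\phi(y_j^{\ab})$ (which is possible by the inf-formula \eqref{eq:def_phi}). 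Once summability on both sides is established, the quantitative ``in particular'' claim is immediate from the definition of almost-sure convergence: take $M_0(\omega,\Check{\varepsilon})$ to be the least index beyond which $|Y_n(\omega) - \phi(y_j^{\ab})| < \Check{\varepsilon}$, whose dependence on $\sum_{i=1}^j \|y_i^{\ab}\|$ enters through the constants governing the word length of the shift $z_n$ and the moments of each $c(y_i)$.
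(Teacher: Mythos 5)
There is a genuine gap. Your reduction via the distributional identity $Y_n \stackrel{d}{=} \tfrac{1}{n}c(y_j^n)$ is fine as far as it goes, but it shifts the entire burden onto \emph{summable} deviation bounds for $\tfrac{1}{n}c(y_j^n)$ around $\phi(y_j^{\ab})$, and the hypotheses do not supply these. For the upper tail, condition \eqref{all} only controls $\p(c(x)\ge t)$ for $t>\upbeta\|x\|_S$; since $\|y_j^n\|_S$ grows linearly in $n$ (at a rate comparable to $\|y_j^{\ab}\|$) and nothing forces $\upbeta$ to be small relative to the constants bounding $\phi$, the threshold $n\bigl(\phi(y_j^{\ab})+\Check{\varepsilon}\bigr)$ need never exceed $\upbeta\|y_j^n\|_S$, so the claimed bound $g\bigl(n(\phi+\Check{\varepsilon})\bigr)=\mathcal{O}\bigl(n^{-(2D+\upkappa)}\bigr)$ is not available at the $\Check{\varepsilon}$-scale (note also that \cref{lm:phi.bi-Lipschitz} uses \eqref{aml}, which this proposition does not assume). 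For the lower tail, the proposed block decomposition would require a quantitative, polynomially summable deviation estimate for Birkhoff averages of $c(y_j^M)$ along the $\tht_{y_j^M}$-orbit; no such rate exists under bare ergodicity (neither Birkhoff's nor Kingman's theorem carries a universal rate), moment bounds alone do not give concentration, and the single transformation $\tht_{y_j^M}$ need not even be ergodic, so its Birkhoff averages converge only to a conditional expectation. In short, \eqref{all} gives tail control only at the linear scale $\upbeta\|x\|_S$, not concentration around the limit, and both halves of your Borel--Cantelli scheme fail there.

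This is exactly why the paper takes a different route that needs no rate at all: it fixes good sets where the plain convergence of \cref{prop:subadditive.ergodic.thm} has already kicked in, uses the return-frequency lemma (\cref{lm:measure.approx}) to find perturbed exponents $n-n_i$ with $n_i<\xi n$ for which the shifted base point $y_{i-1}^{n-n_{i-1}}\cdots y_1^{n-n_1}\cdot\omega$ lands in those sets, and then controls the cost of replacing $y_{i-1}^{n}\cdots y_1^{n}$ by the perturbed word via \cref{lm:small.perturbations} together with \cref{lm:local.bound} — the only place where \eqref{all} and Borel--Cantelli enter, and there the deviation is measured at the linear scale $n\varepsilon$ where \eqref{all} genuinely applies. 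To repair your argument you would either have to import such a perturbation/return-time mechanism or add hypotheses strong enough to yield quantitative ergodic rates, which the statement does not grant.
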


Before proving \cref{prop:ergodic.thm.path} we show the following lemma.

\begin{lemma} \label{lm:local.bound}
    Let $\varepsilon \in (0,1)$ and consider a subadditive cocycle $c$ that satisfies condition \eqref{all}. There exists, $\p$-a.s., $M_1>0$ such that if $\{x_n\}_{n\in \N}$, $\{y_n\}_{n\in \N}$, $\{u_n\}_{n\in\N}$, and $\{v_n\}_{n \in \N}$ are sequences in $\Gamma$ satisfying, for a $n_0=n_0(\varepsilon) \in \N$ and all $n>n_0$:
    \begin{enumerate}[(i)]
        \item There exist elements $\mathlcal{x}, \mathlcal{u} \in G_\infty$ and $\mathtt{c}_{\mathlcal{x},\mathlcal{u}}>0$ such that
        \[
            d_\infty\left(\frac{1}{n}\bull x_n,\mathlcal{x}\right)< \varepsilon,  \quad d_\infty\left(\frac{1}{n}\bull u_n,\mathlcal{u}\right)< \varepsilon,
        \]
        and $ \
        \|x_n\|_S, \|u_n\|_S < \mathtt{c}_{\mathlcal{x},\mathlcal{u}}\cdot n$;
        
        \item $d_S(u_n,v_n)\leq n\varepsilon \quad \text{and}\quad d_S(x_nu_n,y_nv_n) \leq n\varepsilon$.
    \end{enumerate}
    Then
    \begin{equation*} 
        \big|c(x_n) \circ \tht_{u_n} - c(y_n)\circ\tht_{v_n}\big| < 2 \upbeta n \varepsilon
    \end{equation*}
    for all $n > \max\big\{n_0, M_1, \exp\big((2 \mathtt{c}_{\mathlcal{x},\mathlcal{u}} + 3)^D\big)\big\}$.
\end{lemma}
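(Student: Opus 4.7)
The plan is to rewrite both sides through the duality $c(x)\circ\tht_u = d_\omega(u,xu)$, which gives $c(x_n)\circ\tht_{u_n} = d_\omega(u_n, x_n u_n)$ and $c(y_n)\circ\tht_{v_n} = d_\omega(v_n, y_n v_n)$, and then apply the triangle inequality for $d_\omega$ (which follows directly from subadditivity of $c$) through the intermediate points $v_n$ and $y_n v_n$. Returning to cocycle notation, this yields
\[
\bigl|c(x_n)\circ\tht_{u_n} - c(y_n)\circ\tht_{v_n}\bigr| \le \max_{\pm}\Bigl( c(w_1^\pm)\circ\tht_{z_1^\pm} + c(w_2^\pm)\circ\tht_{z_2^\pm}\Bigr),
\]
where each $w_i^\pm$ is drawn from $\{v_n u_n^{-1},\, u_n v_n^{-1},\, (x_n u_n)(y_n v_n)^{-1},\, (y_n v_n)(x_n u_n)^{-1}\}$ and so satisfies $\|w_i^\pm\|_S \le n\varepsilon$ by hypothesis (ii), while the shifts $z_i^\pm \in \{u_n,\,v_n,\,x_n u_n,\,y_n v_n\}$ satisfy $\|z_i^\pm\|_S \le (2\mathtt{c}_{\mathlcal{x},\mathlcal{u}}+1)n$ by hypothesis (i) together with the triangle inequality for $d_S$. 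It thus suffices to show, $\p$-almost surely, that each of these four error terms is strictly less than $\upbeta n\varepsilon$ for all sufficiently large $n$.

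The remaining work is a union bound plus Borel--Cantelli argument. Because $\tht$ is $\p$-preserving, each $c(w)\circ\tht_z$ has the same distribution as $c(w)$; combined with condition (I), this gives $\p\bigl(c(w)\circ\tht_z \ge \upbeta n\varepsilon\bigr) \in \mathcal{O}\bigl(n^{-(2D+\upkappa)}\bigr)$ uniformly over admissible pairs $(w,z)$ (after replacing the threshold $\upbeta n\varepsilon$ by $\upbeta n\varepsilon + \eta$ for an arbitrarily small $\eta>0$ so that the strict inequality required by (I) is met). Polynomial growth of $\Gr$ bounds the number of admissible pairs by $\mathtt{k}^2 \bigl(\varepsilon(2\mathtt{c}_{\mathlcal{x},\mathlcal{u}}+1)\bigr)^D n^{2D}$; the threshold $n > \exp\bigl((2\mathtt{c}_{\mathlcal{x},\mathlcal{u}}+3)^D\bigr)$ is invoked precisely so that the upper bound $|B_S(e,r)| \le \mathtt{k} r^D$ applies uniformly at every relevant radius with the same constant $\mathtt{k}$. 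A union bound then yields an $\mathcal{O}(n^{-\upkappa})$ estimate for the probability that any of the four error terms exceeds $\upbeta n\varepsilon$, which is summable because $\upkappa > 1$. Borel--Cantelli now supplies the desired random $M_1(\omega)$.

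The principal obstacle is the tight matching of the counting exponent $2D$ (arising from the product of two linearly-growing balls in $\Gr$) against the tail-decay exponent $2D+\upkappa$ supplied by condition (I); this is exactly the role of the specific exponent in (I), and it is what makes the union bound summable. Everything else reduces to the triangle inequality and routine bookkeeping with the polynomial-growth bound.
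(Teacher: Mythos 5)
Your decomposition is, in substance, the paper's own: phrasing the comparison through $d_\omega$ and its triangle inequality is exactly the pair of subadditivity inequalities obtained from $a_n=v_nu_n^{-1}$ and $b_n=y_nv_n(x_nu_n)^{-1}$ (so the error terms are $c(a_n^{\pm1})$ and $c(b_n^{\pm1})$ evaluated at the shifts $u_n,v_n,x_nu_n,y_nv_n$), and the tail bound from \eqref{all} combined with a polynomial count of pairs and Borel--Cantelli is also how the paper concludes. The probabilistic estimate itself, including your $\eta$-perturbation to meet the strict inequality in \eqref{all}, is sound.

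The genuine defect is the uniformity of $M_1$. In the statement, $M_1$ is quantified before the sequences: a single $\p$-a.s.\ finite $M_1=M_1(\omega,\varepsilon)$ must work for \emph{every} admissible choice of $\{x_n\},\{u_n\},\{y_n\},\{v_n\}$, with all dependence on $\mathtt{c}_{\mathlcal{x},\mathlcal{u}}$ confined to the deterministic term $\exp\big((2\mathtt{c}_{\mathlcal{x},\mathlcal{u}}+3)^D\big)$. Your union bound runs over shifts $z$ with $\|z\|_S\le(2\mathtt{c}_{\mathlcal{x},\mathlcal{u}}+1)n$, so the bad events, their probabilities, and hence the Borel--Cantelli threshold all depend on $\mathtt{c}_{\mathlcal{x},\mathlcal{u}}$, i.e.\ on the sequences; as written you prove a weaker statement with $M_1=M_1(\omega,\varepsilon,\mathtt{c}_{\mathlcal{x},\mathlcal{u}})$. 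Relatedly, your reading of the threshold $n>\exp\big((2\mathtt{c}_{\mathlcal{x},\mathlcal{u}}+3)^D\big)$ is off: the growth bound $|B_S(e,r)|\le\mathtt{k}r^D$ holds for all $r>1$ and needs no such threshold. Its actual role --- and the paper's resolution of the uniformity issue --- is that for such $n$ one has $(2\mathtt{c}_{\mathlcal{x},\mathlcal{u}}+3)n\le\sqrt[D]{\log(n)}\,n$, so every relevant shift lies in the sequence-independent ball $B_S\big(e,\sqrt[D]{\log(n)}\,n\big)$; the difference is then bounded by $2\sup\big\{c(y)\circ\tht_z:\|y\|_S\le n\varepsilon,\ \|z\|_S\le\sqrt[D]{\log(n)}\,n\big\}$, whose tail is $\mathcal{O}\big(\log(n)/n^{\upkappa}\big)$ --- still summable since $\upkappa>1$ --- and Borel--Cantelli yields an $M_1$ depending only on $\omega$ and $\varepsilon$. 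Your argument becomes a proof of the lemma as stated once you enlarge the shift ball in exactly this way.
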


\begin{proof}
    Fix $a_n :=v_nu_n^{-1}$ and $b_n := y_nv_n(x_nu_n)^{-1}$. Then $\|a_n\|_S= \|a_n^{-1}\|_S = d_S(u_n,v_n)$ and $\|b_n\|_S= \|b_n^{-1}\|_S = d_S(x_nu_n,y_nv_n)$.
    Observe that $y_n = b_n x_n a_n^{-1}$ and $x_n = b_n^{-1}y_na_n$. We thus obtain the $\p$-almost surely inequalities below:
    \begin{align}
        c(y_n)\circ \tht_{v_n} &\leq c(b_n)\circ\tht_{x_nu_n} + c(x_n)\circ \tht_{u_n} + c(a_n^{-1})\circ \tht_{v_n} \label{cocycle:lm:p1}\\
        c(x_n)\circ \tht_{u_n} &\leq c(b_n^{-1})\circ\tht_{y_nv_n} + c(y_n)\circ\tht_{v_n} + c(a_n)\circ\tht_{u_n}. \label{cocycle:lm:p2}
    \end{align}
    Observe now that, by items \textit{(i)} and \textit{(ii)}, for $n > n_0(\varepsilon)$,
    \[
        x_n u_n,y_n v_n, u_n, v_n \in B_S\left(e, 2\mathtt{c}_{\mathlcal{x},\mathlcal{u}}\cdot n + 3n\varepsilon\right) .
    \]

    Hence, by combining \eqref{cocycle:lm:p1} and \eqref{cocycle:lm:p2},
    
    \begin{equation} \label{eq:difference.cocycle.parallel}
            |c(x_n)\circ \tht_{u_v}- c(y_n)\circ \tht_{v_n}| \leq \hspace{10pt}2 \hspace{-15pt} \sup_{\substack{\|y\|_S \leq n\varepsilon \\ \|z\|_S \leq \sqrt[D]{\log(n)}n}} \{c(y)\circ\tht_{z}\}
    \end{equation}
    for all $n > \max\big\{n_0, \exp\big((2 \mathtt{c}_{\mathlcal{x},\mathlcal{u}} + 3)^D\big)\big\}$. It follows from \eqref{all} that there exists $\mathtt{C}>0$ such that
    \begin{equation} \label{prob.sup.parallel.substitute}
    \p\left( \sup_{\substack{\|y\|_S \leq n\varepsilon \\ \|z\|_S \leq \sqrt[D]{\log(n)}n}} \{c(y)\circ\tht_{z}\} \geq \upbeta n\varepsilon  \right) \leq \mathtt{C} {n^{2D}}\log(n)g(\upbeta\varepsilon n) \in \mathcal{O}(\log(n)/n^\upkappa),
    \end{equation}
    for $n >\max\big\{n_0, \exp\big((\|\mathlcal{x}\|_\infty + \|\mathlcal{u}\|_\infty + 3)^D\big)\big\}$. Since $\sum_{n=1}^{+\infty}\frac{\log(n)}{n^\upkappa} = -\upzeta'(\upkappa) < +\infty$ for $\upkappa>1$ where $\upzeta'$ is the derivative of the Riemann zeta function, the proof is completed by applying Borel-Cantelli Lemma to  \eqref{eq:difference.cocycle.parallel} and \eqref{prob.sup.parallel.substitute}.
\end{proof}

\begin{remark}
    If $\{x_n\}_{n\in\N},\{u_n\}_{n\in\N} \subseteq \Gr$ are such that $\lim_{n\uparrow+\infty}\frac{1}{n}\bull x_n = \mathlcal{x}$ and $\lim_{n\uparrow+\infty}\frac{1}{n}\bull u_n = \mathlcal{u}$ in $(G_\infty,d_\infty)$, then item (i) of \cref{lm:local.bound} is immediately satisfied (see \cref{sec:rescaled.dist}).
\end{remark}

Using the lemma above, the \cref{prop:ergodic.thm.path} becomes a straightforward extension of Theorem 3.3 of \cite{cantrell2017}. The result can be verified by replacing the Parallelogram inequality with \cref{lm:local.bound}. To be self-contained, let us first define, for each $E \in \F$,  $\omega \in \Om$, $x\in \Gr$, $\xi>0$, and $n\in\N$,
\[\mathsf{N}_{x,n}^\xi(E,\omega) := \#\big\{n' \in\{0, 1, \dots, 
\lceil\xi n\rceil-1\} \colon \tht_{x^{n-n'}}(\omega) \in E\big\}.\]
Set
\begin{align*}
&\Xi^\star(E,x,\xi) := \left\{ \omega \in \Om \colon \liminf_{n\uparrow+\infty}\frac{\mathsf{N}_{x,n}^\xi(E,\omega)}{\xi n}>0 \right\}, \quad\text{and}\\ &\Xi_m^\star(E,x,\xi) := \left\{ \omega \in \Om \colon \forall n\ge m\left(\frac{\mathsf{N}_{x,n}^\xi(E,\omega)}{\xi n}>0\right) \right\}.
\end{align*}

We now state Lemma 3.6 of \cite{cantrell2017} without proof before the proving \cref{prop:ergodic.thm.path}.

\begin{lemma} \label{lm:measure.approx}
    Let $x\in\Gr$, $\xi >0$, and $E \in \F$. Then, for all $\varepsilon\in(0,1)$, there is $m_0>0$ such that, for $m>m_o$,
    \[\p\big(\Xi^\star(E,x,\xi)\big) \geq \p(E)\quad\text{and}\quad \p\big(\Xi_m^\star(E,x,\xi)\big) > \p\big(\Xi^\star(E,x,\xi)\big)-\varepsilon.\]
\end{lemma}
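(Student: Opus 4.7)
The plan is to deduce both inequalities from Birkhoff's pointwise ergodic theorem applied to the single p.m.p. transformation $\tht_x$, plus basic continuity of measure.

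First I would apply Birkhoff's ergodic theorem to the p.m.p.\ transformation $\tht_x:\Om\to\Om$ (which, being one element of the p.m.p.\ group action, is automatically measure-preserving, though not necessarily ergodic on its own). Let $\mathcal{I}_x\subseteq\F$ denote the $\sigma$-algebra of $\tht_x$-invariant sets and set $h:=\E[\mathbbm{1}_E\mid\mathcal{I}_x]$. Birkhoff yields, for $\p$-a.e.\ $\omega$,
\[
\frac{1}{n}\sum_{k=0}^{n-1}\mathbbm{1}_E(\tht_{x^k}(\omega))\longrightarrow h(\omega),
\]
and by the same argument applied to $\tht_{x}^{-1}$ the analogous convergence holds for negative iterates, so the convergence is bilateral.

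Next I would rewrite
\[
\mathsf{N}_{x,n}^\xi(E,\omega)=\sum_{k=n-\lceil\xi n\rceil+1}^{n}\mathbbm{1}_E(\tht_{x^k}(\omega))=S_{n+1}(\omega)-S_{n-\lceil\xi n\rceil+1}(\omega),
\]
where $S_m(\omega):=\sum_{k=0}^{m-1}\mathbbm{1}_E(\tht_{x^k}(\omega))$ (extended to negative $m$ using backward iterates). Dividing by $\xi n$ and using that both $S_{n+1}/(n+1)$ and $S_{n-\lceil\xi n\rceil+1}/(n-\lceil\xi n\rceil+1)$ converge to $h(\omega)$, a routine algebraic manipulation shows
\[
\lim_{n\uparrow+\infty}\frac{\mathsf{N}_{x,n}^\xi(E,\omega)}{\xi n}=h(\omega)\qquad\p\text{-a.s.}
\]
Consequently, up to a $\p$-null set, $\Xi^\star(E,x,\xi)=\{h>0\}$. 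For the first inequality, I note that $\{h=0\}\in\mathcal{I}_x$, so by definition of conditional expectation $\p(E\cap\{h=0\})=\int_{\{h=0\}}h\,d\p=0$, i.e., $E\subseteq\{h>0\}$ up to a null set. Hence $\p(\Xi^\star(E,x,\xi))=\p(h>0)\geq\p(E)$.

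For the second inequality I would observe that $\Xi_m^\star(E,x,\xi)$ is monotone non-decreasing in $m$, and that the almost-sure identity $\Xi^\star=\{h>0\}$ together with the fact that $\liminf_n\mathsf{N}_{x,n}^\xi/(\xi n)>0$ implies $\mathsf{N}_{x,n}^\xi>0$ for all sufficiently large $n$, gives the inclusion $\Xi^\star(E,x,\xi)\subseteq\bigcup_{m\in\N}\Xi_m^\star(E,x,\xi)$ modulo a null set. Continuity of $\p$ along the increasing sequence $\{\Xi_m^\star\}_{m\in\N}$ then yields $\lim_{m\uparrow+\infty}\p(\Xi_m^\star(E,x,\xi))\geq\p(\Xi^\star(E,x,\xi))$, so for every $\varepsilon\in(0,1)$ there is $m_0$ such that the desired strict inequality holds for all $m>m_0$.

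The only mildly delicate point is the correct handling of the window $[n-\lceil\xi n\rceil+1,n]$ when $\xi\geq 1$, where the lower endpoint becomes non-positive and one must invoke Birkhoff for both $\tht_x$ and $\tht_x^{-1}$; the arithmetic of the Cesàro differences works out identically, so this is bookkeeping rather than a real obstacle. The conceptual substance is entirely the identification of $\Xi^\star$ with $\{h>0\}$ via Birkhoff.
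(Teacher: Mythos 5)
Your proof is correct. Note, however, that the thesis does not prove this lemma at all: it is stated without proof as Lemma 3.6 of Cantrell and Furman \cite{cantrell2017}, so there is no in-paper argument to compare against; your route (Birkhoff's pointwise ergodic theorem for the single p.m.p.\ map $\tht_x$, the conditional expectation $h=\E[\mathbbm{1}_E\mid\mathcal{I}_x]$, the window-difference identity $\mathsf{N}_{x,n}^\xi=S_{n+1}-S_{n-\lceil\xi n\rceil+1}$ giving $\mathsf{N}_{x,n}^\xi/(\xi n)\to h$ a.s., hence $\Xi^\star(E,x,\xi)=\{h>0\}$ mod $\p$-null sets, then $\p(E\cap\{h=0\})=\int_{\{h=0\}}h\,d\p=0$ for the first inequality and monotone continuity along the increasing sets $\Xi_m^\star$ for the second) is precisely the standard argument behind that cited lemma. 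The only point you gloss is why the backward Ces\`aro averages converge to the same $h$: Birkhoff for $\tht_x^{-1}$ gives the conditional expectation with respect to the $\tht_x^{-1}$-invariant $\sigma$-algebra, and one should say explicitly that this coincides with $\mathcal{I}_x$ for an invertible map; this is only needed when $\xi\ge 1$ (in the application $\xi$ is small, so the one-sided theorem already suffices there). Everything else, including the exact inclusion $\Xi^\star(E,x,\xi)\subseteq\bigcup_{m}\Xi_m^\star(E,x,\xi)$ and the continuity-from-below step, is sound.
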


We proceed below with the proof of ergodic subadditive approximation via polygonal paths.

\begin{proof}[Proof of \cref{prop:ergodic.thm.path}]
    Consider $\varepsilon' \in (0,1)$ and  $\{y_i\}_{i=1}^j \subseteq \Gr$ fixed. Let $\xi>0$ and $\thickbar{n}\in\N$ be given by \cref{lm:small.perturbations} for $\varepsilon=\varepsilon'$. Set $\eta \in (0,\frac{1}{2j})$ and $m\in \N$ sufficiently large so that, for each $i \in \{1, \dots, j\}$, one has by \cref{prop:subadditive.ergodic.thm},
    \[\mathcal{X}_i:=\left\{\omega\in\Om \colon \forall n>m \left(\left\vert\frac{1}{n} c(y_i^n,\omega) - \phi (y_i^{\ab})\right\vert< \Check{\varepsilon}\right)\right\} \ \text{ and } \ \p(\mathcal{X}_i) > 1- \eta.\]

    Fix $\mathcal{Y}_j := \mathcal{X}_j$ and define inductively $\mathcal{Y}_{i-1} := \mathcal{X}_{i-1} \cap \Xi_{m_i}^\ast(\mathcal{Y}_i,y_i,\xi)$ so that, for each $i \in \{2, \dots, j\}$,  $m_i \in \N$ is given by \cref{lm:measure.approx} satisfying \[\p\big(\Xi_{m_i}^\ast(\mathcal{Y}_i,y_i, \xi)\big) \geq \p(\mathcal{Y}_i)- \eta.\]

    Therefore, 
    \[\p(\mathcal{Y}_1) > \p(\mathcal{Y}_2) - 2\eta > \cdots > \p(\mathcal{Y}_j) - 2(j-1) \eta > 1 - (2j-1) \eta. \]

    Let now $\Check{m} := \max \{m, m_1, \dots, m_j\}$. Thence, for all $\varpi_i \in \mathcal{Y}_i$ and every $n>\Check{m}$ with $i \in \{1, \dots, j-1\}$, if $n_i<\xi n$, then $\tht_{y^{n-n_i}}(\varpi_i)= y^{n-n_i} \cdot \varpi_i \in \mathcal{Y}_{i+1}$, and
    \[\left\vert \frac{1}{n} c(y_{i+1}^n, \varpi_i) - \phi(y_{i+1}^{\ab}) \right\vert< \varepsilon'.\]

    It follows that, for all $n>\Check{m}$, there exist non negative integers $n_1, \dots, n_{j-1} < \xi n$ such that, for all $\omega \in \mathcal{Y}_1$,
    \[\left\vert \frac{1}{n} c(y_{j}^n, y_{j-1}^{n-n_{j-1}}\cdots y_1^{n-n_1}\cdot \omega) - \phi(y_{j}^{\ab}) \right\vert< \varepsilon'.\]

    By \cref{lm:small.perturbations}, for each $i \in \{2, \dots, j\}$ and every $n>\thickbar{n}$,
    \begin{eqnarray*}
        d_S\left(y_{i-1}^n\cdots y_1^n, ~y_{i-1}^{n-n_{i-1}}\cdots y_1^{n-n_1}\right)< n \varepsilon' \quad \text{and}\\
        d_S\left(y_i^n y_{i-1}^n\cdots y_1^n, ~y_i^n y_{i-1}^{n-n_{i-1}}\cdots y_1^{n-n_1}\right)< n \varepsilon'.
    \end{eqnarray*}

    Hence, by \cref{lm:conv.seq.Gr,lm:local.bound}, there exists $\Om_j \in \F$ with $\p(\Om_j)=1$, and a random $\thickbar{M}_i\geq \thickbar{n}$ depending on $\varepsilon'$, and $\|y_i^{\ab}y_{i-1}^{\ab}\dots y_1^{\ab}\|_\infty \vee \|y_{i-1}^{\ab}\dots y_1^{\ab}\|_\infty$ such that for all $n> \thickbar{M}_i$ and all $\omega \in \Om_j$, 
    \[\left\vert c(y_i^n, ~y_{i-1}^n \cdots y_1^n \cdot \omega) - c(y_i^n, ~y_{i-1}^{n-n_{i-1}} \cdots y_1^{n-n_1} \cdot \omega) \right\vert< 2 \upbeta n \varepsilon'.\]

    Therefore, for all $\omega \in \Om_j \cap \ \mathcal{Y}_1$, every $n> \check{m} \vee \thickbar{M}_i$ and all $i \in \{2, \dots, j\}$
    \begin{equation} \label{eq:erg.ineq.polygonal}
        \left\vert \frac{1}{n}c(y_i^n, ~y_{i-1}^n \cdots y_1^n \cdot \omega) - \phi(y_i^{\ab}) \right\vert< (2 \upbeta +1) \Check{\varepsilon},
    \end{equation}
    and $\p\left(\Om_j \cap \ \mathcal{Y}_1\right) > 1-(2j-1)\eta$. It suffices to consider $\eta_n \downarrow 0$ replacing $\eta  \in (0, \frac{1}{2j})$ with $\sum_{n \in \N} \eta_n < +\infty$, then there exists, $\p$-a.s.,  $M_0 \geq \check{m} \vee \thickbar{M}_i$ by Borel-Cantelli Lemma such that \eqref{eq:erg.ineq.polygonal} is satisfied for all $n > M_0$, which is our assertion with $i=j$ and $\Check{\varepsilon}= \frac{1}{2\upbeta+1}\varepsilon'$.
\end{proof}

\section{The Limiting Shape} \label{sec:limiting.shape}

In this section, we undertake the task of proving the asymptotic shape theorems by utilizing the tools meticulously developed in preceding sections. The concluding subsection is dedicated to exploring a corollary specifically tailored for FPP models.

We initiate our proof by addressing the case of finitely generated torsion-free nilpotent groups. Subsequently, we extend these results to encompass the virtually nilpotent property.

\subsection{Proof of the First Theorem} \label{sec:main.proofs}

This subsection is dedicated to proving \cref{shape.thm}. Therefore, consider all conditions and notations established in the first main theorem for the subsequent results. For instance, here $\Gr$ is torsion-free nilpotent with torsion-free abelianization. Before turning to the proof of the theorem, let us refine the techniques of approximation as outlined in the upcoming propositions and lemmas.

\begin{proposition} \label{prop:asymptotic.approx}
    Let $\mathlcal{g} \in G_\infty$ and $\epsilon \in(0,1)$. Consider $\{y_j\}_{j=1}^k \subseteq \Gr$ and $p>0$ given by \cref{polygonal.path} for a $d_\infty$-geodesic curve $\upgamma:[0,1] \to G_\infty$ from $\mathlcal{e}$ to $\mathlcal{g}$ and $\Hat{\varepsilon}=\epsilon/2$.
    
    If conditions \eqref{all}, \eqref{aml}, and \eqref{innerness} are satisfied, then there exists, $\p$-a.s., $M_2>0$ depending on $\mathlcal{g}$, $\epsilon$, and $\omega \in \Om$, 
such that, for all $n> M_2$,
\[
    \left|\frac{1}{pn}c(y_k^n \cdots y_1^n) - \ell_\phi(\upgamma)\right| <  \epsilon.
\]
\end{proposition}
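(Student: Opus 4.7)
The plan is to sandwich $\tfrac{1}{pn}\,c(y_k^n\cdots y_1^n,\omega)$ between two quantities that both lie within $\epsilon$ of $\ell_\phi(\upgamma)$, handling the two sides separately.

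For the upper bound I apply subadditivity of the cocycle,
\[
c(y_k^n\cdots y_1^n,\omega)\ \le\ \sum_{j=1}^{k}c\bigl(y_j^n,\ y_{j-1}^n\cdots y_1^n\cdot\omega\bigr),
\]
and then invoke \cref{prop:ergodic.thm.path} with $\check{\varepsilon}=\epsilon/(4k)$ for each of the $k$ summands. Since $k$ is fixed, the $\p$-a.s. thresholds can be merged into a common random $M_2'$ beyond which $\tfrac{1}{n}c(y_j^n,\,y_{j-1}^n\cdots y_1^n\cdot\omega)<\phi(y_j^{\ab})+\epsilon/(4k)$ for every $j$. Summing in $j$, dividing by $p$, and appealing to the second conclusion of \cref{polygonal.path} (applied with $\hat\varepsilon=\epsilon/2$, which is what produced $\{y_j\}_{j=1}^k$ and $p$) to obtain $\bigl|\tfrac{1}{p}\sum_j\phi(y_j^{\ab})-\ell_\phi(\upgamma)\bigr|<\epsilon/2$, yields $\tfrac{1}{pn}c(y_k^n\cdots y_1^n,\omega)<\ell_\phi(\upgamma)+\epsilon$ for $n$ large enough. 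The upper-bound half uses only \eqref{all} (through \cref{prop:ergodic.thm.path}).

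For the matching lower bound the innerness hypothesis \eqref{innerness} is essential. Fix $\varepsilon'>0$ small enough that $\varepsilon'\,\ell_\phi(\upgamma)<\epsilon/4$, and apply \eqref{innerness} to $x=y_k^n\cdots y_1^n$ to obtain a decomposition $x=z_m\cdots z_1$ with each $z_i\in F(\varepsilon')\subseteq\Gr\setminus[\Gr,\Gr]$ and
\[
c(x,\omega)\ \ge\ \tfrac{1}{1+\varepsilon'}\sum_{i=1}^{m}c\bigl(z_i,\ z_{i-1}\cdots z_1\cdot\omega\bigr).
\]
Since $F(\varepsilon')$ is finite and \eqref{all} provides the moment control, a uniform-in-$F(\varepsilon')$ version of \cref{prop:ergodic.thm.path} lets one compare the right-hand side to $\tfrac{1}{1+\varepsilon'}\sum_{i=1}^{m}\phi(z_i^{\ab})$ up to an $o(n)$ error. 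This last sum is the rescaled $\phi$-length of the polygonal curve traced by the partial products $z_i\cdots z_1$ in $G_\infty$; by construction that curve joins $\mathlcal{e}$ to $\tfrac{1}{np}\bull y_k^n\cdots y_1^n$, which, by the choice of the $\{y_j\}$ coming from \cref{polygonal.path}, is $d_\infty$-close to $\mathlcal{g}$ and in fact tracks $\upgamma$ at all intermediate scales $\upgamma(j/k)$. Combining this geometric fact with the $d_\phi\asymp d_\infty$ bi-Lipschitz equivalence from \cref{lm:phi.bi-Lipschitz} and the length comparison in \cref{polygonal.path}, one obtains $\tfrac{1}{np}\sum_i\phi(z_i^{\ab})\ge\ell_\phi(\upgamma)-\epsilon/2$, which after absorbing the $(1+\varepsilon')^{-1}$ factor gives $\tfrac{1}{pn}c(y_k^n\cdots y_1^n,\omega)>\ell_\phi(\upgamma)-\epsilon$. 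Taking $M_2$ to be the maximum of the thresholds arising on both sides finishes the argument.

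The main obstacle is the lower bound. Unlike the $k$ fixed-length pieces in the upper-bound step, the innerness decomposition has length $m$ growing linearly in $n$ and depending measurably on $\omega$, so \cref{prop:ergodic.thm.path} cannot be applied term by term; a genuine uniform-in-$F(\varepsilon')$ convergence is required (made possible by the finiteness of $F(\varepsilon')$ and the tail bound \eqref{all} via a Borel--Cantelli argument analogous to the one in \cref{lm:local.bound}). The subtler geometric point is to ensure that the rescaled decomposition not only lands near $\mathlcal{g}$ but traces a polygonal path close enough to $\upgamma$ that its $\phi$-length approximates $\ell_\phi(\upgamma)$ rather than the potentially strictly smaller $d_\phi(\mathlcal{e},\mathlcal{g})$, for which the $d_\infty$-geodesic property of $\upgamma$ is decisive.
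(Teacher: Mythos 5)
Your upper bound is exactly the paper's argument (subadditivity, \cref{prop:ergodic.thm.path} applied to the $k$ fixed blocks, then the length estimate from \cref{polygonal.path}), and that half is fine. The lower bound, however, has a genuine gap at its central step: you invoke a ``uniform-in-$F(\varepsilon')$ version of \cref{prop:ergodic.thm.path}'' to pass from $\sum_{i=1}^m c(z_i,\,z_{i-1}\cdots z_1\cdot\omega)$ to $\sum_{i=1}^m \phi(z_i^{\ab})$ up to $o(n)$. No such statement follows from the hypotheses, and the inequality you need is false in general. The subadditive ergodic theorem (and \cref{prop:ergodic.thm.path}) controls rescaled cocycle values along \emph{large powers} $x^n$ of a fixed element; it says nothing about single increments $c(z_i,\cdot)$ at generators, for which one only has $\phi(z_i^{\ab})\le \E[c(z_i)]$ --- pointwise $c(z_i,\omega')$ can be far below $\phi(z_i^{\ab})$ (e.g.\ in the random-coloring FPP of \cref{ex:color}, where edge weights vanish with positive probability). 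Worse, the innerness decomposition $z_1,\dots,z_m$ is random, has length $m\sim n$, and is \emph{selected} precisely where the cocycle is small, so no Birkhoff-type averaging along the orbit of the partial products can produce the lower bound; the finiteness of $F(\varepsilon')$ together with \eqref{all} only yields upper (sup) tail bounds via Borel--Cantelli, never a matching lower bound. Your secondary geometric claim --- that the polygonal curve traced by the $z$-partial products ``tracks $\upgamma$ at all intermediate scales'' --- is also unjustified: the $y_j$-blocks track $\upgamma$, but the innerness path of the product has no a priori relation to $\upgamma$'s intermediate points.

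This is precisely why the paper's proof of the lower bound is much more elaborate. It first bounds the number $k_n$ of pieces in the decomposition (via \cref{lm:phi.bi-Lipschitz}, the $L^1$ convergence in \cref{prop:subadditive.ergodic.thm}, a Chernoff bound and Borel--Cantelli), then interpolates the rescaled partial products by piecewise geodesic curves $\zeta_n$, uses the $k_n$ bound to get a uniform Lipschitz estimate and Arzel\`a--Ascoli to extract a limiting Lipschitz curve $\zeta$ from $\mathlcal{e}$ to $\mathlcal{g}$, and then applies \cref{polygonal.path} \emph{a second time} to $\zeta$ to produce blocks $w_j^{t_n}$ of large powers, to which \cref{prop:ergodic.thm.path} legitimately applies. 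The cocycle along the innerness decomposition is compared block-by-block with the cocycle along the $w_j^{t_n}$-path through a subdivision argument (controlled by \eqref{all} and Borel--Cantelli) and \cref{lm:local.bound}, which finally yields $\tfrac{1}{pn}c(y_k^n\cdots y_1^n)\gtrsim \ell_\phi(\zeta)-O(\upvarepsilon)\ge \ell_\phi(\upgamma)-\epsilon$. Without some substitute for this curve-extraction-and-reapproximation mechanism, your lower bound does not go through.
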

\begin{proof}
    Let us write $\mathbbmtt{y}_n:= y_k^n\dots y_1^n$ and consider $n_0>0$ for $\Hat{\varepsilon}=\epsilon/2$
    given by \cref{polygonal.path}. It follows from subadditivity that
    \[
        c(\mathbbmtt{y}_n) \leq  \sum_{j=1}^k c(y_{j}^n) \circ \tht_{y_{j-1}^n \dots y_1^{n}}  \quad \p\text{-a.s.}
    \]
    Then, one has by \cref{prop:ergodic.thm.path} with $\Check{\varepsilon} \le \frac{p}{2k}\epsilon$ that, $\p$-a.s., for all $n>M_0 \vee n_0$,
    \begin{equation} \label{eq:upper.bound.path.cocycle}
        \frac{1}{pn}c(\mathbbmtt{y}_n) \leq \frac{1}{p}\sum_{j=1}^k \phi(y_n^{\ab}) + \frac{\epsilon}{2} < \ell_\phi(\upgamma)+ \epsilon.
    \end{equation}
    Set $\upvarepsilon \in (0,1)$ to be defined later and apply condition \eqref{innerness} to obtain
    \[
        \sum_{j=1}^{k_n}c_{n,j} \leq (1+\upvarepsilon) \frac{1}{pn} c(\mathbbmtt{y}_n)
    \]
    where $c_{n,j} = \frac{1}{pn} c(z_{n,j},{z_{n,j-1}\dots z_{n,1}}\cdot \omega)$ with $z_{m,i} \in F(\upvarepsilon)$. Define a sequence of piecewise $d_\infty$-geodesic curves $\zeta_n$ between each $\frac{1}{pn} \bull z_{n,j} \dots z_{n,1}$ and $\frac{1}{pn} \bull z_{n,j-1} \dots z_{n,1}$ for $j \in \{1, \dots, k_n\}$ such that $z_{n,0}=e$ and  \[
        \zeta_n(\tau_j)= \frac{1}{pn} \bull z_{n,j} \dots z_{n,1} \quad \text{for }\tau_j = \sum_{i=1}^j c_{n,i}\Big/\sum_{i=1}^{k_n}c_{n,i}.
    \]
    
    Set $\mathtt{m}_\upvarepsilon:= \min_{z \in F(\upvarepsilon)}\|z^{\ab}\|>0$. It follows from \cref{lm:phi.bi-Lipschitz} that $\E[ c_{n,j} ] \geq a'\mathtt{m}_\upvarepsilon \frac{1}{pn}$ and, due the the $L^1$ convergence in \cref{prop:subadditive.ergodic.thm}, there exists $n_1>n_0$ such that, for all $n >M_0 \vee n_1$,
    \[
        a' \mathtt{m}_\upvarepsilon\frac{1}{pn}\E[k_n] \leq (1 + \upvarepsilon)~ \ell_\infty(\upgamma) + k\upvarepsilon.
    \]
    
    Fix $\mathtt{C}_{\upgamma,\upvarepsilon} > \frac{2p}{ a'\mathtt{m}_\upvarepsilon}\big((1 + \upvarepsilon) \ell_\infty(\upgamma) + k\upvarepsilon\big)$ so that, for all $n \in \N$, $\E[k_n] \leq \mathtt{C}_{\upgamma,\upvarepsilon} n/2$. By Chernoff bound, $\p(k_n \geq \mathtt{C}_{\upgamma,\upvarepsilon}n) \leq \exp(-2n)$. It then follows from an application of Borel-Cantelli Lemma that, $\p$-a.s., there exists $M_0' \ge M_0$ such that, for every $n>M_0'$,
    \begin{equation} \label{eq:kn.upperbound}
        k_n \leq \mathtt{C}_{\upgamma,\upvarepsilon} n.
    \end{equation}
    Let $\mathtt{M}_\upvarepsilon := \max\limits_{z \in F(\upvarepsilon)} \|1\bull z\|_\infty$ Observe now that, for every $t,t'\in[0,1]$, $\p$-a.s., for  $n > M_0'$,
    \[
        d_\infty\big(\zeta_n(t),\zeta_n(t')\big) \leq \frac{k_n}{pn}\mathtt{M}_\upvarepsilon|t-t'| \leq \frac{1}{p}\mathtt{C}_{\upgamma,\upvarepsilon}\mathtt{M}_\upvarepsilon|t-t'|.
    \]
    
    Hence, one has by Arzelà–Ascoli Theorem that a subsequence of $\zeta_n$ converges uniformly to a Lipschitz curve $\zeta:[0,1]\to G_\infty$ such that $\zeta(0)=\mathlcal{e}$ and $\zeta(1)=\mathlcal{g}$. 
    
    We apply \cref{polygonal.path} once again fot the curve $\zeta$ with $\Hat{\varepsilon} = \upvarepsilon/2$ to obtain $p'>0$, $\{w_i\}_{i=1}^{k'} \subseteq \Gr$, $t_n=\lfloor np/p' \rfloor$, and $n_2>0$ such that, for all $n>n_2$
    \[
        \sum_{i=1}^{k'}d_\infty\left(\frac{1}{p' t_n} \bull w_i^{t_n} \dots w_1^{t_n}, \zeta\left(\frac{i}{k'}\right)\right) < \frac{\upvarepsilon}{2}.
    \]
    
    Recall that $F(\upvarepsilon)$ is a generating set of $\Gr$ and $\mathcal{C}\big(\Gr,F(\upvarepsilon)\big)$ shares the polynomial growth rate of $\mathcal{C}(\Gr,S)$. Then there exists $\mathtt{C}'>0$ such that, for a given $\varepsilon'\in(0,1)$,
    \begin{align*}
        \p \left( \sup_{z \in F(\upvarepsilon)}\Big\{c(z)\circ\tht_{z'}:z' \in B_{F(\upvarepsilon)}(e, \mathtt{C}_{\upgamma,\upvarepsilon}n)\Big\} \geq \varepsilon' n\right) \leq \mathtt{C}' |F(\upvarepsilon)| n^D g(\varepsilon' n) \\ \in \mathcal{O}_{\varepsilon'}(1/n^\upkappa)
    \end{align*}
    as $n \uparrow +\infty$.
    
    It thus follows by an application of Borel-Cantelli Lemma and by \eqref{eq:kn.upperbound} that for all $\varepsilon'\in(0,1)$, there exist, $\p$-a.s., $M_0'' \ge M_0'$ and a subdivision function $\mathsf{d}_n:\{0,1, \dots, k'\} \to \{0,1, \dots,k_n\}$ with $\mathsf{d}_n(0)=0 < \mathsf{d}_n(1) < \cdots < \mathsf{d}_n(k')=k_n$ such that, for all $n>M_0''$,
    \[
        \left\vert\frac{1}{k'} -\left.\sum_{i=\mathsf{d}_n(j-1)}^{\mathsf{d}_n(j)-1}c_{n,i+1}\right/\sum_{i=1}^{k_n}c_{n,i}\right\vert < \varepsilon'.
    \]
    Let
    \[
        \mathlcal{g}_{n, j} := \frac{1}{p't_n} \bull z_{n,\mathsf{d}_n(j)} z_{n,\mathsf{d}_n(j)-1} \dots z_{n,1} \quad \text{for } j\in\{1, \dots, k'\}.
    \]
    Then there exist $n_3 \geq n_2$ and, $\p$-a.s., $M_2'>M_0''$ such that, for all $n>M_2' \vee n_3$, $\vert\mathlcal{g}_{n,j} - \zeta(j/k')\vert< \frac{\upvarepsilon}{2}$. Hence, for $n>M_2'\vee n_3$, $\sum_{j=1}^{k'} d_\infty\left(\frac{1}{p'{t_n}}\bull w_j^{t_n} \dots w_1^{t_n}, \mathlcal{g}_{n,j}\right) < \upvarepsilon$. 
    
    It follows that there exists, $\p$-a.s. $M_2''>M_2' \vee n_3$ so that, for every $n>M_2''$, 
    \[
        \sum_{j=1}^{k'} \frac{1}{p't_n} d_S\left( w_j^{t_n} \dots w_1^{t_n}, z_{n,\mathsf{d}_n(j)}z_{n,\mathsf{d}_n(j)-1} \dots z_{n,1} \right) < \upvarepsilon
    \]
    We thus get from \cref{lm:local.bound} that there exists, $\p$-a.s., $M_1'> M_2''$, such that, for each $n> M_1'$,
    \[
        \sum_{j=1}^{k'} \left| c(w_j^{t_n}, w_{j-1}^{t_n} \dots w_1^{t_n} \cdot \omega) - c(z_{n,\mathsf{d}_n(j)}, z_{n,\mathsf{d}_n(j)-1} \dots z_{n,1}\cdot \omega) \right| < 4 \upbeta p't_n\upvarepsilon.
    \]
    Set $M_2 \geq M_1' \vee \frac{p'}{p \upvarepsilon^2}$. Therefore, one has, $\p$-a.s., for all $n>M_2$,
    \begin{align}
        \frac{1}{pn} c(\mathbbmtt{y}_n,\omega) &> \frac{1}{(1+\upvarepsilon)pn} \sum_{j=1}^{k'} ~\sum_{i=d_n(j-1)}^{d_n(j)}c(z_{n,i}, z_{n,i-1}\dots z_{n,1}\cdot\omega) \nonumber\\
        &\geq \frac{1}{(1+\upvarepsilon)pn} \sum_{j=1}^{k'} c(z_{n,\mathsf{d}_n(j)}, z_{n,\mathsf{d}_n(j)-1} \dots z_{n,1}\cdot \omega) \nonumber\\
        & \geq \frac{1}{1+\upvarepsilon} \frac{p't_n}{pn}\left(\frac{1}{p't_n} \sum_{j=1}^{k'} c(w_j^{t_n}, w_{j-1}^{t_n} \dots w_1^{t_n} \cdot \omega)-4\upbeta\upvarepsilon\right) \nonumber\\
        &> (1-\upvarepsilon)\Big(\ell_\phi(\zeta) - (4\upbeta+1)\upvarepsilon \Big). \nonumber
     \end{align}
    
    Fix $\upvarepsilon = \frac{1}{2\big(\ell_\phi(\upgamma) + 4\upbeta+1 \big)}\epsilon$. Hence, since $\ell_\phi(\zeta)\geq\ell_\phi(\upgamma)- \epsilon/2$, one has, $\p$-a.s., for all $n >M_2$,
    \begin{equation} \label{eq:lower.bound.path.cocycle}
         \frac{1}{pn} c(\mathbbmtt{y}_n,\omega) > \ell_\phi(\upgamma) - \epsilon.
    \end{equation}
    We complete the proof by combining \eqref{eq:upper.bound.path.cocycle} and \eqref{eq:lower.bound.path.cocycle}.
\end{proof}

\begin{lemma} \label{lm:lim.phi.g}
    Let  $\{x_n\}_{n\in\N}$ be a sequence in $\Gr$ and let $\{t_n\}_{n \in\N}$ be an increasing sequence in $\R$ such that $\lim_{n\uparrow +\infty}\frac{1}{t_n}\bull x_n = \mathlcal{g} \in G_\infty$. 
    
    Consider a subadditive cocycle $c:\Gr \times \Om \to \R_{\geq 0}$ satisfying conditions \eqref{all} and \eqref{aml}. If condition \eqref{innerness} is satisfied or if $\Gr$ is abelian, then, for all $\upepsilon \in (0,1)$, there exists, $\p$-a.s., a random $M = M(\mathlcal{g},\epsilon)>0$
    such that, for $t_n > M$,
	\[
		\left| \frac{1}{t_n}c(x_n) - \Phi(\mathlcal{g})\right| <\upepsilon%
	\]
\end{lemma}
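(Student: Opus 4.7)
The plan is to sandwich $\frac{1}{t_n}c(x_n)$ between $\Phi(\mathlcal{g})\pm\upepsilon$ by comparing the given $x_n$ with a polygonal element of the form $y_k^{n'}\cdots y_1^{n'}$, to which \cref{prop:asymptotic.approx} already applies. The bridge between the given sequence and the polygonal one is the local substitution estimate of \cref{lm:local.bound}, while a suitable choice of admissible curve ensures that $\ell_\phi$ of the approximating curve is close to $\Phi(\mathlcal{g})$.

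I will first establish the upper bound. Since $\Phi(\mathlcal{g})=d_\phi(\mathlcal{e},\mathlcal{g})$ is the infimum of $\ell_\phi$ over admissible curves, pick an admissible $\upgamma:[0,1]\to G_\infty$ from $\mathlcal{e}$ to $\mathlcal{g}$ with $\ell_\phi(\upgamma)<\Phi(\mathlcal{g})+\upepsilon/8$. Apply \cref{polygonal.path} with small parameter to extract $\{y_j\}_{j=1}^k\subseteq\Gr$ and $p>0$, and set $n'_m:=\lfloor t_m/p\rfloor$. Because $t_m\uparrow+\infty$ forces $pn'_m/t_m\to 1$, both $\frac{1}{t_m}\bull x_m$ and $\frac{1}{t_m}\bull y_k^{n'_m}\cdots y_1^{n'_m}$ converge to $\mathlcal{g}$ in $d_\infty$, so $d_S(x_m,\,y_k^{n'_m}\cdots y_1^{n'_m})\le\delta t_m$ for any prescribed $\delta$ and all large $m$. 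Invoking \cref{lm:local.bound} with $u_m=v_m=e$ and with rescaling parameter $t_m$ in place of $n$ then yields $\p$-a.s.\ $|c(x_m)-c(y_k^{n'_m}\cdots y_1^{n'_m})|<2\upbeta t_m\delta$ for large $m$, while subadditivity together with \cref{prop:ergodic.thm.path} (applied factor-by-factor along $y_k,\ldots,y_1$) gives
\[\frac{1}{pn'_m}c(y_k^{n'_m}\cdots y_1^{n'_m})\le\frac{1}{p}\sum_{j=1}^k\phi(y_j^{\ab})+\frac{\upepsilon}{8}\le\ell_\phi(\upgamma)+\frac{\upepsilon}{4}\le\Phi(\mathlcal{g})+\frac{3\upepsilon}{8}.\]
Multiplying by $pn'_m/t_m$ and absorbing the local-bound error for $\delta$ small produces $\frac{1}{t_m}c(x_m)\le\Phi(\mathlcal{g})+\upepsilon$.

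For the lower bound, where \eqref{innerness} (or the abelian case) is essential, I will apply \eqref{innerness} directly to each $x_m$, writing $x_m=z_{m,k_m}\cdots z_{m,1}$ with $z_{m,i}\in F(\upvarepsilon)$ and $\sum_i c(z_{m,i},\,z_{m,i-1}\cdots z_{m,1}\cdot\omega)\le(1+\upvarepsilon)c(x_m,\omega)$. Mirroring the lower-bound half of \cref{prop:asymptotic.approx}, I combine the just-established upper bound with \cref{lm:phi.bi-Lipschitz} and a Chernoff estimate to obtain $k_m=\mathcal{O}(t_m)$ $\p$-a.s.; then the piecewise $d_\infty$-geodesic curves $\zeta_m:[0,1]\to G_\infty$ joining the rescaled intermediate points $\frac{1}{t_m}\bull z_{m,j}\cdots z_{m,1}$ form an equi-Lipschitz family from which Arzel\`a--Ascoli extracts a Lipschitz limit $\zeta$ from $\mathlcal{e}$ to $\mathlcal{g}$. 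The limit $\zeta$ is admissible (horizontal curves are closed under uniform limits in a Carnot group), so by the definition of $d_\phi$, $\ell_\phi(\zeta)\ge\Phi(\mathlcal{g})$. A second pass of \cref{polygonal.path} applied to $\zeta$ together with \cref{prop:ergodic.thm.path} and \cref{lm:local.bound} transfers this into $\frac{1}{t_m}c(x_m)\ge\frac{1}{1+\upvarepsilon}\ell_\phi(\zeta)-\mathcal{O}(\upvarepsilon)\ge\Phi(\mathlcal{g})-\upepsilon$. When $\Gr$ is abelian, $[\Gr,\Gr]=\{e\}$ and $G_\infty\simeq\g^{\ab}$, so any word decomposition of $x_m$ in the generators $S$ is already a piecewise-linear curve in $G_\infty$ whose $\phi$-length dominates $\Phi(\mathlcal{g})$, and the lower bound follows directly from \cref{prop:ergodic.thm.path} applied in each generator direction, making \eqref{innerness} unnecessary.

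The principal obstacle I anticipate is the lower bound: securing the Chernoff-based equi-Lipschitz control so that Arzel\`a--Ascoli is justified, and carrying out the cascading $\upepsilon$-bookkeeping that links the discrete decomposition of $x_m$, the polygonal approximation of the limit curve $\zeta$, and the target $\Phi(\mathlcal{g})$. The index-matching between the polygonal-path parameter $n$ (internal to \cref{polygonal.path} and \cref{prop:asymptotic.approx}) and the externally-given rescaling $t_m$ is a secondary but unavoidable piece of housekeeping, handled by the choice $n'_m=\lfloor t_m/p\rfloor$.
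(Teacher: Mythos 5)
Your non-abelian branch is sound and, at bottom, follows the paper's own mechanism: the paper also reduces to the polygonal word $\mathbbmtt{y}_n'=y_k^{t_n'}\cdots y_1^{t_n'}$ with $t_n'=\lfloor t_n/p\rfloor$ and controls $\frac{1}{pt_n'}\vert c(x_n)-c(\mathbbmtt{y}_n')\vert$ by a cocycle estimate under \eqref{all} plus Borel--Cantelli, which is exactly what your invocation of \cref{lm:local.bound} with $u_n=v_n=e$ amounts to. Where you diverge is the lower bound: the paper simply applies \cref{prop:asymptotic.approx} to $\mathbbmtt{y}_n'$ (which is already two-sided) and transfers the estimate across the comparison, whereas you apply \eqref{innerness} directly to $x_n$ and re-run the $k_m$-control, Arzel\`a--Ascoli and second polygonal pass for the given sequence. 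That route does work --- it is the lower-bound half of the proof of \cref{prop:asymptotic.approx} with $x_n$ and the scale $t_n$ in place of $\mathbbmtt{y}_n$ and $pn$, and it inherits the same bookkeeping --- but it duplicates work the proposition already encapsulates, since the two-sidedness of the comparison makes the transfer automatic.

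The genuine gap is your abelian case. A word decomposition of $x_m$ in the generators does trace a curve of $\ell_\phi$-length at least $\Phi(\mathlcal{g})$, but that cannot be converted into a lower bound on $c(x_m)$: subadditivity only gives $c(x_m)\le\sum_i c(s_{j_i})\circ\tht_{(\cdots)}$, i.e.\ the inequality in the wrong direction, and the reverse inequality (up to a factor $1+\upvarepsilon$) is precisely what \eqref{innerness} supplies and what you are claiming to drop; moreover \cref{prop:ergodic.thm.path} concerns blocks $y_j^n$ with $n\uparrow+\infty$, not single-letter increments of an arbitrary word, so it cannot be applied ``in each generator direction'' to the decomposition of $x_m$. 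The paper's abelian argument is different and avoids any decomposition of $x_m$: when $\Gr$ is abelian, $G_\infty\cong\g_\infty^{\ab}$ and the geodesic from $\mathlcal{e}$ to $\mathlcal{g}$ is a straight segment, so the polygonal approximation can be taken with $k=1$; then $\mathbbmtt{y}_n'=y_1^{t_n'}$ and the two-sided $\p$-a.s.\ convergence $\frac{1}{n}c(y_1^{n})\to\phi(y_1^{\ab})=p\,\Phi(\mathlcal{g})$ from \cref{prop:subadditive.ergodic.thm} already yields the lower bound, which the same comparison step transfers to $c(x_n)$. Replace your abelian paragraph with this argument; as written, that branch does not close.
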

\begin{proof}
	Set $\varepsilon>0$ to be defined later. Consider $y_k, \dots, y_1 \in \Gr$ and $p$ given by \cref{polygonal.path} for $\Hat{\varepsilon}=\varepsilon/2$ and a $d_\infty$-geodesic curve $\upgamma:[0,1] \to G_\infty$ from $\mathlcal{e}$ to $\mathlcal{g}$. Let  $t_n':=\lfloor t_n/p\rfloor$. Since $\frac{1}{t_n}\bull x_i$ converges to $\mathlcal{g}$. It follows from the Borel-Cantelli Lemma applied to \eqref{all} that there exists, $\p$-a.s., $M'>0$ so that, for every $t_n>M'$,
	\begin{equation}
		\left(\frac{1}{pt_n'}-\frac{1}{t_n}\right)c(x_n) < \frac{p-1}{pt_n'}\upbeta \frac{\|x_n\|_S}{t_n} < \frac{\upepsilon}{4}. \label{eq.xfloor}
	\end{equation}
	
    Let us write $\mathbbmtt{y}_i':=y_k^{t_n'}\dots y_1^{t_n'}$. Since $\lim_{n\uparrow+\infty}d_{\infty}\left(\frac{1}{p t_n'}\bull\mathbbmtt{y}_n' , \mathlcal{g}\right) =0$, one can easily see that there exists $n_1'>0$ such that, for all $t_n'>n_1'$,  \[\frac{1}{pt_n'}d_\infty(x_n,\mathbbmtt{y}_n')<\varepsilon.\] 
    Then there exists $n_2'\geq n_1'$ such that, for all $t_n' >n_2'$, one has
    $\|x_n(\mathbbmtt{y}_n')^{-1}\|_S< pt_n' \varepsilon$.
    
    Let now $t= \upbeta p t_n'\varepsilon$ in \eqref{all}. Since $c(x)$ is identically distributed  to $c(x)\circ\tht_y$, we have by Borel-Cantelli Lemma that there exists, $\p$-a.s., $M'' \geq M' \vee n_2'$ such that, for $t_n > M''$,
	\begin{align}
		\frac{1}{pt_n'}\big|c(x_n)- c(\mathbbmtt{y}_n')\big| &\leq \frac{1}{pt_n'}\max\big\{ c(\mathbbmtt{y}_n'x_n^{-1})\circ\tht_{x_n},c(x_n(\mathbbmtt{y}_n' )^{-1})\circ\tht_{\mathbbmtt{y}_n'} \big\} \nonumber \\
		&< 2\upbeta \varepsilon. \label{eq.ylim}
	\end{align}
	
	Set $\varepsilon\leq \frac{\upepsilon}{8\upbeta}$ and combine \eqref{eq.xfloor} and \eqref{eq.ylim}. We thus obtain that, $\p$-a.s., for all $t_n >M''$,
	\begin{equation} \label{eq:asymp.pt1}
		\left| \frac{1}{t_n}c(x_n) - \frac{1}{p t_n'}c\left(\mathbbmtt{y}_n'\right) \right|<\frac{\upepsilon}{2}.	
	\end{equation}
	
	Consider $\Gr$ abelian, then $\mathbbmtt{y}_n'= (y_k\dots y_1)^{t_n'}$. In fact, it is straightfoward that $k=1$ by the standard approach for commutative groups. Then by \cref{prop:subadditive.ergodic.thm}, there is, $\p$-a.s., $M^\ast\geq M''$  such that, for all $t_n>M^\ast$,
	\begin{equation} \label{eq:asymp.pt2}
	    \frac{1}{p}\left|\frac{1}{t_n'}c\left( \mathbbmtt{y}_n'\right) - \phi(y_1^{\ab})\right|<\frac{\upepsilon}{2}.
	\end{equation}
    
    Furthermore, we have $\phi(y_1^{\ab})/p = \ell_\phi(\upgamma) = \Phi(\mathlcal{g})$. Combining the two previous inequalities with \eqref{eq:asymp.pt1}, we can establish the result for the commutative case with  $M=M^\ast$. Now, let's consider the non-abelian case, assuming that \eqref{innerness} holds true. Notably, by \cref{prop:asymptotic.approx} with $\epsilon/2$ and $M= M'' \vee M_2$, for all $t_n>M$,
    \[\left\vert \frac{1}{p t_n'}c\left(\mathbbmtt{y}_n'\right) -\ell_\phi(\upgamma) \right\vert < \frac{\upepsilon}{2}.\]
    This result, when combined with \eqref{eq:asymp.pt1}, completes the proof.
\end{proof}

We now proceed to demonstrate the proof of the first main theorem.

\begin{proof}[Proof of \cref{shape.thm}]
    We begin by proving the $\p$-a.s. asymptotic equivalence given, which is given by 
    \begin{equation} \label{eq:asymp.equiv}
        \lim_{\|x\| \uparrow +\infty} \frac{\vert c(x) - \Phi(1\bull x) \vert}{\|x\|_S} = 0 \quad \p\text{-a.s.}
    \end{equation}

    Suppose, by contradiction, that \eqref{eq:asymp.equiv} is not true. Consider $\{v_n\}_{n\in \N} \subseteq \Gr$ to be such that $\|v_n\|_S \uparrow +\infty$ as $n\uparrow+\infty$. Let $\mathlcal{S}_r$ stand for $\overline{B_\infty(\mathlcal{e},r)}$, the closure of the $d_\infty$-ball or radius $r>0$ in $G_\infty$. Due to the compactness of $\mathlcal{S}_1$ with respect to $d_\infty$, there exists a subsequence $\{y_n\}_{n\in\N} \subseteq \{v_n\}_{n\in\N}$ such that, for $t_n:=\|y_n\|_S$
    \[\lim_{n\uparrow+\infty} \frac{1}{t_n}\bull y_n = \mathlcal{h} \in \mathlcal{S}_1.\]

    By construction, $\Delta := \bigcup_{n\in\N}\left(\frac{1}{n}\bull\Gr\right)$ is a countable dense subset of $G_\infty$. Fix, for each $\mathlcal{g} \in \Delta$, $\sigma(\mathlcal{g})= \{x_n\}_{n \in \N}$ such that $\frac{1}{n}\bull x_n$ converges to $\mathlcal{g}$ under $d_\infty$ (see \cref{lm:conv.seq.Gr}). Let $\Om_{\mathlcal{g}} \in \F$ be the event with $\p(\Om_{\mathlcal{g}})=1$ given by \cref{lm:lim.phi.g} for $\sigma(\mathlcal{g})$. Hence, $\Om_\Delta := \bigcap_{\mathlcal{g} \in \Delta} \Om_{\mathlcal{g}}$ is such that $\p(\Om_\Delta)=1$.

    The compactness of $\mathlcal{S}_r$ implies the existence of a finite $\Delta_{r,\varepsilon}\subseteq \mathlcal{S}_r \cap \Delta$ such that $\bigcup_{\mathlcal{g}\in\Delta_{r,\varepsilon}}B_\infty(\mathlcal{g}, \varepsilon)$ covers $\mathlcal{S}_r$. Thus there exists $\mathlcal{g} \in \Delta_{1,\varepsilon}$ so that $\mathlcal{h} \in B_\infty(\mathlcal{g}, \varepsilon)$. Consider $\sigma(\mathlcal{g}) = \{x_n\}_{n\in\N}$ as defined above and let $\varepsilon>0$ to be determined later. Then, there exists $m(\varepsilon)>0$ so that, for all $t_n>m(\varepsilon)$,
    \begin{align*}
        d_\infty\left(\frac{1}{t_n}\bull x_{t_n}, \frac{1}{t_n}\bull y_n\right) &\leq d_\infty\left(\frac{1}{t_n}\bull x_{t_n}, \mathlcal{g}\right) + d_\infty\left(\frac{1}{t_n}\bull y_n, \mathlcal{h}\right) + d_\infty(\mathlcal{g}, \mathlcal{h})\\
        &\leq 3\varepsilon.
    \end{align*}
    and $d_S(x_{t_n},y_n) < 7\varepsilon=: \eta_\varepsilon$.

    Let $M_1(\mathlcal{g}, \eta_\varepsilon)>0$ be given by \cref{lm:local.bound} on $\Uptheta_{\mathlcal{g}} \in \F$ with $\p(\Uptheta_{\mathlcal{g}})=1$ satisfying, for all $t_n> M_1(\mathlcal{g}, \eta_\varepsilon)$  and $u_n \in B_S(x_{t_n}, t_n\eta_\varepsilon)$,
    \[|c(x_{t_n})-c(u_n)|< 14\|y_n\|_S\upbeta\varepsilon.\]
    
    Fix, for $M(\mathlcal{g}, \varepsilon)$ given by \cref{lm:lim.phi.g},  
    \[\widehat{M}(\varepsilon) := \max_{\mathlcal{g}\in \Delta_{\mathlcal{S}_1,\varepsilon}}\lbrace M(\mathlcal{g}, \varepsilon), M_1(\mathlcal{g},\eta_\varepsilon)\rbrace,\]
    which is finite on $\Uptheta_\Delta:= \bigcap_{\mathlcal{g} \in \Delta} (\Om_\Delta \cap \ \Uptheta_{\mathlcal{g}})$ with $\p(\Uptheta_\Delta)=1$. 
    
    Set $\hat{m}(\varepsilon)>m(\varepsilon)$ to be such that $\left|\Phi(\frac{1}{t_n}\bull y_n) - \Phi(\mathlcal{h})\right|<\varepsilon$ for all $n>\hat{m}(\varepsilon)$. Hence, for all $t_n > \widehat{M}(\varepsilon) \vee \hat{m}(\varepsilon)$ on $\Uptheta_\Delta$,
    \begin{align*}
        \frac{|c(y_n) -\Phi(1\bull y_n)|}{\|y_n\|_S} &\leq  \frac{1}{t_n}|c(y_n)-c(x_{t_n})| + \left| \frac{1}{t_n}c(x_n) - \Phi(\mathlcal{g}) \right|  \\ & \hspace{55pt}+ |\Phi(\mathlcal{g}) - \Phi(\mathlcal{h})|+ \left| \Phi(\mathlcal{h}) -\Phi\left(\frac{1}{t_n}\bull y_n\right)\right| \\
        &\leq (14\upbeta + 3) \varepsilon,
    \end{align*}
	which contradicts the above assumption proving that \eqref{eq:asymp.equiv} holds true.
 
    It remains to show how $\frac{1}{n}d_\omega$ converges to $d_\phi$ in the asymptotic cone. Recall that $d_\omega(x,y)= \big(c(yx^{-1})\circ\tht_{x}\big)(\omega)$. Consider now any given $\mathlcal{h}, \mathlcal{h}' \in G_\infty$ and $\{u_n\}_{n \in \N}$ a sequence with $\{t_n'\}_{n\in\N} \subseteq \N$ such that $t_n' \uparrow +\infty$ and $\frac{1}{t_n'}\bull u_n \to \mathlcal{h'h}^{-1}$. Then $\|u_n\|_S/t_n'$ converges to $d_\infty(\mathlcal{h}, \mathlcal{h}')$ and $\frac{1}{\|u_n\|_S}\bull u_n$ converges as above. In particular, one can fix any $r'>d_\infty(\mathlcal{h}, \mathlcal{h}')$ to find $\mathtt{k}_{r'}>0$ such that  $\|u_n\|_S/t_n' < r'$ for all $t_n'>\mathtt{k}_{r'}$.
    
    Let us define $\mathtt{K}_{r'} = (14\upbeta +3)r'$ and $m_{r'}(\varepsilon) = \hat{m}(\varepsilon) \vee \mathtt{k}_{r'}$. The asymptotic equivalence \eqref{eq:asymp.equiv} implies the existence of a random $\widehat{M}(\varepsilon)>0$ for $ \varepsilon \in (0, \frac{1}{14\upbeta+3})$ such that, for all $t_n' > \widehat{M}(\varepsilon) \vee m_{r'}(\varepsilon)$ on $\Uptheta_\Delta$,
    \[\left|\frac{1}{t_n'}c(u_n)  - \Phi\big(\mathlcal{h'h}^{-1}\big) \right|< \mathtt{K}_{r'} \varepsilon.\]

    Due to the fact that $\tht$ is a p.m.p. group action, one can repeat all arguments above also in \cref{prop:asymptotic.approx,lm:lim.phi.g} to obtain $\widehat{M}\big(\varepsilon, \sigma(\mathlcal{g})\big)$ and $\Uptheta_\Delta\big(\sigma(\mathlcal{g})\big)$ for each $\sigma(\mathlcal{g}) = \{x_n\}_{n \in \N}$ with $\mathlcal{g}\in G_\infty$ and $\p\big(\Uptheta_\Delta(\mathlcal{g})\big)=1$ so that, for all converging $\frac{1}{t_n'}\bull u_n$ as above and every $t_n' > \widehat{M}\big(\varepsilon, \sigma(\mathlcal{g})\big) \vee m_{r'}(\varepsilon)$ on $\Uptheta_\Delta\big(\sigma(\mathlcal{g})\big)$,
    \begin{equation} \label{eq:c.phi.1}
        \left|\frac{1}{t_n'}c(u_n)\circ\tht_{x_{t_n'}}  - \Phi(\mathlcal{h'h}^{-1}) \right|< \mathtt{K}_{r'} \varepsilon.
    \end{equation}

    Let now $\{v_n\}_{n\in\N}$ be a sequence that $\frac{1}{n} \bull v_n \to \mathlcal{h}$ and choose $r\geq d_\infty(\mathlcal{e}, \mathlcal{h})$. Fix $\mathlcal{g} \in \Delta_{r,\varepsilon}$ so that $\mathlcal{g} \in B_\infty(\mathlcal{h}, \varepsilon)$. By \cref{lm:local.bound}, one can find a random $\thickbar{M}_{r,r'}\big(\varepsilon,\sigma(\mathlcal{g})\big)> 0$ and $\Upxi_{\sigma(\mathlcal{g})}$ with $\p\big(\Upxi_{\sigma(\mathlcal{g})}\big) =1$ such that, for all $n> \thickbar{M}_{r,r'}\big(\varepsilon,\sigma(\mathlcal{g})\big)$ on $\Upxi_{\sigma(\mathlcal{g})}$,
    \begin{equation} \label{eq:c.phi.2}
        |c(w_n)\circ\tht_{x_n} - c(w_n)\circ\tht_{v_n}| < 2 \upbeta n \varepsilon,
    \end{equation}
    where $\{w_n\}_{n\in\N}$ is any convergent sequence $\frac{1}{n} \bull w_n \to \mathlcal{w}\in B_\infty(\mathlcal{e}, r')$. Let us fix 
    \[ 
        \Upxi_\Delta := \bigcap_{\mathlcal{g} \in \Delta}\left( \Upxi_{\sigma(\mathlcal{g})} \cap \Uptheta_\Delta\big(\sigma(\mathlcal{g}) \big)\right),
    \]
    and set 
    \[
        M_{r,r'}(\varepsilon):= \max_{\mathlcal{g} \in \Delta_{r,\varepsilon}}\left\{ \thickbar{M}_{r,r'}\big(\varepsilon,\sigma(\mathlcal{g})\big), \widehat{M}\big(\varepsilon, \sigma(\mathlcal{g})\big) \right\}.
    \]
    Then $M_{r,r'}(\varepsilon)$ is finite on $\Upxi_\Delta$ and  $\p\big(\Upxi_\Delta\big) = 1$. It follows from \eqref{eq:c.phi.1} and \eqref{eq:c.phi.2} that, for all $t_n'> M_{r,r'}(\varepsilon) \vee m_{r'}(\varepsilon)$ on $\Upxi_\Delta$,
    \[
        \left|\frac{1}{t_n'}c(u_n)\circ\tht_{v_{t_n'}}  - \Phi(\mathlcal{h'h}^{-1}) \right|< (\mathtt{K}_{r'} + 2\upbeta) \varepsilon.
    \]
    This establishes the $\p$-a.s. convergence of $\frac{1}{t_n'}d_\omega(v_{t_n'},u_nv_{t_n'})$ to $\Phi\big(\mathlcal{h'h}^{-1}\big) =d_\phi(\mathlcal{h},\mathlcal{h}')$ for $\omega\in \Upxi_\Delta$ as $n\uparrow+\infty$. Observe that the bi-Lipschitz equivalence is a straightforward consequence of \cref{lm:phi.bi-Lipschitz}, and this completes the proof.
\end{proof}

\subsection{Proof of the Second Theorem} \label{sec:virt.nilpotent.proofs}

With the first main theorem now established, we have determined the asymptotic shape for finitely generated torsion-free nilpotent groups. The objective of this subsection is to extend this result to a finitely generated virtually nilpotent group $\Gr$.

Recall that the nilpotent subgroup $N \unlhd \Gr$ has a finite index $\kappa=[\Gr:N]$, and for each coset $N_{(j)} =z_{(j)}N \in \Gr/N$, we designate a representative $z_{(j)} \in N_{(j)}$. Also, define $\uppi_N( x ) = z_{(j)}^{-1}x$ for all $x\in N_{(j)}$ and $j \in \{1, \dots, \kappa\}$.

We commence by presenting results concerning the properties of p.m.p. ergodic group actions of $\Gr$ with respect to $N$ and $\Gr'$. We adopt the notation $\cup \mathcal{A} := \bigcup_{A \in\mathcal{A}}A$.

\begin{lemma} \label{lm:erg.finite.index}
    Let $\Gr$ be a discrete group $\Gr$ and $N \unlhd \Gr$ a finite normal subgroup with finite index $[\Gr:N]=\kappa$. Consider that $\tht: \Gr \curvearrowright (\Om, \F, \p)$ is a p.m.p. ergodic group action. Then there exists a finite $\mathfrak{B}_N \subseteq \F$ such that, for all $B \in \mathfrak{B}_N$, $\p(B) \geq 1/\kappa$ and $\tht\big\vert_N,$ the restriction of $\tht$ on $N$, induces a p.m.p. ergodic group action on $\big(B, \F_{\cap B}, \p(~\cdot \mid B)\big)$. Furthermore, $|\mathfrak{B}_N|\leq \kappa$ and $\p(\cup\mathfrak{B}_N)=1$. 
\end{lemma}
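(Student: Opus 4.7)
The key observation is that $N$ being normal of finite index induces a natural action of the finite quotient $\Gr/N$ on the sub-$\sigma$-algebra of $N$-invariant events. I would first define
\[
	\F_N := \{E \in \F : \tht_n^{-1}E = E \text{ for every } n \in N\}.
\]
A short computation exploiting normality shows $\tht_n^{-1}\tht_g^{-1}E = \tht_g^{-1}\tht_{gng^{-1}}^{-1}E = \tht_g^{-1}E$ whenever $E\in \F_N$ and $g \in \Gr$, so $\tht$ restricts to an action of $\Gr$ on $\F_N$ that is trivial on $N$ and therefore factors through $\Gr/N$. This induced action is ergodic on $(\Om,\F_N,\p|_{\F_N})$, because a $\Gr/N$-invariant element of $\F_N$ is automatically $\Gr$-invariant in $\F$ and hence $\p$-trivial by the hypothesis on $\tht$.

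The central step is to deduce from this finite-group ergodicity that $\F_N$ is essentially atomic, with atoms of mass at least $1/\kappa$. Given any $A \in \F_N$ with $\p(A)>0$, the set $\bigcup_{gN\in\Gr/N}\tht_g^{-1}A$ lies in $\F_N$ and is $\Gr/N$-invariant of positive measure, hence has full measure. Being a union of at most $\kappa$ translates of $A$, this forces $\p(A) \geq 1/\kappa$. Consequently no non-null element of $\F_N$ can have mass below $1/\kappa$, and a standard maximality argument then selects a finite collection $\mathfrak{B}_N$ of pairwise disjoint atoms of $\F_N$ whose union has full measure; since their masses each exceed $1/\kappa$ and sum to $1$, we get $|\mathfrak{B}_N|\leq \kappa$ automatically.

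Finally, for each atom $B\in\mathfrak{B}_N$, the action $\tht|_N$ restricts to a well-defined p.m.p.\ action on $(B,\F_{\cap B},\p(\cdot\mid B))$: since $B \in \F_N$ it is $N$-invariant, and $N$-invariance of $\p$ passes to the normalized conditional probability. Ergodicity on $B$ is then immediate, as any $N$-invariant subset of $B$ belongs to $\F_N$ and is contained in the atom $B$, so has mass $0$ or $\p(B)$. I anticipate that the only nontrivial point is the atomicity argument in the middle paragraph; once the role of normality of $N$ has been identified to pass to the quotient action, the rest is a matter of unwinding definitions and applying a maximality argument.
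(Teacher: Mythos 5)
Your proof is correct and follows essentially the same route as the paper: the lower bound $\p(A)\geq 1/\kappa$ for $N$-invariant sets via the $\Gr$-invariance of the union of coset translates, followed by an atomic decomposition of the $N$-invariant $\sigma$-algebra and ergodicity of $\tht\big\vert_N$ on each piece by atomicity. The only (cosmetic) difference is that the paper assembles $\mathfrak{B}_N$ as the coset translates $\{z_{(j)}\cdot A_0\}$ of a single minimal-measure $N$-invariant set, whereas you take a maximal disjoint family of atoms; these yield the same decomposition up to null sets.
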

\begin{proof}
    Set $\mathfrak{A}_N \subseteq \F$ to be the family of all non-empty $N$-invariant events under $\tht$. Then, for all $A \in \mathfrak{A}_N$, \[\p\left(\bigcup_{j =1}^{\kappa}z_{(j)}\cdot A\right) =1\] which implies $\p(A) \geq 1/\kappa$. Observe that $\mathfrak{A}_N$ is closed under countable unions and non-empty countable intersections. Let us fix $A_0 \in \mathfrak{A}_N$  such that $ \p(A_0)= \inf_{A \in \mathfrak{A}}\p(A)$. Define $\mathfrak{B}_N = \{z_{(j)}\cdot A_0\}_{j=1}^{\kappa}$. 
    
    Since $N$ is a normal subgroup of $\Gr$, $N$ acts ergodically on $\big(B, \F_{\cap B}, \p(~\cdot \mid B)\big)$ for all $B \in \mathfrak{B}_N$ and it inherits the measure preserving property.
\end{proof}

We use \cref{lm:erg.finite.index} to write $(B,\F_B,\p_B)$ with $\F_B:=\F_{\cap B} = \{E \cap B : E \in \F\}$ and $\p_B(E):=\p(E \mid B)$ for each $B \in \mathfrak{B}_N$. Let us denote by $[\omega] = \tor N \cdot \omega$, the orbit of $\omega\in B$ under the action on $\tor N$. Set
\[\big([B], \F_B', \p_B'\big) := (B, \F_B, \p_B)/\tor N\]
where $\F_B' = \big\{[E]:E \in \F_B\big\}$ and $\p_B'\big([E]\big)$ is the induced probability measure $(\tor N)_\ast\p_B(E) = \p_B\left(\cup[E]\right)$. Let us fix $\upupsilon_x = \upupsilon_{\llbracket x \rrbracket} \in \llbracket x \rrbracket$ for each $\llbracket 
x \rrbracket \in \Gr'$. Define $\theta: \Gr' \curvearrowright \big([B], \F_B', \p_B'\big)$ so that 
\[
    \theta_{\llbracket x \rrbracket}\big([\omega]\big) = \big[\tht_{\upupsilon_x}(\omega)\big].
\]

\begin{lemma} \label{lm:erg.torsion.quotient}
    Let $\mathfrak{B}_N$ be the set obtained in \cref{lm:erg.finite.index}. Then, for each $B \in \mathfrak{B}_N$, $\theta: \Gr' \curvearrowright \big([B], \F_B', \p_B'\big)$ is a p.m.p. ergodic group action.
\end{lemma}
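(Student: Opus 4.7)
The statement requires verifying four things: that $\theta_{\llbracket x\rrbracket}$ is a well-defined map on the quotient $[B]$, that the family $\{\theta_{\llbracket x\rrbracket}\}_{\llbracket x\rrbracket\in\Gr'}$ satisfies the group action axioms, that each $\theta_{\llbracket x\rrbracket}$ preserves $\p_B'$, and that the action is ergodic. The plan is to handle the first three by direct unwinding of definitions, and to deduce ergodicity by lifting a $\theta$-invariant class to a $\tht\vert_N$-invariant subset of $B$ and invoking \cref{lm:erg.finite.index}.

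For well-definedness, I would take $\omega'=\tht_z(\omega)$ with $z\in \tor N$ and compute
\[
\tht_{\upupsilon_x}(\omega')=\tht_{\upupsilon_x z \upupsilon_x^{-1}}\bigl(\tht_{\upupsilon_x}(\omega)\bigr).
\]
Because $\tor N$ is characteristic (hence normal) in the nilpotent $N$ and $\upupsilon_x\in N$, the conjugate $\upupsilon_x z\upupsilon_x^{-1}$ belongs to $\tor N$, so $[\tht_{\upupsilon_x}(\omega')]=[\tht_{\upupsilon_x}(\omega)]$; the same conjugation trick shows independence from the choice of coset representative $\upupsilon_x$. The group action axioms then reduce to observing that $\upupsilon_x\upupsilon_y$ and $\upupsilon_{xy}$ both lie in $\llbracket xy\rrbracket$, so they differ by an element of $\tor N$, giving $\theta_{\llbracket x\rrbracket}\circ\theta_{\llbracket y\rrbracket}=\theta_{\llbracket xy\rrbracket}$ by what has already been shown, while $\theta_{\llbracket e\rrbracket}=\mathrm{id}$ upon taking $\upupsilon_e=e$.

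For measure preservation, the identity $\cup\,\theta_{\llbracket x\rrbracket}^{-1}([E])=\tht_{\upupsilon_x}^{-1}\bigl(\cup[E]\bigr)$ reduces the claim to the p.m.p. property of $\tht\vert_N$ on $(B,\F_B,\p_B)$ provided by \cref{lm:erg.finite.index}. For ergodicity, suppose $[E]\in\F_B'$ is $\theta$-invariant and set $E^{\ast}:=\cup[E]\subseteq B$. The saturation $E^{\ast}$ is automatically $\tor N$-invariant, and $\theta$-invariance of $[E]$ forces $\tht_{\upupsilon_x}(E^{\ast})=E^{\ast}$ for every $\llbracket x\rrbracket\in \Gr'$. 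Since every $n\in N$ factors as $n=\upupsilon_{\llbracket n\rrbracket}z$ with $z\in\tor N$, $E^{\ast}$ is invariant under the whole action $\tht\vert_N$, and the ergodicity granted by \cref{lm:erg.finite.index} yields $\p_B(E^{\ast})=\p_B'([E])\in\{0,1\}$.

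The main obstacle is the well-definedness check, because it is the only place where normality of $\tor N$ inside $N$ is essentially used, and because every subsequent step either reuses its conclusion or is a routine unwinding. Once well-definedness is secured, ergodicity follows cleanly from the lifting-and-saturation argument above.
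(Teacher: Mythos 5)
Your proposal is correct and follows essentially the same route as the paper: measure preservation is pulled back from the p.m.p.\ action of $\tht\big\vert_N$ on $(B,\F_B,\p_B)$, while ergodicity comes from saturating a $\theta$-invariant class to a $\tor N$-saturated, $N$-invariant subset of $B$ (using normality of $\tor N \unlhd N$, which is exactly where the paper also invokes it) and then applying \cref{lm:erg.finite.index}. The only difference is that you spell out the well-definedness and group-action axioms via the conjugation identity, which the paper leaves implicit.
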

\begin{proof}
    The measure preserving property is immediately inherited from $\tht$. Let $\tht_v(\omega) = v\cdot\omega$. Due to the normality of $\tor N \unlhd N$, for all $A \in \F_B$ and each $v' \in v.\tor N$, 
    \[\cup[v \cdot A] = v'\cdot\left(\cup[A]\right).\]
    
    Hence, if for all $v.\tor N \in \Gr'$, one has $[v \cdot A] = [A]$. Then, for all $x \in N$
    \[x\cdot \left(\cup[A]\right)=\cup[A].\]
    It follows from the ergodicity of $\tht:N \curvearrowright (B, \F_B, \p_B)$ that $\p_B'\big([A]\big)\in\{0,1\}$, which is the desired conclusion.
\end{proof}

\begin{remark} \label{rmk:c.prime}
    Recall that definition \eqref{eq:def.c.prime} determines
    \[c'\big(\llbracket x \rrbracket\big) := \max_{\substack{y \in \llbracket x \rrbracket\\ z \in \tor N}}c(y)\circ\tht_z.\]

    It is straightforward to see that $c'$ is compatible with the probability space $\big([B],\F_b', \p_B'\big)$ for each $B \in \mathfrak{B}_N$. Futhermore, it is a subadditive cocycle associated with $\theta$. Additionally, $c'$ is well defined on $(B,\F_B,\p_B)$. Let $\Om':= \cup \mathfrak{B}_N$ and $\p(\Om')=1$. Consequently, one can investigate $c'$ on $([B],\F_B',\p_B')$, and the results can be naturally extended $\p$-a.s. to $(\Om,\F,\p)$.
\end{remark}

In preparation for the asymptotic comparison between cocycles $c$ and $c'$, the following lemmas provide essential insights into their respective properties and relationships.

\begin{lemma} \label{lm.T.bounded.nilpotent.as}
    Let $\varepsilon, r>0$ and consider a subadditive cocycle $c$ that satisfies condition \eqref{all}. Then there exists, $\p$-a.s., $M_N=M_N(\varepsilon,r)>0$ such that, for all $n > M_N$ and every $x \in B_S(e,rn)$,
    \[
        \big|c(x) - c\big(\uppi_N( x )\big)\big| < \varepsilon n.
    \]
\end{lemma}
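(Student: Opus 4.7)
The plan is to reduce the difference $|c(x)-c(\uppi_N(x))|$ to cocycle values of a uniformly bounded set of group elements, and then to control those via a union bound together with hypothesis \eqref{all} and the polynomial volume growth.

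First I would use the defining relation $x = z_{(j)}\cdot\uppi_N(x)$ for $x\in N_{(j)}$ and apply the cocycle inequality \eqref{eq:subadditivity} in both directions:
\[
c(x) \leq c(\uppi_N(x)) + c(z_{(j)})\circ\tht_{\uppi_N(x)}, \qquad c(\uppi_N(x)) \leq c(x) + c(z_{(j)}^{-1})\circ\tht_{x}.
\]
Combining these two gives the pointwise bound
\[
\big|c(x)-c(\uppi_N(x))\big| \leq \max\Big\{c(z_{(j)})\circ\tht_{\uppi_N(x)},\ c(z_{(j)}^{-1})\circ\tht_{x}\Big\}.
\]
Thus the difference is controlled by the values of $c$ on the fixed finite set $\mathcal{R}=\{z_{(j)}^{\pm 1}: 1\le j\le\kappa\}$, translated by group elements lying in $B_S(e,rn+C_\mathcal{R})$, where $C_\mathcal{R}:=\max_{z\in\mathcal{R}}\|z\|_S$.

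The second step is a union bound. Since $\tht$ is p.m.p., each $c(z)\circ\tht_y$ has the same distribution as $c(z)$, and by \eqref{all} one has $\p(c(z)\geq \varepsilon n)\leq g(\varepsilon n)\in\mathcal{O}(1/n^{2D+\upkappa})$ once $n$ is large enough that $\varepsilon n >\upbeta C_\mathcal{R}$. Using the polynomial growth bound $|B_S(e,rn+C_\mathcal{R})|\leq \mathtt{k}(rn+C_\mathcal{R})^D$ together with $|\mathcal{R}|\leq 2\kappa$, I would obtain
\[
\p\!\left(\sup_{\substack{z\in\mathcal{R}\\ y\in B_S(e,rn+C_\mathcal{R})}} c(z)\circ\tht_y \geq \varepsilon n\right) \leq 2\kappa\,\mathtt{k}(rn+C_\mathcal{R})^D\, g(\varepsilon n) \in \mathcal{O}\!\left(\tfrac{1}{n^{D+\upkappa}}\right).
\]

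Finally, since $D\geq 1$ and $\upkappa>1$, the series $\sum_{n\in\N} n^{-(D+\upkappa)}$ converges. Borel--Cantelli then yields a $\p$-a.s.\ finite random $M_N=M_N(\varepsilon,r)>0$ such that, for all $n>M_N$, the displayed supremum is strictly less than $\varepsilon n$. For any $x\in B_S(e,rn)$, the element $\uppi_N(x)$ lies in $B_S(e,rn+C_\mathcal{R})$, so the pointwise bound of the first step gives $|c(x)-c(\uppi_N(x))|<\varepsilon n$, as desired. I do not anticipate any serious obstacle; the only delicate point is ensuring the shift $\tht_{\uppi_N(x)}$ lies in a ball of controlled radius, which is handled by the additive constant $C_\mathcal{R}$ from the finite set of coset representatives.
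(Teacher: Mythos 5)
Your proposal is correct and follows essentially the same route as the paper: the two-sided subadditivity bound via the coset representatives $z_{(j)}^{\pm1}$, a union bound over the polynomially many shifts in a ball using \eqref{all} and measure preservation, giving a summable $\mathcal{O}(1/n^{D+\upkappa})$ tail, and Borel--Cantelli. Your additive constant $C_\mathcal{R}$ accounting for $\uppi_N(x)$ possibly lying just outside $B_S(e,rn)$ is a minor refinement of a detail the paper leaves implicit.
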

\begin{proof}
    It follows from subadditivity that, for $x \in N_{(j)}$,
    \[
        |c(x) - c(z_{(j)}^{-1}x)| \leq \max\left\{c(z_{(j)})\circ\tht_{z_{(j)}^{-1}x},\; c(z_{(j)}^{-1})\circ\tht_{x}\right\} \quad \p\text{-a.s.}
    \]
    for every $j \in \{1,\dots, \kappa\}$. Let $\mathtt{m}_\kappa=\max\left\{\|z_{(j)}\|_S:1 \leq j \leq \kappa\right\}$. Hence, one has by \eqref{all} and a $\mathtt{C}>0$ that
    \begin{align*}
        \p\left(\max_{x \in B_S(e,rn)}\big\{ |c(x)-c(\uppi_N(x))| \big\} \geq \varepsilon n \right) &\leq |B_S(e,rn)|\sum_{j=1}^\kappa\p\left(c(z_{(j)}^{\pm 1}) \geq \varepsilon n\right)\\
        & \leq \mathtt{C}r^Dn^D g(n \varepsilon) \in \mathcal{O}_{\varepsilon, r}\big(1/n^{D+\upkappa}\big)
    \end{align*}
    for $n > \upbeta \mathtt{m}_\kappa/\varepsilon$. The result is derived through the application of the Borel-Cantelli Lemma.
\end{proof}

\begin{lemma} \label{lm.T.bounded.torsion.as}
    Let $\varepsilon, r>0$ and consider a subadditive cocycle $c$ that satisfies condition \eqref{all}. Then there exists, $\p$-a.s., $M_q=M_q(\varepsilon,r)>0$ such that, for all $n > M_q$ and every $x \in B_S(e,rn)$,
    \[
        \left|c\left(x_1\right)\circ\tht_{y_1} - c(x_2)\circ\tht_{y_2}\right| < \varepsilon n
    \]
    where $x_1,x_2 \in \llbracket x\rrbracket$ and $y_1,y_2 \in \tor N$.

\end{lemma}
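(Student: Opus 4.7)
The strategy follows that of \cref{lm.T.bounded.nilpotent.as}: dominate the difference by a supremum of cocycle values over a polynomially-sized set of group elements, and conclude via condition \eqref{all} and Borel-Cantelli. The additional work is purely algebraic, exploiting that $\tor N$ is finite (as the torsion subgroup of a finitely generated nilpotent group) and normal in $N$.

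I would write $u := \uppi_N(x)$, set $\mathtt{m}_\tor := \max_{z \in \tor N}\|z\|_S < +\infty$, and express $x_1 = u t_1$, $x_2 = u t_2$ with $t_1, t_2 \in \tor N$. The key algebraic step is to realize the difference between the two cocycle values through cocycles on bounded-norm elements of $\tor N$. Setting $a := y_1 y_2^{-1} \in \tor N$ and $b := x_1\, a\, x_2^{-1}$, a direct computation gives $b = u(t_1 y_1 y_2^{-1} t_2^{-1})u^{-1}$; since $\tor N$ is normal in $N$, one has $b \in \tor N$, and hence $\|a\|_S, \|b\|_S \leq \mathtt{m}_\tor$. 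Two applications of subadditivity to the identity $x_1 = b\, x_2\, a^{-1}$ followed by shifting by $\tht_{y_1}$ then produce, using the crucial identity $a^{-1}y_1 = y_2$,
\[
c(x_1)\circ\tht_{y_1} \leq c(a^{-1})\circ\tht_{y_1} + c(x_2)\circ\tht_{y_2} + c(b)\circ\tht_{x_2 y_2},
\]
and a symmetric decomposition $x_2 = b'\, x_1\, (a')^{-1}$ with $a' = y_2 y_1^{-1}$ and $b' \in \tor N$ yields the reverse inequality. Combining both,
\[
\bigl|c(x_1)\circ\tht_{y_1} - c(x_2)\circ\tht_{y_2}\bigr| \leq 2 \sup_{\substack{z \in \tor N \\ w \in B_S(e,\, rn + \mathtt{m}_\kappa + 2\mathtt{m}_\tor)}} c(z)\circ\tht_w,
\]
uniformly over $x \in B_S(e, rn)$ and the allowed choices of $x_1, x_2, y_1, y_2$.

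Since $\tht$ is $\p$-preserving, $c(z)\circ\tht_w$ has the same law as $c(z)$, and a union bound over the $|\tor N|\cdot|B_S(e, rn + \mathtt{m}_\kappa + 2\mathtt{m}_\tor)| = \mathcal{O}(n^D)$ pairs $(z,w)$ combined with \eqref{all} gives
\[
\p\!\left(2\sup_{z,w} c(z)\circ\tht_w \geq \varepsilon n\right) \in \mathcal{O}\!\left(n^D\, g(\varepsilon n/2)\right) = \mathcal{O}\!\left(1/n^{D+\upkappa}\right),
\]
once $n > 2\upbeta \mathtt{m}_\tor/\varepsilon$ so that the tail estimate applies. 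Since $\upkappa > 1$ the right-hand side is summable, and Borel-Cantelli provides the desired random $M_q$.

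The main obstacle is the algebraic matching in the second step: the decomposition $x_1 = b\, x_2\, a^{-1}$ must be chosen so that, after shift, the middle cocycle becomes exactly $c(x_2)\circ\tht_{y_2}$ (forcing $a = y_1 y_2^{-1}$) and, simultaneously, both $a$ and $b$ remain in $\tor N$. The latter genuinely relies on normality of $\tor N$ in $N$; without it, $b$ would only be guaranteed to lie in $N$, its norm could grow with $\|x\|_S$, and the tail argument would no longer close.
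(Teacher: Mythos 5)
Your proof is correct and follows essentially the same route as the paper: the decomposition $x_1 = b\,x_2\,a^{-1}$ with $a = y_1y_2^{-1}$ and $b\in\tor N$ (guaranteed by normality of $\tor N$ in $N$) is exactly the paper's $x_1 = v_2 x_2 y_3$ with $y_3 = y_2y_1^{-1}$, and both arguments then bound the difference by a supremum of cocycle values at torsion elements shifted over a polynomially large ball, concluding via \eqref{all}, a union bound, and Borel--Cantelli. The only cosmetic difference is bookkeeping: you track the shift set explicitly as $B_S(e, rn+\mathtt{m}_\kappa+2\mathtt{m}_\tor)$, getting a bound of order $n^D g(\varepsilon n/2)$, while the paper takes the supremum over $x_1,x_2,y_1,y_2$ directly and settles for $n^{2D}g(\varepsilon n)$; both are summable.
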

\begin{proof}
    Since $\tor N$ is a normal subgroup of $N$, the exists $v_2\in \tor N$ such that $x_1=v_2x_2y_3$ with $y_3= y_2y_1^{-1}$. Thus
    \begin{align*}
        c(x_1)\circ\tht_{y_1} &\leq c(y_3)\circ \tht_{y_1} + c(v_2x_2)\circ\tht_{y_2}\\
        &\leq c(y_3)\circ \tht_{y_1} + c(x_2)\circ\tht_{y_2} + c(v_2)\circ\tht_{x_2y_2} ~~~\p\text{-a.s.}
    \end{align*}
    We apply the same reasoning for $c(x_2)\circ\tht_{y_2}$ obtaining that
    \[
        \left|c(x_1)\circ\tht_{y_1} - c(x_2)\circ\tht_{y_2}\right| \leq \max_{y,z \in \tor N}\{c(y)\circ\tht_z\} + \max_{y,z \in \tor N}\{c(y)\circ \tht_{x_1z}\} \quad \p\text{-a.s.}
    \]
    By \eqref{all} and the finitness of $\tor N$, there exists a constant $C' >0$ such that
     \begin{eqnarray*}
        \mathbb{P}\left( \sup\limits_{\substack{x\in B_S(e,rn)\\x_1,x_2 \in \llbracket x \rrbracket \\ y_1,y_2 \in \tor N}}\hspace{-3pt} \big|c(x_1)\circ\tht_{y_1} - c(x_2)\circ\tht_{y_2}\big| \geq \varepsilon n \right) &\leq& 2|\tor N|^4 {|B_S(e,rn)|^2} g(\varepsilon n)\\
        &\leq& C'{(rn)^{2D}}g(\varepsilon n) \in \mathcal{O}_{\varepsilon,r}(1/n^\upkappa)
     \end{eqnarray*}
     for $n > \upbeta \max\{ \|z\|_S: z \in \tor N\}/\varepsilon$. The desired conclusion follows from an application of Borel-Cantelli Lemma.
\end{proof}

Let us define, for all $\llbracket x \rrbracket \in \Gr'$,
\[
    \big\vert \llbracket x \rrbracket \big\vert_S^{\inf} := \min_{1 \leq i,j\leq \kappa} ~\min_{y \in (z_{(j)} .\llbracket x \rrbracket.z_{(i)}^{-1})} \|y\|_S,
\]
and
\[
    \big\vert \llbracket x \rrbracket \big\vert_S^{\sup} := \max_{1 \leq i, j\leq \kappa}  ~\max_{y \in (z_{(j)} .\llbracket x \rrbracket.z_{(i)}^{-1})} \|y\|_S.
\]
Set \[\mathtt{m}_{\kappa,q} := \max_{1 \leq i, j \leq \kappa} ~\max_{z \in (z_{(j)} .\llbracket e \rrbracket. z_{(i)}^{-1})} \|z\|_S.\]

Thus, one has, for all $y \in z_{(j)}.\llbracket x \rrbracket$ with $j \in \{1, \dots, \kappa\}$,
\begin{equation} \label{eq:equiv.nom.inf.sup}
    \big\vert \llbracket x \rrbracket \big\vert_S^{\inf} \leq \|y\|_S \leq \big\vert \llbracket x \rrbracket \big\vert_S^{\sup} \leq \big\vert \llbracket x \rrbracket \big\vert_S^{\inf} + 2\cdot\mathtt{m}_{\kappa,q}.
\end{equation}

By the same arguments employed in Section \ref{sec:norms.and.mean}, the discrete norm
\begin{equation} \label{eq:norm.inf.abelian}
    \big\vert \llbracket x \rrbracket\big\vert_S^{\ab} := \inf_{\llbracket y \rrbracket \in \big(\llbracket x \rrbracket. [\Gr',\Gr']\big)} \big\vert \llbracket y \rrbracket \big\vert_S^{\inf}
\end{equation}
exhibits the same properties as $\|-\|_S^{\ab}$ when $\llbracket S \rrbracket$ is a generating set of $\Gr'$.

Consider $\sigma(\mathlcal{g}) = \big\{\llbracket x \rrbracket_n\big\}_{n\in\N} \subseteq \Gr'$ to be the sequences fixed for each $\mathlcal{g}\in G_\infty$ in the proof of  \cref{shape.thm}. Set $x_n:= \upupsilon_{\llbracket x \rrbracket_n}$ with $\upupsilon$ defined by the group action $\theta$. Then 
\[\llbracket x_n \rrbracket= \llbracket x \rrbracket_n\]
when $\sigma(\mathlcal{g})$ is given. Let us write $\upupsilon_\sigma(\mathlcal{g}) = \{x_n\}_{n \in \N}$ for each $\sigma(\mathlcal{g}) = \big\{\llbracket x \rrbracket_n\big\}_{n\in\N} \subseteq \Gr'$. Also, one can easily verify that
    \[\lim_{n\uparrow+\infty}\frac{\|x_n\|_S}{n} = \lim_{n\uparrow+\infty}\frac{\big\vert\llbracket x_n \rrbracket \big\vert_S^{\inf}}{n} = \lim_{n\uparrow+\infty}\frac{\big\vert\llbracket x_n \rrbracket \big\vert_S^{\sup}}{n} = d_\infty(\mathlcal{e},\mathlcal{g}) .\]

The proposition below shows us that $c$ and $c'$ share the same linear asymptotic behaviour.

\begin{proposition} \label{prop:asympt.equiv.c.cPrime}
    Let $\Gr$ be a virtually nilpotent group, and let $c:\Gr \times \Om \to \R_{\geq0}$ be a subadditive cocycle associated with $\tht$. 

    If condition \eqref{all} is satisfied, then  $c$ and $c'$ are asymptotically equivalent, \textit{i.e.}, there exists, $\p$-a.s., $M'(\varepsilon)>0$ such that, for all $x \in \Gr$ with $\|x\|_S> M'(\varepsilon)$,
    \begin{equation} \label{eq:asymp.equiv.c.c.prime}
        \big| c(x)-c'(\llbracket x\rrbracket)\big| < \varepsilon\|x\|_S.
    \end{equation}

    In particular, \eqref{all} implies the $\p$-a.s. existence of $M'\big(\varepsilon, r, \upupsilon_\sigma(\mathlcal{g})\big)>0$ so that, for all $n> M'\big(\varepsilon,r, \upupsilon_\sigma(\mathlcal{g})\big)$ and every $y \in B_S(e, rn)$,
    \begin{equation} \label{eq:aproxx.seq.c.c.prime}
        \big| c(y)-c'(\llbracket y\rrbracket)\big| \circ \tht_{x_n} < n \varepsilon.
    \end{equation}
\end{proposition}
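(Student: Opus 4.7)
The plan is to chain \cref{lm.T.bounded.nilpotent.as} and \cref{lm.T.bounded.torsion.as} by the triangle inequality, using $c(\uppi_N(x))$ as an intermediate pivot. The key observation is that $\uppi_N(x) \in N$ represents the coset $\llbracket x \rrbracket = \uppi_N(x)\cdot\tor N$, and since $e \in \tor N$, the value $c(\uppi_N(x)) = c(\uppi_N(x))\circ\tht_e$ appears among those entering the maximum defining $c'(\llbracket x \rrbracket)$. Thus $c(\uppi_N(x))$ sits between $c(x)$ and $c'(\llbracket x \rrbracket)$, and each of the two gaps is controlled by one of the auxiliary lemmas.

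To prove \eqref{eq:asymp.equiv.c.c.prime}, I first apply \cref{lm.T.bounded.nilpotent.as} with parameters $(\varepsilon/2, 1)$ to obtain, $\p$-a.s., $M_N$ such that for every integer $n > M_N$ and every $x \in B_S(e,n)$,
\[
    \bigl| c(x) - c\bigl(\uppi_N(x)\bigr)\bigr| < \tfrac{\varepsilon}{2}\, n.
\]
Next, I apply \cref{lm.T.bounded.torsion.as} with the same parameters to obtain $M_q$ such that for $n > M_q$ and $x \in B_S(e,n)$, all values in the finite set $\{c(x_1)\circ\tht_{y_1} : x_1 \in \llbracket x \rrbracket, y_1 \in \tor N\}$ lie within an interval of length $< (\varepsilon/2)\,n$. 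Since $c'(\llbracket x \rrbracket)$ is the maximum of this set and $c(\uppi_N(x))$ is a particular member, $|c(\uppi_N(x)) - c'(\llbracket x \rrbracket)| < (\varepsilon/2)\,n$. Setting $M'(\varepsilon) := \max\{M_N, M_q\}$ and choosing, for any $x$ with $\|x\|_S > M'(\varepsilon)$, the integer $n = \lceil \|x\|_S \rceil$, the triangle inequality gives a bound of $\varepsilon n \leq \varepsilon(\|x\|_S + 1)$; an initial rescaling of $\varepsilon$ absorbs the ceiling.

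For the uniform statement \eqref{eq:aproxx.seq.c.c.prime}, I repeat the argument above with every occurrence of $c$ replaced by $c\circ\tht_{x_n}$. Because $\tht$ is measure-preserving and $x_n$ is deterministic, all the probabilistic estimates in the proofs of \cref{lm.T.bounded.nilpotent.as} and \cref{lm.T.bounded.torsion.as} are unchanged under this translation: they relied only on the marginal distribution of $c$ and on the union bound over $B_S(e, rn)$. Summability of the resulting tail bounds is guaranteed by \eqref{all} (with $g(t)\in\mathcal{O}(1/t^{2D+\upkappa})$, $\upkappa>1$), so Borel--Cantelli furnishes $M'(\varepsilon, r, \upupsilon_\sigma(\mathlcal{g}))$ satisfying the claim uniformly in $y \in B_S(e, rn)$.

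The main obstacle is not conceptual but bookkeeping: ensuring that the almost-sure event in \eqref{eq:aproxx.seq.c.c.prime} is uniform in $y$ after translation by $\tht_{x_n}$, and that the random thresholds arising from the two lemmas compose properly with the sequence $\upupsilon_\sigma(\mathlcal{g})$. The two auxiliary lemmas were engineered precisely to isolate the two sources of asymptotic discrepancy (the choice of coset representative in $\Gr/N$, and the torsion in $N$), so once they are in place the combination is essentially a two-term triangle inequality.
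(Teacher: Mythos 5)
Your proposal is correct and follows essentially the same route as the paper's proof: both pivot on $c\big(\uppi_N(x)\big)$ via the triangle inequality, control the two gaps with \cref{lm.T.bounded.nilpotent.as,lm.T.bounded.torsion.as} applied at level $\varepsilon/2$, and take $M'(\varepsilon)$ as the maximum of the two thresholds. For \eqref{eq:aproxx.seq.c.c.prime} you likewise invoke the p.m.p.\ property of $\tht$ to rerun the lemmas' Borel--Cantelli estimates after translation by the deterministic sequence $\upupsilon_\sigma(\mathlcal{g})$, exactly as the paper does.
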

\begin{proof}
    From \cref{lm.T.bounded.nilpotent.as,lm.T.bounded.torsion.as}, we can deduce that, for every $\varepsilon>0$,  one can fix $M'(\varepsilon)= M_N(\frac{\varepsilon}{2},1) \vee M_q(\frac{\varepsilon}{2},1)$ so that, $\p$-a.s., for all $n> M'(\varepsilon)$ and every $x \in B_S(e,n+1)\setminus B_S(e,n)$,
    \[
        \frac{|c(x)-c'(x)|}{\|x\|_S} < \frac{\left|c(x)-c\big(\uppi(x)\big)\right|}{n} + \frac{\left|c\big(\uppi(x)\big)-c'(x)\right|}{n} < \varepsilon.
    \]
    The inequality above implies the asymptotic equivalence of $c$ and $c'$ on $\Gr$.

    Since $\tht$ is p.m.p. group action, one can obtain from \cref{lm.T.bounded.nilpotent.as,lm.T.bounded.torsion.as} the random variables $M_N>0$ and $M_q>0$ depending on $\upupsilon_\sigma(\mathlcal{g})\big)>0$ determining
    \[
        M'\big(\varepsilon,r,\upupsilon_\sigma(\mathlcal{g})\big) = M_N\big({\varepsilon}/{2},r,\upupsilon_\sigma(\mathlcal{g})\big) \vee M_q\big({\varepsilon}/{2},r,\upupsilon_\sigma(\mathlcal{g})\big)
    \]
     so that \eqref{eq:aproxx.seq.c.c.prime} holds true.
\end{proof}

The following result extends the subadditive ergodic theorem to $c'$ with respect to $|-|_S^{\ab}$. 
\begin{lemma} \label{lm:subaddtive.ergodic}
    Consider $\Gr$ to be a virtually nilpotent group generated by a finite symmetric set $S \subseteq \Gr$ with $\llbracket S \rrbracket$ a generating set of $\Gr'$.
    
    If the subadditive cocycle $c$ satisfies \eqref{all} and \eqref{aml2} with respect to the word norm $\|-\|_S$, then $c'$ satisfies \eqref{all} and \eqref{aml} with respect to $\vert-\vert_S^{\inf}$. In particular, \cref{lm:phi.bi-Lipschitz} is still valid with $x^{\ab}=\llbracket x \rrbracket^{\ab}$ and
    \[\phi(x^{\ab}) = \inf_{n \in \N}\frac{\E[c'(\llbracket x\rrbracket^n)]}{n}.\]
\end{lemma}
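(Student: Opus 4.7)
The plan is to verify conditions \eqref{all} and \eqref{aml} separately for $c'$, and then read off the desired bi-Lipschitz bound and formula from \cref{lm:phi.bi-Lipschitz} applied now to the cocycle $c'$ with respect to the ergodic $\Gr'$-action $\theta$ furnished by \cref{lm:erg.torsion.quotient}.

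For \eqref{all}, I would start from the union bound
\[
\p\big(c'(\llbracket x \rrbracket) \geq t\big) \leq \sum_{y \in \llbracket x \rrbracket}\sum_{z \in \tor N} \p\big(c(y)\circ\tht_z \geq t\big),
\]
which has exactly $|\tor N|^2$ terms. Since $\tht$ is probability measure preserving, $c(y)\circ\tht_z$ is distributed as $c(y)$, so assumption \eqref{all} on $c$ bounds each summand by $g(t)$ as soon as $t > \upbeta \|y\|_S$. The estimate \eqref{eq:equiv.nom.inf.sup} yields $\max_{y \in \llbracket x \rrbracket}\|y\|_S \leq |\llbracket x \rrbracket|_S^{\inf} + 2\mathtt{m}_{\kappa,q}$, so for $t > \upbeta\big(|\llbracket x \rrbracket|_S^{\inf} + 2\mathtt{m}_{\kappa,q}\big)$ the sum is controlled by $|\tor N|^2 g(t)$. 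Setting $g'(t) := |\tor N|^2 g(t - 2\upbeta\mathtt{m}_{\kappa,q})$ (and slightly enlarging $\upbeta$ to absorb the shift, treating the finitely many small cosets by a constant prefactor) preserves the $\mathcal{O}(1/t^{2D+\upkappa})$ decay, now with $|\cdot|_S^{\inf}$ in place of $\|\cdot\|_S$.

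For \eqref{aml}, I would fix $\llbracket x \rrbracket \in \Gr' \setminus [\Gr',\Gr']$ and, for each $\llbracket y \rrbracket$ in the abelian coset $\llbracket x \rrbracket[\Gr',\Gr']$, choose a representative $\upupsilon_y \in \llbracket y \rrbracket \subseteq N$. Because $\tor N \unlhd N$, the power $\upupsilon_y^n$ lies in $\llbracket y \rrbracket^n$, so taking $z=e$ in the defining maximum of $c'$ gives $c'(\llbracket y \rrbracket^n) \geq c(\upupsilon_y^n)$ and, after integrating, $\E[c'(\llbracket y \rrbracket^n)] \geq \E[c(\upupsilon_y^n)]$. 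Condition \eqref{aml2} applied to $\upupsilon_y \in \Gr$ then provides a sequence $\{n_j\}$ depending only on $\llbracket \upupsilon_y \rrbracket[\Gr',\Gr'] = \llbracket x \rrbracket[\Gr',\Gr']$ with $a \|\upupsilon_y^{n_j}\|_S \leq \E[c(\upupsilon_y^{n_j})]$. Since $\upupsilon_y^{n_j}$ is one particular member of the $\Gr'$-coset $\llbracket y \rrbracket^{n_j}$, definition \eqref{eq:norm.inf.abelian} gives $|\llbracket y \rrbracket^{n_j}|_S^{\ab} \leq \|\upupsilon_y^{n_j}\|_S$. Chaining these inequalities produces $a |\llbracket y \rrbracket^{n_j}|_S^{\ab} \leq \E[c'(\llbracket y \rrbracket^{n_j})]$, with a single sequence serving every $\llbracket y \rrbracket$ in $\llbracket x \rrbracket[\Gr',\Gr']$, as required by \eqref{aml}.

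With both hypotheses verified for $c'$, \cref{prop:subadditive.ergodic.thm} applied to $(c',\theta)$ produces the homogeneous subadditive $\phi$ with $\phi(\llbracket x \rrbracket^{\ab}) = \inf_{n \in \N} \E[c'(\llbracket x \rrbracket^n)]/n$, and \cref{lm:phi.bi-Lipschitz} then delivers the bi-Lipschitz bound for $\phi$ in terms of $|\cdot|_S^{\ab}$. The main obstacle is bookkeeping rather than substance: making sure the transfer from the $\Gr$-action $\tht$ to the $\Gr'$-action $\theta$ on $([B],\F_B',\p_B')$ preserves the distributional identities used above, and verifying that the additive discrepancy $2\mathtt{m}_{\kappa,q}$ between $\|\cdot\|_S$ and $|\cdot|_S^{\inf}$ in \eqref{eq:equiv.nom.inf.sup} can be absorbed without worsening the polynomial decay rate in \eqref{all}.
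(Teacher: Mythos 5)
Your proposal is correct and follows essentially the same route as the paper's proof: a union bound over the finitely many pairs in the defining maximum of $c'$ (using that $\tht$ is p.m.p.) yields \eqref{all} with a new $g'$ and $\upbeta'$ after absorbing the constant discrepancy from \eqref{eq:equiv.nom.inf.sup}, a single representative $\upupsilon_y^{n_j}\in\llbracket y\rrbracket^{n_j}$ together with \eqref{aml2} and \eqref{eq:norm.inf.abelian} yields \eqref{aml}, and the conclusion is read off from \cref{prop:subadditive.ergodic.thm} and \cref{lm:phi.bi-Lipschitz} with $\|-\|_S$ replaced by $\vert-\vert_S^{\inf}$. Your write-up is in fact slightly more explicit than the paper's (e.g., the shared subsequence across the coset $\llbracket x\rrbracket[\Gr',\Gr']$ and the absorption of the additive shift $2\mathtt{m}_{\kappa,q}$), but there is no substantive difference in approach.
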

\begin{proof}
    First, observe that \eqref{all} and \eqref{aml2} imply, for all $x \in \Gr$ and 
    \begin{equation*} 
        \p\Big( c'\big(\llbracket x \rrbracket\big) \geq  t \Big) \leq \kappa ~\vert\tor N\vert~ g(t), \quad \text{for all } t >\upbeta\big|\llbracket x \rrbracket\big|_S^{\sup},
    \end{equation*}
    and
    \begin{equation*} 
        \E\Big[ c'\big(\llbracket x \rrbracket\big)\Big] \geq a \big|\llbracket x \rrbracket\big|_S^{\inf}.
    \end{equation*}

    Therefore, it follows from \eqref{eq:equiv.nom.inf.sup} that $c'$ satisfy \eqref{all} and \eqref{aml} with respect to $\vert -\vert_S^{\inf}$ for a new $g'(t) \in \mathcal{O}\big(t^{2D+\upkappa}\big)$ and $\upbeta'>0$. The proof is complete by replacing $\|-\|_S$ with $\vert - \vert_S^{\inf}$ and applying \eqref{eq:norm.inf.abelian} in the proof of \cref{lm:phi.bi-Lipschitz}.

\end{proof}

Having established the aforementioned results, we now move forward to prove the second theorem.

\begin{proof}[Proof of Theorem \ref{thm:shape.polynomial}]
    Observe that it follows from \cref{lm:erg.finite.index,lm:erg.torsion.quotient,lm:subaddtive.ergodic}, and \cref{rmk:c.prime} that, for each $B \in \mathfrak{B}_N$, \cref{shape.thm} holds true for $c'$ on $(B,\F_B,\p_B)$. Therefore, it suffices to extend the results to $(\Om,\F,\p)$ and compare $c$ with $c'$. 
    
    The asymptotic equivalence is an immediate consequence of \eqref{eq:asymp.equiv} and \eqref{eq:asymp.equiv.c.c.prime}, we focus on the second part of the proof of \cref{shape.thm}. Recall de definition of $\Delta$ as a dense subset of $G_\infty$, the finite $\Delta_{r,\varepsilon}$. Similarly, we consider $\{u_n\}_{n\in\N} \subseteq \Gr$ and $\{t_n'\}_{n\in\N}\subseteq\N$ with $t_n\uparrow+\infty$ as $n\uparrow+\infty$ and $\frac{1}{t_n'}\bull u_n \to \mathlcal{h}'\mathlcal{h}^{-1}$. Note that we may regard $\llbracket u \rrbracket_n = \llbracket u_n \rrbracket$ to replace the orifinal sequence in the proof of Thm. \ref{shape.thm} and let $\mathtt{K}_{r'}$ and $m_{r'}(\varepsilon)$ be defined as before with $r'>d_\infty(\mathlcal{h},\mathlcal{h}')$. 
    
    Set $\widehat{M}\big(\varepsilon,\sigma(\mathlcal{g}),B\big)$ and $\Uptheta_\Delta\big( \sigma(\mathlcal{g}), B \big)$ to be defined by \eqref{eq:c.phi.1} for each $B \in \mathfrak{B}_N$ with $\p\left(\Uptheta_\Delta\big( \sigma(\mathlcal{g})\big), B \big) \mid B\right)=1$ so that, for all $t_n'>\widehat{M}\big(\varepsilon,\sigma(\mathlcal{g}),B\big) \vee m_{r'}(\varepsilon)$,
    \begin{equation} \label{eq:c.phi.1.prime}
        \left\vert \frac{1}{t_n'}c' \big(\llbracket u_n \rrbracket\big)\circ \tht_{x_{t_n}} - \Phi(\mathlcal{h}'\mathlcal{h}^{-1}) \right\vert < \mathtt{K}_{r'}\varepsilon.
    \end{equation}
    on $\Uptheta_\Delta\big( \sigma(\mathlcal{g})\big), B \big)$ with $\upupsilon_\sigma(\mathlcal{g})=\{x_n\}_{n\in\N}$. Fix
    \[
        \widehat{M}'\big(\varepsilon,\sigma(\mathlcal{g})\big) := \sum_{B\in\mathfrak{B}_N}\widehat{M}\big(\varepsilon,\sigma(\mathlcal{g}),B\big)\mathbbmtt{1}_B + \mathbbmtt{1}_{\Om\setminus(\cup\mathfrak{B}_N)}.
    \]

    Consider $\{y_n\}_{n\in\N}$ with $\|y_n\|_S/n < r'$ for every $n> m_{r'}(\varepsilon)$. Then \cref{prop:asympt.equiv.c.cPrime} ensures the existence of $M'\big(\varepsilon,r', \upupsilon_\sigma(\mathlcal{g})\big)>0$ and $\Uplambda_{\sigma(\mathlcal{g})} \in \F$ with $\p(\Uplambda_{\sigma(\mathlcal{g})})=1$ so that, for all $n>M'\big(\varepsilon,r', \upupsilon_\sigma(\mathlcal{g})\big)$ on $\Uplambda_{\sigma(\mathlcal{g})}$,
    \begin{equation} \label{eq:new.approx.c.c.prime}
        \frac{1}{n}\left\vert c(y_n) - c'\big( \llbracket y_n \rrbracket \big) \right\vert \circ \tht_{x_n} < \varepsilon.
    \end{equation}

    Let now $\{v_n\}_{n\in\N}\subseteq \Gr$ be a sequence such that $\frac{1}{n} \bull v_n \to \mathlcal{h}$ and choose $r\geq d_\infty(\mathlcal{e}, \mathlcal{h})$. Fix $\mathlcal{g} \in \Delta_{r,\varepsilon}$ so that $\mathlcal{g} \in B_\infty(\mathlcal{h}, \varepsilon)$. Observe that \eqref{eq:c.phi.2} is still valid for $c$. Hence, by \cref{lm:local.bound}, one can find  $\thickbar{M}_{r,r'}'\big(\varepsilon,\sigma(\mathlcal{g})\big)> 0$ and $\Upxi_{\sigma(\mathlcal{g})}'$ with $\p\big(\Upxi_{\sigma(\mathlcal{g})}'\big) =1$ such that, for all $n> \thickbar{M}_{r,r'}'\big(\varepsilon,\sigma(\mathlcal{g})\big)$ on $\Upxi_{\sigma(\mathlcal{g})}'$,
    \begin{equation} \label{eq:c.phi.2.prime}
        |c(w_n)\circ\tht_{x_n} - c(w_n)\circ\tht_{v_n}| < 2 \upbeta n \varepsilon,
    \end{equation}
    where $\{w_n\}_{n\in\N}\subseteq\Gr$ is any convergent sequence $\frac{1}{n} \bull w_n \to \mathlcal{w}\in B_\infty(\mathlcal{e}, r')$. Let us fix 
    \[
        \Uplambda_\Delta := \bigcap_{\mathlcal{g} \in \Delta}\left( \Uplambda_{\sigma(\mathlcal{g})} \cap \Upxi_{\sigma(\mathlcal{g})}  \cap \left(\bigcup_{B \in\mathfrak{B}_N} \Uptheta_\Delta\big(\sigma(\mathlcal{g}),B \big)\right)\right),
    \]
    and set
    \[
        M_{r,r'}'(\varepsilon):= \max_{\mathlcal{g} \in \Delta_{r,\varepsilon}}\left\{ M'\big(\varepsilon,r', \upupsilon_\sigma(\mathlcal{g})\big), ~\thickbar{M}_{r,r'}'\big(\varepsilon,\sigma(\mathlcal{g})\big), ~\widehat{M}'\big(\varepsilon, \sigma(\mathlcal{g})\big) \right\}.
    \]
    Then $M_{r,r'}'(\varepsilon)$ is finite on $\Uplambda_\Delta$ and  $\p\big(\Uplambda_\Delta\big) = 1$. It follows from \eqref{eq:c.phi.1.prime}, \eqref{eq:new.approx.c.c.prime}, and \eqref{eq:c.phi.2.prime} with $u_n=w_{t_n'}=y_{t_n'}$ that, for all $t_n'> M_{r,r'}'(\varepsilon) \vee m_{r'}(\varepsilon)$ on $\Uplambda_\Delta$,
    \[
        \left|\frac{1}{t_n'}c(u_n)\circ\tht_{v_{t_n'}}  - \Phi(\mathlcal{h'h}^{-1}) \right|< (\mathtt{K}_{r'} + 2\upbeta +1) \varepsilon.
    \]
    This establishes the $\p$-a.s. convergence of $\frac{1}{t_n'}d_\omega(v_{t_n'},u_nv_{t_n'})$ to $\Phi\big(\mathlcal{h'h}^{-1}\big) =d_\phi(\mathlcal{h},\mathlcal{h}')$ for $\omega\in \Uplambda_\Delta$ as $n\uparrow+\infty$.
\end{proof}

\subsection{An Additional Result for FPP Models} \label{sec:additional.FPP}

In the preceding sections, we delved into the asymptotic behavior of $c$ and $c'$. The definition of $c'$ depends only  on the action of $\tht$ restricted to $N \unlhd \Gr$, ensuring that we can systematically investigate the group action of $\Gr'$ within a fixed $B \in \mathfrak{B}_N$. 

To broaden the scope of our findings and establish the validity of \eqref{innerness2} for FPP models on virtually nilpotent groups, we will introduce a new random variable induced by a graph homomorphism. Let us now define, for all $\llbracket x \rrbracket \in \Gr' \setminus\{\llbracket  e\rrbracket\}$,
\[c''\big(\llbracket x \rrbracket\big) := \max_{1 \leq i, j \leq \kappa}\max_{\substack{y \in z_{(j)}.\llbracket x \rrbracket\\ z \in z_{(i)}.\tor N}}c(y)\circ\tht_z\]
and consider $c''\big( \llbracket e \rrbracket \big) :=0$. Note that $c''$ restricted to $B \in \mathfrak{B}_N$ is not well-defined when there exists another set $B' \in \mathfrak{B}_N$ distinct from $B$. This inherent limitation prompts the necessity for specific conditions in the subsequent result.

The following lemma outlines the criteria under which $c''$ inherits the FPP property from $c$. Before presenting this result, we establish the notation:
\[\llbracket S \rrbracket^\pm := \big\{ \llbracket s \rrbracket^{\pm1} \colon s \in S \big\}.\]

\begin{lemma} \label{lm:N.ergodic}
    Let $(\Gr,.)$ be a virtually nilpotent group generated by a finite symmetric set $S \subseteq \Gr$ with $\llbracket S \rrbracket$ a generating set of $\Gr'$. Consider a subadditive cocycle $c:\Gr\times\Om\to\R_{\geq 0}$ determining a FPP model on $\mathcal{C}(\Gr, S)$ which satisfies \eqref{all}. Suppose that the restriction $\tht\big\vert_{N}: N \curvearrowright (\Om,\F,\p)$ is a p.m.p. ergodic group action.

    If, for all $s \in S$, $\llbracket s^{-1} \rrbracket = \llbracket s \rrbracket^{-1}$, then $c''$ determines a FPP model on $\mathcal{C}(\Gr, \llbracket S \rrbracket^\pm)$ and condition \eqref{innerness2} is satisfied when $\llbracket S \rrbracket^\pm \subseteq \Gr'\setminus[\Gr',\Gr']$.
\end{lemma}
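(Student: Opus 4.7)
The proof splits into two parts corresponding to the two conclusions of the lemma. For the FPP claim, I would first verify that $\llbracket S \rrbracket^\pm$ is a symmetric finite generating set of $\Gr'$: symmetry follows from the hypothesis $\llbracket s^{-1}\rrbracket = \llbracket s \rrbracket^{-1}$ together with the symmetry of $S$, while generation is inherited from $\llbracket S \rrbracket$. To promote $c''$ to the first-passage time of an FPP on this Cayley graph, I would assign to each edge $\{v,\llbracket s\rrbracket v\}$ the weight $c''(\llbracket s\rrbracket)\circ\theta_v$, where $\theta$ is the $\Gr'$-action constructed via representatives in \cref{sec:virt.nilpotent.proofs}. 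Measure preservation of $\theta_v$ (inherited from $\tht\vert_N$ and the ergodic decomposition of \cref{lm:erg.finite.index,lm:erg.torsion.quotient}) makes these weights identically distributed along each generator direction, and the subadditivity of $c''$ as a cocycle is inherited from that of $c$ by commuting maxima with sums.

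For condition \eqref{innerness2}, I would set $F(\upvarepsilon) := \{\uppi_N(s^{\pm 1}) : s \in S\}$, possibly enlarged by a finite collection of small elements of $\Gr'$ lifted back to $N$. Since $\llbracket z \rrbracket$ for $z \in F(\upvarepsilon)$ recovers $\llbracket S \rrbracket^\pm$, this set generates $\Gr'$, and the hypothesis $\llbracket S \rrbracket^\pm \subseteq \Gr' \setminus [\Gr', \Gr']$ ensures $F(\upvarepsilon) \subseteq N \setminus [N, N]$ (otherwise $\uppi_N(s) \in [N,N]$ would force $\llbracket s \rrbracket \in [\Gr', \Gr']$, a contradiction). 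For each $x \in \Gr$, the FPP structure yields a near-geodesic path in $\mathcal{C}(\Gr', \llbracket S \rrbracket^\pm)$ from $\llbracket e \rrbracket$ to $\llbracket x \rrbracket$ with total weight at most $(1+\upvarepsilon_0)\,c''(\llbracket x \rrbracket)$, and lifting each generator through $\uppi_N$ produces the required decomposition $\llbracket x \rrbracket = z_n \cdots z_1$. The pointwise bound $c' \leq c''$ then yields an upper bound for $\sum c'(z_i) \circ \theta_{z_{i-1}\cdots z_1}$ in terms of this total weight.

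The main obstacle is converting the near-geodesic inequality, naturally expressed in terms of $c''$, into the factor $(1+\upvarepsilon)\,c'(\llbracket x \rrbracket)$ required by \eqref{innerness2}. Although $c'' \geq c'$ pointwise, the reverse bound holds only up to a discrepancy controlled by $c$-increments along the finitely many coset representatives $z_{(j)}$ and torsion shifts. Applying Borel--Cantelli to the tail bound \eqref{all}, in the spirit of \cref{lm.T.bounded.nilpotent.as,lm.T.bounded.torsion.as}, one shows that $|c'' - c'|$ is $o(\|x\|_S)$ uniformly in $x$ almost surely. Combined with the bi-Lipschitz lower bound $c' \gtrsim |\llbracket x \rrbracket|_S^{\inf}$ provided by \cref{lm:subaddtive.ergodic}, this permits a choice of $\upvarepsilon_0 \ll \upvarepsilon$ that absorbs the discrepancy into the factor $(1+\upvarepsilon)$ for all sufficiently large $\|x\|_S$; the remaining $x$ of small norm are handled deterministically by enlarging $F(\upvarepsilon)$ to include them as single-step lifts.
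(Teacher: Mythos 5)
There is a genuine gap, in both halves of your argument. For the FPP claim, you assign to the edge $\{v,\llbracket s\rrbracket v\}$ the weight $c''(\llbracket s\rrbracket)\circ\theta_v$, i.e.\ a maximum of full first-passage times, and you never show that the passage time of the resulting model actually equals $c''(\llbracket x\rrbracket)$. The content of the lemma is precisely such an identity: the paper defines the quotient edge weight as the maximum of the \emph{single-edge} weights $\tau(y,hy)$ over the $\kappa^2$ coset translates $z_{(j)}.\llbracket x\rrbracket$, $z_{(i)}.\llbracket s\rrbracket$, checks symmetry, and then uses the graph homomorphism induced by $\llbracket s^{-1}\rrbracket=\llbracket s\rrbracket^{-1}$ to exchange the maximum over cosets with the infimum over paths, so that $c''(\llbracket x\rrbracket)$ is literally the first-passage time on $\mathcal{C}\big(\Gr',\llbracket S\rrbracket^{\pm}\big)$ with these maxed weights. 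That minimax identity is what makes the innerness condition automatic, since one can decompose exactly along a $c''$-geodesic in the quotient Cayley graph (with $\llbracket S\rrbracket^{\pm}\subseteq\Gr'\setminus[\Gr',\Gr']$ guaranteeing the generators avoid the commutator subgroup). Your construction supplies neither the identity nor, consequently, the geodesic mechanism.

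For the innerness claim, you try to establish \eqref{innerness2} for $c'$ itself by bounding $|c''-c'|$ by $o(\|x\|_S)$ and absorbing the discrepancy into the factor $(1+\upvarepsilon)c'$. This fails for two reasons. First, absorption of an additive $o(\|x\|_S)$ error into a multiplicative factor requires a pointwise almost-sure lower bound of the form $c'(\llbracket x\rrbracket,\omega)\geq a'\|x\|_S$ for large $\|x\|_S$, which the hypotheses do not provide: \eqref{all} is an upper tail bound, and \cref{lm:subaddtive.ergodic} together with \eqref{aml2} only controls \emph{expectations} along subsequences; in examples such as the random coloring FPP the passage time of a fixed $x$ vanishes with positive probability, so no such realization-wise bound is available. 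Second, the threshold beyond which your asymptotic comparison holds is random, whereas $F(\upvarepsilon)$ in \eqref{innerness2} must be a deterministic finite set fixed before the almost-sure quantifier; the exceptional set of ``small'' $x$ depends on $\omega$ and cannot be swallowed by enlarging $F(\upvarepsilon)$. The paper avoids both problems by proving the innerness statement directly for $c''$ (which then replaces $c'$ in the main argument via \cref{prop:c.doubleprime,cor:fpp.virt.nil}), where the decomposition inequality holds exactly along geodesics rather than up to an asymptotically absorbed error.
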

\begin{proof}
    Define, for each $x \in \Gr$ and every $\llbracket s \rrbracket \in \llbracket S \rrbracket^\pm$,    \[
        \tau\big(\llbracket x \rrbracket, \llbracket s \rrbracket\llbracket x \rrbracket\big) := \max_{1 \leq i, j \leq \kappa} \max_{\substack{~y \in z_{(j)}.\llbracket x \rrbracket~\\ h \in z_{(i)}.\llbracket s \rrbracket}} \tau(y, hy)
    \]
    and note that $\tau$ preserves the symmetry
    \[
        \tau\big(\llbracket x \rrbracket, \llbracket s \rrbracket\llbracket x \rrbracket\big)= \tau\Big(\llbracket s \rrbracket\llbracket x \rrbracket, \llbracket s \rrbracket^{-1}\big(\llbracket s \rrbracket\llbracket x \rrbracket\big)\Big) =\tau\big(\llbracket s \rrbracket\llbracket x \rrbracket, \llbracket x \rrbracket\big).
    \]

    Condition \eqref{all} imply that $c''$ is $\p$-a.s. finite and there exists of a (finite) geodesic path. Observe that $\llbracket s^{-1} \rrbracket = \llbracket s \rrbracket^{-1}$ for all $s \in S$ induces a graph homomorphism of $\mathcal{C}(\Gr,S)$ and $\mathcal{C}(\Gr',\llbracket S\rrbracket^\pm)$. In other words, for all $w_1, w_2 \in \llbracket w \rrbracket$ and $i,j \in \{1, \dots, \kappa\}$, $z_{(j)}.w_1 \not\sim z_{(i)}.w_2$ and if $x \sim y$ in $\mathcal{C}(\Gr, S)$, then $\llbracket x \rrbracket \sim \llbracket y \rrbracket$ in $\mathcal{C}\big(\Gr',\llbracket S \rrbracket^{\pm}\big)$.  Hence, one can easily verify by the minimax property that
    \begin{align*}
        c''(\llbracket x \rrbracket) &:= \max_{1 \leq i,j \leq \kappa}\max_{\substack{y \in z_{(j)}.\llbracket x \rrbracket\\ z \in z_{(i)}.\tor N}}\left( \inf_{\gamma \in \mathscr{P}(e, y)} \sum_{\{u,v\} \in \gamma} \tau(u,v)\right)\circ\tht_z\\
        &\phantom{:}= \inf_{\gamma \in \mathscr{P}(\llbracket e\rrbracket, \llbracket x \rrbracket)}\left( \sum_{\{\llbracket u\rrbracket,\llbracket v\rrbracket\} \in \gamma} \max_{1 \leq i, j \leq \kappa} \max_{\substack{~u' \in z_{(j)}.\llbracket u \rrbracket~\\ s' \in z_{(i)}.\llbracket vu^{-1} \rrbracket}}\tau(u',s'u')\right) \quad \p\text{-a.s.}
    \end{align*}

    This is a direct consequence of the graph homomorphism. Property \eqref{innerness2} arises naturally from the given definition when $\llbracket S \rrbracket^\pm \subseteq \Gr'\setminus[\Gr',\Gr']$.
\end{proof}

\begin{proposition} \label{prop:c.doubleprime}
    Under the same hypotheses stated in \cref{lm:N.ergodic}, it follows that the results in \cref{lm.T.bounded.nilpotent.as,lm.T.bounded.torsion.as,prop:asympt.equiv.c.cPrime,lm:subaddtive.ergodic,rmk:c.prime} also hold when replacing $c'$ with $c''$.
\end{proposition}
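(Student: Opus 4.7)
The plan is to verify each of the five predecessor results for $c''$ in turn, exploiting the fact that $c''$ differs from $c'$ only by a uniformly bounded ambiguity. Specifically, $c''(\llbracket x\rrbracket)$ is the maximum of $c(y)\circ\tht_z$ over the at most $\kappa^2\,|\tor N|$ choices of $y\in z_{(j)}.\llbracket x\rrbracket$ and $z\in z_{(i)}.\tor N$ indexed by $i,j\in\{1,\dots,\kappa\}$. Because these indices range over a fixed finite set, every union bound appearing in the proofs for $c'$ picks up only a constant multiplicative factor, so all the polynomial tail rates $\mathcal{O}(1/n^\upkappa)$ and $\mathcal{O}(1/n^{D+\upkappa})$ survive.

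For the analogues of \cref{lm.T.bounded.nilpotent.as,lm.T.bounded.torsion.as}, I would repeat the original proofs verbatim, only with the sup taken over the finite collection of coset representatives. By subadditivity of $c$,
\[
\bigl|c(z_{(j)}y)\circ\tht_{z_{(i)}v}-c(y)\circ\tht_{v}\bigr|
\leq c(z_{(j)})\circ\tht_{y z_{(i)} v}+c(z_{(i)})\circ\tht_{v}+c(v)\circ\tht_{\cdot},
\]
and since $\|z_{(j)}\|_S,\|z_{(i)}\|_S\leq \mathtt{m}_\kappa$ and $v\in\tor N$ is drawn from a finite set, condition \eqref{all} combined with Borel--Cantelli forces these corrections to be $o(n)$, uniformly on any polynomial-size ball. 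The enlarged union bounds remain summable, so the same almost-sure conclusions $|c''-c\circ\uppi_N|<\varepsilon n$ and $|c''(\llbracket x\rrbracket)\circ\tht_{y_1}-c''(\llbracket x\rrbracket)\circ\tht_{y_2}|<\varepsilon n$ (the $c''$-versions of \cref{lm.T.bounded.nilpotent.as,lm.T.bounded.torsion.as}) go through.

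For the analogue of \cref{prop:asympt.equiv.c.cPrime}, I would simply combine the two preceding analogues exactly as in the original proof: since $c\leq c''$ pointwise, only the bound $c''\leq c + o(\|x\|_S)$ requires work, and this is delivered by the two previous steps. For the analogue of \cref{rmk:c.prime}, the crucial observation is that under the stronger hypothesis of \cref{lm:N.ergodic} that $\tht|_N$ is itself p.m.p.\ ergodic on $(\Om,\F,\p)$, \cref{lm:erg.finite.index} collapses to $\mathfrak{B}_N=\{\Om\}$. Consequently $c''$ is defined and compatible with the full probability space $(\Om,\F,\p)$, without passing to a subset $B\in\mathfrak{B}_N$, and the induced action $\theta:\Gr'\curvearrowright([\Om],\F',\p')$ of \cref{lm:erg.torsion.quotient} is unambiguous. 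This is in fact the entire reason to introduce $c''$ rather than $c'$.

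Finally, for the analogue of \cref{lm:subaddtive.ergodic}, I would check the two growth conditions. The tail bound
\[
\p\bigl(c''(\llbracket x\rrbracket)\geq t\bigr)\leq \kappa^2\,|\tor N|\,g(t), \qquad t>\upbeta\,\bigl|\llbracket x\rrbracket\bigr|_S^{\sup},
\]
is an immediate union bound and gives \eqref{all} for $c''$ with respect to $|-|_S^{\sup}$. The lower bound \eqref{aml} for $c''$ with respect to $|-|_S^{\inf}$ follows from $c''\geq c$ and \eqref{aml2}, since $\E[c''(\llbracket x^{n_j}\rrbracket)]\geq \E[c(x^{n_j})]\geq a\|x^{n_j}\|_S\geq a\,|\llbracket x^{n_j}\rrbracket|_S^{\inf}$. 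Then the same argument as in \cref{lm:phi.bi-Lipschitz} yields the bi-Lipschitz property for the homogeneous $\phi$ associated to $c''$, and Kingman's theorem (\cref{prop:subadditive.ergodic.thm}) applied to $c''$ on $\Gr'$ gives the infimum formula $\phi(x^{\ab})=\inf_n \E[c''(\llbracket x\rrbracket^n)]/n$. The main obstacle is verifying that $c''$ is genuinely a subadditive cocycle for $\theta$: given a pair $(y',z')$ realizing the max defining $c''(\llbracket xy\rrbracket)$, one must split $y'$ along the coset structure into factors $\hat{x}\in z_{(j')}.\llbracket x\rrbracket$ and $\hat{v}\in z_{(j'')}.\llbracket y\rrbracket$ and then dominate $c(y')\circ\tht_{z'}$ by $c(\hat{x})\circ\tht_{\hat{v}z'}+c(\hat{v})\circ\tht_{z'}$, each of which is controlled by a maximum appearing in the definition of $c''(\llbracket x\rrbracket)\circ\theta_{\llbracket y\rrbracket}$ and $c''(\llbracket y\rrbracket)$ respectively. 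This is where the compatibility condition $\llbracket s^{-1}\rrbracket=\llbracket s\rrbracket^{-1}$ from \cref{lm:N.ergodic} plays its role, since it supplies the graph homomorphism $\mathcal{C}(\Gr,S)\to\mathcal{C}(\Gr',\llbracket S\rrbracket^\pm)$ needed to align coset representatives across the splitting.
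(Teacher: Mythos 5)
Your plan is correct and follows essentially the same route as the paper: the paper's proof simply observes that the arguments of \cref{lm.T.bounded.nilpotent.as,lm.T.bounded.torsion.as,prop:asympt.equiv.c.cPrime,lm:subaddtive.ergodic} can be repeated for $c''$, with the enlarged maxima over the finitely many coset representatives costing only constant factors in the union bounds, exactly as you argue (and your observations on well-definedness via $\vert\mathfrak{B}_N\vert=1$ and on subadditivity through the graph homomorphism are the same points the paper handles in \cref{lm:N.ergodic} and in the proof of \cref{cor:fpp.virt.nil}). The only explicit computation in the paper's proof is the check that the convention $c''\big(\llbracket e\rrbracket\big)=0$ is harmless, since $\max_{x\in B_S(e,n)}\max_{y\in\bigcup_{j=1}^{\kappa} z_{(j)}.\tor N} c(y)\circ\tht_x$ is $\p$-a.s. $o(n)$ by \eqref{all}; your uniform $o(n)$ correction bound covers this implicitly, though you do not flag the convention itself.
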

\begin{proof}
    Notice that 
    \[\p\left(\max_{x \in B_S(e,n)}\max_{~y \in \bigcup_{j=1}^\kappa z_{(j)}.\tor N}c(y)\circ \tht_{x} > \sqrt{n}\right) \in \mathcal{O}(1/n^\upkappa).\]

    Consequently, $\max\limits_{x \in B_S(e,n)}\max\limits_{~y \in \bigcup_{j=1}^\kappa z_{(j)}.\tor N}c(y)\circ \tht_{x} \in o(n)$, $\p$-a.s., as $n \uparrow +\infty$. Therefore, defining $c''\big( \llbracket e \rrbracket \big) = 0$ is a suitable choice for investigating the asymptotic cone of $c''$ in comparison to $c$.
    
     The arguments in the proofs of \cref{lm.T.bounded.nilpotent.as,lm.T.bounded.torsion.as,prop:asympt.equiv.c.cPrime,lm:subaddtive.ergodic} can be repeated for $c''$, yielding the same properties up to a constant factor.
\end{proof}

\begin{corollary} \label{cor:fpp.virt.nil}
    Let $(\Gr,.)$ be a finitely generated group with polynomial growth rate $D \geq 1$ and $\Gr'/[\Gr',\Gr']$ torsion-free. Consider $c:\Gr\times\Om \to \R_{\geq0}$ to be a subadditive cocycle associated with $d_\omega$ and a p.m.p. ergodic group action $\tht\big\vert_N:N \curvearrowright (\Om,\F,\p)$. 
    
    Suppose that $c$ describes a FPP model which satisfies conditions \eqref{all} and \eqref{aml2} for a finite symmetric generating set $S \subseteq \Gr$ so that 
    \begin{itemize}
        \item[(i)] For all $s \in S$, $\llbracket s^{-1} \rrbracket = \llbracket s \rrbracket^{-1}$, and
        \item[(ii)] $\llbracket S \rrbracket^{\pm} \subseteq \Gr' \setminus [\Gr',\Gr']$ generates $\Gr'$.
    \end{itemize}
    
    Then
    \begin{equation*} 
        \quad \quad \left(\Gr,\frac{1}{n}d_\omega,e\right) \GHto \left(G_\infty,d_\phi,\mathlcal{e}\right) \quad\quad \p\text{-a.s.}
    \end{equation*}
    where $G_\infty$ is a simply connected graded Lie group, and $d_\phi$ is a quasimetric homogeneous with respect to a family of homotheties $\{\delta_t\}_{t>0}$. Moreover, $d_\phi$ is bi-Lipschitz equivalent to $d_\infty$ on $G_\infty$.
\end{corollary}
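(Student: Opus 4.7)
The plan is to follow the blueprint of the proof of Theorem \ref{thm:shape.polynomial}, but with $c''$ taking the role of $c'$. The reason is that although $c$ itself is not guaranteed to satisfy the innerness condition \eqref{innerness2}, the auxiliary cocycle $c''$ — built to absorb the actions on coset representatives and on $\tor N$ — inherits the FPP structure at the quotient level and therefore does satisfy \eqref{innerness2} once hypotheses (i) and (ii) are in force.

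First, I would verify that \cref{lm:N.ergodic} applies: the ergodicity of $\tht|_N$ is given, and conditions (i) and (ii) are exactly the remaining hypotheses of that lemma. Consequently $c''$ defines an FPP cocycle on $\mathcal{C}(\Gr',\llbracket S \rrbracket^\pm)$ and satisfies \eqref{innerness2} with $F(\varepsilon) = \llbracket S \rrbracket^\pm$ for every $\varepsilon > 0$, since any geodesic path in this Cayley graph already decomposes into single generator steps whose classes lie outside $[\Gr',\Gr']$.

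Next, I would invoke \cref{prop:c.doubleprime} to transport the whole machinery supporting the proof of \cref{thm:shape.polynomial} — namely \cref{lm.T.bounded.nilpotent.as,lm.T.bounded.torsion.as,prop:asympt.equiv.c.cPrime,lm:subaddtive.ergodic} and \cref{rmk:c.prime} — to $c''$. In particular, $c''$ satisfies the analogues of \eqref{all} and \eqref{aml} with respect to $|-|_S^{\inf}$, the subadditive ergodic limit \eqref{eq:def_phi} still produces the same $\phi$, and the asymptotic equivalence $|c(x)-c''(\llbracket x \rrbracket)| = o(\|x\|_S)$ holds $\p$-a.s., together with its translated version along convergent sequences $\upupsilon_\sigma(\mathlcal{g})$.

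Then I would apply \cref{shape.thm} to $c''$ on the quotient probability space associated with $\theta:\Gr' \curvearrowright (\Om,\F,\p)/\tor N$ inherited from $\tht|_N$ (no decomposition into $\mathfrak{B}_N$ is needed here, because the ergodicity of $\tht|_N$ is assumed outright rather than deduced from the ergodicity of $\tht$ on $\Gr$ via \cref{lm:erg.finite.index}). This yields the pointed Gromov–Hausdorff convergence of the rescaled $\Gr'$-metric induced by $c''$ to $(G_\infty,d_\phi,\mathlcal{e})$. The transfer of this convergence from $c''$ back to $c$ on $\Gr$ is then a verbatim repetition of the closing argument of the proof of \cref{thm:shape.polynomial}: combine the asymptotic equivalence of $c$ and $c''$, the local-perturbation bound of \cref{lm:local.bound}, and the density of $\bigcup_{n\in\N}\frac{1}{n}\bull\Gr$ in $G_\infty$ to produce, along any pair of sequences $\frac{1}{n}\bull v_n \to \mathlcal{h}$ and $\frac{1}{t_n'}\bull u_n \to \mathlcal{h}'\mathlcal{h}^{-1}$, the convergence $\frac{1}{t_n'} d_\omega(v_{t_n'},u_n v_{t_n'}) \to d_\phi(\mathlcal{h},\mathlcal{h}')$ $\p$-a.s., with the bi-Lipschitz equivalence of $d_\phi$ and $d_\infty$ coming directly from \cref{lm:phi.bi-Lipschitz} via \cref{lm:subaddtive.ergodic}.

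The main obstacle is essentially bookkeeping: one must check that the selection of sequences $\sigma(\mathlcal{g})$, the full-measure events $\Uplambda_\Delta$, and the uniform bounds $M_{r,r'}'(\varepsilon)$ all go through when $c''$ replaces $c'$ and when ergodicity is assumed only on $\tht|_N$. Since \cref{prop:c.doubleprime} states precisely that the intermediate results survive this substitution, and since the absence of $\mathfrak{B}_N$ simplifies rather than complicates the argument, no genuinely new difficulty arises beyond verifying that the minimax identity in the proof of \cref{lm:N.ergodic} truly reduces \eqref{innerness2} for $c''$ to the trivial decomposition along $\llbracket S \rrbracket^\pm$.
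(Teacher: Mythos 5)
Your proposal is correct and follows essentially the same route as the paper: invoke \cref{lm:N.ergodic} and \cref{prop:c.doubleprime} to show $c''$ inherits the FPP structure, satisfies \eqref{innerness2}, and shares the properties of $c'$, then observe that the assumed ergodicity of $\tht\big\vert_N$ makes $\mathfrak{B}_N$ trivial (so $c''$ is well-defined) and rerun the proof of \cref{thm:shape.polynomial} with $c''$ in place of $c'$. The paper's own argument is just a terser statement of exactly this substitution.
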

\begin{proof}
    First, according to \cref{prop:c.doubleprime}, the random variables $c'$ and $c''$ share similar properties. Observe that $|\mathfrak{B}_N|=1$, ensuring that $c''$ is well-defined and a suitable replacement of $c'$ in the proof of \cref{thm:shape.polynomial}, which establishes the result.
\end{proof}

The next example highlights a case where $\tht\big\vert_N$ acts ergodically on the probability space followed by an example of virtually nilpotent group with generating set satisfying items (i) and (ii) of \cref{cor:fpp.virt.nil}.

\begin{example}[Independent FPP models] \label{exmpl:independent.FPP}

The subadditive cocycle $c$ exhibits equivariance. Recall properties discussed in \cref{sec:fpp} for FPP models and notice that, for all $x,y \in \Gr$ and $s \in S$,
\[\tau(x,sx) \sim \tau\big(y,s^{\pm1}y\big).\]

Consider that the random weights are independent, but not necessarily identically distributed (see \cite{benjamini2015} for FPP with i.i.d. random variables). Let us define $S' := \big\{ \{s,s^{-1}\}: s \in S \big\}$ and set $\varsigma(s'):= s \in s'$ for $s' \in S'$, \textit{i.e.}, the function $\varsigma$ fixes one element of each $s' \in S'$.

Suppose that, for all $s \in S$, $s^2 \neq e$ and consider $\nu^{(s')}$ to be the law of $\tau(x,\varsigma(s')x)$ with $x \in \Gr$ and  $s \in s' \in S'$. Thus, one can write
\[\p \equiv \left(\bigotimes_{s' \in S'}\nu^{(s')}\right)^{\otimes\Gr} = \left(\bigotimes_{j=1}^\kappa\bigotimes_{s' \in S'}\nu^{(s')}\right)^{\otimes N} \equiv \bigotimes_{x \in N} \nu^{(x)},\]
where, for each $x \in N$, $\nu^{(x)} \equiv \bigotimes_{j=1}^\kappa\bigotimes_{s' \in S'}\nu^{(s')}$. Let $E \in \F$ be such that, for all $x \in N$, $\tht_x(E)=E$. Then, for all $x,y \in N$,
\[\nu^{(x)}(E) = \nu^{(y)}(E)=: \mathtt{k}_E \in [0,1].\]

The condition of polynomial growth rate $D \geq 1$ ensures that $N$ is countably infinite. Consequently,
\[\p(E) = \prod_{x \in N} \mathtt{k}_E \in \{0,1\}.\]

Therefore, $\tht\big\vert_N$ as defined in \cref{sec:fpp} constitutes a probability measure-preserving (p.m.p.) ergodic group action for independent FPP models.
\end{example}

\begin{example}[Direct product]
    Consider $\mathrm{L}$ a torsion-free nilpotent group with torsion-free abelianization and a symmetric finite generating set $S_{\mathrm{L}} \subseteq \mathrm{L} \setminus [\mathrm{L},\mathrm{L}]$. Set $M$ to be a finite group. Recall the properties highlighted in \cref{sec:examples.virt.nil}. Let us define
    \[\Gr = \mathrm{L}\times M, \quad \text{and} \quad S = S_{\mathrm{L}} \times M.\]
    
    Then $S$ is a symmetric finite generating set of $\Gr$. Fix $\uppi_N(x,m) = (x, e)$ for all $x\in \mathrm{L}$ and $m \in M$. One can easily see that $\Gr' \cong \mathrm{L}$ with $\llbracket S \rrbracket = \llbracket S \rrbracket^{\pm} \cong S_{\mathrm{L}}$.

    Furthermore, for any $(x,m) \in \Gr$, the inverse $(x,m)^{-1}$ is given by $(x^{-1},m^{-1})$, leading to  \[\llbracket (x,m)^{-1} \rrbracket \cong x^{-1} \cong \llbracket (x,m) \rrbracket^{-1}.\]

    As a consequence, both items (i) and (ii) of \cref{cor:fpp.virt.nil} hold when $\Gr$ is the direct product equipped with the generating set $S$ defined above.
\end{example}

\section{Applications to Random Growth Models} \label{sec:examples}

In this section, we delve into three distinct examples that serve as applications of the main results outlined in this monograph for a random growth on $\mathcal{C}(\Gr,S)$. These examples have been deliberately chosen to address scenarios that fall outside the scope of previous works, thereby offering a nuanced examination of the versatility and robustness of our established theorems.

The first example considers a First-Passage Percolation (FPP) model with dependent random variables, challenging the assumption of $L^\infty$, since we allow random weights to be zero with a strict positive probability. Transitioning to the second example, we investigate a FPP model with independent random variables that are not identically distributed and also not $L^\infty$. The third example shifts focus to an interacting particle system that is not a FPP model. Notably, this model fails to meet the conditions found in the literature.

\begin{example}[First-Passage Percolation for a Random Coloring of $\Gr$]\label{ex:color}
Let us now consider a dependent Bernoulli FPP model based on the random coloring studied by Fontes and Newman \cite{fontes1993}. Set $\{X_x\}_{x \in \Gr}$ to be a family of i.i.d. random variables taking values in a finite set of colors $\mathlcal{F}$. The model generates color clusters by assigning weight $0$ to edges between sites with same color and weight $1$ otherwise. We define for every edge $u \sim v$
\[
    \tau(u,v) = \mathbbmtt{1}(X_{u} \neq X_{v}) ,
\]
Set for each self-avoiding path $\gamma \in \mathscr{P}(x,y)$ the random length $T(\gamma)= \sum_{\mathtt{e} \in \gamma}\tau(\mathtt{e})$. The first-passage time is
\[T(x,y) := \inf_{\gamma \in \mathscr{P}(x,y)}T(\gamma)\]

Let $p_{\mathlcal{s}}:= \p(X_x = \mathlcal{s})$ then one can verify that $T(x,y)$ is a FPP model with dependent identically distributed passage times $\tau(x,y) \sim \operatorname{Ber}\left(1-\sum_{\mathlcal{s} \in \mathlcal{F}}p_{\mathlcal{s}}^2\right)$. One can easily see that $c(x):= T(e,x)$ is a subadditive cocycle and the translations $\tht$ are ergodic due to the fact that $\{X_x\}_{x \in \Gr}$ are i.i.d. random variables. 

Observe that $c(x)$ is bounded above by the word norm $\|x\|_S$, items \eqref{all} and \eqref{innerness} are immediately satisfied. Consider $p_{\mathlcal{s}} \in (0,1)$ for all $\mathlcal{s} \in \mathlcal{F}$. Set
\begin{equation*}
    p:= \max_{\mathlcal{s} \in \mathlcal{F}} ~p_{\mathlcal{s}}~, \quad q :=  \max_{\mathlcal{s} \in \mathlcal{F}} ~(1-p_{\mathlcal{s}}), \quad\text{ and }\quad p' :=\frac{p}{p+q}.
\end{equation*}
The lemma below establishes a sufficient condition for \eqref{aml} and \eqref{aml2}.

\begin{lemma} \label{lm:aml.colors}
    Consider the Random Coloring Model of $\Gr$ on $\mathcal{C}(\Gr,S)$ satisfying
    \begin{equation} \label{eq:bound.colors}
        p < \frac{1}{|S|-1},
    \end{equation}
    then \eqref{aml} and \eqref{aml2} hold true.
\end{lemma}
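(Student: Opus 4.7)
The plan is to establish a linear lower bound $\E[T(e,v)] \ge c\,\|v\|_S$ for some constant $c>0$ and all $v \in \Gr$ with $\|v\|_S$ sufficiently large. Since $\|y^n\|_S^{\ab} \le \|y^n\|_S$ and $\|y^n\|_S \to \infty$ as $n \to \infty$ whenever $y$ is not a torsion element, this bound immediately yields \eqref{aml} and \eqref{aml2} with $a = c$ and any cofinal sequence $n_j$ (for torsion $x$ one chooses $n_j$ along which $x^{n_j}=e$, making both inequalities trivial).

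The first step is to establish subcriticality of the colour clusters. Let $C(v)$ denote the connected component of $v$ in the subgraph induced by $\{u \in \Gr : X_u = X_v\}$. Conditionally on $X_e = \mathlcal{s}$, the set $C(e)$ is explored one vertex at a time: each newly discovered vertex has at most $|S|-1$ unexplored neighbours in $\mathcal{C}(\Gr,S)$, each belonging to $C(e)$ independently with probability $p_\mathlcal{s} \le p$. The exploration is therefore stochastically dominated by a Galton--Watson tree with offspring law $\operatorname{Bin}(|S|-1,p)$, whose mean $(|S|-1)p < 1$ by \eqref{eq:bound.colors}. Standard subcritical-branching estimates then yield constants $C_0,\lambda>0$ with
\[
\p\big(\operatorname{diam}(C(v)) \ge R\big) \le C_0\,e^{-\lambda R}
\]
uniformly in $v\in\Gr$ and $R>0$.

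The second step is a renormalization argument. Fix $L$ large and call a site $u\in\Gr$ $L$-\emph{bad} if some monochromatic cluster of diameter at least $L/3$ meets $B_S(u,L)$. A union bound yields $\p(u\text{ is $L$-bad}) \le \mathtt{k}(2L)^D C_0 e^{-\lambda L/3}$, which can be made smaller than $|S|^{-L}$ by choosing $L$ large. A Liggett--Schonmann--Stacey-type comparison then stochastically dominates the $L$-bad field by a subcritical independent site percolation on a coarse-grained copy of $\mathcal{C}(\Gr,S)$. Any self-avoiding $\gamma \in \mathscr{P}(e,v)$ has length $\ell \ge \|v\|_S$ and crosses at least $\|v\|_S/(3L)$ pairwise-disjoint $L$-tiles; a constant fraction of these must be $L$-good, and within each $L$-good tile $\gamma$ is forced to perform at least one colour transition, since no cluster meeting the tile spans it. A union bound over self-avoiding paths --- feasible because the bad-probability has been made to dominate the path count $|S|^\ell$ --- then gives $T(\gamma) \ge c'\,\|v\|_S/L$ simultaneously for all $\gamma \in \mathscr{P}(e,v)$, outside an event of probability $e^{-c''\|v\|_S}$. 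Integration provides $\E[T(e,v)] \ge c\,\|v\|_S$, as required.

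The principal obstacle is the renormalization step, where one has to simultaneously control the correlation between $L$-bad events at nearby sites (which couple through overlapping monochromatic clusters) and the exponential count $|S|^\ell$ of self-avoiding paths defeated by the union bound. Both difficulties are resolved by pushing $L$ large enough that $\p(L\text{-bad}) \ll |S|^{-L}$, which is feasible precisely because of the genuine exponential decay of cluster diameters provided by the subcritical Galton--Watson domination.
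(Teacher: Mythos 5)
Your step 1 (domination of a monochromatic cluster by a Galton--Watson tree with offspring mean $(|S|-1)p<1$, hence an exponential tail $C_0e^{-\lambda R}$ for cluster diameters) is correct. The gap is in the renormalization step, and it is quantitative, not cosmetic: the decay rate $\lambda$ is a \emph{fixed} constant of order $-\log\big((|S|-1)p\big)$, so $\p(u \text{ is } L\text{-bad}) \approx \mathtt{k}(2L)^D C_0 e^{-\lambda L/3}$ decays with rate $\lambda/3$ in $L$ and cannot be "made smaller than $|S|^{-L}$ by choosing $L$ large" unless $\lambda/3>\log|S|$. Near the threshold $p\uparrow 1/(|S|-1)$ one has $\lambda\downarrow 0$, so the key inequality $\p(L\text{-bad})\ll|S|^{-L}$ on which your union bound over microscopic self-avoiding paths rests is simply false in exactly the regime the lemma must cover; as written, your scheme only closes for $p$ far below $1/(|S|-1)$. (The step is repairable by the standard fix: take the union bound over \emph{coarse-grained} traces of tiles, whose number per tile-step is a constant depending only on the polynomial growth of $\mathcal{C}(\Gr,S)$ and not on $L$, and localize the definition of badness so that a Liggett--Schonmann--Stacey comparison applies; then any fixed $\lambda>0$ suffices. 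But that argument is not the one you gave.)

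The paper's proof avoids renormalization altogether and is where the sharp constant comes from. For a fixed self-avoiding path $\gamma$ of length $n$, conditioning sequentially on the colour of the previous vertex gives $\p\big(T(\gamma)=m\big)\le\binom{n}{m}p^{n-m}q^m=(p+q)^nP(Y=m)$ with $Y\sim\operatorname{Binomial}(n,1-p')$, $p'=p/(p+q)$; a Chernoff bound then shows that for $\upalpha$ small the per-path probability of at most $\upalpha n$ colour changes decays like $\big((p+q)p''\big)^n$ with $p''$ as close to $p'$ as desired, i.e.\ with rate approaching $-\log p$. This beats the path entropy $\log(|S|-1)$ precisely under \eqref{eq:bound.colors}, so the union bound over the at most $\mathtt{C}(|S|-1)^n$ self-avoiding paths yields $\p\big(c(x)\le\upalpha\|x\|_S\big)\le\mathtt{C}e^{-\mathtt{C}'n}$, whence $\E[c(x)]\ge(\upalpha/2)\|x\|_S$ for $\|x\|_S$ large and \eqref{aml}, \eqref{aml2} follow. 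In short: the paper ties the per-path decay rate directly to $p$, which is exactly what survives the microscopic union bound; your renormalized bad-event rate $\lambda/3$ does not, so your final claim ("both difficulties are resolved by pushing $L$ large") does not hold.
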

\begin{proof}
    Let $\gamma = (x_0=e, x_1, \dots, x_n) \in \mathscr{P}_{n}$ with $\mathscr{P}_n$ the set of all self-avoiding paths in $\mathcal{C}(\Gr,S)$ of graph length $n$ starting at $e$. Fix $[n]:=\{1, \dots, n\}$, then
    \begin{align*}
    \p\big(T(\gamma) = m\big) &\leq \sum_{\substack{A \subseteq [n]\\ |A|=m}}\prod_{i \in [n]\setminus A}\mathbb{P}(X_{x_i}=X_{x_{i-1}}|X_{x_{i-1}}) \prod_{j \in A}\mathbb{P}(X_{x_i}\neq X_{x_{i-1}}|X_{x_{i-1}})\\ &\leq \binom{n}{m}p^{n-m}q^m = (p+q)^n P(Y= m)
    \end{align*}
    where $Y \sim \operatorname{Binomial}(n, 1-p')$ with respect to $P$. Let us regard $\|x\|_S=n$, thus
    \begin{align*}
        \p\big(c(x) \leq \upalpha \|x\|_S\big) &\leq \p\big(\exists \gamma \in \mathscr{P}_n:T(\gamma) \leq \upalpha n\big)\\ &\leq \big|\mathscr{P}_n\big|(p+q)^n \cdot P(Y \leq \upalpha n).
    \end{align*}
    It is a well-known fact that $|\mathscr{P}_n| \leq |S|(|S|-1)^{n-1}$. Therefore, there exists $\mathtt{C}>0$ such that $|\mathscr{P}_n| \leq \mathtt{C}(|S|-1)^n$. By Chernoff bound, one can obtain
    \begin{align}
        P(Y \leq \upalpha n) &\leq \exp\left( n\left(\upalpha-1)\log\frac{1-\upalpha}{1-p'} -\upalpha \log\frac{\upalpha}{p'}\right) \right) \nonumber \\
        & =\left((p')^\upalpha(1-p')^{(1-\upalpha)}\upalpha^{-\upalpha}(1-\upalpha)^{\alpha-1}\right)^n. \label{eq:binom.color}
    \end{align}
    Observe that the base of \eqref{eq:binom.color} converges to $p'$ as $\upalpha \downarrow 0$. Hence, there exist $\upalpha, p''>0$ such that $P(Y \leq \upalpha n) \leq \left(p''\right)^n$ with $p'<p''< 1/((p+1)(|S|-1))$ when $p$ satisfies \eqref{eq:bound.colors}. It then follows that there exists $\mathtt{C}'>0$ such that
    \[\p\big(c(x) \leq \upalpha \|x\|_S\big) \leq \mathtt{C} \big(p''(p+q)(|S|-1) \big)^n = \mathtt{C} \exp(-\mathtt{C}'n).\]

    Let now $a:=\upalpha/2$ and choose $\|x\|_S \gg 1$ so that $\p\big(c(x) \leq \upalpha\|x\|_S\big) \leq 1/2$, then $a\|x\|_S \leq \E[c(x)]$, which yields \eqref{aml} and \eqref{aml2} as a consequence.
\end{proof}

Similarly to \cref{exmpl:independent.FPP}, let $\nu$ be the law of the random coloring of a vertex. Then \[\p\equiv \nu^{\otimes\Gr} = \Bigg(\bigotimes_{j=1}^\kappa \nu\Bigg)^{\otimes N} \equiv\bigotimes_{x\in N}\nu^{(x)}\] with $\nu^{(x)} \equiv \bigotimes_{j=1}^\kappa \nu$. By the same reasoning employed for $\nu^{(x)}$ in \cref{exmpl:independent.FPP}, we verify that $\tht\big\vert_N$ acts ergodically on $(\Om, \F, \p)$.

Hence, under the assumption of \eqref{eq:bound.colors} and based on the aforementioned results, the Shape Theorems \ref{shape.thm} and \ref{thm:shape.polynomial} are applicable to the random coloring of $\Gr=N$ nilpotent with a finite generating set $S \subseteq N \setminus \big( [N,N] \cup \tor N\big)$ or in the case where $\Gr'$ is abelian. Moreover, under the fulfillment of conditions (i) and (ii) in \cref{cor:fpp.virt.nil}, the existence of the limiting shape is also guaranteed when $\Gr$ is virtually nilpotent.

\begin{remark}
    Observe that \eqref{eq:bound.colors} provides a lower bound for the critical probability of site percolation on $\mathcal{C}(\Gr,S)$ (see for instance \cite{grimmett1998}). To verify that, fix a color $\mathlcal{s} \in \mathlcal{F}$ and we say that a site $x \in \Gr$ is open when $X_x=\mathlcal{s}$. Therefore, one can write
    \[\tau_{\operatorname{site}}^{(\mathlcal{s})}(x,y)= \mathbbmtt{1}(X_x\neq \mathlcal{s}\text{ or }X_y\neq \mathlcal{s}).\]
    Note that it stochastically dominates with $\tau \leq \tau_{\operatorname{site}}^{(\mathlcal{s})}$ $\p$-a.s. The open edges are the new edges of length zero. By \cref{lm:aml.colors}, we can apply \cref{shape.thm} to obtain that, $\p$-a.s., there is no infinite open cluster in $\mathcal{C}(\Gr,S)$ when $p_{\mathlcal{s}}< \frac{1}{\vert S \vert-1}$.
\end{remark}

\end{example}

\begin{example}[Richardson's Growth Model in a Translation Invariant Random Environment] \label{ex:richardson}

In this example, we define a variant of the Richardson's Growth Model which is commonly employed to describe the spread of infectious diseases. This version of the model involves independent random variables that are not identically distributed (see \cite{garet2012,richardson1973} for similar models).  Specifically, we consider that the transmission rate of the disease between neighboring sites is randomly chosen. The distribution of this variable will vary depending on the directions of the Cayley graph.

Consider that the infection rates between neighbors are determined by a random environment taking values in $\Uplambda:=(0,+\infty)^E$.  Let $S':=\big\{\{s,s^{-1}\}:s \in S\big\}$ be the set of directions of $\mathcal{C}(\Gr,S)$. Consider $\{\uplambda_{s'}\}_{s' \in S'}$ a set of strictly positive random variables that are independent over a probability measure $\nu$. Set $\big(\uplambda(\mathtt{e})\big)_{\mathtt{e} \in E}$ to be a collection of independent random variables over $\nu$ such that
\[
    \uplambda(x,sx) \sim \uplambda_{s'} \quad \text{with }s'= \{s^{\pm 1}\}.
\]

Let us regard $\uplambda \in \Uplambda$ as a fixed realization of the random environment. The growth process is defined by the family of independent random variables ${\{\tau(x,sx): x\in \Gr, s \in S\}}$ such that 
\begin{equation} \label{eq:t.Richardson}
    \tau(x,sx) \sim \text{Exp}\big(\uplambda(x,sx)\big).
\end{equation}

Set $\p_\uplambda$ to be the quenched probability law of \eqref{eq:t.Richardson}. We write, for each path $\upgamma \in \mathscr{P}(x, y)$ with $x,y \in \Gr$, its random length $T(\upgamma):= \sum_{\mathtt{e} \in \upgamma}\tau(\mathtt{e})$.

The first-passage time is
\[
c(x):= \inf_{\upgamma \in \mathscr{P}(e,x)}T(\upgamma).
\]
It is straightforward to see that $c(x)$ is subadditive. However, the group action $\tht$ is not ergodic over $\p_\uplambda$ for a given $\uplambda \in \Uplambda$. Let $\p(\cdot) = \int_\Uplambda \p_\upgamma(\cdot)d\nu(\uplambda)$ be the annealed probability. It then follows that $\tht$ preserves the measure $\p$ and it is ergodic.

Note that $c(x)$ defines a First-Passage Percolation (FPP) model, which we refer to as Richardson's Growth Model in a Translation Random Environment (RGTRE). In the following, we establish that conditions \eqref{all}, \eqref{aml}, and \eqref{aml2} are met.

\begin{lemma} \label{lm:richardson.all}
    Consider the RGTRE defined as above. Then there exist $\upbeta, \mathtt{C},\mathtt{C}'>0$ such that, for all $x \in \Gr$,
    \[
        \p\big(c(x) \geq t\big) \leq \mathtt{C} \exp\big(-\mathtt{C}'t\big)
    \]
    for all $t \geq \upbeta \|x\|_S$.
\end{lemma}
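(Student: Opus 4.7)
The plan is to upper bound $c(x)$ by the passage time along a fixed geodesic of the Cayley graph and then apply a Chernoff estimate under the annealed measure $\p$. Fix $n:=\|x\|_S$ and choose a geodesic path $\upgamma^\ast = (e=x_0,x_1,\dots,x_n=x) \in \mathscr{P}(e,x)$ in $\mathcal{C}(\Gr,S)$. Since the infimum defining $c(x)$ is bounded above by any admissible path, one has
\[
c(x) \leq T(\upgamma^\ast) = \sum_{i=1}^{n}\tau_i, \qquad \tau_i := \tau(x_{i-1},x_i).
\]
Under $\p$ the environment $\uplambda$ is independent across edges, and conditionally on the environment the weights $\tau(\mathtt{e})$ are independent exponentials; therefore the $\tau_i$ are unconditionally independent under $\p$, with marginals determined by the directions $s_i' \in S'$ traversed by $\upgamma^\ast$.

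Next, I would compute the moment generating function of each $\tau_i$ by conditioning on $\uplambda_i := \uplambda(x_{i-1},x_i)$. For $\theta>0$ with $\theta<\uplambda_i$ $\p$-a.s.,
\[
\E\big[e^{\theta\tau_i}\big] \;=\; \E\!\left[\frac{\uplambda_i}{\uplambda_i-\theta}\right].
\]
Provided there exists a deterministic $\uplambda_{\min}>0$ with $\uplambda_{s'}\geq \uplambda_{\min}$ for every $s' \in S'$ (an integrability assumption on the environment that is implicit in the model; the finiteness of $|S'|$ makes this a uniform condition), choosing $\theta=\uplambda_{\min}/2$ gives $\E[e^{\theta\tau_i}]\leq 2$ for every $i$. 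By independence under $\p$,
\[
\E\big[e^{\theta T(\upgamma^\ast)}\big] \;\leq\; 2^{\,n}.
\]

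Finally, applying Markov's inequality to $e^{\theta T(\upgamma^\ast)}$,
\[
\p\big(c(x)\geq t\big) \;\leq\; \p\big(T(\upgamma^\ast)\geq t\big) \;\leq\; 2^{\,n}\,e^{-\theta t} \;=\; e^{\,n\log 2 \,-\,\theta t}.
\]
Setting $\upbeta>\log 2 /\theta$, then for every $t\geq \upbeta\|x\|_S=\upbeta n$ we have $n\log 2\leq (\log 2/\upbeta)\,t$, so the right-hand side is bounded by $\mathtt{C}\exp(-\mathtt{C}'t)$ with $\mathtt{C}:=1$ and $\mathtt{C}':=\theta-\log 2/\upbeta>0$, which is the claimed bound.

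The main obstacle is the last step of the MGF computation: controlling $\E[\uplambda_i/(\uplambda_i-\theta)]$ uniformly over directions when the random rates $\uplambda_{s'}$ are merely strictly positive. Without a uniform positive lower bound (or, equivalently, a strong enough integrability of $1/\uplambda_{s'}$ near zero), the annealed law of $\tau_i$ may fail to admit any exponential moment, and the Chernoff step breaks down. I would therefore isolate this condition as the standing hypothesis of the lemma; given it, the rest of the argument reduces to the routine geodesic-plus-Chernoff computation sketched above.
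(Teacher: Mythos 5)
Your argument is essentially the paper's: bound $c(x)$ by the passage time along a $d_S$-geodesic and apply a Chernoff/MGF estimate edge by edge, obtaining a bound of the form $2^{n}e^{-\alpha t}$ and then choosing $\upbeta$ proportional to $\log 2/\alpha$. The only divergence is where the exponential moment is controlled: the paper works under the quenched law, writing $\E_\uplambda\big[e^{\alpha\tau(\mathtt{e})}\big]$ as a geometric series in $\alpha/\uplambda(\mathtt{e})$ and taking $\alpha=\thickbar{\uplambda}_{\min}/2$ with $\thickbar{\uplambda}_{\min}=\min_{s'\in S'}\E[\uplambda_{s'}]$ before invoking dominated convergence, whereas you work annealed and posit a deterministic a.s.\ lower bound on the rates. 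Your closing caveat is well taken: some control of the rates near zero is genuinely needed for the exponential-moment step, and the paper's choice of $\alpha$ as half the minimum \emph{mean} rate leaves this point implicit (it does not by itself guarantee $\alpha<\uplambda(\mathtt{e})$ pointwise), so your version simply makes explicit a hypothesis that the paper's proof also relies on in effect.
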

\begin{proof}
    Let $\upgamma \in \mathscr{P}(e,x)$ be a $d_S$-geodesic with $\|x\|_S=n$. Then one has by Chernoff bound and the independence of $\{\tau(\mathtt{e})\}_{\mathtt{e}\in E}$ that 
    \[
        \p_\uplambda\big(c(x) \geq t\big) \leq \p_\uplambda\big(T(\upgamma) \geq t\big) \leq \frac{\prod_{\mathtt{e} \in \gamma}\E_\uplambda\left[e^{\alpha \tau(\mathtt{e})}\right]}{e^{\alpha t}}.
    \]
    where
    \[
        \E_\uplambda\left[e^{\alpha \tau(\mathtt{e})}\right]= \sum_{m=0}^{+\infty} \left(\frac{\alpha}{\uplambda(e)}\right)^m
    \]
    Let $\thickbar{\uplambda}_{\min} := \min_{s'\in S'} \E[\uplambda_{s'}]$ and set $\alpha= \thickbar{\uplambda}_{\min}/2$. Thus, by the Dominated Convergence Theorem,
    \[
         \p_\uplambda\big(c(x) \geq t\big) \leq \frac{2^n}{e^{\thickbar{\uplambda}_{\min}t/2}}.
    \]

    Therefore, it suffices to choose $\upbeta> 2\log(2)/\thickbar{\uplambda}_{\min}$ to complete the proof.
\end{proof}

\begin{lemma} \label{lm:richardson.aml}
     Consider the RGTRE defined as above. The there exists $a>0$ such that, for all $x \in \Gr$,
     \[
        a \|x\|_S \leq \E[c(x)].
     \]
\end{lemma}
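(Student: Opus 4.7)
My plan is to derive an exponential lower-tail estimate of the form $\p(c(x)\le a\|x\|_S)\le 1/2$ by a Chernoff/union-bound argument over self-avoiding paths, and then conclude via the elementary inequality $\E[c(x)]\ge t\cdot\p(c(x)>t)$ with $t=a\|x\|_S$.

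Under the annealed law $\p$, independence of the rates $\{\uplambda(\mathtt{e})\}_{\mathtt{e}\in E}$ carries over to independence of the passage times $\{\tau(\mathtt{e})\}_{\mathtt{e}\in E}$, since conditional independence given the rates, together with independence of the rates, yields unconditional independence. For an edge $\mathtt{e}$ in direction $s'\in S'$, I compute the Laplace transform
\[
M_{s'}(\alpha):=\E\!\left[e^{-\alpha\tau(\mathtt{e})}\right]=\E\!\left[\frac{\uplambda_{s'}}{\uplambda_{s'}+\alpha}\right],
\]
which is strictly less than $1$ for every $\alpha>0$ and, by dominated convergence with dominating function $1$, tends to $0$ as $\alpha\uparrow+\infty$. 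Since $S'$ is finite, I fix $\alpha>0$ large enough that $M(\alpha):=\max_{s'\in S'}M_{s'}(\alpha)$ satisfies $|S|\,M(\alpha)\le 1/2$.

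For a self-avoiding path $\gamma$ of graph-length $\ell$, independence of the edge weights combined with the standard Chernoff estimate gives $\p(T(\gamma)\le t)\le e^{\alpha t}\,M(\alpha)^{\ell}$. Every self-avoiding path from $e$ to $x$ has length at least $n:=\|x\|_S$, and the number of such paths of length $\ell$ is bounded by $|S|^\ell$, so a union bound yields
\[
\p\bigl(c(x)\le t\bigr)\le e^{\alpha t}\sum_{\ell\ge n}\bigl(|S|\,M(\alpha)\bigr)^\ell\le 2\cdot 2^{-n}\,e^{\alpha t}.
\]
Taking $t=an$ with $a:=\log(2)/(2\alpha)$ produces $\p(c(x)\le an)\le 2^{1-n/2}$, so there exists $n_0$ with $\p(c(x)>an)\ge 1/2$ for every $x$ satisfying $\|x\|_S\ge n_0$, and consequently $\E[c(x)]\ge (a/2)\|x\|_S$ for all such $x$.

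For the finitely many elements $x$ with $0<\|x\|_S<n_0$, strict positivity of each $\tau(\mathtt{e})$ forces $\E[c(x)]>0$, so replacing $a/2$ by the minimum of $a/2$ and $\min_{0<\|x\|_S<n_0}\E[c(x)]/\|x\|_S$ furnishes a single uniform constant $a'>0$ valid for every $x\in\Gr\setminus\{e\}$; the case $x=e$ reduces to $0\le 0$. I expect the main technical point to be ensuring that $\alpha$ can be chosen so that $|S|\,M(\alpha)<1$ and the union-bound series converges; this is resolved by the dominated-convergence argument across the finitely many direction classes $s'\in S'$, exploiting only the strict positivity of each $\uplambda_{s'}$ and requiring no integrability beyond what is already implicit in the model.
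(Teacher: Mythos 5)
Your proof is correct, and it reaches the conclusion by a somewhat different mechanism than the paper. The paper (mirroring the argument of Lemma~\ref{lm:aml.colors}) uses only that $\p(\tau(\mathtt{e})=0)=0$: by right-continuity of the distribution function it fixes a threshold $\delta>0$ with $\p(\tau(\mathtt{e})<\delta)=p<\tfrac{1}{|S|-1}$, dominates the number of ``expensive'' edges along a path by a $\operatorname{Binomial}(n,1-p)$ variable, applies a Chernoff bound to that binomial, and then union-bounds over the $\le C(|S|-1)^n$ self-avoiding \emph{initial segments of length exactly} $n=\|x\|_S$. You instead apply the Chernoff bound directly to $T(\gamma)$ via the annealed Laplace transform $\E[e^{-\alpha\tau(\mathtt{e})}]=\E\big[\uplambda_{s'}/(\uplambda_{s'}+\alpha)\big]$, use dominated convergence to make it arbitrarily small, and sum over all path lengths $\ell\ge n$; the correctly justified annealed independence of the passage times makes this legitimate. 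What each buys: your route avoids any comparison between $p$ and the connective constant because $\alpha$ can simply be taken large (and, since $\E[e^{-\alpha\tau}]\to\p(\tau=0)$ for any nonnegative $\tau$, it is not really tied to the exponential form), while the paper's threshold/binomial reduction is a verbatim recycling of the coloring-model lemma and so needs no exponential-moment computation at all. A further small point in your favour: by treating the finitely many $x$ with $0<\|x\|_S<n_0$ separately you obtain the bound literally for all $x\in\Gr$, whereas the paper's proof only establishes it for $\|x\|_S$ large, which is all that conditions \eqref{aml} and \eqref{aml2} actually require.
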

\begin{proof}
    It is a well-known fact that, for all $\uplambda \in \Uplambda$ and every $\mathtt{e}\in E$, that $\p_\uplambda\big(\tau(\mathtt{e})=0\big) =0$ and, therefore,  $\p\big(\tau(\mathtt{e})=0\big) =0$. By the right continuity of the cumulative distribution function, one can find $\delta>0$  and $p \geq 0$ such that, for every $\mathtt{e} \in E$,
    \[
        \p\big(\tau(\mathtt{e}) < \delta\big) =p < \frac{1}{|S|-1}.
    \]
    
    We use similar arguments as those employed in the proof of \cref{lm:aml.colors}, we may consider $Y \sim \operatorname{Binomial}(n, 1-p)$ over $P$. Then there exists $\upalpha>0$ such that, for any $\upgamma \in \mathscr{P}_n$,
    \[\p\big(T(\upgamma)\leq \upalpha n \big) \leq P\big(Y \leq \upalpha n /\delta\big) \leq p^n.\]

    It follows that there exists $C>0$ such that, for $\|x\|_S=n$,
    \[
        \p\big(c(x) \leq \alpha \|x\|_S \big) \leq \big\vert \mathscr{P}_n \big\vert \cdot P\big(Y \leq \upalpha n /\delta\big) \leq C \big((|S|-1)p\big)^n.
    \]
    Since $(|S|-1)p <1$, we can complete the proof by following the same steps as in  \cref{lm:aml.colors}.
\end{proof}

It follows from \cref{lm:richardson.all,lm:richardson.aml} that conditions \eqref{all}, \eqref{aml}, and \eqref{aml2} are satisfied. Observe that $\tht\big\vert_N$ acts ergodically on the probability space (see \cref{exmpl:independent.FPP}). 

Therefore, building upon the preceding results, the Limiting Shape Theorems \ref{shape.thm} and \ref{thm:shape.polynomial} apply to the RGTRE with $\Gr=N$ nilpotent with a finite generating set $S \subseteq N \setminus \big( [N,N] \cup \tor N\big)$ or in scenarios where $\Gr'$ is abelian. Additionally, when $\Gr$ is virtually nilpotent and conditions (i) and (ii) from \cref{cor:fpp.virt.nil} are satisfied, the existence of the limiting shape is also assured.

\end{example}

\begin{example}[The Frog Model]\label{ex:frog}

The Frog Model, originally introduced by Alves et al. \cite{alves2002} and previously featured as an example in \cite{telcs1999},  is a discrete-time interacting particle system determined by the intersection of random walks on a graph. In this model, particles, often representing individuals, are distributed across the vertices and they can be in either active (awake) or inactive (sleeping) states. At discrete time steps, active particles perform simple random walks, while inactive ones remain stationary. The activation of an inactive particle occurs when its vertex is visited by an active counterpart, thereby characterizing an awakening process. This straightforward yet potent model serves as a valuable tool for analyzing diverse dynamic processes, such as the spread of information and disease transmission.

In our previous study Coletti and de Lima \cite{coletti2021},  we investigated the Frog Model on finitely generated groups.  We can now extend our findings to virtually nilpotent groups as a consequence of \cref{thm:shape.polynomial}. Let us define the model in detail. The initial configuration of the process at time zero begins with one particle at each vertex and the only active particle lies on the origin $e \in \Gr$.

Set $S_n^x$ to be the simple random walk on $\mathcal{C}(\Gr,S)$ of a particle originally placed at $x \in \Gr$ and let $t(x, y)$
be the first time the random walk $S_n^x$ visits $y\in\Gr$, \textit{i.e.}, it defined the random variable $t(x, y) = \inf\{n \in \N_0: S_n^x=y\}$. Note that $t(x, y) = +\infty$ with strictly positive probability when $D \geq 3$.

The activation time of the particle originally positioned at $x$ is given by the random variable
\[T(x) = \inf\left\{ \sum_{i=1}^m t(x_{i-1},x_i) \colon m\in\N, ~\{x_i\}_{i=1}^m \subseteq\Gr, ~x_0=e \right\}.\]
Observe that $x_{i-1}$ and $x_i$ are not necessarily neighbours. We proved in \cite{coletti2021} that $c(x)=T(x)$ is a subadditive cocycle with respect to the translation $\tht$, which is p.m.p. and an ergodic group action. Futhermore, $\tau^{(e)}(x,sx)= |T(x)-T(sx)|$ is not identically distributes as in the FPP models (see \cref{sec:fpp}).

Due to the discrete time random walks, $T(x) \geq \|x\|_S$ and therefore \eqref{aml2} is immediately satisfied. The at least linear growth in virtually nilpotent group was already investigated in \cite{coletti2021}. Hence, condition \eqref{all} is a consequence of the following result.

\begin{lemma}[Prop. 2.10 of \cite{coletti2021}]
    Let $\Gr$ be a group of polynomial growth rate $D \geq 3$ with a symmetric finite generating set $S \subseteq \Gr \setminus\{e\}$. Then there exists $\mathtt{C},\varkappa>0$ and $\upbeta>1$ such that, for all $x \in \Gr$ and every $t > \upbeta \|x\|_S$, one has
    \[\p\big(T(x) \geq t\big) \leq \mathtt{C} \exp(-t^\varkappa).\]
\end{lemma}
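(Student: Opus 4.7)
The plan is to bound $T(x)$ by a sum of hitting times along an efficient activation cascade, and then apply large-deviation estimates to this sum. Fix a geodesic $e=x_0, x_1, \ldots, x_n = x$ in $\mathcal{C}(\Gr,S)$ with $n = \|x\|_S$. By the very definition of $T$ (as an infimum over finite sequences),
\[
T(x) \leq \sum_{i=1}^n t(x_{i-1},x_i),
\]
so it suffices to control the upper tail of this sum, uniformly in the choice of $x$.

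The main difficulty is that, since $D\ge 3$, the simple random walk on $\mathcal{C}(\Gr,S)$ is transient, and each $t(x_{i-1},x_i)$ may equal $+\infty$ with strictly positive probability, so a naive Chernoff bound on the sum above fails outright. To circumvent this, I would adopt a two-scale scheme: fix a mesoscopic radius $r=r(t)$ to be chosen and replace each direct hit $t(x_{i-1},x_i)$ by a composite two-stage bound. First, require the random walk started at $x_{i-1}$ to enter the ball $B_S(x_i,r)$; and second, once inside this ball, exploit one of the newly activated particles in $B_S(x_i,r)$ to cover the remaining distance to $x_i$ in a bounded time with positive probability, repeating this local trial a geometric number of times until success.

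To quantify the first stage I would invoke Gaussian heat-kernel estimates valid on Cayley graphs of polynomial growth (Hebisch--Saloff-Coste, Varopoulos), which provide bounds of the form $\p\bigl(\|S_k^e\|_S \geq a\sqrt{k}\bigr)\leq \mathtt{c}_1\exp(-\mathtt{c}_2 a^2)$ and hence allow one to show that the ball $B_S(x_i,r)$ is hit within time of order $r^2$ with stretched-exponential accuracy. The second stage is a purely local event whose success probability is bounded below by a positive constant depending only on $S$ and $r$, so its cost is geometric. Combining these estimates via the strong Markov property at the successive activation times and applying a Chernoff bound across the $n$ blocks yields, for a suitable $\varkappa\in(0,1)$ and constant $\mathtt{C}>0$,
\[
\p\Big(\textstyle\sum_{i=1}^n t(x_{i-1},x_i)\geq t\Big)\leq \mathtt{C}\exp(-t^\varkappa)
\]
for every $t>\upbeta n$, provided $\upbeta$ is large enough in terms of $S$ and $D$. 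This last step requires calibrating $r(t)$ so that the Gaussian bound from the first stage and the geometric bound from the second stage balance against the linear budget $n$, which is precisely what forces the exponent $\varkappa<1$ rather than a clean exponential rate.

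The main obstacle is the handling of transience: single random walks in isolation are insufficient, and the argument must genuinely use that the Frog Model carries infinitely many sleeping particles together with the volume estimate $|B_S(e,r)|\asymp r^D$ to convert rare direct hits into frequent near-misses. The choice $r(t)$ and the resulting exponent $\varkappa$ depend in a mildly technical way on $D$, and verifying that the Markov-property chaining does not accumulate logarithmic losses across the $n$ blocks is the most delicate point; this is the content borrowed from \cite{coletti2021}.
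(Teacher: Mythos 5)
First, note that the thesis itself does not prove this lemma: it is imported verbatim as Prop.~2.10 of \cite{coletti2021}, so your sketch can only be measured against the strategy of that cited work (which, like the classical argument of Alves--Machado--Popov for $\Z^d$, rests on a recursive activation argument). Your overall frame --- bound $T(x)$ by chains along a geodesic, use heat-kernel/Green-function estimates on polynomial-growth Cayley graphs, and use the abundance of sleeping frogs to beat transience --- is the right one, and you correctly identify transience as the obstruction to a naive Chernoff bound.

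The genuine gap is in your second stage, which is the heart of the matter. As described, one designated walker from $x_{i-1}$ wakes a single generation of frogs inside $B_S(x_i,r)$, and you then ``repeat the local trial a geometric number of times.'' Two things go wrong. First, there is no supply mechanism for the repeated trials: the designated walker is itself transient, so within the available time budget (of order $r^2$ per step, say) it wakes at most its range, roughly $r^2$ frogs, and then escapes for good; with a fixed $r$ the ball contains only $O(1)$ frogs, so the number of genuinely fresh trials is bounded and the per-step failure probability stays bounded away from $0$. Second, even granting one full generation of $\sim r^2$ woken frogs at distance at most $r$ from $x_i$, each hits $x_i$ with probability of order $r^{-(D-2)}$ (Green-function asymptotics), so the failure probability is of order $\exp(-c\,r^{4-D})$: this is only small for $D=3$, is a constant for $D=4$, and is useless for $D\geq 5$, whereas the lemma is claimed for all $D\geq 3$. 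What the actual proof needs, and what your sketch only gestures at, is the recursive cascade --- woken frogs waking further frogs, so that within the time budget a polynomially large cloud of active particles sits near the target --- typically organized as a multi-scale or block/renormalization argument; this also quietly resolves the dependence problem you flag when chaining the $n$ geodesic blocks, since nearby blocks share helper frogs and one must either space the blocks or do the bookkeeping explicitly. Without that mechanism the calibration of $r(t)$ and the claimed stretched-exponential exponent $\varkappa$ cannot be carried out for general $D\geq 3$.
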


Consider now $\Gr$ as a group with polynomial growth rate $D \geq 3$ generated by a symmetric finite set $S \subseteq \Gr\setminus\{e\}$. According to \cref{thm:shape.polynomial} and the preceding results, it can be inferred that the Frog Model on $\mathcal{C}(\Gr,S)$ exhibits a limiting shape when $\big\langle \llbracket S \rrbracket\big\rangle$ generates an abelian group $\Gr'$. This phenomenon can be exemplified by the generalized dihedral group $\Gr = \operatorname{Dih}(N)$ when $N$ is a finitely generated abelian group with polynomial growth rate $D \geq 3$ (see \cref{ex:dihedral}).

\end{example}

\part{Asymptotic Shape Theorem for First-Passage Percolation Models on Random Geometric Graphs and its Speed of Convergence} \label{part:rgg}

\chapter{On Random Geometric Graphs} \label{ch:rgg}

\vspace{1.3cm}

In the early 1960s, Gilbert \cite{gilbert1961} introduced a mathematical model for wireless networks, laying the groundwork for continuum percolation theory and extending the concepts of discrete percolation.  This model, often referred to as the \textit{Gilbert disk model}, is characterized by the uniform distribution of points across the infinite plane $\mathbb{R}^2$ through a homogeneous Poisson point process (PPP) with intensity $\lambda>0$. The point process determines the vertices of the graph and the edges are defined between any pair of vertices that are within an Euclidean distance smaller than a fixed threshold $r>0$. In this context, we define the random graph in $\mathbb{R}^d$ with $d \geq 2$, and following Penrose \cite{penrose2003}, we refer to it as \textit{random geometric graph} (RGG). This class of graphs is associated to the Poisson--Boolean model in continuum percolation, and it can also be seen as a particular case of the random-connection model (see for instance Meester and Roy \cite{meester1996}).

In this chapter, we present the definition and parameters for random geometric graphs (RGGs) and the existence of the infinite connected component. We also show some results about its geometry in order to study the asymptotic shape in the following chapters. Throughout the text, we assume $\|\cdot\|$ to be the Euclidean norm on $\R^d$, and let $\|\cdot\|_1$ and $\|\cdot\|_\infty$ stand for the $\ell^1$ and $\ell^\infty$ norms, respectively.

\section{Basic Definitions and Auxiliary Results} \label{sec:basic.defs_RGGs}

Let $\Poi_\lambda$ be the random set of points determined by the homogeneous PPP on $\R^d$ with intensity $\lambda>0$. The RGG $\G=(V,\mathcal{E})$ on $\R^d$ is defined by
\[
  V = \Poi_\lambda \quad \text{and} \quad
  \mathcal{E} = \big\{\{u,v\} \subseteq V: \|u-v\|<r,~ u \neq v\big\}.
\]
Since $\lambda^{-\frac{1}{d}}\mathcal{P}_\lambda \sim \mathcal{P}_1$, we consider $\lambda$ as a fixed parameter and allow $r$ to vary due to the homogeneity of the norm. Thus, unless specified otherwise, we set $\lambda=1$ and denote $\mathcal{P}_1$ as $\mathcal{P}$. Let $\left(\Upxi,\F,\mu\right)$ denote the probability space induced by the construction of $\mathcal{P}$. Let us now introduce the group action $\tht:\R^d \curvearrowright\Upxi$ which is determined by the spatial translation as a shift operator. That is, $\Poi\circ\tht_z = \big\{v-z :v \in \Poi \big\}$. The following lemma is a classical result on PPPs, which can be found for example in Meester and Roy \cite[Prop. 2.6]{meester1996}.

\begin{lemma} \label{lm:PPP.mixing}
    The homogeneous PPP is mixing on $(\Upxi,\F,\mu,\tht)$.
\end{lemma}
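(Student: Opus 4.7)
The plan is to prove the mixing property directly from the well-known independence property of the Poisson point process on disjoint Borel sets, combined with a standard approximation argument. Recall that mixing for the action $\tht$ means that for every pair $A,B \in \F$,
\[
  \lim_{\|z\|\to+\infty} \mu\big(A \cap \tht_z^{-1}(B)\big) \;=\; \mu(A)\,\mu(B).
\]

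First, I would reduce to cylinder events. Let $\mathcal{A}_0$ denote the algebra of cylinder events, namely those events $A$ for which there exists a bounded Borel set $K_A \subseteq \R^d$ such that $A$ is determined by $\Poi \cap K_A$. This algebra generates $\F$. Suppose $A,B \in \mathcal{A}_0$ with witnesses $K_A$ and $K_B$. Since both sets are bounded, there exists $R>0$ such that $K_A \subseteq B(0,R)$ and $K_B \subseteq B(0,R)$. For every $z \in \R^d$ with $\|z\|>2R$, the sets $K_A$ and $K_B + z$ are disjoint. The event $\tht_z^{-1}(B)$ is determined by $\Poi \cap (K_B+z)$, because $\Poi \circ \tht_z$ restricted to $K_B$ coincides with $\Poi$ restricted to $K_B + z$ (shifted back). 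The defining independence property of the homogeneous PPP then yields
\[
  \mu\big(A \cap \tht_z^{-1}(B)\big) \;=\; \mu(A)\,\mu\big(\tht_z^{-1}(B)\big) \;=\; \mu(A)\,\mu(B),
\]
where the last equality uses that $\tht_z$ preserves $\mu$ (translation invariance of $\Poi$). Thus for cylinder events the limit is attained for all sufficiently large $\|z\|$.

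Next, I would extend from $\mathcal{A}_0$ to all of $\F$ via the standard $\varepsilon/3$ approximation. Given $A,B \in \F$ and $\varepsilon>0$, by the approximation theorem for the generated $\sigma$-algebra there exist $A_\varepsilon, B_\varepsilon \in \mathcal{A}_0$ with $\mu(A \triangle A_\varepsilon)<\varepsilon$ and $\mu(B \triangle B_\varepsilon)<\varepsilon$. Using translation invariance of $\mu$, the set $\tht_z^{-1}(B_\varepsilon) \triangle \tht_z^{-1}(B)$ has measure at most $\varepsilon$ as well. The triangle inequality for the symmetric difference then gives
\[
  \Big| \mu\big(A \cap \tht_z^{-1}(B)\big) - \mu\big(A_\varepsilon \cap \tht_z^{-1}(B_\varepsilon)\big) \Big| \;\le\; 2\varepsilon,
\]
and similarly $|\mu(A)\mu(B) - \mu(A_\varepsilon)\mu(B_\varepsilon)| \le 2\varepsilon$. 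Combining this with the cylinder-event identity above (applied to $A_\varepsilon$ and $B_\varepsilon$ for all $\|z\|$ sufficiently large), we obtain
\[
  \limsup_{\|z\|\to+\infty}\Big| \mu\big(A \cap \tht_z^{-1}(B)\big) - \mu(A)\mu(B) \Big| \;\le\; 4\varepsilon.
\]
Letting $\varepsilon \downarrow 0$ concludes the proof.

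The only real subtlety is the approximation step: one must be careful that cylinder events truly form a generating algebra and that the symmetric-difference approximation respects translations. Both facts are standard for Poisson point processes on $\R^d$ with the usual $\sigma$-algebra generated by counting random variables on bounded Borel sets, so in practice the proof reduces to invoking independence on disjoint sets plus translation invariance. I would keep the exposition brief and simply cite the classical reference \cite[Prop.~2.6]{meester1996} if a self-contained proof is not desired.
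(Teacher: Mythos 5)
Your proof is correct: the reduction to cylinder events determined by bounded windows, exact factorization via independence of the PPP on disjoint Borel sets once $\|z\|>2R$, and the $\varepsilon$-approximation using translation invariance of $\mu$ is precisely the classical argument. The paper does not prove this lemma at all — it simply cites \cite[Prop.~2.6]{meester1996} — and your argument is the standard proof underlying that citation, so there is nothing further to reconcile.
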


\begin{remark} \label{rmk:isometry}
    Let $S:\R^d\to\R^d$ be an isometry. Then, it is known that $S$ induces a $\mu$-preserving ergodic function $\Tilde{\sigma}: \Upxi \to \Upxi$ where $S[\Poi] = \Poi \circ \ \Tilde{\sigma}$.
\end{remark}

We aim to investigate the spread of infection within an infinite connected component of $\G$. It is a well-known fact from continuum percolation theory (see Meester and Roy \cite{meester1996} or Penrose \cite[Chapter~10]{penrose2003} for details) that, for all $d \geq 2$, there exists a critical $r_c(\lambda)>0$ (or $r_c$ for $\lambda=1$) such that $\G$ has an infinite component $\Hh$ $\mu$-\textit{a.s.} for all $r>r_c$. Moreover, $\Hh$ is $\mu$-\textit{a.s.} unique. As $\Hh$ is a subgraph of $\G$, we denote by $V(\Hh)$ and $E(\Hh)$ its sets of vertices and edges, respectively. To simplify notation, we often use $\Hh$ to represent the set of vertices $V(\Hh)$. 

For our purposes, it suffices to note that $r_c\geq 1/{\upupsilon_d}^{1/d}$ where $\upupsilon_d$ denotes the volume of the unit ball in the $d$-dimensional Euclidean space. Indeed, more precise lower and upper bounds can be found in Torquato and Jiao \cite{torquato2012} and $r_c$ approximates to $1/{\upupsilon_d}^{1/d}$ from above as $d \uparrow +\infty$.

Define $\mathbbm{B}(t)$ as the hypercube $[-t/2, t/2]^d$ and consider the Euclidean ball denoted as $B(x,t) := \{y \in \R^d \colon \|y-x\| < t\}$. Let us fix $\theta_{r}$ as $\mu \big( B(o,r) \cap \Hh \neq \varnothing \big)$. The following proposition presents a fundamental result concerning the volume of $\Hh$, which is a weaker version of Theorem 1 in Penrose and Pisztora~\cite{penrose1996}.

\begin{proposition}\label{prop:Hn.growth}
    Let $d \geq 2$, $r>r_c$ and $\varepsilon \in (0,1/2)$. Then, there exists $\mathtt{c}_0>0$ and $t_0>0$ such that, for all $t \geq s_0$,
    \[
        \mu\left((1-\varepsilon)\theta_{r} <\frac{|\Hh \cap \mathbbm{B}(t) |}{t^d}  <(1+\varepsilon)\theta_{r} \right) \geq 1 -\exp(-\mathtt{c}_0t^{d-1}).
    \]
\end{proposition}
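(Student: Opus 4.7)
The plan is to establish the concentration through a renormalization argument, following the strategy of Penrose and Pisztora. Note first that $\theta_{r} = \mu(o \leftrightarrow \infty)$ is the density of $\Hh$ (via the multidimensional ergodic theorem and \cref{lm:PPP.mixing}), so the real task is to upgrade pointwise ergodic convergence to an exponential concentration estimate with the surface-order rate $t^{d-1}$.

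First, I would partition $\mathbbm{B}(t)$ into a grid of sub-cubes $\{\Lambda_i\}_{i \in I_t}$ of side length $L$, with $L$ a large constant to be fixed, so that $|I_t| \asymp (t/L)^d$. For a slightly inflated box $\Lambda_i^+ \supset \Lambda_i$, declare $\Lambda_i$ \emph{good} when (i) the subgraph of $\G$ inside $\Lambda_i^+$ contains a unique ``crossing cluster'' reaching every face of $\Lambda_i^+$, and (ii) this crossing cluster matches those of the neighboring inflated boxes, so that the glued crossings genuinely form part of the unique infinite component $\Hh$. Since the good event depends only on $\Poi \cap \Lambda_i^+$, the indicators $(\mathbbm{1}\{\Lambda_i \text{ good}\})_{i \in I_t}$ form a $k$-dependent site process on a finite portion of $\Z^d$ (with $k$ depending only on $L$).

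For $r > r_c$, classical estimates of continuum percolation (see Meester-Roy, or the adaptation used by Penrose-Pisztora) guarantee $\mu(\Lambda_i \text{ is bad}) < \delta$ once $L$ is large enough, with $\delta$ arbitrarily small. The Liggett-Schonmann-Stacey domination theorem then stochastically dominates the good-box field from below by Bernoulli site percolation with density as close to $1$ as desired. A Peierls/contour argument in this highly supercritical regime shows that, outside an event of probability at most $\exp(-\mathtt{c}_1 t^{d-1})$, the total volume of bad sub-cubes inside $\mathbbm{B}(t)$ is at most $\varepsilon t^d/(4\theta_{r})$. Inside a good sub-cube one has $\E[|\Hh \cap \Lambda_i| \mid \Lambda_i \text{ good}] = \theta_{r} L^d + \mathcal{O}(L^{d-1})$, and Chernoff bounds applied to the Poisson counts over the (weakly dependent) good boxes give deviations of order $\varepsilon t^d/4$ with probability at most $\exp(-\mathtt{c}_2 t^d)$. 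Combining the two bounds and fixing $L$ appropriately in $\varepsilon$ yields the claimed estimate.

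The main obstacle is the design of the good-box event: it must be simultaneously local (to produce a finite-range-dependent renormalized process) and strong enough to identify the giant local cluster with $\Hh$ away from $\partial \mathbbm{B}(t)$. Achieving the optimal surface-order rate $t^{d-1}$, rather than a weaker polynomial or subpolynomial rate, requires careful handling of the boundary layer of $\mathbbm{B}(t)$, where purely local observations may misclassify a bounded number of $\Hh$-vertices per boundary sub-cube. This boundary layer---of volume of order $t^{d-1}$---is precisely what dictates the exponent $d-1$ in the final bound.
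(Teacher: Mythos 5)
Your sketch is essentially the argument behind the result the paper relies on: the paper does not prove \cref{prop:Hn.growth} at all, but states it as a weaker version of Theorem~1 of Penrose and Pisztora \cite{penrose1996}, whose proof is exactly the renormalization scheme you outline (good/bad $L$-boxes with unique crossing clusters, Liggett--Schonmann--Stacey domination, a Peierls-type surface-order estimate, and concentration of the counts in good boxes). So you are on the same route, with the caveat that your write-up is a sketch deferring the delicate points you yourself flag --- the precise good-box event guaranteeing that every locally observed large cluster is part of $\Hh$, and the boundary layer giving the $t^{d-1}$ rate --- to that classical machinery; note also that the paper's $\theta_r=\mu\big(B(o,r)\cap\Hh\neq\varnothing\big)$, so identifying it with the Palm density of $\Hh$, as your ergodic-theorem remark implicitly does, deserves a one-line Palm-calculus justification.
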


As a consequence of the last result, we present the following lemma without proof (see Yao et al. \cite[Lemma~3.3]{yao2011}).

\begin{lemma} \label{lm:ball.intersecting.infinite.comp}
    Let $r>r_c$. Then, there exists $C,C'>0$ such that, for each $x \in \R^d$ and all $s>0$,
    \[\mu\big(B(x,s) \cap \Hh= \varnothing\big) \leq C\exp(-C's^{d-1}).\]
\end{lemma}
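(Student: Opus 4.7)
The plan is to reduce the problem to a box-volume estimate on $\Hh$ and then invoke \cref{prop:Hn.growth}. First I would use translation invariance: since the homogeneous PPP is invariant under $\tht_x$ for every $x\in\R^d$ (see \cref{lm:PPP.mixing} and \cref{rmk:isometry}), and the infinite component $\Hh$ is a.s.\ unique and defined equivariantly from $\Poi$, one has
\[
\mu\bigl(B(x,s)\cap \Hh = \varnothing\bigr) = \mu\bigl(B(o,s)\cap \Hh = \varnothing\bigr),
\]
so it suffices to treat the case $x=o$.

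Next, I would inscribe a hypercube inside the ball. Setting $t := 2s/\sqrt{d}$, every point of $\mathbbm{B}(t)=[-t/2,t/2]^d$ has Euclidean norm at most $t\sqrt{d}/2 = s$, so $\mathbbm{B}(t)\subseteq B(o,s)$. Consequently, on the event $\{B(o,s)\cap\Hh=\varnothing\}$ we have $|\Hh\cap\mathbbm{B}(t)|=0$, and in particular
\[
\{B(o,s)\cap\Hh=\varnothing\}\subseteq\Bigl\{\frac{|\Hh\cap\mathbbm{B}(t)|}{t^d}\le (1-\varepsilon)\theta_r\Bigr\}
\]
for any $\varepsilon\in(0,1/2)$, since $\theta_r>0$ whenever $r>r_c$.

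Now I would apply \cref{prop:Hn.growth} with, say, $\varepsilon=1/2$: there exist $\mathtt{c}_0>0$ and $t_0>0$ such that for all $t\ge t_0$,
\[
\mu\Bigl(\frac{|\Hh\cap\mathbbm{B}(t)|}{t^d}\le \tfrac{1}{2}\theta_r\Bigr)\le\exp(-\mathtt{c}_0 t^{d-1}).
\]
Combining this with the inclusion above and the identification $t=2s/\sqrt{d}$ yields, for all $s\ge s_1 := \tfrac{\sqrt d}{2}t_0$,
\[
\mu\bigl(B(o,s)\cap\Hh=\varnothing\bigr)\le \exp\!\bigl(-\mathtt{c}_0 (2/\sqrt d)^{d-1} s^{d-1}\bigr).
\]
Finally, for $s\in(0,s_1)$ the trivial bound $\mu(\cdot)\le 1$ together with the boundedness of $\exp(-C' s^{d-1})$ on $[0,s_1]$ lets us absorb the constant into $C$, producing the uniform estimate with $C'=\mathtt{c}_0(2/\sqrt d)^{d-1}$ and some $C=C(d,r)\ge 1$.

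The only genuinely nontrivial ingredient is \cref{prop:Hn.growth}, which we take as given; once that is in hand, the main points to check carefully are the translation-invariance argument (relying on a.s.\ uniqueness of $\Hh$ and the stationarity from \cref{lm:PPP.mixing}) and the geometric inclusion $\mathbbm{B}(2s/\sqrt d)\subseteq B(o,s)$, neither of which presents an obstacle.
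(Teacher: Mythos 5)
Your proof is correct and follows exactly the route the paper intends: the lemma is stated there without proof as a consequence of \cref{prop:Hn.growth} (citing Yao et al.), and your reduction by stationarity plus the inscribed cube $\mathbbm{B}(2s/\sqrt{d})$ simply fills in that derivation, with the small-$s$ regime absorbed into the constant as you indicate. The only nitpicks are cosmetic: take $\varepsilon$ strictly inside $(0,1/2)$ (e.g.\ $\varepsilon=1/4$), since \cref{prop:Hn.growth} excludes $\varepsilon=1/2$, and note that the corners of the cube lie on $\partial B(o,s)$ rather than in the open ball, which is harmless because almost surely no point of $\Poi$ lies on that sphere (or simply shrink $t$ slightly).
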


Define $\mathscr{P}(x,y)$ as the set of self-avoiding paths from $x$ to $y$ in $\G$. The simple length of a path $\gamma = (x=x_0, x_1, \dots, x_m=y) \in \mathscr{P}(x,y)$ is denoted by $|\gamma|=m$. Let $\thickbar{q}: \R^d \to \G$ be a function defined as follows:
\begin{equation} \label{def:q_bar.function}
    \thickbar{q}(x) := \argmin_{y \in V}\{\|y-x\|\}
\end{equation}
which determines the closest point to $x$ in $V$.
Note that from \eqref{def:q_bar.function}, $\thickbar{q}$ may have multiple values for certain $x \in \mathbb{R}^d$. In such cases, we presume that $\thickbar{q}(x)$ is uniquely defined by arbitrarily selecting one outcome of \eqref{def:q_bar.function}.

Let $\thickbar{D}(x,y)$ stand for the $\G$-distance between $x,y \in \R^d$ given by
\[
    \thickbar{D}(x,y) = \inf\{|\gamma|:\gamma \in \mathscr{P}\big(\thickbar{q}(x),\thickbar{q}(y)\big)\}.
\]

Similarly to \eqref{def:q_bar.function}, we set $q: \R^d \to \Hh$ to be a function that determines the closest point to $x$ in $\Hh$ defined as
\begin{equation*}
    q(x) := \argmin_{y \in \Hh}\{\|y-x\|\}.
\end{equation*}
Hence $q$ induces a Voronoi partition of $\R^d$ with respect to $\Hh$, see Figure~\ref{fig:RGG_Voronoi} for an illustration.

\begin{figure}[!htb]
    \centering
    \includegraphics[scale=0.235]{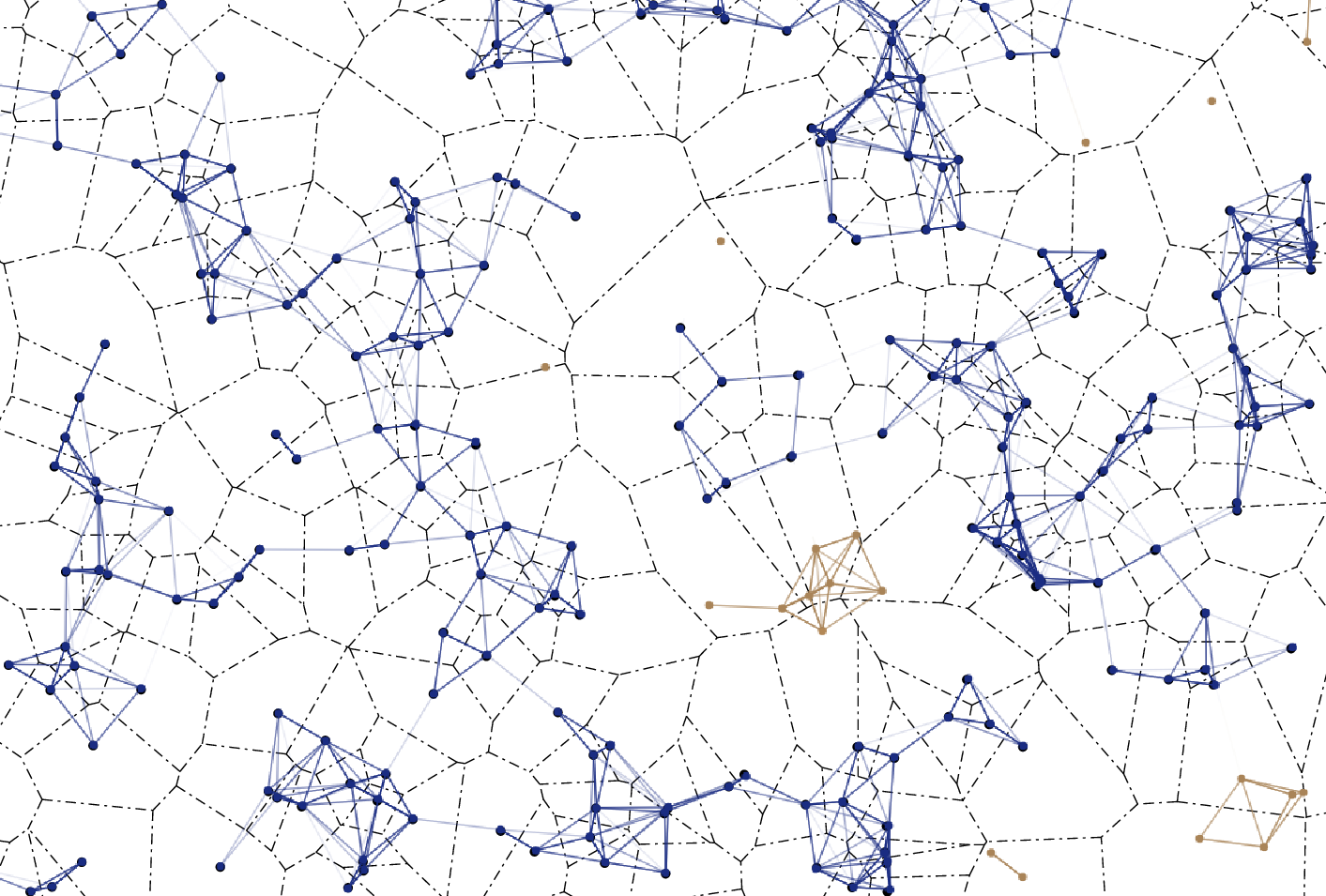}
    \caption{A random geometric graph on $\R^2$ with the Voronoi partition generated by the infinite connected component $\Hh$ (in blue). Fig. from \cite{coletti2023}.}
    \label{fig:RGG_Voronoi}
\end{figure}

We restrict the domain of $\thickbar{D}$ to $\Hh$ by defining $D(x,y):=\thickbar{D}(q(x),q(y))$ for every $x,y \in \R^d$. The proposition below can be immediately adapted from the proof of~Yao et al. \cite[Thm 2.2]{yao2011} by applying properties of Palm calculus and Lemma \ref{lm:ball.intersecting.infinite.comp}.
\begin{proposition}\hspace{-0.1cm}{\rmfamily{(Adapted from~Yao et al. \cite[Thm. 2.2]{yao2011}).}} \label{prop:linearity.chem.dist}
    Let $d \geq 2$ and $r>r_c$. Then there exists $\varrho_{r} > 0$ depending on $r$ such that, $\mu$-\textit{a.s.}, for all $x \in \R^d$,
    \[
        \lim_{\substack{\|y\|\uparrow + \infty}}\frac{D(x,y)}{\|y-x\|} = \varrho_{r}.
    \]
\end{proposition}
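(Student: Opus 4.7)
The plan is to adapt the approach of Yao et al.\ by combining Kingman's Subadditive Ergodic Theorem along each fixed direction with the isotropy of the homogeneous PPP recorded in Remark~\ref{rmk:isometry} and the projection control provided by \cref{lm:ball.intersecting.infinite.comp}. I fix a unit vector $u \in S^{d-1}$ and define $X_n := D(0, nu)$. The triangle inequality for $D$ yields $X_{n+m} \leq X_m + X_n \circ \tht_{mu}$, and \cref{lm:PPP.mixing} guarantees that the shift $\tht_u$ is mixing, hence ergodic. To verify integrability $\E[X_n] \in \mathcal{O}(n)$, I would use a chaining argument: cover the segment $[0, nu]$ by a linear number of balls of fixed radius, each of which meets $\Hh$ with stretched-exponential probability by \cref{lm:ball.intersecting.infinite.comp}, and concatenate short connecting pieces inside $\Hh$ via Palm calculus on the PPP. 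Kingman's Theorem then produces a deterministic constant $\varrho_r(u) \geq 0$ with $X_n/n \to \varrho_r(u)$ $\mu$-a.s.\ and in $L^1$.

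Next, I would argue that $\varrho_r(u)$ is independent of $u$. For any rotation $S \in O(d)$, Remark~\ref{rmk:isometry} supplies a $\mu$-preserving map $\Tilde{\sigma}$ under which $D(0, nu)$ and $D(0, nS(u))$ share the same law, so $\varrho_r(u) = \varrho_r$ is a single constant. Strict positivity follows from the deterministic bound $D(x,y) \geq \|q(x) - q(y)\|/r$ (each edge of $\G$ has Euclidean length at most $r$) together with $\|q(y) - y\| = o(\|y\|)$ $\mu$-a.s., itself a Borel--Cantelli consequence of \cref{lm:ball.intersecting.infinite.comp}; thus $\varrho_r \geq 1/r > 0$. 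Translation invariance of $\mu$ under $\tht_x$ reduces the claim at a general base point $x$ to that at the origin.

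The main obstacle is promoting the directional convergence along integer multiples of a fixed $u$ to the stated limit for arbitrary $y$ with $\|y - x\| \uparrow +\infty$. My plan is, for each $\varepsilon > 0$, to fix a finite $\varepsilon$-net $\{u_1, \dots, u_K\} \subseteq S^{d-1}$ and apply the first step simultaneously to each $u_k$, obtaining a single $\mu$-full event on which $D(0, nu_k)/n \to \varrho_r$ for every $k$; for generic $y$ I would then compare $D(0, y)$ with $D(0, \lfloor \|y\| \rfloor u_k)$ for the $u_k$ closest to $y/\|y\|$. The comparison error reduces to controlling $D(w, w')$ for $\|w - w'\|$ of order $\varepsilon \|y\|$, which I would bound by a uniform stretched-exponential tail for $D$ on macroscopic boxes deduced from \cref{lm:ball.intersecting.infinite.comp} combined with a standard Peierls estimate in the Poisson--Boolean model. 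A Borel--Cantelli step along a subsequence of radii absorbs this error into the $\varepsilon$ budget, and letting $\varepsilon \downarrow 0$ along a countable sequence finishes the argument.
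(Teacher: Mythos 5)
Your overall architecture --- Kingman's theorem (\cref{thm:kingman}) applied to $X_n = D(o,nu)$ along each ray, isotropy via \cref{rmk:isometry} to identify the directional constants, the bound $\varrho_r \ge 1/r$ from the edge-length constraint, and an $\varepsilon$-net on the sphere plus Borel--Cantelli to pass from fixed directions to arbitrary $y$ --- is essentially the argument underlying Yao et al.'s Theorem 2.2. Note that the paper does not reprove the proposition at all: it simply imports it from \cite{yao2011}, remarking that the proof there adapts to the present setting via Palm calculus and \cref{lm:ball.intersecting.infinite.comp}. So you are reconstructing the cited proof rather than matching a proof in the paper, and the skeleton is right; the problem lies in the two places where your sketch needs quantitative control of graph distances inside $\Hh$.

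First, the integrability step for Kingman does not work as described. Covering $[o,nu]$ by a linear number of balls of \emph{fixed} radius fails because, for fixed radius, \cref{lm:ball.intersecting.infinite.comp} gives only a constant bound on the probability of missing $\Hh$, so a positive fraction of the $\mathcal{O}(n)$ balls will be empty; if you enlarge the radii (say to order $\log n$) so that all balls meet $\Hh$, then the points of $\Hh$ in neighbouring balls are no longer adjacent, and knowing they exist says nothing about the length of a path in $\Hh$ joining them --- two vertices of $\Hh$ at Euclidean distance $\mathcal{O}(\log n)$ can a priori be at enormous graph distance. Moreover a with-high-probability bound is not a moment bound: $\E[X_n]<+\infty$ requires a tail estimate in $t$ for fixed $n$. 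Second, the same issue reappears in your net comparison, where the error $D(w,w')$ with $\|w-w'\| = \mathcal{O}(\varepsilon\|y\|)$ must be bounded linearly in $\|w-w'\|$ with exponentially small failure probability, uniformly over an annulus of radius $\|y\|$; this is not deducible from \cref{lm:ball.intersecting.infinite.comp} together with a ``Peierls estimate.'' What both steps actually require is the Antal--Pisztora-type chemical-distance bound $\mu_{u,v}\big(\thickbar{D}(u,v)\mathbbm{1}_{u \in \mathscr{C}(v)} > t\big) \le C e^{-C' t}$ for $t > \beta' \|u-v\|$, a nontrivial renormalization result which is precisely the input the paper takes from \cite[Lemma 3.4]{yao2011} (stated here as \cref{lm:PPP.clusters} and packaged as the exponential tail of \cref{lm:chem.all}). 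Once you invoke that lemma, your integrability and comparison steps go through by union bounds over Poisson points (Mecke/Campbell) and Borel--Cantelli. A smaller point: your translation-invariance reduction only yields ``for each fixed $x$, $\mu$-a.s.,'' whereas the statement requires a single full-measure event valid for all $x$; this is repaired by observing that $|D(x,y)-D(o,y)| \le D(o,x) < +\infty$ and $\|y\|/\|y-x\| \to 1$ on the event constructed for the origin.
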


The constant $\varrho_{r}$ is called \emph{stretch factor} of $\Hh$. Observe that $\varrho_{r} \geq 1/r$. Due to the subadditivity of the $\Hh$-distance, one can easily see that $\E_\mu[D(o, z)]$ with $\|z\|=1$ is an upper-bound for $\varrho_{r}$.

Let $u, v \in \R^d$, we define the Palm measure $\mu_v$ as the convolution of the measure $\mu$ with the Dirac measure $\delta_v$. Specifically, $\mu_v := \mu \ast \delta_v$. Similarly, $\mu_{u,v}$ stands for $\mu \ast \updelta_u \ast \updelta_v$. Set $\mathscr{C}(x)$ to be the connected component of $\thickbar{q}(x)$ in $\G$. We have the following result about the tail behaviour of $D(o, z)$.

\begin{lemma} \label{lm:chem.all}
    Let $d \geq 2$ and $r>r_c$. Then, there exist $C,C'>0$ and $\beta^\dagger >1$ such that, for all $x \in \R^d$ and every $t> \beta^\dagger \|x\|$,
    \[
        \mu\big(D(o,x) \geq t\big)  \leq C\exp(-C' t).
    \]
\end{lemma}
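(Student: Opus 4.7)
The plan combines three ingredients: a cleanup estimate near the endpoints, a block renormalization reducing to supercritical site percolation, and an Antal--Pisztora--type chemical-distance estimate. As a first step, \cref{lm:ball.intersecting.infinite.comp} yields
\[
\mu\big(\|q(o)\| \vee \|q(x)-x\| \ge s\big) \le 2C\, e^{-C' s^{d-1}},
\]
so taking $s$ a fixed fraction of $t$ absorbs the discrepancy between $o,x$ and their $\Hh$-projections at cost exponentially small in $t$ (using $d-1\ge 1$).

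Second, I would tile $\R^d$ by cubes $B_z = z + [-L/2, L/2)^d$, $z\in L\Z^d$, with $L$ a large parameter to be fixed, and call $z$ \emph{good} when (a) $|\Hh\cap B_z|\in(1\pm\varepsilon)\theta_{r} L^d$, and (b) every two $\Hh$-vertices in $B_z$ or in one of its nearest-neighbour cubes are joined, inside the union of these cubes, by a $\G$-path of length at most a deterministic $K=K(L)$. \cref{prop:Hn.growth} combined with \cref{lm:ball.intersecting.infinite.comp} and the uniqueness of $\Hh$ gives $\mu(z\text{ good})\ge 1-e^{-c_0 L^{d-1}}$, and since the event depends on $\Poi$ only in an $r$-neighbourhood of $B_z$, the good-site field is finite-range dependent. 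By Liggett--Schonmann--Stacey it dominates Bernoulli site percolation on $L\Z^d$ with parameter $p(L)\uparrow 1$ as $L\uparrow+\infty$; fix $L$ so that $p(L)$ is as close to $1$ as needed.

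Third, for this dominating Bernoulli model an Antal--Pisztora-type large-deviation estimate provides constants $\varrho, c_1, C_1>0$ such that, for $z_u, z_v$ in the a.s.\ unique infinite good cluster,
\[
\mu\big(\mathrm{dist}^{\mathrm{good}}_{\mathrm{chem}}(z_u,z_v)\ge \varrho\|z_u-z_v\|_\infty + m\big) \le C_1 e^{-c_1 m}, \qquad m\ge 0;
\]
multiplying block distances by $K$ transfers this bound to $\thickbar D$ between any $\Hh$-vertices lying in good boxes of the infinite cluster. Setting $\beta^\dagger := 4K\varrho/L$, for $t>\beta^\dagger\|x\|$ I split $\{D(o,x)\ge t\}$ into (i) a cleanup event --- either $\|q(o)\|\vee\|q(x)-x\|\ge t/\beta^\dagger$, or the box containing $q(o)$ or $q(x)$ lies at box-distance $\ge t/(4K)$ from the infinite good cluster --- and (ii) its complement, on which $\|q(o)-q(x)\|\le 3t/\beta^\dagger$ and the chemical-distance estimate with $m = t/(4K)$ bounds the remaining probability by $C_1 e^{-c_1 t/(4K)}$. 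Event (i) is controlled by $C''e^{-C''' t}$ via the first step together with standard tails on finite-cluster sizes in supercritical percolation, so summing gives the claim.

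The main obstacle will be step three: the classical Antal--Pisztora theorem is a large-deviation bound exponential in $\|z_u-z_v\|$, and upgrading it to an exponential decay in the excess $m$ (required above) uses a multi-scale renormalization, combined with the Liggett--Schonmann--Stacey domination that reduces our finite-range dependent field to i.i.d.\ Bernoulli. A further technical point is to handle the cleanup case (i) in a unified way, using the observation that the infinite good cluster is a renormalized copy of $\Hh$, so that the distance from the box of any $\Hh$-vertex to the nearest infinite-good-cluster box has a supercritical finite-cluster tail of the form $e^{-c\,\mathrm{dist}}$.
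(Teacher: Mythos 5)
Your overall architecture (endpoint cleanup, block renormalization, Antal--Pisztora on the coarse lattice) is the classical route to chemical-distance estimates, but as written it has genuine gaps. First, your good-box events are not local: both conditions (a) and (b) refer to $\Hh$, the infinite cluster, which is determined by the entire configuration, so the claim that goodness ``depends on $\Poi$ only in an $r$-neighbourhood of $B_z$'' is false, and the Liggett--Schonmann--Stacey domination --- the step on which the whole renormalization rests --- does not apply to this field. The standard repair is to define goodness through crossing/largest clusters in enlarged boxes \`a la Penrose--Pisztora and only afterwards relate those local clusters to $\Hh$; that extra layer is precisely what you skip. Second, your cleanup case (i) does not deliver what you need: bounding the box-distance from the box containing $q(o)$ to the infinite good cluster does not produce a $\G$-path of controlled length from $q(o)$ into a good box, since crossing the intervening bad boxes is exactly what the renormalization cannot control, and the assertion that this distance has exponential tails ``because the infinite good cluster is a renormalized copy of $\Hh$'' is not an argument ($q(o)\in\Hh$ may well lie in a bad box, and wiring it into the coarse infinite cluster requires a separate estimate). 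Third, the strengthening of Antal--Pisztora from decay in $\|x\|$ to decay in $t$ for all $t>\beta^\dagger\|x\|$, which you flag yourself, is a nontrivial input of exactly the same nature as the result you are trying to avoid invoking.

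For comparison, the paper does not redo any renormalization here: it takes as a black box the Palm-measure tail bound of Yao--Chen--Guo, $\mu_{v,w}\big(v\in\mathscr{C}(w),\ \thickbar{D}(v,w)\ge t\big)\le \widetilde{C}e^{-\widetilde{C}'t}$ for $t\ge \beta^\dagger\|v-w\|/2$ (recorded in essentially this form in \cref{lm:PPP.clusters}), and then argues that on the event $\|q(o)\|<t/(2\beta')$ and $\|q(x)-x\|<t/(2\beta')$ (whose complement is handled by \cref{lm:ball.intersecting.infinite.comp}), the event $D(o,x)\ge t$ forces some pair of Poisson points $v,w$ in the two balls of radius $t/(2\beta')$ with $v\in\mathscr{C}(w)$ and $\thickbar{D}(v,w)\ge t$; Campbell's theorem then converts the union over such pairs into a polynomial factor $t^{2d}$, yielding $C\exp(-C't)$. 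In the best case your program amounts to reproving that cited input from scratch; if that is the intention, the locality of the good-box field and the step connecting $q(o),q(x)$ to the coarse infinite cluster must be supplied before the argument stands.
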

\begin{proof}
    Let $\thickbar{D}$ be the simple $\G$-distance. It is clear that $D(v,w)=\thickbar{D}(v,w)$ whenever $v,w\in \Hh$. By \cite[Lemma 3.4]{yao2011}, there exist $\widetilde{C},\widetilde{C}'>0$ and $\beta^\dagger>1$ such that
    \begin{equation} \label{eq:Palm.bound}
        \mu_{v,w} \big(v \in\mathscr{C}(w) , \text{~and~} \thickbar{D}(v,w) \geq t\big) \leq \widetilde{C} \exp(-\widetilde{C}'t)
    \end{equation}
    for all $t \geq \beta^\dagger \|v-w\|/2$. Consider now $\mathlcal{B}_r(z):= B(z,r) \cap \mathcal{P}$. We apply Lemma \ref{lm:ball.intersecting.infinite.comp},  \eqref{eq:Palm.bound}, and Campbell's theorem to obtain that there exist $\thickbar{C},\thickbar{C}'>0$ such that
    \begin{align} 
        \mu\big(D(o,x) \geq t\big) &\begin{multlined}[t]\leq \mu\big(\|q(o)\| \geq t/(2\beta')\big) + \mu\big(\|q(x)-x\| \geq t/(2\beta')\big) \\+ \mu\left(\bigcup_{\substack{v \in \mathlcal{B}_{t/(2\beta')}(o), \, w \in \mathlcal{B}_{t/(2\beta')}(x)}}\{v \in \mathscr{C}(w) \text{~and~} \thickbar{D}(v,w) \geq t\}\right)\end{multlined} \nonumber\\
        &\leq 2 \thickbar{C} \exp\big({-\thickbar{C}'t/(2\beta')}\big) + \widetilde{C}\frac{ \upupsilon_d^2}{2^{2d}}t^{2d} \exp({-\widetilde{C}'t}) \label{eq:chem.prob.upper.bound}
    \end{align}
    for all $t \geq \beta^\dagger \|x\|$.
    Hence, we can conclude the proof of \eqref{eq:chem.prob.upper.bound} by suitably choosing the constants $C,C'>0$.
\end{proof}

Define $\operatorname{Cover}(t):= \mathbbm{B}(t) \cap \left( \bigcup_{v \in \Hh}B(v,r)\right)$ as the coverage of $\Hh$ in $\mathbbm{B}(t)$. A connected region in $\mathbbm{B}(t) \setminus \operatorname{Cover}(t)$ is called a hole in $\mathbbm{B}(t)$. Let $v \in \mathbbm{B}(t)\cap \Hh$. The Euclidean ball $B(v,t')$ is an spherical hole when $B(v,t) \cap \operatorname{Cover}(t) = \varnothing$. The largest diameter of a spherical hole in $\mathbbm{B}(t)$ is denoted by
\[
    \mathcal{D}(t) := \sup \left\{ t' \geq 0 \colon v \in \mathbbm{B}(t), ~B(v,t'/2) ~\text{is a spherical hole in}~ \mathbbm{B}(t) \right\}.
\]

The following proposition is an adapted version of Theorem 3.3 of Yao and Guo~\cite{yao2015}:

\begin{proposition} \label{prop:holes.H}
    Let $d \geq 2$ and $r>r_c$. Then there exist $\Cl[small]{cte.l.bound.holes}, \Cl[small]{cte.u.bound.holes}>0$ such that
    \[\mu\Big( \Cr{cte.l.bound.holes} \cdot \log(t) < \mathcal{D}(t) < \Cr{cte.u.bound.holes} \cdot\log(t) \Big) \geq 1- \frac{1}{t^2}. \]
\end{proposition}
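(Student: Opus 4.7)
The plan is to establish the two-sided bound by treating the upper and lower tails separately, each via a union bound over a suitable discretization of $\mathbbm{B}(t)$.

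For the upper bound, I would discretize $\mathbbm{B}(t)$ with a lattice $\mathscr{G}_t = \mathbbm{B}(t)\cap(\eta\Z)^d$ of fixed spacing $\eta>0$, so that $|\mathscr{G}_t| = \mathcal{O}(t^d)$. If a spherical hole of diameter larger than $\Cr{cte.u.bound.holes}\log(t)$ existed, its centre $v$ would lie within $\eta\sqrt{d}/2$ of some $x\in\mathscr{G}_t$, forcing $B(x,\,\Cr{cte.u.bound.holes}\log(t)/2 + r - \eta\sqrt{d}/2)$ to miss $\Hh$. Applying \cref{lm:ball.intersecting.infinite.comp} to each grid point and taking a union bound, the probability of this event is at most $C|\mathscr{G}_t|\exp\bigl(-C'\bigl(\Cr{cte.u.bound.holes}\log(t)/2\bigr)^{d-1}\bigr)$, and taking $\Cr{cte.u.bound.holes}$ large enough drives it below $1/(2t^2)$ for sufficiently large $t$.

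For the lower bound I must actually exhibit a hole. The idea is to exploit independence of the PPP on disjoint regions: tile $\mathbbm{B}(t)$ by $N=\Theta(t^d/(\log t)^d)$ disjoint cells of side length of order $\log t$, centred at $v_1,\dots,v_N$, and consider the annular events
\[
E_i = \Bigl\{ \mathcal{P}\cap\bigl(B\bigl(v_i,\tfrac{\Cr{cte.l.bound.holes}}{2}\log(t)+2r\bigr)\setminus B\bigl(v_i,\tfrac{\Cr{cte.l.bound.holes}}{2}\log(t)+r\bigr)\bigr)=\varnothing \Bigr\}.
\]
If $E_i$ holds, the empty annulus has width $r$ and thus blocks every edge of $\G$ between the inner ball and its exterior, so every $\mathcal{P}$-vertex inside the inner ball belongs to a finite component and hence lies outside $\Hh$; in particular $B\bigl(v_i,\tfrac{\Cr{cte.l.bound.holes}}{2}\log(t)\bigr)$ is a spherical hole of diameter $\Cr{cte.l.bound.holes}\log(t)$. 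Disjointness of the annuli makes the $\{E_i\}$ mutually independent under $\mu$, each with probability at least $\exp(-C''(\log t)^{d-1})$. The probability that none of them occurs is then bounded by $\exp(-N\mu(E_1))$; taking $\Cr{cte.l.bound.holes}$ small enough makes $N\mu(E_1)$ grow fast enough to push this estimate below $1/(2t^2)$.

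The main obstacle is the balancing act in the lower bound: the exponent $(\log t)^{d-1}$ in $1/\mu(E_i)$ must be compensated by the polynomial factor $N=\Theta(t^d/(\log t)^d)$. In dimension $d=2$ this reduces to comparing polynomials in $t$ and is resolved by simply picking $\Cr{cte.l.bound.holes}$ small, while for $d\ge 3$ a more delicate localisation of the annulus construction (in the spirit of \cite{yao2015}) is needed to keep the expected number of favourable cells large. Once both one-sided estimates are secured, a final union bound yields $\mu\bigl(\Cr{cte.l.bound.holes}\log(t)<\mathcal{D}(t)<\Cr{cte.u.bound.holes}\log(t)\bigr)\ge 1-1/t^2$ for $t$ large enough.
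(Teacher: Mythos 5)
The paper offers no proof of this proposition at all — it is imported as an ``adapted version'' of Theorem~3.3 of \cite{yao2015} — so your argument has to stand on its own. The upper bound does: a spherical hole of radius $s$ centred at $v\in\mathbbm{B}(t)$ forces a ball of radius comparable to $s$ (centred at a nearby net point; you must shrink by a constant factor so that the ball stays inside $\mathbbm{B}(t)$, since $\operatorname{Cover}(t)$ only records coverage inside the box) to miss $\Hh$, and \cref{lm:ball.intersecting.infinite.comp} together with a union bound over $\mathcal{O}(t^d)$ centres gives a bound of the form $Ct^d\exp\bigl(-C'(\mathrm{const}\cdot\log t)^{d-1}\bigr)$, which is below $t^{-2}/2$ for a suitable constant (any positive constant when $d\ge 3$).

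The genuine gap is your lower bound for $d\ge 3$, and it cannot be closed by ``a more delicate localisation of the annulus construction''. Your own upper-bound computation shows why: by \cref{lm:ball.intersecting.infinite.comp}, $\mu\bigl(\mathcal{D}(t)\ge c\log t\bigr)\le C t^{d}\exp\bigl(-C'(c\log t/4)^{d-1}\bigr)$, which tends to $0$ for \emph{every} fixed $c>0$ once $d\ge 3$, because the cost of excluding $\Hh$ from a ball is of surface order $(\log t)^{d-1}\gg\log t$. Hence in $d\ge 3$ the largest spherical hole is of order $(\log t)^{1/(d-1)}=o(\log t)$ with high probability, and no refinement of the annulus construction (nor any other construction) can make the event $\{\mathcal{D}(t)>c\log t\}$ occur with probability at least $1-t^{-2}$. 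What your blocking-annulus argument genuinely yields is a lower bound at scale $(\log t)^{1/(d-1)}$ in every dimension: the empty annulus of width $r$ at radius $\rho$ costs $\exp(-\Theta(\rho^{d-1}))$, disjoint cells give independence, and $\exp(-N\mu(E_1))$ is small exactly when $\rho^{d-1}$ is a small multiple of $\log t$. This matches the claimed $\log t$ only in $d=2$, where your plan is complete once the lower-bound constant is taken small enough (of order $1/r$) so that $N\mu(E_1)$ still grows polynomially in $t$. So either restrict the lower bound to $d=2$ or restate it at scale $(\log t)^{1/(d-1)}$; note that this tension sits partly in the statement as adapted from \cite{yao2015}, and that the thesis only ever invokes the upper bound $\mathcal{D}(s)=\mathcal{O}(\log s)$ (in \cref{cor:asymp.dir}), so downstream results are unaffected.
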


The subsequent result appears as Lemma 4 in Yao \cite{yao2013} and as Lemma 3.4 in Yao, Chen, and Guo~\cite{yao2011}.

\begin{lemma} \label{lm:PPP.clusters}
    Let $d \geq 2$ and $r>r_c$. Then there exist $C,C'>0$ such that, for each $t>0$,
    \begin{equation*}
    \mu_v\Big(v \not\in \Hh , ~\mathscr{C}(v) \not\subseteq B(v,t)\Big) \leq C \exp(-C' t).
    \end{equation*}
    
    Furthermore, there exist $\Cl[small]{dist.bar.exp.ext},\Cl[small]{dist.bar.exp.int}>0$ and $\beta' >1$ such that, for all $u,v \in \R^d$ and every $t> \beta' \|u-v\|$,
    \begin{equation*}
        \mu_{u,v}\big(\thickbar{D}(u,v) \mathbbm{1}_{u \in \mathscr{C}(v)}> t\big)  \leq \Cr{dist.bar.exp.ext}\exp(-\Cr{dist.bar.exp.int} t).
    \end{equation*}
\end{lemma}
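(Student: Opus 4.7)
The plan is to handle the two bounds separately, exploiting standard renormalisation machinery for the supercritical Poisson--Boolean model together with the exponential tails already established in \cref{lm:ball.intersecting.infinite.comp,lm:chem.all}. The overall strategy is to coarse-grain $\R^d$ into cubes of side length comparable to $r$, declare a cube \emph{good} when it contains a sufficiently dense and well-connected portion of $\Poi$, and use the fact that for $r>r_c$ the process of good cubes stochastically dominates a supercritical Bernoulli site percolation with parameter arbitrarily close to $1$. This is the standard Penrose--Pisztora / Meester--Roy block construction, and will be the common engine behind both statements.

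For the first bound, I would argue that if $v \notin \Hh$ and $\mathscr{C}(v) \not\subseteq B(v,t)$, then there is a $\ast$-connected contour of bad cubes of $\ell^\infty$-diameter at least of order $t$ separating $\mathscr{C}(v)$ from the unique infinite cluster $\Hh$; otherwise $\mathscr{C}(v)$ would merge with $\Hh$ through a chain of good cubes. A Peierls-type enumeration controls the number of such contours by $(\mathtt{C}_d)^{t/r}$, while each contour has probability at most $p^{t/r}$ with $p$ arbitrarily small once the good-cube density is taken close enough to $1$. The product gives $C\exp(-C't)$, uniformly in $v \in \R^d$ by translation invariance of the Palm measure $\mu_v$. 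This is exactly the line of argument carried out in Yao--Chen--Guo~\cite{yao2011}, and I would cite their construction for the detailed combinatorics.

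For the second bound, I would split on whether $v \in \Hh$. If $v \in \Hh$, then $u \in \mathscr{C}(v)$ forces $u \in \Hh$, so $\thickbar{D}(u,v) = D(u,v)$ in the sense of~\cref{sec:basic.defs_RGGs}, and the desired exponential tail for $t>\beta^\dagger\|u-v\|$ follows directly from \cref{lm:chem.all} after absorbing the Palm normalisation via Campbell's formula. If $v \notin \Hh$, then $u \in \mathscr{C}(v)$ also lies off $\Hh$ and $\|u-v\| \leq \operatorname{diam}\mathscr{C}(v)$. Taking $\beta' \geq \beta^\dagger \vee 2$ and setting $s = t/(2\beta')$, either $\mathscr{C}(v) \not\subseteq B(v,s)$ — an event of probability at most $C\exp(-C's)$ by part~(i) — or $\mathscr{C}(v) \subseteq B(v,s)$, in which case the chemical distance is crudely bounded by the vertex count $|\Poi \cap B(v,s)|$; a standard Poisson concentration inequality gives $\mu_{u,v}(|\Poi \cap B(v,s)| > t) \leq \exp(-\mathtt{c}\, t)$ once $t$ is large compared to $\upupsilon_d s^d$, which is automatic since $s$ is linear in $t$ with small slope and Poisson tails dominate. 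Combining the two estimates and adjusting constants produces the claimed $\Cr{dist.bar.exp.ext}\exp(-\Cr{dist.bar.exp.int} t)$.

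The main obstacle is the first bound: justifying rigorously that the block process dominates Bernoulli site percolation with density close to $1$ requires a careful choice of the local \emph{good-cube} event (typically: the cube contains at least one Poisson point, all such points lie in a single $\G$-component of the $3\times\cdots\times 3$ neighbourhood, and no abnormally large void appears), and a finite-energy/sprinkling argument to upgrade supercriticality of $\G$ into supercriticality of the discretised process. Since this is exactly the content of the results quoted from Yao~\cite{yao2013} and Yao--Chen--Guo~\cite{yao2011}, my proof would import their block construction as a black box and concentrate on verifying that the contour and Poisson-counting steps fit together to yield the explicit constants $\beta'$, $\Cr{dist.bar.exp.ext}$ and $\Cr{dist.bar.exp.int}$ needed later in the thesis.
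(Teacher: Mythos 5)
The paper does not actually prove this lemma: it is stated verbatim as an import, with the sentence preceding it attributing the first bound to Lemma~4 of Yao \cite{yao2013} and the second to Lemma~3.4 of Yao, Chen and Guo \cite{yao2011}. Your sketch of the first bound (renormalisation into good blocks dominating highly supercritical site percolation, plus a Peierls contour enumeration) is the standard route and, since you explicitly import the block construction from those references, it is consistent with what the thesis itself does. The problem is your derivation of the second bound, which contains two genuine flaws. First, a circularity: you invoke \cref{lm:chem.all} for the case $v \in \Hh$, but in this thesis \cref{lm:chem.all} is \emph{deduced from} the second inequality of \cref{lm:PPP.clusters} (its proof begins by quoting exactly the Palm bound $\mu_{u,v}(u\in\mathscr{C}(v),\,\thickbar{D}(u,v)\ge t)\le \widetilde{C}e^{-\widetilde{C}'t}$). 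Using \cref{lm:chem.all} here inverts the logical order; moreover, Campbell/Mecke formulas let you integrate a Palm-measure bound to obtain a stationary one, not the reverse, so ``absorbing the Palm normalisation via Campbell's formula'' does not produce the two-point Palm statement from the $\mu$-statement about $D(o,x)$.

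Second, and decisively, the case $v \notin \Hh$ does not work as written. With $s = t/(2\beta')$ you bound $\thickbar{D}(u,v)$ on $\{\mathscr{C}(v)\subseteq B(v,s)\}$ by $|\Poi\cap B(v,s)|$, whose mean is $\upupsilon_d s^d \asymp t^d$. For $d\ge 2$ this is \emph{much larger} than $t$, so $\mu_{u,v}(|\Poi\cap B(v,s)|>t)$ tends to one rather than being exponentially small; the claim that ``$t$ is large compared to $\upupsilon_d s^d$'' because $s$ has small slope is false, since the slope is raised to the $d$-th power. Even optimising the split by taking $s\asymp t^{1/d}$ (so that the Poisson tail kicks in) leaves you with the diameter estimate from part one at scale $t^{1/d}$, i.e.\ a bound of order $\exp(-ct^{1/d})$ --- a stretched exponential, not the exponential decay in $t$ that the lemma asserts and that \cref{lm:T.bar.all,lem_T_t_and_T} later rely on. The exponential-in-$t$ tail cannot be extracted from ``small cluster diameter plus a vertex count''; it hinges on the geodesic structure (non-consecutive vertices of a shortest path are at distance at least $r$, and shortcuts through good blocks must be excluded), which is exactly the content of the renormalisation argument in Yao--Chen--Guo. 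So either reproduce that argument in full or, like the thesis, cite it for both bounds; the intermediate reduction you propose does not bridge the gap.
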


\chapter{The existence of the Limiting Shape of FPP Models on RGGs} \label{ch:shape.fpp.rgg}

\vspace{2cm}
First-Passage Percolation (FPP) was initially introduced to study the spread of fluids through random medium (see \cref{sec:fpp} for details).  Since then, several variations of the percolation process have been extensively investigated (see Auffinger et al. \cite{auffinger2017} for an overview of FPP models) due to their considerable amount of theoretical consequences and applications. It determines a random metric space by assigning random weights to the edges of a graph.

We consider the FPP model defined on a random geometric graph (RGG) in $\R^d$ with $d \geq 2$. Here, the RGG is defined as in \cref{ch:rgg}. Recall that the infinite component $\Hh$ is unique almost surely in the supercritical case. We define the FPP model on $\Hh$ with independent and identically distributed random variables on a joint probability space $(\Om, \A,\p)$.

The aim of this chapter is to investigate the $\p$-\textit{a.s.} existence of the limiting shape of the above defined process. In fact, we show that, under some conditions, the random balls of $\Hh$ converge $\p$-\textit{a.s.} to the deterministic shape of an Euclidean ball. The additional conditions refer to the distribution of zero passage time on the edges and the at-least linear growth of the process.

Let $\tau$ be a random variable which defines the common distribution of the i.i.d.~passage times $\tau_e$ along each edge $e \in \mathcal{E}$. Recall that $r_c(\lambda)>0$ is the critical $r$ for the existence of the infinite connected component $\Hh$ of the RGG $\G$. Let $\upupsilon_d$ be volume of the unit ball in $d$-dimensional Euclidean space. Denote by $H_t$ the random subset of $\R^d$ of points for which their closest point in $\Hh$ is reached by the FPP model up to time $t>0$. We let $H_0$ be the set of points that have the same closest point in $\Hh$ as the origin. Here is our first main theorem.

\begin{theorem}[Shape theorem for FPP on RGGs] \label{thm:shape.FPP}
    Let $d \geq 2$ and $r>r_c(\lambda)$. Consider the FPP with i.i.d.~random variables
    defined on the infinite connected component $\Hh$ of  $\G$. Suppose that the following conditions are satisfied:
    \begin{itemize}
    \item[$({A}_1)$] \ltext{${A}_1$}{A1}
    We have that
    \[
        \p(\tau=0) < \frac{1}{\upupsilon_d r^d \lambda}.
    \]
    
    \item[$({A}_2)$] \ltext{${A}_2$}{A2}
    There exists $\eta>2d+2$ such that
    \[
        \E[\tau^\eta] < +\infty.
    \]
\end{itemize}
Then, there exists $\upvarphi \in (0, +\infty)$ such that, for all $\varepsilon \in (0,1)$, one has $\p$-\textit{a.s.} that
    \begin{equation} \label{eq:asymptotic.cone}
        (1-\varepsilon) B(o,\upvarphi) \subseteq \frac{1}{n}H_{n} \subseteq (1+\varepsilon) B(o,\upvarphi)
    \end{equation}
    for sufficiently large $n \in \N$.
\end{theorem}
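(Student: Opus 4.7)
The plan is to adapt the classical Cox--Durrett subadditive framework for first-passage percolation to the infinite cluster $\Hh$ of the RGG, leveraging the quantitative control on the chemical distance from \cref{prop:linearity.chem.dist,lm:chem.all}. Define the first-passage time between $x,y \in \R^d$ by
\[
T(x,y) := \inf\left\{\sum_{e \in \gamma}\tau_e : \gamma \in \mathscr{P}\big(q(x),q(y)\big)\right\},
\]
and write $T(x):=T(o,x)$; then $x \in H_t$ precisely when $T(x) \leq t$. The functional $T$ is subadditive, and the joint law of $\Poi$ together with the i.i.d.\ weights is translation-invariant and mixing in the base-point by \cref{lm:PPP.mixing}. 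Bounding $T(x) \leq \sum_{e \in \gamma_x}\tau_e$ along a $D$-geodesic $\gamma_x$ from $q(o)$ to $q(x)$, and using the independence of $\{\tau_e\}_{e \in \mathcal{E}}$ from the underlying graph, the exponential tail of $D(o,x)$ furnished by \cref{lm:chem.all} combined with \ref{A2} yields finite moments of $T(o,x)$ up to the order $\eta > 2d+2$.

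First I would fix a unit vector $u \in \R^d$ and apply an Ackoglu--Krengel--type subadditive ergodic theorem to the process $n \mapsto T(o, \thickbar{q}(nu))$. This produces an almost sure deterministic limit $\upmu(u) := \lim_{n \uparrow +\infty} T(o,nu)/n \in [0,+\infty)$, the time constant in direction $u$. The invariance of the homogeneous PPP under arbitrary isometries of $\R^d$ (\cref{rmk:isometry}), together with the invariance of the i.i.d.\ passage-time distribution under any relabeling of edges, forces $\upmu(u)$ to depend only on $\|u\|$; let $\upmu$ denote the common value on the unit sphere and set $\upvarphi := 1/\upmu$.

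Next I would establish $\upmu \in (0,+\infty)$. Finiteness follows from $\E[T(o,x)] \leq \E[\tau]\cdot\E[D(o,x)]$ together with \cref{prop:linearity.chem.dist}. Positivity, which is the delicate point, uses \ref{A1}: the subgraph of zero-weight edges is a Bernoulli bond percolation on $\G$ with parameter $p_0 := \p(\tau = 0)$, and by Mecke's formula the expected number of neighbors of a typical vertex of $\Poi$ in $\G$ equals $\upupsilon_d r^d \lambda$. Condition \ref{A1} therefore reads $p_0 \cdot \E[\deg] < 1$, which via a branching-process domination of the zero-weight exploration makes the zero-weight cluster of the origin almost surely finite; since any path reaching the sphere of radius $n$ must traverse at least $n/r$ edges, a fraction bounded away from zero must carry strictly positive weight, forcing $\upmu > 0$. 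To finish, I would upgrade the directional convergence to the uniform statement \eqref{eq:asymptotic.cone} by covering the unit sphere with finitely many balls of radius $\delta$, invoking the directional limit at their centers, and controlling $|T(y) - T(\thickbar{q}(nu_j))|$ for $\|y - nu_j\| \leq \delta n$ via subadditivity and \cref{lm:chem.all}; the resulting Borel--Cantelli bound over the $\mathcal{O}(n^d)$ points of $\Poi$ in $\mathbbm{B}(n)$ is precisely where the regime $\eta > 2d+2$ of \ref{A2} enters.

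The principal obstacle will be the positivity step: the threshold $1/(\upupsilon_d r^d \lambda)$ in \ref{A1} is exactly the reciprocal of the mean degree of a typical vertex of $\Poi$, which coincides with the critical parameter of the branching process dominating the zero-weight cluster. The domination must be carried out sharply with respect to this mean, and it has to handle the Palm conditioning that arises when exploring from a vertex of $\Hh$ rather than from a deterministic point of $\R^d$, so as not to lose the sharp constant of \ref{A1}.
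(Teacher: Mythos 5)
Your plan follows essentially the same route as the paper's proof: subadditivity together with Kingman's theorem and the isotropy of the Poisson process produce a direction-independent time constant, condition \eqref{A1} gives positivity through the same sharp first-moment/path-counting mechanism behind \cref{lm:FPP.mean.lower.bound} (proved in \cite{coletti2023}), finiteness comes from the stretch factor as in \cref{rmk:FPP.mean.upper.bound}, and the directional limits are upgraded to \eqref{eq:asymptotic.cone} by a covering plus Borel--Cantelli argument in which $\eta>2d+2$ enters exactly as in \cref{lm:FPP.seq.conv}. The only details to tighten are routine: for positivity you need a positive density of edges with weight at least some fixed $\delta>0$ (obtained from right-continuity of the law of $\tau$ and the strictness of \eqref{A1}), not merely strictly positive weight, and the local term $T(nu_j,y)$ must be controlled by a passage-time tail bound of the type in \cref{lm:all.growth,lm:FPP.seq.conv}, since \cref{lm:chem.all} alone only bounds the chemical distance.
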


The existence of the limiting shape is particularly interesting because the RGG is a random graph which exhibits, $\p$-a.s., unbounded degree and holes with unbounded diameter. To avoid the possible extreme effects of such pathologies on the growth of the process, we control the growth almost surely by combining the conditions above with properties of the point process. Additionally, we note that the probability measure $\p$ is given by $\mu \otimes \nu$, where $\mu$ represents the measure associated with the PPP and $\nu$ denotes the measure related to the passage times. For further details, interested readers can refer to the construction of the probability space outlined in the proof of \cref{lem_apply_conc}.

The interest of applications for this class of models has already been pointed out by Jahnel and K\"onig \cite{jahnel2020}. In particular, they suggested the theorem for the Richardson model on telecommunication networks. The example is naturally associated with the contact process by stochastic domination as studied by M\'enard and Singh\cite{menard2016}, and Riblet \cite{riblet2019}. Another interesting application is a lower bound for the critical probability of bond percolation on the RGG. The same lower bound can be obtained by other methods (\textit{e.g.} branching processes), however it shows in comparison how good and suitable condition \eqref{A1} is.

It is worth pointing out that one can find in the literature a bigger class of random geometric graphs studied by Hirsch et al. \cite{hirsch2015} where the graph distance was interpreted as a FPP model. It suggests that the class of RGGs could also be expanded in our case. We chose to focus our attention on the standard definition in this work due to the usage of intermediate results presented in the next section.

\section{Intermediate Results}

Let us proceed with defining the first-passage percolation model on  $\G=(V,\mathcal{E})$ and i.i.d. passage times $\{\tau_e: e \in \mathcal{E}\}$. Recall properties of the RGG $\G$ presented in \cref{sec:basic.defs_RGGs} and fix $\lambda=1$. We introduce $\thickbar{T}$ as a random variable called first-passage time such that, for all $x, y \in \R^d$,  we have
\[\thickbar{T}(x,y) := \inf\left\{\sum_{e \in \gamma} \tau_e \ \ \colon \ \ \gamma \in \mathscr{P}(\thickbar{q}(x),\thickbar{q}(y))\right\}.\]

The first-passage time on $\Hh$ is defined as
\[T(u,v) := \thickbar{T}(q(u),q(v))\] 
For brevity, we denote $T(x) = T(q(o),q(x))$. To streamline our focus on the shape theorem itself and on the techniques employed in Chapter \ref{ch:moderate}, we have omitted the proofs of \cref{lm:FPP.mean.lower.bound,lm:all.growth,lm:FPP.seq.conv} from this text. Interested readers can find these proofs in detail in our published paper \cite{coletti2023}.

\begin{lemma}[{\hspace{-0.7pt}\cite[Lm.~3.1]{coletti2023}}] \label{lm:FPP.mean.lower.bound}
    Let $d \geq 2$, $r>r_c$, and $\p(\tau=0) < 1/(\upupsilon_d r^d)$. Then, there exists $a>0$ depending on $r$ such that, for all $x \in \R^d$,
    \[
        a \|x\| \leq \E[T(x)].
    \]
\end{lemma}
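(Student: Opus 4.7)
The plan is a Kesten-style first-moment enumeration combined with a thresholding trick that accommodates the possibility $\p(\tau=0) > 0$ permitted by $(A_1)$. By right-continuity of the distribution function of $\tau$ and condition $(A_1)$, one can choose $\delta > 0$ with
\[
p_\delta \;:=\; \p(\tau < \delta) \;<\; \frac{1}{\upupsilon_d r^d}.
\]
Declare an edge $e \in \mathcal{E}$ to be \emph{$\delta$-light} when $\tau_e < \delta$ and \emph{$\delta$-heavy} otherwise. Then every self-avoiding path $\gamma$ in $\G$ satisfies $T(\gamma) \geq \delta\, N_h(\gamma)$, where $N_h(\gamma)$ counts the $\delta$-heavy edges of $\gamma$, and since the $\tau_e$'s are i.i.d.\ and independent of $\G$, conditional on $\G$ and on any fixed path $\gamma$ with $|\gamma|=m$ one has $N_h(\gamma) \sim \operatorname{Bin}(m, 1-p_\delta)$.

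The core step is a first-moment estimate on ``bad paths''. Let $N(m)$ denote the number of self-avoiding paths in $\G$ of length $m$ starting at $q(o)$; by an $(m{+}1)$-fold Campbell--Mecke computation,
\[
\E[N(m)] \;\leq\; (\upupsilon_d r^d)^m.
\]
Chernoff's inequality gives, for $\alpha \in (0, 1-p_\delta)$,
\[
\p\bigl(\operatorname{Bin}(m,1-p_\delta) \leq \alpha m\bigr) \;\leq\; e^{-I(\alpha) m}, \qquad I(\alpha) \xrightarrow[\alpha\downarrow 0]{} -\log p_\delta.
\]
Condition $(A_1)$ and the strict inequality $p_\delta \cdot \upupsilon_d r^d < 1$ let us fix $\alpha>0$ so small that $I(\alpha) > \log(\upupsilon_d r^d)$. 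Using the independence of $\{\tau_e\}$ from $\G$, Markov's inequality on the number of bad paths yields
\[
\p\Bigl( \exists\, \gamma \subset \G : q(o) \in \gamma,\ |\gamma|=m,\ N_h(\gamma) \leq \alpha m \Bigr) \;\leq\; (\upupsilon_d r^d)^m\, e^{-I(\alpha) m} \;=:\; e^{-c_1 m}
\]
with $c_1 > 0$.

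Each edge of $\G$ has Euclidean length at most $r$, so any self-avoiding path from $q(o)$ to $q(x)$ has at least $\|q(o)-q(x)\|/r$ edges. Summing the previous estimate over $m \geq \|q(o)-q(x)\|/r$ and applying the Borel--Cantelli lemma, one obtains $\p$-a.s., for $\|q(o)-q(x)\|$ large,
\[
T(x) \;\geq\; \delta\, N_h(\gamma^*) \;\geq\; \frac{\delta \alpha}{r}\,\|q(o)-q(x)\|
\]
for every geodesic $\gamma^*$. Taking expectations and using \cref{lm:ball.intersecting.infinite.comp} to bound $\E[\|q(o)\|]$ and $\E[\|q(x)-x\|]$ uniformly in $x$ gives $\E[T(x)] \geq a_0\|x\| - M$ for constants $a_0, M > 0$ and all $\|x\|$ beyond some threshold $R_0$. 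The complementary compact regime $\|x\| \leq R_0$ is handled separately: since $\p(q(o) \neq q(x)) > 0$ whenever $\|x\| > 0$ and the conditional expectation of $T(x)$ on this event is bounded away from zero, $\E[T(x)]/\|x\|$ is bounded below on $\{0 < \|x\| \leq R_0\}$. Combining the two regimes delivers the asserted $a \|x\| \leq \E[T(x)]$.

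The principal obstacle is the expectation bound $\E[N(m)] \leq (\upupsilon_d r^d)^m$: unlike $\Z^d$, the RGG has random, unbounded vertex degrees, so one cannot use a na\"ive worst-case bound, and the argument must proceed via factorial moment measures on the PPP while handling self-avoidance carefully to avoid over-counting. An alternative route that sidesteps path enumeration is to stochastically dominate the $\delta$-light cluster by a subcritical Poisson--Boolean model and invoke the exponential decay of its cluster radii~\cite{meester1996}, reducing the question to controlling the Euclidean span of maximal $\delta$-light subpaths of the geodesic.
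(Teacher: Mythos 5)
Your overall strategy is the one the thesis itself relies on for statements of this type: threshold the passage times at a level $\delta$ with $\p(\tau<\delta)<1/(\upupsilon_d r^d)$ (using the slack in the hypothesis), bound the expected number of length-$m$ self-avoiding paths by a Mecke/Campbell computation at cost $(\upupsilon_d r^d)^m$, beat this with a binomial Chernoff bound on the number of heavy edges, and use that any path from $q(o)$ to $q(x)$ has at least $\|q(o)-q(x)\|/r$ edges. The proof of this particular lemma is omitted here and deferred to \cite{coletti2023}, but the same scheme appears in \cref{lm:aml.colors,lm:richardson.aml} for the group setting and, almost verbatim, in \cref{lem_cheap_hop} for $\G^t$, so the approach matches. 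One technical point you flag but do not resolve: the bound $\E[N(m)]\le(\upupsilon_d r^d)^m$ is not available for paths anchored at the configuration-dependent vertex $q(o)$; the standard fix, used in \cref{lem_cheap_hop}, is to count all paths started in a deterministic ball $B(o,\ell)$ (a harmless extra factor of order $\ell^d$) and to control $\|q(o)\|$ separately via \cref{prop:Hn.growth} or \cref{lm:ball.intersecting.infinite.comp}.

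Two steps do need repair. First, the extraction of the expectation bound is not valid as written: Borel--Cantelli gives $T(x)\ge \delta\alpha\,\|q(o)-q(x)\|/r$ only beyond a random threshold, and ``taking expectations'' of such an eventually-a.s.\ statement proves nothing for a fixed $x$. Instead, fix $x$ with $\|x\|$ large, let $B_x$ be the event that some path from $q(o)$ to $q(x)$ has at most an $\alpha$-fraction of heavy edges, bound $\p(B_x)$ by your exponential estimate after discarding the exponentially rare events $\{\|q(o)\|>\|x\|/4\}$ and $\{\|q(x)-x\|>\|x\|/4\}$, and write $\E[T(x)]\ge \delta\alpha r^{-1}\,\E\big[\|q(o)-q(x)\|\,\mathbbm{1}_{B_x^c}\big]$, removing the $B_x$ contribution by Cauchy--Schwarz; this gives $\E[T(x)]\ge a_0\|x\|$ for $\|x\|\ge R_0$ directly, with no a.s.\ detour. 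Second, the compact regime $0<\|x\|\le R_0$ is genuinely under-justified: on $\{q(o)\neq q(x)\}$ the time $T(x)$ can still vanish, since light edges are permitted, so the assertion that its conditional expectation is uniformly bounded away from zero is exactly what must be proved; moreover, as $\|x\|\downarrow 0$ you also need $\p\big(q(o)\neq q(x)\big)\gtrsim\|x\|$ to recover the linear lower bound, which you never state. A workable repair is to observe that $T(x)\ge\delta$ whenever $q(o)\neq q(x)$ and every edge incident to $q(o)$ is heavy, and to lower-bound the probability of this joint event by $c\|x\|$ through an explicit local configuration controlling $\deg q(o)$. This regime is peripheral to how the lemma is used (only through $\phi(x)=\lim_n\E[T(nx)]/n\ge a\|x\|$), but as stated your argument has a gap there.
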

\begin{remark} \label{rmk:FPP.mean.upper.bound}
    Observe that {$\E[T(x)] \leq \E[D(o,x)]\E[\tau]$ due to the subadditivity and Fubini's Theorem. Moreover, condition \eqref{A2} implies that $\E[\tau] < +\infty$. One can easily see from Proposition \ref{prop:linearity.chem.dist} and $L^1$ convergence given by Kingman's Subadditive Ergodic Theorem (\cref{thm:kingman}) applied to the $\Hh$-distance, that, for all $x \in \R^d\setminus\{o\}$,
    \[
        b:= \varrho_{r}\E[\tau] \geq \lim_{n \uparrow +\infty} \E[T(nx)]/\|nx\|.
    \]}
\end{remark}

Let $\Upxi'$ stand for the event where $\Hh$ exists. Denote by $\p_\xi$ the quenched probability of the propagation model given a realization $\xi \in \Upxi'$. The lemma below ensures the at least linear growth of the passage times.
 
\begin{lemma}[{\hspace{-0.7pt}\cite[Lm.~3.2]{coletti2023}}] \label{lm:all.growth}
    Let $d \geq 2$, $r>r_c$, and assume that \eqref{A2} holds. Then, there exist deterministic $\beta>0$ and $\upkappa>1$ such that, for every $x,y \in \R^d$, and for each $\xi \in \Upxi'$,
    \[
        \p_{\xi}\big(T(x,y) \geq t\big) \leq  t^{-(d+\upkappa)}
    \]
    for all $t \geq \beta D(x,y)$.
\end{lemma}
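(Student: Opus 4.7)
The plan is to bound $T(x,y)$ from above by the total passage time along a fixed graph geodesic, thereby reducing the problem to a tail estimate for an i.i.d.~sum. Given $\xi \in \Upxi'$, select measurably a path $\gamma^\ast \in \mathscr{P}(q(x),q(y))$ of graph length $|\gamma^\ast| = D(x,y) =: n$. By definition of the first-passage time one has $T(x,y) \leq \sum_{e \in \gamma^\ast} \tau_e$, and under the quenched law $\p_{\xi}$ the variables $(\tau_e)_{e \in \gamma^\ast}$ form $n$ i.i.d.\ copies of $\tau$, since the passage-time environment is independent of the point process.

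By \eqref{A2}, $\E[\tau]<+\infty$, so I would center the sum via $\bar\tau := \tau - \E[\tau]$ and fix any $\beta > \E[\tau]$, setting $c := 1 - \E[\tau]/\beta \in (0,1)$. For $t \geq \beta n$, monotonicity yields
\[
\p_{\xi}(T(x,y) \geq t) \ \leq \ \p_{\xi}\Big(\sum_{e \in \gamma^\ast} \bar\tau_e \geq c t\Big).
\]
Since $\E[|\bar\tau|^\eta] \leq 2^\eta(\E[\tau^\eta] + \E[\tau]^\eta) < +\infty$, the Marcinkiewicz--Zygmund inequality (applicable as $\eta > 2d+2 \geq 2$) combined with Markov's inequality gives
\[
\p_{\xi}(T(x,y) \geq t) \ \leq \ \frac{B_\eta\, \E[|\bar\tau|^\eta]}{c^\eta}\cdot \frac{n^{\eta/2}}{t^\eta} \ \leq \ \frac{B_\eta\, \E[|\bar\tau|^\eta]}{c^\eta\, \beta^{\eta/2}}\cdot t^{-\eta/2} \ =: \ C(\beta)\, t^{-\eta/2},
\]
where the last inequality uses $n \leq t/\beta$.

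It remains to absorb $C(\beta)$ into a polynomial with a slightly smaller exponent. Since $\eta > 2d+2$, I would pick $\upkappa \in (1,\ \eta/2 - d)$ and set $\delta := \eta/2 - (d+\upkappa) > 0$. The inequality $C(\beta)\, t^{-\eta/2} \leq t^{-(d+\upkappa)}$ is then equivalent to $t \geq C(\beta)^{1/\delta}$. Because $C(\beta)$ decays polynomially in $\beta$, the threshold $C(\beta)^{1/\delta}$ can be made smaller than $\beta$ by enlarging $\beta$; with such a $\beta$ fixed and $n \geq 1$, the hypothesis $t \geq \beta n \geq \beta$ already forces $t \geq C(\beta)^{1/\delta}$, and the degenerate case $n = 0$ is immediate since $T(x,y) = 0$. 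The main obstacle is this constant-absorption step: one must verify that the multiplicative factors arising from the centering shift and from Marcinkiewicz--Zygmund can be simultaneously removed at the cost of a single deterministic $\beta$, which is feasible precisely because \eqref{A2} leaves a strictly positive polynomial gap $\delta$ between the achieved exponent $\eta/2$ and the target exponent $d+\upkappa$.
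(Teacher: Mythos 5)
Your proof is correct. The thesis omits its own proof of \cref{lm:all.growth}, deferring to \cite{coletti2023}, but your route --- bounding $T(x,y)$ by the passage time along a chemical-distance geodesic (chosen from $\xi$ alone, so the weights along it remain i.i.d.\ under $\p_\xi$ and there is no selection bias), centering with $\beta>\E[\tau]$, and applying Marcinkiewicz--Zygmund plus Markov to obtain $C(\beta)\,t^{-\eta/2}$ --- is precisely what the threshold $\eta>2d+2$ in \eqref{A2} is calibrated for: it gives $\eta/2>d+1$, so one may take $\upkappa\in(1,\eta/2-d)$ and absorb the constant by enlarging $\beta$, using that $t\ge \beta D(x,y)\ge\beta$ whenever $D(x,y)\ge 1$. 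The remaining caveats are minor and you addressed them: the chosen path must be a graph geodesic independent of the passage times (yours is), the bound is uniform in $\xi$ and in $n=D(x,y)$, and the degenerate case $q(x)=q(y)$ is trivial.
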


Before proving our first main theorem of \cref{part:rgg}, we state and prove the following result. It is an annealed version of the at least linear growth from lemma above in all directions.

\begin{lemma}[{\hspace{-0.7pt}\cite[Lm.~3.3]{coletti2023}}] \label{lm:FPP.seq.conv}
    Let $d \geq 2$, $r>r_c$. Consider the i.i.d.~FPP on the RGG satisfying \eqref{A2}. Then, there exist constants $\delta, C > 0$, and $\upkappa>1$ such that for all $t>0$ and all $x \in \R^d$, one has
   \[
        \p\left(\sup_{y \in B(x,\delta t)} T(x,y) \ge t\right) \le C t^{-\upkappa}.
    \]
\end{lemma}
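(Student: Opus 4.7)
The plan is to localize the uncountable supremum on $B(x,\delta t)$ to a finite random subset of $\Hh$ of polynomial cardinality, and then to apply the quenched pointwise bound of \cref{lm:all.growth} via a union bound. By translation invariance of the PPP and of the i.i.d.\ passage times I may assume $x=o$. Since $T(o,y)=\thickbar{T}(q(o),q(y))$ depends on $y$ only through $q(y)\in\Hh$, the supremum can be rewritten as
\[
\sup_{y\in B(o,\delta t)}T(o,y)=\sup_{v\in q(B(o,\delta t))}\thickbar{T}(q(o),v),
\]
reducing the task to controlling (i) the location and size of the random set $q(B(o,\delta t))\subseteq\Hh$, and (ii) the chemical distances $D(o,v)=\thickbar{D}(q(o),v)$ of its elements from the origin.

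First I would show that $q(B(o,\delta t))\subseteq\Hh\cap B(o,2\delta t)$ on a high probability event $G_1$. Covering $B(o,\delta t)$ by a deterministic net of $O(t^d)$ balls of radius $1$ and applying \cref{lm:ball.intersecting.infinite.comp} at each net point (enlarging the probing radius to $\rho_t=(C\log t)^{1/(d-1)}$ if needed) makes the probability that some ball of the cover misses $\Hh$ decay faster than any power of $t$; for large $t$ this forces the projections $q(y)$ to lie in $B(o,2\delta t)$. \cref{prop:Hn.growth} then bounds $|\Hh\cap B(o,2\delta t)|\leq M_t:=C_1 t^d$ on an event $G_2$ with $\mu(G_2^c)\leq \exp(-\mathtt{c}_0 t^{d-1})$. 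To bound the chemical distances I would use Campbell's formula together with \cref{lm:chem.all}: writing
\[
\E\big[\#\{v\in\Hh\cap B(o,2\delta t)\colon D(o,v)>2\beta^\dagger\delta t\}\big]\leq (2\delta t)^d\cdot C\exp(-C'\beta^\dagger\delta t),
\]
Markov's inequality yields an event $G_3$ of super-polynomially high probability on which $\max_{v\in\Hh\cap B(o,2\delta t)}D(o,v)\leq 2\beta^\dagger\delta t$.

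Finally I would choose $\delta=1/(4\beta\beta^\dagger)$, where $\beta$ is the constant of \cref{lm:all.growth} and $\beta^\dagger$ the one of \cref{lm:chem.all}. On $G:=G_1\cap G_2\cap G_3$ every candidate $v\in q(B(o,\delta t))$ satisfies $\beta D(q(o),v)\leq t/2<t$, so the quenched bound of \cref{lm:all.growth} applies and gives $\p_\xi(\thickbar{T}(q(o),v)\geq t)\leq t^{-(d+\upkappa)}$. A union bound over at most $M_t=C_1 t^d$ vertices produces the conditional estimate $C_1 t^{-\upkappa}$, and integrating in $\xi$ while absorbing the super-polynomially small $\mu(G^c)$ yields the claimed bound. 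The main obstacle will be the localization step: one must simultaneously control three different error terms (holes inside $B(o,\delta t)$, the total count of $\Hh$-vertices in the enlargement, and the maximum chemical distance from $q(o)$ to these vertices), each by a different tool from \cref{ch:rgg}, and each decaying strictly faster than $t^{-\upkappa}$ so as not to spoil the target rate; once this reduction is in place, the quenched Lemma plus union bound is essentially mechanical.
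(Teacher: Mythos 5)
The thesis omits its own proof of \cref{lm:FPP.seq.conv}, deferring to \cite{coletti2023}, but your argument follows essentially the intended route: reduce the uncountable supremum to the at most polynomially many points of $q\big(B(x,\delta t)\big)\subseteq\Hh\cap B(x,2\delta t)$, control their chemical distances from $q(x)$, and then apply the quenched bound of \cref{lm:all.growth} with a union bound, absorbing environment-exceptional events whose probabilities decay faster than $t^{-\upkappa}$. The one step to tighten is your event $G_3$: \cref{lm:chem.all} is stated under $\mu$ for a \emph{deterministic} reference point, so the displayed expectation over Poisson points should instead be run through Mecke's formula with the two-point Palm estimate of \cref{lm:PPP.clusters} (combined with \cref{lm:ball.intersecting.infinite.comp}), exactly as is done in the proof of \cref{lm:chem.all} itself; with that substitution, and the trivial enlargement of $C$ to cover bounded $t$, your proof goes through.
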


The forthcoming results will be utilized in Chapter \ref{ch:moderate}, where we enhance \eqref{A2} with a stronger assumption, subsequently denoted as \eqref{A2p}. Let's recall the definition of the Palm measures $\mu_{u,v}$ and consider $\p_{u,v} := \mu_{u,v} \otimes \nu$.

\begin{lemma} \label{lm:T.bar.all}
    Let $d \geq 2$ and $r>r_c$, and suppose that $\E[e^{\eta \tau}] < +\infty$ for some $\eta>0$. Then, there exist $\Cl[small]{cte.ext.Tbar},\Cl[small]{cte.int.Tbar}>0$ and $\thickbar{\beta} >1$ such that, for all $u,v \in \R^d$ and every $t> \thickbar{\beta} \|u-v\|$,
    \[
        \p_{u,v}\big(\thickbar{T}(u,v) \mathbbm{1}_{u \in \mathscr{C}(v)} \geq t\big)  \leq \Cr{cte.ext.Tbar}\exp(-\Cr{cte.int.Tbar} t).
    \]
\end{lemma}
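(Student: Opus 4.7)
The plan is to deduce this exponential tail for $\thickbar{T}(u,v)\mathbbm{1}_{u\in\mathscr{C}(v)}$ from the analogous bound for the graph distance $\thickbar{D}$ supplied by \cref{lm:PPP.clusters}, together with a Chernoff estimate for a sum of i.i.d.~copies of $\tau$. The exponential-moment hypothesis $\E[e^{\eta\tau}]<+\infty$ is precisely what upgrades the polynomial tail of \cref{lm:all.growth} into the exponential tail claimed here.

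First I would reduce $\thickbar{T}$ to a sum of i.i.d.~passage times whose length equals $\thickbar{D}$. Because $\{\tau_e\}_{e\in\mathcal{E}}$ is independent of $\Poi$, I condition on the PPP realization and, using a fixed measurable tie-breaking rule, select a graph geodesic $\gamma^\ast$ from $\thickbar{q}(u)$ to $\thickbar{q}(v)$ of length $\thickbar{D}(u,v)$. By construction $\gamma^\ast$ is a functional of $\Poi$ alone, so conditionally on $\Poi$ the sum $\sum_{e\in\gamma^\ast}\tau_e$ is a sum of $\thickbar{D}(u,v)$ independent copies of $\tau$, and on the event $\{u\in\mathscr{C}(v)\}$ we have $\thickbar{T}(u,v)\le\sum_{e\in\gamma^\ast}\tau_e$.

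Next, for a parameter $\alpha\in(0,1)$ to be fixed, I would split on whether $\thickbar{D}(u,v)$ exceeds $\alpha t$:
\begin{align*}
\p_{u,v}\!\big(\thickbar{T}(u,v)\mathbbm{1}_{u\in\mathscr{C}(v)}\ge t\big)
&\le \p_{u,v}\!\big(\thickbar{D}(u,v)\mathbbm{1}_{u\in\mathscr{C}(v)}\ge\alpha t\big) \\
&\quad + \p\!\left(\sum_{i=1}^{\lceil\alpha t\rceil}\tau_i\ge t\right),
\end{align*}
where the $\tau_i$ are i.i.d.~copies of $\tau$. The first term is bounded directly by \cref{lm:PPP.clusters} whenever $\alpha t>\beta'\|u-v\|$, yielding $\Cr{dist.bar.exp.ext}\exp(-\Cr{dist.bar.exp.int}\alpha t)$. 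For the second, assuming $\tau\not\equiv 0$ (otherwise the statement is trivial), fix any $\eta'\in(0,\eta)$ and set $c_1:=\log\E[e^{\eta'\tau}]\in(0,+\infty)$; a standard Chernoff estimate gives
\[
\p\!\left(\sum_{i=1}^{\lceil\alpha t\rceil}\tau_i\ge t\right)\le \exp\!\big(c_1-(\eta'-\alpha c_1)\,t\big).
\]
Choosing $\alpha<\eta'/(2c_1)$ makes the exponent at most $c_1-\eta' t/2$. Setting $\thickbar{\beta}:=\beta'/\alpha>1$, both estimates are active as soon as $t>\thickbar{\beta}\|u-v\|$, and combining them produces constants $\Cr{cte.ext.Tbar},\Cr{cte.int.Tbar}>0$ of the required form.

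The only delicate point is the measurable-selection step used to pass from $\thickbar{T}$ to a sum of i.i.d.~copies of $\tau$: one must ensure that $\gamma^\ast$ is chosen as a function of $\Poi$ alone so that, conditionally on the point configuration, the passage times along $\gamma^\ast$ are genuinely i.i.d.~copies of $\tau$. Since $\thickbar{D}$ is $\sigma(\Poi)$-measurable and the passage times are independent of $\Poi$, any deterministic tie-breaking rule (e.g.~lexicographic minimisation over vertex sequences) suffices. After this observation, the remainder of the argument is a routine concatenation of \cref{lm:PPP.clusters} with the Chernoff bound, with no further interaction between the geometric randomness and the passage times.
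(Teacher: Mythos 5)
Your proposal is correct and follows essentially the same route as the paper: split on whether the graph distance $\thickbar{D}(u,v)$ is of order $t$, control the large-distance event with \cref{lm:PPP.clusters}, and apply a Chernoff bound to the sum of i.i.d.\ passage times along a graph geodesic, yielding $\thickbar{\beta}$ as a product/quotient of the two threshold constants. Your explicit handling of the measurable selection of the geodesic (as a function of $\Poi$ alone, independent of the passage times) is a slightly more careful rendering of the same step the paper performs implicitly.
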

\begin{proof}
    Fix $\beta'' > \log\big(\E[e^{\eta\tau}]^{1/\eta}\big)$ with $\eta>0$ satisfying \eqref{A2}, then
    \begin{equation} \label{eq:control.chernoff}
        \frac{\E[e^{\eta\tau}]}{e^{\eta \beta''}}<1.
    \end{equation}
    Consider $u,v \in \R^d$ to be fixed and define the event
    \[\thickbar{E}(t) := \{\thickbar{D}(u,v) \leq t\}.\]
    
    Let $\gamma \in \mathscr{P}(u,v)$ be a geodesic given by the $\thickbar{T}$-distance on $\thickbar{E}(t)$. We apply the Chernoff bound to verify that
    \begin{align}
        \p_{u,v}\big(\big\{\thickbar{T}(u,v) \geq t \big\} \cap \thickbar{E}(t) \big) &\leq  \p_{u,v}\big(\thickbar{T}(\gamma) \geq t \big) \nonumber \\
        & \leq \frac{\E[e^{\eta \tau}]^{D(x,y)}}{e^{ \eta t}} \leq \left(\frac{\E[e^{\eta \tau}]}{e^{\eta \beta'' }}\right)^{t/\beta''} \label{eq:all.DbarS}
    \end{align}
    with $t \geq \beta'' \thickbar{D}(o,x)$. Hence the result follows from \cref{lm:PPP.clusters}, \eqref{eq:control.chernoff}, and \eqref{eq:all.DbarS} since
    \[
        \p_{u,v}\big(\big\{\thickbar{T}(u,v)\mathbbm{1}_{u \in\mathscr{C}(v)}  \geq t \big\}\big) \leq \big({\E[e^{\eta\tau}]}{e^{-\eta \beta''}}\big)^{t/\beta''} + \Cr{dist.bar.exp.ext}\exp(-\Cr{dist.bar.exp.int} t/\beta'')
    \]
    for $t \geq \thickbar{\beta} \|u-v\|$ with $\thickbar{\beta} = \beta'\beta''$.
\end{proof}

\begin{lemma} \label{lm:T.bds}
    Let $d \geq 2$, $r>r_c$. Consider that $\E[e^{\eta \tau}] < +\infty$ for some $\eta>0$. Then there exist $\Cl[small]{T.exp.ext}, \Cl[small]{T.exp.int}>0$ and $\beta>1$ such that, for all $x \in \R^d$ and every $t> \beta \|x\|$,
    \begin{equation*} 
        \p(T(x)\geq t) \leq \Cr{T.exp.ext} \exp(-\Cr{T.exp.int} t), 
    \end{equation*}
\end{lemma}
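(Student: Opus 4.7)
The plan is to follow essentially the same scheme as in the proof of \cref{lm:chem.all}, but using the passage-time bound \cref{lm:T.bar.all} in place of the chemical-distance bound from \cref{lm:PPP.clusters}. First, I would fix $\beta \geq 2\thickbar{\beta}$, where $\thickbar{\beta}$ is the constant from \cref{lm:T.bar.all}, and decompose the event $\{T(x) \geq t\}$ along three scenarios: either $\|q(o)\|$ exceeds $t/(4\thickbar{\beta})$, or $\|q(x) - x\|$ exceeds $t/(4\thickbar{\beta})$, or both $q(o)$ and $q(x)$ lie inside balls of radius $t/(4\thickbar{\beta})$ around $o$ and $x$ respectively. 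In the latter case there exist $u \in \mathcal{P} \cap B(o, t/(4\thickbar{\beta}))$ and $v \in \mathcal{P} \cap B(x, t/(4\thickbar{\beta}))$ with $u \in \mathscr{C}(v)$ and $\thickbar{T}(u,v) \geq t$, since $T(x) = \thickbar{T}(q(o),q(x))$ and $q(o),q(x) \in \Hh$.

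Next, I would bound the first two terms via \cref{lm:ball.intersecting.infinite.comp}: each contributes at most $C\exp\big(-C'(t/(4\thickbar{\beta}))^{d-1}\big)$, which decays super-exponentially in $t$ for $d \geq 2$, and in particular faster than any pure exponential. For the third term I would apply Campbell's theorem (or the Mecke equation) to rewrite the probability as a double integral under the Palm measure $\mu_{u,v}$, with an overall volume factor of order $t^{2d}$ coming from the two balls, and for each pair $(u,v)$ the integrand is dominated by $\Cr{cte.ext.Tbar}\exp(-\Cr{cte.int.Tbar} t)$ via \cref{lm:T.bar.all}. The hypothesis $\beta \geq 2\thickbar{\beta}$ together with $t \geq \beta\|x\|$ ensures $\|u-v\| \leq t/(2\thickbar{\beta}) + \|x\| \leq t/\thickbar{\beta}$, so the Palm bound indeed applies uniformly over the two balls.

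Summing the three contributions yields
\[
    \p\big(T(x)\geq t\big) \leq 2C\exp\!\big(\!-C'(t/(4\thickbar{\beta}))^{d-1}\big) + \tfrac{\upupsilon_d^2}{(4\thickbar{\beta})^{2d}}\,\Cr{cte.ext.Tbar}\, t^{2d}\exp(-\Cr{cte.int.Tbar} t),
\]
valid for every $t \geq \beta\|x\|$. Absorbing the polynomial factor into the exponential by slightly shrinking the decay rate (say, $\Cr{T.exp.int} < \Cr{cte.int.Tbar}$) and enlarging the prefactor $\Cr{T.exp.ext}$ gives the stated bound. The main obstacle is the bookkeeping around the ball radii and the Palm condition: once $\beta$ is chosen as a suitable multiple of $\thickbar{\beta}$ so that \cref{lm:T.bar.all} applies uniformly for every pair $(u,v)$ in the two balls, the rest reduces to routine estimates already used in \cref{lm:chem.all}.
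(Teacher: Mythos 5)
Your argument is correct, but it is not the route the paper takes. The paper's proof of \cref{lm:T.bds} is a one-line adaptation of the proof of \cref{lm:T.bar.all}: it conditions on the event $E(t)=\{D(o,x)<t\}$, applies the Chernoff bound along a geodesic of at most $t$ edges between $q(o)$ and $q(x)$, and disposes of the complementary event $\{D(o,x)\ge t\}$ by invoking \cref{lm:chem.all} as a black box, with $\beta=\beta^\dagger\beta''$. You instead re-run the proof scheme of \cref{lm:chem.all} itself at the level of passage times: a spatial decomposition on $\|q(o)\|$ and $\|q(x)-x\|$, the tail bound of \cref{lm:ball.intersecting.infinite.comp} for the two bad events, and Mecke/Campbell together with the Palm-measure bound of \cref{lm:T.bar.all} for the good event, with the choice $\beta\ge 2\thickbar{\beta}$ guaranteeing $t>\thickbar{\beta}\|u-v\|$ uniformly over the two balls. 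Both routes rest on the same two ingredients (Chernoff along paths plus exponential control of the chemical distance), so neither is more general; the paper's version is shorter because it reuses \cref{lm:chem.all} directly, while yours avoids introducing the intermediate event on $D(o,x)$ and is exactly the pattern the paper employs later for \cref{lm:T.bds.sup} (Mecke plus \cref{lm:T.bar.all}), so it integrates naturally with the surrounding material. Two trivial points to tidy up: the bound $C\exp\big(-C'(t/(4\thickbar{\beta}))^{d-1}\big)$ is merely exponential (not super-exponential) when $d=2$, which is still all you need; and the inequality $\|u-v\|<t/\thickbar{\beta}$ should be kept strict (it is, since the balls are open, or take $\beta$ slightly larger than $2\thickbar{\beta}$) so that \cref{lm:T.bar.all} applies.
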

\begin{proof}
    The proof of the lemma mirrors that of \cref{lm:T.bar.all}. Here, we replace $\thickbar{E}(t)$ with $E(t) := \{D(o,x) < t\}$ and $\thickbar{\beta}$ with $\beta := \beta^\dagger\beta''$. The result follows as a consequence of applying \cref{lm:chem.all}.
\end{proof}

\begin{lemma} \label{lm:T.bds.sup}
    Let $d \geq 2$, $r>r_c$, and $\beta, \thickbar{\beta}>0$ be as defined in \cref{lm:T.bar.all,lm:T.bds}. If $\E[e^{\eta \tau}] < +\infty$ for some $\eta>0$, then there exist $c,c'>0$ such that, for all $t'>1$ and any $t>\beta t'$,
    \begin{equation} \label{eq:supT.ball}
        \p\left(\sup_{\|w\| < t'}T(w) > t\right) \leq c \exp(-c' t') + c (t')^d \exp(-c't).
    \end{equation}
    Moreover, there exist $C,C'>0$ such that, for all $t,t'>1$,
    \begin{equation}  \label{eq:supT.annulus}
        \p\left(\sup_{\substack{z \in \Hh\cap B(o,t')\\y \in \Hh\cap B(z,t)}} T(z,y) > \thickbar{\beta} t\right) \leq \exp(-C' t') + C (t't)^d \exp(-C't).
    \end{equation}
\end{lemma}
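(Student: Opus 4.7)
Both bounds follow from a combination of Slivnyak--Mecke (Palm--Campbell) calculus with the exponential tail estimate for $\thickbar{T}$ in \cref{lm:T.bar.all}. For \eqref{eq:supT.ball}, writing $T(w)=\thickbar{T}(q(o),q(w))$ turns the sup into $\sup_{v}\thickbar{T}(q(o),v)$, where $v$ ranges over those $\Hh$-vertices whose Voronoi cells meet $B(o,t')$. The plan is to split into two events: $A_1$, that some $w\in B(o,t')$ has $\|w-q(w)\|>t'$; and $A_2$, that $\max_{u,v\in\Hh\cap B(o,2t')}\thickbar{T}(u,v)>t$. On $A_1^c\cap A_2^c$ the sup in question is at most $t$, so $\p(\sup T>t)\leq \p(A_1)+\p(A_2)$.

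The event $A_1$ forces $B(w,t')\cap\Hh=\varnothing$ for some $w\in B(o,t')$; covering $B(o,t')$ by a unit grid of at most $C(t')^d$ centers and applying \cref{lm:ball.intersecting.infinite.comp} to each yields $\p(A_1)\leq C(t')^d\exp(-c(t')^{d-1})\leq C\exp(-c't')$ for $t'\geq 1$ and $d\geq 2$. For $A_2$, Slivnyak--Mecke together with \cref{lm:T.bar.all} (the pairwise distance in $B(o,2t')^2$ is at most $4t'$, matching the tail-bound threshold once $t>4\thickbar{\beta}t'$) gives
\[
\E\big[\#\{(u,v)\in\Hh^2\cap B(o,2t')^2\colon\thickbar{T}(u,v)>t\}\big]\leq C(t')^{2d}\exp(-ct).
\]
Taking $\beta$ in the hypothesis $t>\beta t'$ large enough that $(t')^d\leq\exp(ct/2)$, the quadratic prefactor collapses to $(t')^d$, and Markov's inequality gives $\p(A_2)\leq C(t')^d\exp(-c't)$; summing the two contributions produces \eqref{eq:supT.ball}.

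For \eqref{eq:supT.annulus}, since both $z,y\in\Hh$ we have $T(z,y)=\thickbar{T}(z,y)$ and $z\in\mathscr{C}(y)$ holds automatically by uniqueness of the infinite component. Slivnyak--Mecke combined with \cref{lm:T.bar.all}, whose threshold $\thickbar{\beta}t>\thickbar{\beta}\|z-y\|$ is trivially satisfied since $\|z-y\|<t$, yields
\[
\E\big[\#\{(z,y)\in\Hh^2\colon z\in B(o,t'),\,y\in B(z,t),\,\thickbar{T}(z,y)>\thickbar{\beta}t\}\big]\leq C(t't)^d\exp(-c't),
\]
and Markov produces the $(t't)^d\exp(-C't)$ contribution claimed in the statement. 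The additive $\exp(-C't')$ term can be accommodated by excising the low-probability event that $|\Hh\cap B(o,t')|$ exceeds a constant multiple of its mean, which by \cref{prop:Hn.growth} has probability at most $\exp(-c_0(t')^{d-1})\leq\exp(-c_0 t')$ when $d\geq 2$.

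The main technical nuisance will be absorbing the polynomial prefactor $(t')^{2d}$, produced by the two Campbell integrations over balls of radius $\sim t'$ in the estimate of $\p(A_2)$, into $(t')^d$ times a pure exponential in $t$. This is available precisely thanks to the hypothesis $t>\beta t'$, which permits the exchange $(t')^d\leq \exp(ct/2)$ provided $\beta$ is chosen large enough relative to the exponential rate $c$ arising from \cref{lm:T.bar.all}.
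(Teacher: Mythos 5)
Your overall strategy is sound and, for the annulus bound \eqref{eq:supT.annulus}, essentially coincides with the paper's: a Mecke/Palm computation over pairs combined with the exponential tail of \cref{lm:T.bar.all}, whose threshold is automatic since $\|z-y\|<t$; your version is in fact slightly cleaner, since the direct Campbell integral already yields the $(t't)^d e^{-C't}$ term without the volume-control events the paper introduces, and the extra $e^{-C't'}$ term is then not even needed. For the ball bound \eqref{eq:supT.ball} you take a genuinely different route: the paper anchors one endpoint at $q(o)$, so that the supremum reduces to single passage times $T(z)=\thickbar{T}(q(o),z)$ for $z\in\Hh$ in a ball of radius of order $t'$, and then combines the annealed one-point bound of \cref{lm:T.bds} with the volume estimate of \cref{prop:Hn.growth} (prefactor $(t')^d$ directly), whereas you bound the maximum over all pairs in $B(o,2t')$ via the two-point Palm bound of \cref{lm:T.bar.all}, producing $(t')^{2d}$ and then absorbing one power. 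Your treatment of $A_1$ via \cref{lm:ball.intersecting.infinite.comp} and a grid covering is fine.

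There is, however, one step that does not work as written. The hypothesis of the lemma is $t>\beta t'$ with $\beta$ the fixed constant of \cref{lm:T.bds}; you cannot ``take $\beta$ in the hypothesis large enough,'' as that changes the statement. The enlargement is in fact unnecessary for the absorption step: under $t>\beta t'$ one has $(t')^d\le (t/\beta)^d$, and a polynomial in $t$ is swallowed by weakening the exponential rate, so $(t')^{2d}e^{-ct}\le C(t')^d e^{-c t/2}$ without touching $\beta$. Where the constant genuinely matters is in applying \cref{lm:T.bar.all} to all pairs $u,v\in B(o,2t')$: this needs $t>\thickbar{\beta}\,\|u-v\|$, i.e.\ $t>4\thickbar{\beta}t'$ in the worst case, and since $\beta$ (from \cref{lm:T.bds}) and $\thickbar{\beta}$ (from \cref{lm:T.bar.all}) are a priori unrelated, this is not implied by $t>\beta t'$; your argument therefore leaves the range $\beta t'<t\le 4\thickbar{\beta}t'$ uncovered when $\beta<4\thickbar{\beta}$. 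The repair is to do what the paper does in this part: only the pairs $(q(o),q(w))$ are needed, so anchor at $q(o)$ and apply \cref{lm:T.bds} to $T(z)$ for $z\in\Hh$ near the origin (together with \cref{prop:Hn.growth} to count them), which keeps the threshold expressed in terms of $\beta$ alone. (To be fair, the paper's own proof also carries a factor-two slack of this kind for points with $\|z\|$ up to $\sim 2t'$, so the issue is one of unoptimized constants; but your version introduces a dependence on $\thickbar{\beta}$ that the stated hypothesis does not provide, so it should be fixed rather than waved away.)
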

\begin{proof}
    Consider the constants random variables from \cref{prop:Hn.growth} to define the event
    \[E= \left\{ \|q(o)\| \le t'/2 \text{ and } |\Hh\cap B(o,2t')| < 2^d\theta_r\cdot(t')^d\right\}.\]
    Hence, by \cref{prop:Hn.growth} and \cref{lm:T.bds},
    \begin{align*}
        \p\left(\sup_{\|w\| < t'}T(w) > t\right) &\le \p(E^c) + \p\left( \left( \bigcup_{z \in \Hh \cap B(o,2t')} \hspace{-10pt}\left\{ T(z) > t\right\}\right) \cap E \right)  \\
        &\le 2 e^{-\frac{\mathtt{c}_0}{4} t'} + 2^d\theta_r\Cr{T.exp.ext}(t')^d e^{-\Cr{T.exp.int} t},
    \end{align*}
    which proves \eqref{eq:supT.ball}. Let us now define
    \begin{align*}
        E_1'&= \left\{ \theta_r (t')^d/2 < |\Hh\cap B(o,t')| < 3\theta_r\cdot(t')^d/2\right\}, \text{ and} \\
        E_2'&= \left\{ 
        \text{for all } z \in \Hh\cap B(o,t'), \text{ we have }|\Hh\cap B(z,t)| \le \theta_r\cdot t^d \right\}.
    \end{align*}
    Observe that, by Mecke's formula, \cref{prop:Hn.growth,lm:T.bar.all},
    \begin{align}
		\nonumber &\p\left(\sup_{\substack{z \in \Hh\cap B(o,t')\\y \in \Hh\cap B(z,t)}} T(z,y) > \thickbar{\beta} t\right) \\
		\nonumber &\le \p\big((E_1')^c\big) + \p\big((E_2')^c \cap E_1\big) + \E \left[ \sum_{\substack{z \in \Hh\cap B(o,t') \\ y \in \Hh\cap B(z,t)}}\hspace{-10pt}\p_{z,y}\left(T(z,y)> \thickbar{\beta}t\right)\mathbbm{1}_{E_1'\cap E_2'} \right] \\
        \nonumber &\le e^{-\frac{\mathtt{c}_0}{2}t'} + \theta_r(t')^d e^{-\frac{\mathtt{c}_0}{2}t} + \theta_r^2(t't)^d \Cr{cte.ext.Tbar} e^{-\Cr{cte.int.Tbar}\thickbar{\beta} t}.
	\end{align}
        This inequality establishes \eqref{eq:supT.annulus} as asserted.
\end{proof}

\section{Proof of the Standard Shape Theorem}

After this preparatory work, we now proceed to prove Theorem~\ref{thm:shape.FPP}. The methods are closely related to standard techniques for shape theorems which can be found in \cite{kesten1986}, for instance.
\begin{proof}[Proof of \cref{thm:shape.FPP}]
    We begin by verifying properties of $T(nx)$. Note that for every $x \in \R^d$ one has that $\E[T(x)] < +\infty$ by \cref{lm:chem.all,lm:all.growth}. Recall that the process is mixing on $(\Om, \A,\p,\tht)$ by \cref{lm:PPP.mixing}. Then, by the subadditivity of $T$, we apply Kingman's subadditive ergodic theorem to obtain that, $\p$-\textit{a.s.}, for all $x \in \R^d$,
    \begin{equation} \label{eq.FPP.conv.T(nx)}
        \lim_{n\uparrow+\infty}\frac{T(nx)}{n} = \phi (x),
    \end{equation}
    where $\phi:\R^d\to [0,+\infty)$ is a homogeneous and subadditive function given by
    \[
        \phi(x) = \inf_{n\geq 1} \frac{\E[T(nx)]}{n} = \lim_{n\uparrow+\infty} \frac{\E[T(nx)]}{n}.
    \]
    
    Since the process is rotation invariant, there exists a constant $\upvarphi$ (the time constant) such that $\phi(x)=\upvarphi^{-1}\|x\|$ for all $x \in \R^d$. In fact, one has from Lemma \ref{lm:FPP.mean.lower.bound} and Remark \ref{rmk:FPP.mean.upper.bound} that
    \[
        0<a \leq \upvarphi^{-1} \leq b= \varrho_{r}\E[\tau] < \infty.
    \]


   Let us now prove the $\p$-\textit{a.s.} asymptotic equivalence
   \begin{equation} \label{eq:asymptotic.equivalence}
        \lim_{\|y\|\uparrow+\infty} \frac{T(y)}{\|y\|} = \frac{1}{\upvarphi}.
    \end{equation}

    For the approach from below, we prove the equivalent statement that for every $\epsilon \in (0,1)$,
    \[
        \limsup_{s \uparrow +\infty}\quad \sup_{\| y \| \le (1-\epsilon)s}\frac{T(y)}{s} = \limsup_{m \in \N ,m \uparrow +\infty}\quad \sup_{\| y \| \le (1-\epsilon)m}\frac{T(y)}{m} \le \frac{1}{\upvarphi} \quad \p-a.s.,
    \]
    where the first equation holds as $\lfloor s \rfloor/s$ converges to $1$.
    Fix $\epsilon \in (0,1)$  and let $\delta$ be given by Lemma \ref{lm:FPP.seq.conv}. Due to compactness, there exists a finite cover of open balls with centers $(y_i)_{i \in \{1,\dots,n\}} \subseteq \R^d$ with $\|y_i\| \le 1-\epsilon$ such that
    \[
        \overline{B(o,1 - \epsilon)} \subseteq \bigcup_{{i \in \{1,\dots,n}\}} B\big(y_i,\delta\epsilon/(2\upvarphi)\big).
    \]
    Furthermore $B\big(o,m(1-\epsilon)\big) \subseteq \bigcup_{{i \in \{1,\dots,n}\}} B\big(my_i, m\delta\epsilon/(2\upvarphi)\big)$ for every $m \in \N$.
    Applying Lemma $\ref{lm:FPP.seq.conv}$ we obtain
    \[
       \sum_{m \in \N} \p\left( \sup_{\|y- my_i\| \le m \delta\epsilon/(2\upvarphi)} T(m y_i, y) > m \epsilon/(2\upvarphi) \right) < \infty.
    \]
    Therefore, by the Borel--Cantelli lemma,
    \[
      \limsup_{m \in \N, m \uparrow +\infty} \quad \sup_{\|my_i-y\| \le m \delta\epsilon/(2\upvarphi)} \frac{T(my_i,y)}{m} <  \frac{\epsilon}{2\upvarphi} \quad \p\text{-a.s.}
    \]
    Applying \eqref{eq.FPP.conv.T(nx)} and subadditivity, we obtain
    \begin{align*}
       \limsup_{m \uparrow +\infty; \| y \| \le (1-\epsilon)m}\frac{T(y)}{m} &\le 
       \limsup_{m \uparrow +\infty} \begin{multlined}[t]\bigg(\max_{i \in \{1,\dots n\}} \frac{T(o,my_i)}{m} \\ \hspace{50pt}+ \sup_{\|my_i-y\| \le m \delta\epsilon/(2\upvarphi)} \frac{T(my_i,y)}{m}\bigg)\end{multlined}\\
       &\le \max_{i \in \{1,\dots, n\}}\|y_i\|/\upvarphi + \epsilon/(2\upvarphi) < 1/\upvarphi \quad \p\text{-a.s.},
    \end{align*}
    where we used that $\|y_i\| < 1-\epsilon$.
    
    For the approach from above, define  $A_t := B\big(o,t(1+2\epsilon)\big) \setminus B\big(o,t(1+\epsilon)\big)$ 
   and observe that it suffices to prove
    \[
        \liminf_{m \in \N, m \uparrow +\infty}\quad \inf_{y \in {A}_t}\frac{T(y)}{m} \ge \frac{1}{\upvarphi} \quad \p\text{-a.s.}
    \]
    for arbitrary but fixed $\epsilon>0$, as for (for $t > \epsilon$) any $x$ with $\|x\|>t(1+2\epsilon)$ there exists an $\tilde x \in A_t$ with $T(\tilde x) \le T(x)$.
    
    Similar as in the approach from below, fix $\epsilon > 0$ and $\delta > 0$ small enough that Lemma \ref{lm:FPP.seq.conv} holds. There exists a set of centers $(y_i)_{i \in \{1,\dots,n\}} \subseteq \R^d$ with $\|y_i\| \ge 1+\epsilon$ such that
        \[
        A_t \subseteq \bigcup_{{i \in \{1,\dots,n}\}} B\big(y_i,\delta\epsilon/(2\upvarphi)\big)
    \]
and hence
    \begin{align*}
       \liminf_{m\in \N,m \uparrow +\infty}\quad\inf_{y \in {A}_m}\frac{T(y)}{m} &\ge 
        \liminf_{m\in \N,m \uparrow +\infty} \begin{multlined}[t]\bigg(\min_{i \in \{1,\dots n\}} \frac{T(o,my_i)}{m} \\ \hspace{40pt} - \sup_{\|my_i-y\| \le m \delta\epsilon/(2\upvarphi)} \frac{T(my_i,y)}{m}\bigg)\end{multlined}\\
       &\ge\min_{i \in \{1,\dots n\}}\|y_i\|/\upvarphi - \epsilon/(2\upvarphi) > 1/\upvarphi,
    \end{align*}
    which concludes the proof of the asymptotic equivalence \eqref{eq:asymptotic.equivalence}. The proof of the theorem is now complete by standard arguments of the $\p$-\textit{a.s.} uniform convergence given by \eqref{eq:asymptotic.equivalence}.
\end{proof}

\section{Illustrative Examples}

In this section, we delve into specific applications of the Shape Theorem established in previous chapters.

\begin{example}[Bond percolation]
    We define the bond percolation by considering the clusters of the Bernoulli FPP only at time zero (see Figure~\ref{fig:bond_perc_RGG}). For this, let us call $e \in E(\Hh)$ an \emph{open edge} when $\tau_e =0$. Set $\tau_e \sim \operatorname{Ber}(1-p)$ independently for every $e \in E(\Hh)$ and observe that \eqref{A2} is immediately satisfied.
    
    Then, the open clusters are maximally connected components defined by sites with passage time zero between them. Let us define the critical probability $p_c$ for the bond percolation on the $d$-dimensional RGG by
    \[
        p_c := \inf\{p \in [0,1]: \p(\exists \text{ an infinite open cluster in } \Hh)>0,\tau_e \sim \operatorname{Ber}(1-p)\}.
    \]
    \begin{figure}[!ht]
        \centering
        \fbox{\includegraphics[width=325pt]{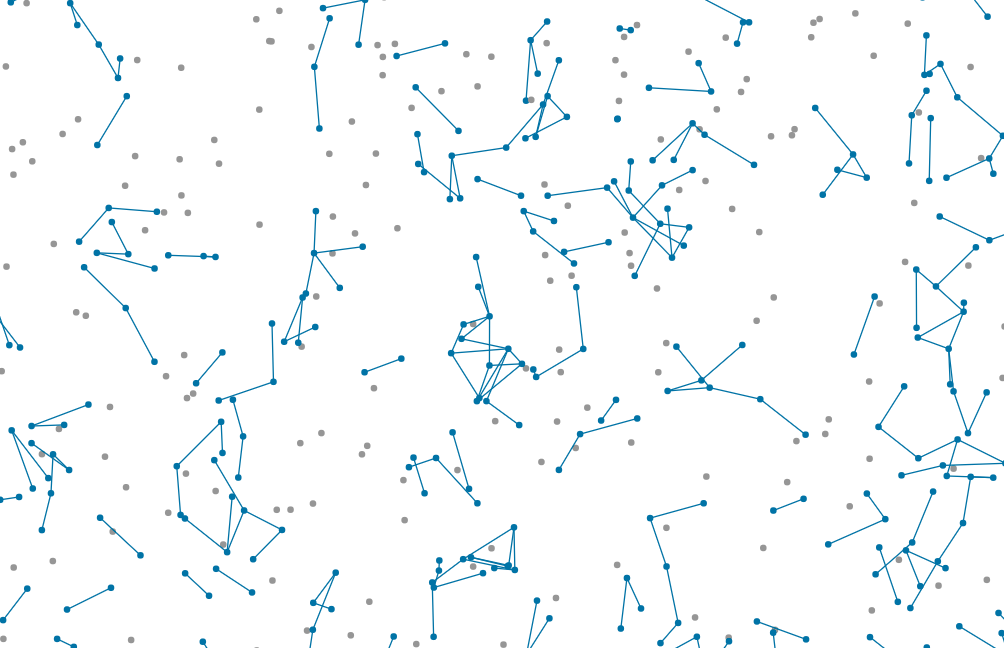}}
        \caption{Simulation of the open clusters for a bond percolation model on a 2-dimensional RGG with $p<1/(\upupsilon_d r^d\lambda)$. Fig. from \cite{coletti2023}.}
        \label{fig:bond_perc_RGG}
    \end{figure}
    Note that by Theorem \ref{thm:shape.FPP} the case $p< 1/(\upupsilon_d r^d \lambda)$ implies the existence of the limiting shape. Thus, an immediate consequence of the theorem is the following lower bound for the critical probability
    \[
        p_c \geq 1/(\upupsilon_d r^d\lambda),
    \]
    and for $p=0$ we recover $\Hh$. 
    We observe that the same lower bound can also be obtained by exploration methods.
\end{example}

\begin{example}[Richardson's growth model] 
    Consider the interacting particle system known as the Richardson model defined on the infinite connected component $\Hh$ of the RGG with parameter $\lambda_{\mathrm I}>0$. It is a random growth process based on a model introduced by Richardson \cite{richardson1973} and illustrated in Figure~\ref{fig:Richardson_RGG}. It is commonly referred to as a model for the spread of an infection or for the growth of a population.
    
    At each time $t \geq 0$, a site of $\Hh$ is in either of two states, healthy (vacant) or infected (occupied). Let $\zeta_t:\Hh\to \{0,1\}$ indicate the state of the sites at time $t$ assigning the values $0$ and $1$ for the healthy and infected states, respectively. The process evolves as follows:
    \begin{itemize}
        \item A healthy particle becomes infected at rate $\lambda_{\mathrm I}\sum_{y \sim x}\zeta_t(y)$ and
        \item An infected particle remains infected forever.
    \end{itemize}

    It is easily seen that the process is determined by FPP with edge passage times ${\tau_e \sim \mathrm{Exp}(\lambda_{\mathrm I})}$ independently for each $e \in E(\Hh)$. In particular, this version of the Richardson model conventionally stochastically dominates the basic contact process.
    \begin{figure}[!ht]
        \centering
        \includegraphics[scale=0.27]{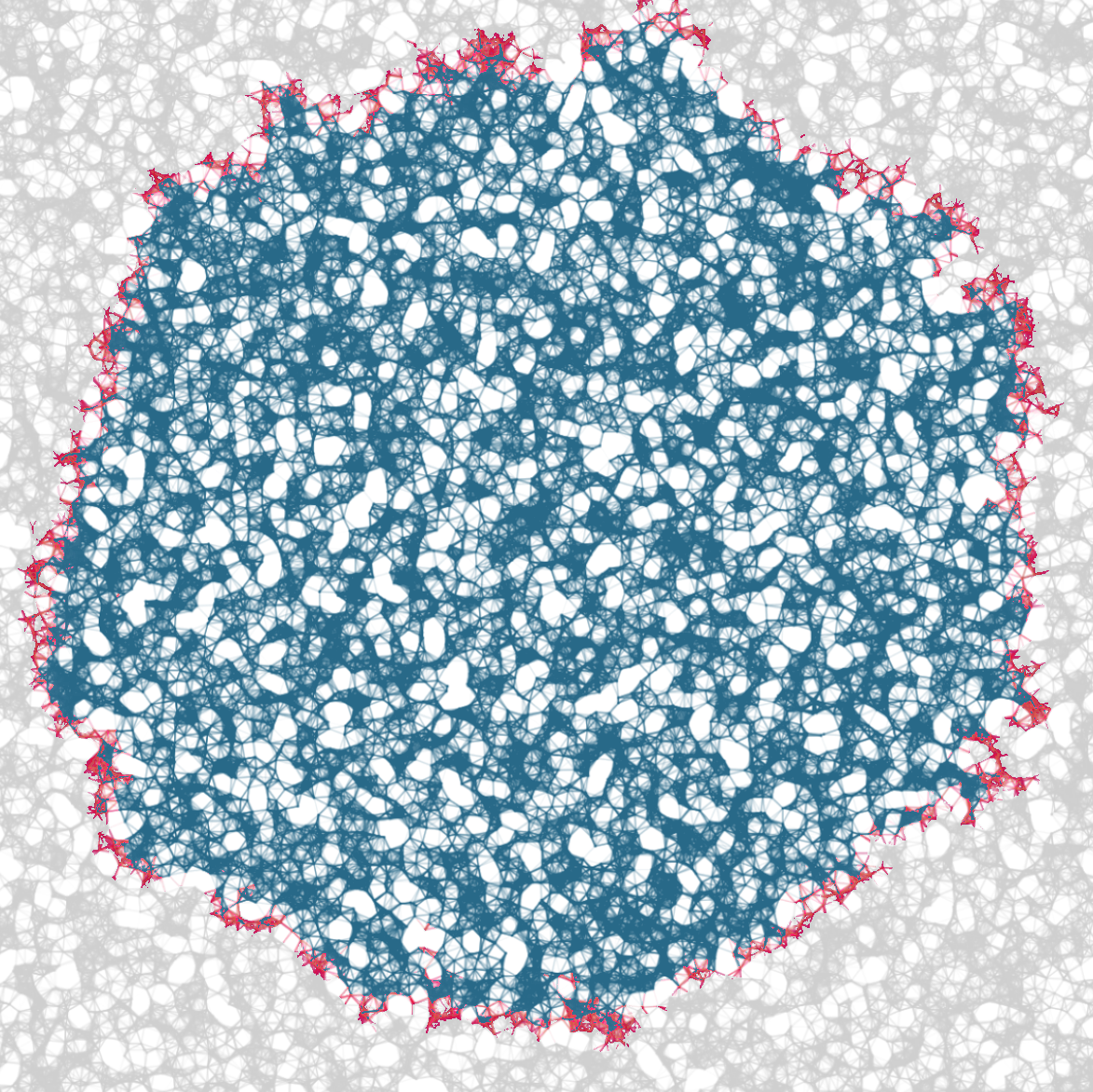}
        \caption{Simulation of the spread of an infection given by the Richardson model on a bidimensional RGG. Fig. from \cite{coletti2023}.}
        \label{fig:Richardson_RGG}
    \end{figure}

    Conditions \eqref{A1} and \eqref{A2} are straightforward since $\p(\tau=0)=0<1/(\upupsilon_d r^d\lambda)$ and since  $\E[\exp(\alpha\tau)] < +\infty$ for $\alpha \in (0, \lambda_{\mathrm I})$. Hence, Theorem~\ref{thm:shape.FPP} is valid for the Richardson model on $\Hh$ for any supercritical $r> r_c(\lambda)$.
    
    Futhermore, it is immediate to see that Theorem \ref{thm:shape.FPP} still holds for any initial configuration $\mathcal{Z} \subseteq \R^d$ of infected particles whenever $\mathcal{Z} \subseteq B(o,s')$ for some $s'>0$. In that case, we simply replace $H_t$ by $H_t^{\mathcal{Z}} := \bigcup_{z \in \mathcal{Z}}H_t^z$.
\end{example}

\chapter{Moderate Deviation for FPP Models on RGGs and its Implications} \label{ch:moderate}

\vspace{2cm}
In this chapter, we extend the research initiated in~\cref{ch:shape.fpp.rgg} and \cite{coletti2023} by conducting further analysis of first-passage percolation on random geometric graphs. 
Our primary focus is to examine the properties of geodesic paths and moderate deviations, which enhance our understanding of the convergence behavior of FPP models towards their limiting shape ---commonly referred to as the quantitative shape theorem.

Kesten \cite{kesten1993} established the groundwork for understanding the speed of convergence in FPP models on the hypercubic lattice $\Z^d$ using martingales. Recent advancements by Tessera \cite{tessera2018} have further refined these results through the application of Talagrand's concentration inequality. Additionally, similar investigations have been pursued for FPP on random structures. For example, Pimentel \cite{pimentel2011} explored FPP on two-dimensional Delaunay graphs, and Howard and Newman \cite{howard2001} studied Euclidean FPP.

Here, we establish a more informative version of the shape theorem under a slightly modified set of assumptions. Specifically, we assume that
\begin{itemize}
    \item[(${A}_1'$)] \ltext{${A}_1'$}{A1p} $\p(\tau=0) =0$;
    \item[(${A}_2'$)] \ltext{${A}_2'$}{A2p}
    $\E[e^{\eta \tau}] < +\infty$ for some $\eta>0$.
\end{itemize}
Our first main result is the following.
\begin{theorem}[Quantitative Shape Theorem for FPP on RGGs] \label{thm:speed.FPP}
	Let $d \geq 2$,~$\lambda > 0$ and $r>r_c(\lambda)$, and consider first-passage percolation on the random geometric graph on~$\R^d$ with parameters~$\lambda$ and~$r$, with passage times satisfying~\eqref{A1p} and~\eqref{A2p} above. 
Then, there exists $c'>0$ and $\upvarphi>0$ such that almost surely, for~$t$ large enough we have
    \begin{equation} \label{eq:asymptotic.cone2}
	    \left(1-c'\frac{\log(t)}{\sqrt{t}}{}\right) B(o, \upvarphi) \subseteq \frac{1}{t}H_{t} \subseteq \left(1+c'\frac{\log(t)}{\sqrt{t}}{}\right) B(o, \upvarphi).
    \end{equation}
\end{theorem}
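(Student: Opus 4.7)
Write $\bar{T}(x) := \E[T(o,x)]$. The standard route to a quantitative shape theorem is to split the error $T(o,x) - \upvarphi^{-1}\|x\|$ into the \emph{fluctuation} $T(o,x) - \bar{T}(x)$ and the \emph{bias} $\bar{T}(x) - \upvarphi^{-1}\|x\|$, control both at rate $\sqrt{\|x\|}\log\|x\|$, and then upgrade the pointwise high-probability bounds to an almost-sure uniform statement via a net argument. Translating back into the geometry of $H_t$ then yields the claimed sandwich at rate $\log(t)/\sqrt{t}$.

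First, I would establish a Gaussian-type concentration inequality of the form
\[
    \p\big(|T(o,x) - \bar{T}(x)| \geq u\sqrt{\|x\|}\big) \;\leq\; C\exp\!\big(-c\,u^{2}/\log\|x\|\big),
\]
by a Talagrand-type bounded-differences argument applied to both sources of randomness, namely the Poisson configuration $\Poi$ and the i.i.d.\ passage times $\{\tau_e\}$. The exponential-moment hypothesis \eqref{A2p} provides the integrability needed, while \cref{lm:chem.all,lm:T.bds} ensure that $\mu$-\textit{a.s.} the geodesic realising $T(o,x)$ uses only $O(\|x\|)$ edges, so the effective number of coordinates influencing $T(o,x)$ is $O(\|x\|)$. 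The logarithmic correction in the exponent is Kesten's improvement, coming from the fact that individual weights perturb $T$ by at most a polylogarithmic amount in the Talagrand framework.

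Second, for the bias term I would adapt Alexander's renormalization method. The lower bound $\bar{T}(x) \geq \upvarphi^{-1}\|x\|$ follows from the subadditive ergodic theorem already used in the proof of \cref{thm:shape.FPP}, so the content is the matching upper bound $\bar{T}(x) \leq \upvarphi^{-1}\|x\| + C\sqrt{\|x\|}\log\|x\|$. The argument concatenates $O(\sqrt{\|x\|})$ near-geodesic segments of length $\sqrt{\|x\|}$, using the concentration inequality from the first step to bound the error along each segment; the holes of diameter $O(\log\|x\|)$ coming from \cref{prop:holes.H} are paid for with \cref{lm:T.bds.sup}. Assumption \eqref{A1p} is essential here, because it prevents zero-weight corridors that would invalidate the Alexander bootstrap.

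Third, I would combine these two estimates to obtain $|T(o,x) - \upvarphi^{-1}\|x\|| \leq C\sqrt{\|x\|}\log\|x\|$ with probability $1-O(\|x\|^{-2d})$, take a $\sqrt{t}$-net on the sphere of radius $t$ (polynomially many points), apply a union bound with $u \asymp \log t$, and interpolate $T$ inside each cell by \cref{lm:T.bds.sup}. Along the dyadic sequence $t_n = 2^n$ the Borel--Cantelli lemma upgrades this to an almost-sure uniform bound; monotonicity and subadditivity fill in the scales $t \in [t_n,t_{n+1}]$ and produce the factor $\log(t)/\sqrt{t}$. The main obstacle is the bias estimate: the renormalization has to be carried out on a random point-process geometry, and the Alexander construction must be patched around the logarithmic holes of $\Hh$ without paying more than the target $\sqrt{\|x\|}\log\|x\|$ error, which is exactly where \eqref{A1p}, \eqref{A2p} and \cref{prop:holes.H,lm:T.bds.sup} interact most delicately.
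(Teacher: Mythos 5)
Your high-level skeleton (fluctuation term plus bias term, then a net/Borel--Cantelli upgrade and a translation into the sandwich for $H_t$) is the same as the paper's, which deduces the theorem from \cref{thm_new_moderate_deviations} and \cref{thm:moderate.dev.FPP}. But your first step has a genuine gap. You assert a Kesten--Talagrand concentration bound by a ``bounded-differences argument applied to both sources of randomness,'' justified only by the fact that the geodesic uses $O(\|x\|)$ edges. On the random geometric graph the influence of resampling the Poisson configuration in a single region is \emph{not} bounded: removing or adding points can change $q(o)$, $q(x)$, or the local connectivity, and force a detour whose extra cost has only exponential (not uniform) tails, so neither Talagrand's method nor the entropy/log-Sobolev machinery applies off the shelf. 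This is exactly the difficulty the paper spends its technical sections on: it introduces the augmented graph $\mathcal G^t$ with extra edges of deterministic cost $\K t$ and the truncated variable $Y_{t,x}$ (paths of length at most $\Kprime\|x\|$), so that resampling one box of side $t$ changes $Y_{t,x}$ by at most $4\K t$ and the geodesic meets only $O(\|x\|/t)$ boxes (\cref{lem_number_of_boxes}); the Boucheron--Lugosi--Massart inequality (\cref{prop_concentration}) then gives concentration at scale $\sqrt{t\|x\|}$, and the comparison lemmas (\cref{lem_Y_and_T_t,lem_q_and_no_q,lem_T_t_and_T}) transfer this back to $T(x)$, yielding the moderate-deviation bound $\p(|T(x)-\E T(x)|>\ell\sqrt{\|x\|})\le Ce^{-c\ell}$, which is exponential in $\ell$ rather than Gaussian but amply sufficient for the $\sqrt{t}\log t$ rate. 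Without some such truncation/regularization device, your concentration step does not go through as sketched, and it is the heart of the proof.

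Your second step also diverges from the paper, though here the divergence is a legitimate alternative rather than an error: for the bias you propose Alexander's renormalization, whereas the paper uses a Howard--Newman-style doubling inequality (\cref{lm_expectation.double}: $2\E[T(tx)]\le\E[T(2tx)]+\bar C\sqrt{t}\log t$, proved by covering the sphere of radius $t$ with polynomially many small balls and invoking the moderate-deviation bound, symmetry, \cref{prop:Hn.growth} and \cref{lm:T.bds.sup}), which is then iterated to give $\E[T(x)]\le\|x\|/\upvarphi+C\sqrt{\|x\|}\log\|x\|$; the lower bound is free from subadditivity, as you say. Adapting Alexander's scheme to the Poisson geometry would require patching around the logarithmic holes, as you note, and the doubling argument avoids most of that work. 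Your final net-plus-Borel--Cantelli step is fine and essentially what the paper does implicitly before converting the two-sided bound on $T$ into the inclusion \eqref{eq:asymptotic.cone2} via the monotone functions $g_\pm$.
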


We derive this theorem as a consequence of two results of independent interest, which we now present.

\begin{theorem}[Moderate Deviations of First-Passage Times] \label{thm_new_moderate_deviations}
	Consider first-passage percolation as in Theorem~\ref{thm:speed.FPP}, under the same assumptions as in that theorem. There exist~$C,C', c > 0$ such that for any~$x \in \mathbb R^d$ with~$\|x\|$ large enough, we have
	\begin{equation*}
		\p\left(\frac{|T(x) - \E[T(x)]|}{\sqrt{\|x\|}} > \ell \right) \le Ce^{-c \ell} \quad \text{for any } ~\ell \in \left[C'\log\big(\|x\|\big),\sqrt{\|x\|}\right].
	\end{equation*}
\end{theorem}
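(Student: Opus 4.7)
The plan is a martingale concentration argument combined with truncation to a high-probability good event, along the lines of Kesten~\cite{kesten1993} and Tessera~\cite{tessera2018}. The two sources of randomness---the underlying Poisson point process and the i.i.d.\ passage times---must both be handled, and the principal novelty with respect to the lattice case is controlling the random geometry of $\Hh$ simultaneously with the weights.

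First I would partition $\R^d$ into cubes $\{Q_i\}$ of side length $L := K\log(\|x\|)$ for a sufficiently large constant $K$, and associate to each $Q_i$ the independent marked configuration $\mathcal{Z}_i$ consisting of the Poisson points in $Q_i$ together with the passage times on edges incident to them. After fixing a lexicographic enumeration and setting $\mathcal{F}_k = \sigma(\mathcal{Z}_1,\dots,\mathcal{Z}_k)$, I form the Doob martingale $M_k := \E[T(x)\mid \mathcal{F}_k]$. Next I would define a good event $\mathcal{G}$ on which: every cube in $B(o,\beta\|x\|)$ contains $O(L^d)$ Poisson points, by \cref{prop:Hn.growth}; no spherical hole in $B(o,\beta\|x\|)$ has diameter exceeding $\Cr{cte.u.bound.holes}\log(\|x\|)$, by \cref{prop:holes.H}; $T(x)\le \beta\|x\|$, by \cref{lm:T.bds}; and between every two $\Hh$-points at Euclidean distance $O(L)$ the first-passage time is $O(L)$, by the annular bound \eqref{eq:supT.annulus} of \cref{lm:T.bds.sup}. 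The lemmas quoted above yield $\p(\mathcal{G}^c)\le \|x\|^{-q}$ for any prescribed $q$ once $K$ is taken large.

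On $\mathcal{G}$, resampling a single $\mathcal{Z}_i$ affects $\Hh$ and the weights only inside an $O(L)$-neighborhood of $Q_i$, and any geodesic entering this neighborhood may be rerouted around it through $\Hh$ at passage-time cost at most $\Delta := C'L$. This yields $|M_k - M_{k-1}|\le \Delta$ on $\mathcal{G}$; moreover the increment vanishes whenever $Q_k$ lies outside a macroscopic tube around any candidate geodesic, so on $\mathcal{G}$ the nonzero increments are confined to at most $O(\|x\|/L)$ cubes. The resulting predictable quadratic variation is $O(L\|x\|) = O(\|x\|\log(\|x\|))$, and a Freedman--Bernstein inequality produces
\[
    \p\bigl(|T(x) - \E[T(x)]| > \ell\sqrt{\|x\|}\bigr) \le C\exp(-c\ell)
\]
in the range $C'\log(\|x\|) \le \ell \le \sqrt{\|x\|}$, where the lower bound on $\ell$ absorbs both the $\sqrt{\log(\|x\|)}$ loss coming from the variance and the $\|x\|^{-q}$ contribution of $\mathcal{G}^c$, and the upper bound keeps us in the Gaussian regime of Bernstein.

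The hard part will be the local-rerouting step on $\mathcal{G}$: a resample in $Q_i$ may rearrange both the random graph and the weights, and could in principle disconnect a geodesic from $\Hh$ or force a long detour. Formalizing, uniformly in $i$ and on a high-probability event, the existence of a detour of diameter $O(L)$ and passage time $O(L)$ inside $\Hh$ around every cube is the technical heart of the argument and requires a careful combination of the logarithmic hole bound of \cref{prop:holes.H} with the sup-tail estimate of \cref{lm:T.bds.sup}. A secondary subtlety is justifying that nonzero martingale increments really concentrate on an $O(\|x\|/L)$ tube around the geodesic, which in turn rests on exponential tail bounds for competing near-geodesic paths supplied by \cref{lm:T.bar.all,lm:T.bds}.
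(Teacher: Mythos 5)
There is a genuine gap at the step you yourself flag as the heart of the argument, and it is not merely technical. You bound the Doob-martingale increments by $\Delta = C'L$ \emph{on the good event} $\mathcal{G}$, but $M_k - M_{k-1}$ is a difference of conditional expectations, which average over all configurations of the unrevealed blocks, including those where $\mathcal{G}$ fails; a resampling bound valid only on $\mathcal{G}$ does not transfer to the increments, and one cannot simply intersect with $\mathcal{G}$ because $\mathcal{G}$ is not measurable block by block (nor is the event that the geodesic avoids $Q_k$, which undermines the claim that only $O(\|x\|/L)$ increments are nonzero). Worse, the underlying resampling bound itself is unproved and delicate: resampling the Poisson points of a cube can change membership in the infinite cluster, relocate $q(o)$ and $q(x)$, and destroy a bridge used by the geodesic, so the existence of a detour of passage time $O(L)$ inside $\Hh$ around the cube, uniformly over cubes and configurations, is exactly what you would need to establish and what your outline does not supply. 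Making Kesten's martingale scheme work here would require either a modified functional whose block differences are bounded deterministically, or the full apparatus of Kesten's increment truncation, neither of which is sketched.

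The paper resolves precisely this obstruction by changing the functional rather than conditioning on a good event: it augments $\G$ to $\G^t$ by adding the deterministic scaffold $t\Z^d$ with extra edges of passage time $\K t$, and works with the truncated variable $Y_{t,x}$, the minimal passage time over paths of graph length at most $\Kprime\|x\|$. Resampling one box then changes $Y_{t,x}$ by at most $4\K t$ \emph{deterministically} (one reroutes through the extra edges, no geometry of $\Hh$ needed), and \cref{lem_number_of_boxes} gives a deterministic $O(\|x\|/t)$ bound on the number of boxes the optimizing path meets; the concentration then comes from the entropy-method inequality of \cref{prop_concentration} (Boucheron--Lugosi--Massart) applied in \cref{lem_apply_conc}, rather than Freedman. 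The comparison $|T(x)-Y_{t,x}|\le \mathfrak{C}t$ with high probability (\cref{cor_join_bounds}, via \cref{lem_Y_and_T_t,lem_q_and_no_q,lem_T_t_and_T}) replaces your good event, and the box scale is not fixed at $\log\|x\|$ but tuned to the deviation level, $t = s/\sqrt{\|x\|} \in [\mathfrak C_1\log(\|x\|),\sqrt{\|x\|}]$, before optimizing the Chernoff parameter. If you want to pursue your martingale route, the missing ingredient is an unconditional (or at least integrable) substitute for the rerouting bound, and the paper's extra-edge construction is one clean way to manufacture it.
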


\begin{theorem}[Asymptotic Expectation and Variance of First-Passage Times] \label{thm:moderate.dev.FPP}
Consider first-passage percolation as in Theorem~\ref{thm:speed.FPP}, under the same assumptions as in that theorem. There exist~$\upvarphi > 0$ and~$C > 0$ such that, for~$x \in \R^d$ with~$\|x\|$ large enough,
	\begin{equation}\label{eq_asymp_exp}
		\frac{\|x\|}{ \upvarphi} \leq  \E\left[ T(x) \right]   \leq \frac{\|x\|}{ \upvarphi} + C \sqrt{\|x\|} \log\big(\|x\|\big)
	\end{equation}
	and
    \begin{equation} \label{eq:VarT}
    \operatorname{Var} T(x) \le C\|x\| \log \big(\|x\|\big).
    \end{equation}
\end{theorem}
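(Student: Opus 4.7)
The plan is to establish the three estimates in sequence: first the lower bound in \eqref{eq_asymp_exp}, then the variance bound \eqref{eq:VarT}, and finally the upper bound in \eqref{eq_asymp_exp} as a consequence of the variance bound together with the subadditivity of $\E[T(\cdot)]$ via an Alexander-type argument. The lower bound $\E[T(x)] \ge \|x\|/\upvarphi$ is immediate from the representation $\phi(x) = \inf_{n \ge 1}\E[T(nx)]/n$ derived in the proof of \cref{thm:shape.FPP}: choosing $n = 1$ gives $\phi(x) \le \E[T(x)]$, and rotation invariance of the PPP yields $\phi(x) = \|x\|/\upvarphi$.

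For the variance bound I would use an Azuma--Hoeffding Doob-martingale decomposition tailored to the two-layer randomness $\p = \mu \otimes \nu$. Enumerate a family of disjoint cubes $(\mathbbm{B}_i)$ of side $L$ covering the tube $\{y \in \R^d : \operatorname{dist}(y,[o,x]) \le \|x\|^{1/2+\varepsilon}\}$, and let $\F_i$ be the $\sigma$-algebra generated by the restriction of $\Poi$ and of the i.i.d.\ weights to the first $i$ cubes. To keep the martingale differences bounded I would introduce the truncation $\tau_e^{\ast} := \tau_e \wedge K$ with $K = C\log\|x\|$; by \eqref{A2p} the replacement of $T(x)$ by its truncated analogue $T^{\ast}(x)$ contributes at most $o(\sqrt{\|x\|})$ to both mean and variance. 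Writing
\[\operatorname{Var}\big(T^{\ast}(x)\big) \le \sum_i \E\big[(M_i - M_{i-1})^2\big], \qquad M_i := \E[T^{\ast}(x)\mid \F_i],\]
one bounds $|M_i - M_{i-1}| \le C K N_i$, where $N_i$ counts the edges on a geodesic for $T^{\ast}$ that intersect $\mathbbm{B}_i$. Combining the exponential tails of \cref{lm:T.bds,lm:T.bds.sup,lm:T.bar.all} with the volume and hole-diameter estimates of \cref{prop:Hn.growth,prop:holes.H} yields $\E[N_i^2] = O(L^{2d})$ for the $O(\|x\|/L^d)$ cubes close to $[o,x]$ and an exponentially small contribution from cubes far from this tube. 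Optimising $L$ and $K$ of polylogarithmic order in $\|x\|$ then gives $\sum_i \E[(M_i - M_{i-1})^2] \le C\|x\|\log\|x\|$, and \eqref{eq:VarT} follows.

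For the expectation upper bound I would follow Alexander's strategy. The same martingale, analysed via a Hoeffding-type bound, yields the exponential concentration $\p\big(|T(y)-\E T(y)| > \ell\sqrt{\|y\|}\big) \le Ce^{-c\ell}$ for $\ell$ in the polylogarithmic range of \cref{thm_new_moderate_deviations}. Using the subadditive bound $T(o,nx) \le \sum_{i=1}^n T(z_{i-1},z_i)$ along the discretisation $z_i = (i/n)x$, together with the concentration of each $T(z_{i-1},z_i)$ around its mean and the almost sure limit from \cref{thm:shape.FPP}, one obtains $\E T(nx) \ge n\|x\|/\upvarphi - C\sqrt{n\|x\|}\log(n\|x\|)$. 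Combined with the Fekete bound $\E T(nx) \le n\E T(x)$ and choosing $n \asymp \sqrt{\|x\|/\log\|x\|}$ to balance the two error terms, one concludes $\E T(x) \le \|x\|/\upvarphi + C\sqrt{\|x\|}\log\|x\|$.

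The hardest point will be bounding the martingale differences when $\Poi \cap \mathbbm{B}_i$ and the edge weights inside a single cube are resampled jointly: altering the PPP in $\mathbbm{B}_i$ changes the combinatorics of $\Hh$ itself, so a crude bound on the influence of a single cube on $T(x)$ can be as large as an $\Hh$-diameter of a small region, which has only exponential (not uniformly bounded) tails. Showing that, with very high probability, the geodesic is confined to the thin tube around $[o,x]$ and that rerouting around a resampled cube costs only $O(KL)$ will require the combined exponential tail estimates on $T$ and $\thickbar T$ from \cref{lm:T.bds,lm:T.bar.all} together with the logarithmic hole-size control of \cref{prop:holes.H}. This interleaving of PPP renormalisation with edge-weight resampling is substantially more delicate than in classical lattice FPP and will be the technical core of the proof.
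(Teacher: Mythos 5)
Your lower bound $\E[T(x)] \ge \|x\|/\upvarphi$ is exactly the paper's (Kingman's infimum representation plus rotation invariance), and your overall instinct to truncate and use a box-wise concentration inequality is in the right spirit. The first genuine gap is in the variance argument: the step $|M_i - M_{i-1}| \le CKN_i$ is the lattice-FPP bound, but here resampling a cube resamples the \emph{Poisson points}, not just the edge weights, so the infinite cluster $\Hh$ and the geodesic change combinatorially; the increment is then controlled by the cost of rerouting around the cube in the \emph{new} configuration, which is not bounded by $O(KL)$ almost surely (a resampled cube may sit next to a large hole, and hole diameters and chemical-distance detours have only exponential tails). Azuma therefore does not apply, and even the Efron--Stein version needs uniform second-moment control of increments that your tube-confinement heuristic does not supply. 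This is precisely the difficulty the paper removes by working with the modified graph $\G^t$ (adding the deterministic grid $t\Z^d$ with edges of cost $\K t$) and the truncated time $Y_{t,x}$: the extra-edge detour makes the influence of any single box \emph{deterministically} at most $4\K t$ (inequality \eqref{eq:new_V-minus}), \cref{lem_number_of_boxes} bounds the number of relevant boxes by $O(\|x\|/t)$, and \cref{prop_concentration} (Efron--Stein/Boucheron--Lugosi--Massart with indicator events) plus \cref{cor_join_expectation} transfers the bound back from $Y_{t,x}$ to $T(x)$ with $t\asymp\log\|x\|$. Without some device of this kind your ``hardest point'' remains exactly the unresolved core of the proof.

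The second gap is in the expectation upper bound: the chain of inequalities you write cannot produce it. Subadditivity along $z_i=(i/n)x$ gives $\E[T(nx)] \le n\,\E[T(x)]$, and the bound $\E[T(nx)] \ge n\|x\|/\upvarphi - C\sqrt{n\|x\|}\log(n\|x\|)$ is weaker than the trivial $\E[T(nx)] \ge n\|x\|/\upvarphi$ coming from the infimum representation; combining the two only recovers the lower bound $\E[T(x)] \ge \|x\|/\upvarphi$, never an upper bound on $\E[T(x)]$. What is needed is an \emph{approximate superadditivity} statement, which is a genuine geometric input: the paper's \cref{lm_expectation.double} proves $2\E[T(te_1)] \le \E[T(2te_1)] + \bar{C}\sqrt{t}\log(t)$ by forcing the $T(2te_1)$-geodesic to cross the spheres $\partial B(o,t)$ and $\partial B(2te_1,t)$, covering each sphere by $n_t\le t^d$ balls of radius $t^{1/4}$, and using the moderate deviations of \cref{thm_new_moderate_deviations} together with \cref{lm:T.bds.sup} to replace $\min_i T(x_i)$ by $\E[T(te_1)]$ up to $O(\sqrt{t}\log t)$; iterating $\Tilde{f}(2t)\ge 2\Tilde{f}(t)$ for $\Tilde f(t)=\E[T(te_1)]-10\bar C\sqrt t\log t$ then yields \eqref{eq_asymp_exp}. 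Your proposal names Alexander's strategy but does not contain this crossing/covering argument, and the inequalities actually written run in the wrong direction, so this part of the proof is missing rather than merely sketchy.
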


Building upon the well-established understanding that the limiting shape is an Euclidean ball, we aim gaining valuable insights into the fluctuations of geodesic paths as a consequence of the moderated deviation of the passage times. Our approach is influenced by the seminal work of Howard and Newman \cite{howard2001}, from which we adapt their techniques to our model of FPP on random geometric graphs. This enables the investigation and quantification of fluctuations of the geodesic paths within our specific framework.

Let us define, for all $x, y \in \R^d$, the $T$-geodesics $\upgamma_x(y)$ to be chosen as follows. The path $\upgamma_x(y) = \big(q(x)=q_0,q_1, \dots, q_{n'}=q(y)\big)$ is a $T$-geodesic from $q(x)$ to $q(y)$ constructed inductively so that, for each $i \in \{0, 1, \dots, n'\}$, $\upgamma_x(y)$ is the concatenation of $\upgamma_x(q_i)$ and $(q_i, q_{i+1}, \dots, q_{n'})$.

In what follows, $\overline{xy}$ stands for the straight line segment from $x$ to $y$. Recall the definition of the Hausdorff distance, denoted as $d_H$, as introduced in \cref{sec:polynomial.growth.group}. Subsequently, we present a theorem that provides bounds for the fluctuations of the geodesics $\upgamma_x(-)$.

\begin{figure}[!htb]
    \centering
    \includegraphics[scale=0.28]{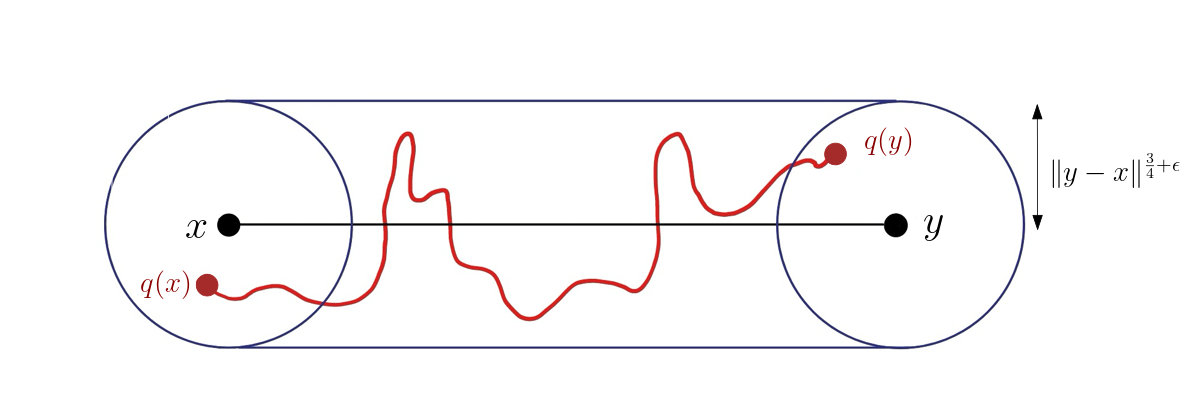}
    \caption{A geodesic path with its fluctuations in a cylindrical region of space given by \cref{thm:fluctuations.of.geodesics}.}
\end{figure}

\begin{theorem}[Fluctuations of the geodesics] \label{thm:fluctuations.of.geodesics}
    Let $d \geq 2$, $r>r_c(\lambda)$, and fix $\epsilon\in (0,1/4)$. Consider the FPP with i.i.d.~random variables
    defined on $\Hh$ of $\G$ satisfying \eqref{A1p} and \eqref{A2p}. Then there exist $\check{c}_1,\check{c}_2>0$ such that, for all $x,y \in \R^d$,
    \begin{equation} \label{eq:geodesic.fluctuations}
        \p\left(d_H\big(\upgamma_x(y),\overline{xy}\big) \geq \|y-x\|^{\frac{3}{4} + \epsilon}\right) \le \check{c}_1 \exp(-\check{c}_2 \|y-x\|^{2\epsilon}).
    \end{equation}
\end{theorem}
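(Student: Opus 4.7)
Set $L = \|y-x\|$ and $h_0 = L^{3/4+\epsilon}$. The plan is to combine the triangle-equality identity that holds along geodesics with the moderate deviations of \cref{thm_new_moderate_deviations}. On the event $E := \{d_H(\upgamma_x(y),\overline{xy}) \geq h_0\}$, consecutive vertices of $\upgamma_x(y)$ lie at Euclidean distance at most $r \ll h_0$, so there must exist a vertex $v \in \Hh \cap \upgamma_x(y)$ with $d(v,\overline{xy}) \geq h_0/2$. The prefix-closed construction of $\upgamma_x(-)$ implies that the sub-path from $q(x)$ to $v$ is itself a $T$-geodesic, hence
\[
    T(x,y) = T(x,v) + T(v,y).
\]
As a first reduction I would discard the subevent $\{\|v-x\|+\|v-y\|>3L\}$: the above identity together with the lower tail of \cref{thm_new_moderate_deviations} forces $T(x,y) \geq 3L/\upvarphi - O(\sqrt{L}\log L)$, which contradicts the upper tail of \cref{thm_new_moderate_deviations} applied to $T(x,y)$ outside a set of probability at most $C\exp(-cL^{2\epsilon})$. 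It therefore suffices to control the existence of a bad $v$ in $R := \{z\in\R^d:\|z-x\|+\|z-y\| \leq 3L\}$.

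\textbf{Geometry and moderate deviations.} For every $v \in R$ with $d(v,\overline{xy}) \geq h_0/2$ (and using $h_0 \leq L/4$, valid for large $L$ since $\epsilon<1/4$), the identity $\sqrt{a^2+b^2}-a=b^2/(\sqrt{a^2+b^2}+a)$ applied to both $\|v-x\|$ and $\|v-y\|$ yields a Euclidean excess
\[
    \|v-x\| + \|v-y\| - \|y-x\| \;\geq\; c\,\frac{h_0^2}{L} \;=\; c\,L^{1/2+2\epsilon}
\]
for a universal $c>0$. Choose $\ell = c_\star L^{2\epsilon}$ with $c_\star$ sufficiently small; because $\epsilon\in(0,1/4)$, $\ell$ lies in the admissible window $[C'\log L,\sqrt L]$ of \cref{thm_new_moderate_deviations}. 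Applying \cref{thm_new_moderate_deviations} to each of $T(x,v), T(v,y), T(x,y)$ and \eqref{eq_asymp_exp} to their expectations, on an event of probability at least $1 - C\exp(-c_\diamond L^{2\epsilon})$ one obtains
\[
    T(x,v) + T(v,y) - T(x,y) \;\geq\; \tfrac{c}{2\upvarphi}\,L^{1/2+2\epsilon} - O\!\big(L^{1/2}\log L\big) \;>\; 0,
\]
which is incompatible with the geodesic identity above.

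\textbf{Discretization, union bound, and main obstacle.} To make the estimate uniform in $v$, I would tile $R$ by unit cubes $(Q_i)$, of which there are $O(L^d)$, and pick deterministic representatives $w_i \in Q_i$. On the high-probability event $\sup_i\sup_{v\in\Hh\cap Q_i}T(v,w_i) \leq \log L$ --- obtained from \cref{lm:T.bds.sup} with $t'=O(1)$ and $t = \log L$ combined with a union bound over the $L^d$ boxes --- one may replace $v$ by $w_i$ in the argument above at the cost of an additive $O(\log L)$, which is absorbed by the dominant $L^{1/2+2\epsilon}$. A final union bound then yields $\p(E) \leq CL^d\exp(-c_\diamond L^{2\epsilon}) \leq \check c_1\exp(-\check c_2 L^{2\epsilon})$ for $L$ large, establishing \eqref{eq:geodesic.fluctuations}. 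The chief obstacle is precisely this uniform control over the random infinite component: because $\Hh$ has unbounded degree and spherical holes of size $\Theta(\log L)$ (\cref{prop:holes.H}), the local comparison $T(v,w_i) \leq \log L$ is delicate and requires \cref{lm:T.bds.sup} at its quantitative threshold. A secondary subtlety is that \cref{thm_new_moderate_deviations} is stated for large $\|\cdot\|$, so the case where $v$ lies close to an endpoint of $\overline{xy}$ must be handled by a separate geometric argument using the shape theorem combined with the fact that in this regime the excess $\|v-x\|+\|v-y\|-L$ is already of order at least $h_0/2$, which is enormous compared with $\sqrt{L}\log L$.
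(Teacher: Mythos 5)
Your proposal is correct and takes essentially the same route as the paper's proof: force a geodesic vertex $v$ off the tube around $\overline{xy}$, use the additivity $T(x,y)=T(x,v)+T(v,y)$ along the prefix-closed geodesic, the Pythagorean excess of order $\|y-x\|^{1/2+2\epsilon}$, \cref{thm_new_moderate_deviations} together with \eqref{eq_asymp_exp}, and a union bound over polynomially many candidate vertices via \cref{prop:Hn.growth}. The only notable difference is that the paper restricts $v$ to the width-$r$ shell $\mathsf{A}^{\epsilon}_{x,y}$ where the geodesic first exits the tube, which simultaneously bounds the excess from above (keeping the required deviation inside the moderate-deviation window) and controls the number of candidates, thereby dispensing with your preliminary reduction discarding far-away $v$ and the endpoint caveat you flag.
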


In what follows, we present a fundamental result concerning the properties of the spanning trees $\mathlcal{T}_x$ of $\Hh$ rooted at $q(x)$, where the edges of $\mathlcal{T}_x$ are induced by the union of geodesic paths $\upgamma_x(y)$ for all $y \in \R^d$. To characterize this spanning tree, we introduce several key concepts.

Let $\uptheta(x,y)$ denote the angle between the vectors $\vec{x}$ and $\vec{y}$. Define $\operatorname{Cone}(x, s)$ as the set of points $ y \in \R^d$ such that $\uptheta(x,y) \le s$ where $s$ varies in the interval $[0,\pi]$.

\begin{figure}[!htb]
    \centering
    \includegraphics[scale=0.38]{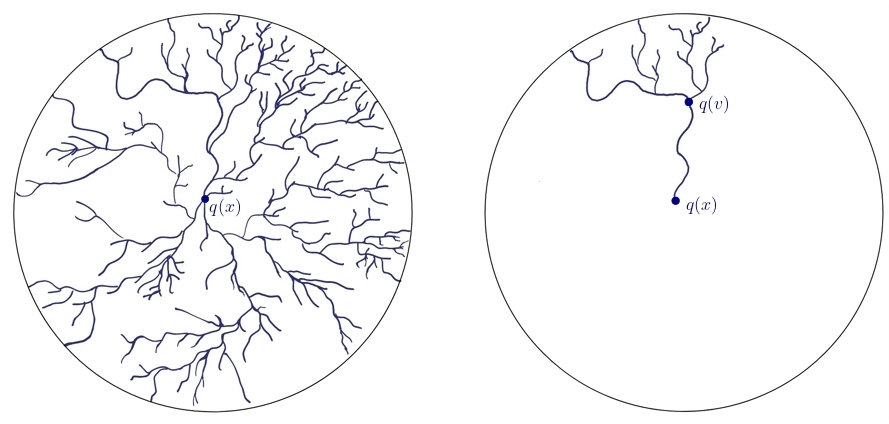}
    \caption{Section of a spanning tree $\mathlcal{T}_x$ (left) and the subtree given by $\mathlcal{T}_x^{\mathsf{out}}(v)$ (right).}
\end{figure}

We now introduce a subset of $\Hh$ which determines a subtree of $\mathlcal{T}_x$
\[\mathlcal{T}_x^{\mathsf{out}}(v) := \left\{u \in \Hh \colon v \in \upgamma_x(u)\right\}.\] 

Additionally, we define for a real function $f$
\[\mathlcal{V}_{x, f} := \left\{v \in \Hh ~\colon~ \mathlcal{T}_x^{\mathsf{out}}\big(v) \not\subseteq x + \operatorname{Cone}\big(v-x, f(\|v-x\|)\big)\right\}.\]

A tree $\mathlcal{T}_x$ is called $f$-straight at $x$ when $\mathlcal{V}_{x,f}$ is finite. The next theorem establishes conditions under which $\mathlcal{T}_x$ if $f$-straight.

\begin{figure}[!htb]
    \centering
    \includegraphics[scale=0.13]{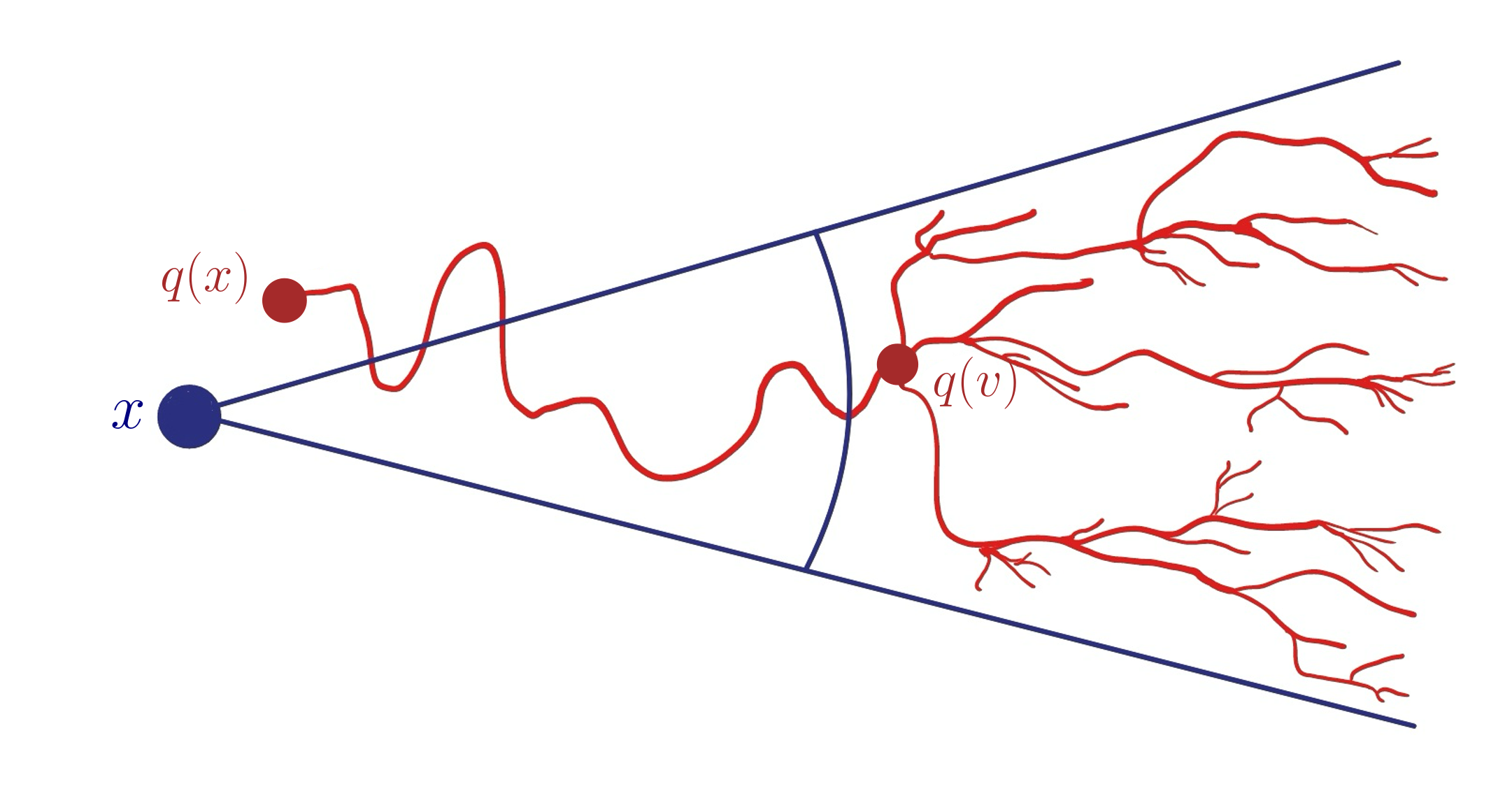}
    \caption{The subtree associated with $\mathlcal{T}_x^{\mathsf{out}}(v)$ approaching the $f_\epsilon$-straight property (see \cref{thm:f.straight.spanning.trees}).}
\end{figure}

\begin{theorem}[Asymptotic behaviour of the spanning trees $\mathlcal{T}_x$] \label{thm:f.straight.spanning.trees}
    Let $d \geq 2$ and $r>r_c(\lambda)$. Consider the FPP  defined on $\Hh$ with i.i.d.~random variables satisfying \eqref{A1p} and \eqref{A2p}. Fix $x \in \R^d$ and $\epsilon\in (0,1/4)$. Then there exists, $\p$-a.s., a finite $\mathlcal{F}_{x,f_\epsilon} \subseteq \R^d$ with $f_\epsilon(s)=s^{\epsilon-1/4}$ such that $\mathlcal{V}_{x,f_\epsilon} \subseteq \mathlcal{F}_{x,f_\epsilon}$ for any choice of $~\mathlcal{T}_x$. 
    
    Hence, all spanning trees of the type $\mathlcal{T}_x$ are $\p$-a.s. $f_\epsilon$-straight at $x$.
\end{theorem}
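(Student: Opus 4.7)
The plan is to adapt the Howard--Newman strategy for $f$-straightness of geodesic spanning trees, combining the fluctuation bound in \cref{thm:fluctuations.of.geodesics} with the moderate-deviation estimates in \cref{thm_new_moderate_deviations,thm:moderate.dev.FPP}, fed into a dyadic Borel--Cantelli scheme. The key geometric reformulation is the following: if $v \in \mathlcal{V}_{x, f_\epsilon}$ with $s = \|v - x\|$, then there exists $u \in \Hh$ with $v \in \upgamma_x(u)$ satisfying $\uptheta(v-x, u-x) > s^{\epsilon - 1/4}$; dropping the perpendicular from $v$ to the chord $\overline{xu}$ yields, for $s$ large enough,
\[
\mathrm{dist}(v, \overline{xu}) \geq s\sin(s^{\epsilon - 1/4}) \geq \tfrac{1}{2}\, s^{3/4+\epsilon},
\]
so the geodesic $\upgamma_x(u)$ must deviate from its chord by at least this much.

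First, I would control geodesic fluctuations on a dyadic angular net. For each $k \geq 1$, fix a net $\mathcal{N}_k \subseteq \partial B(x,2^k)$ with angular mesh $2^{k(\epsilon/4 - 1/4)}$ and cardinality $|\mathcal{N}_k| = O(2^{k(d-1)(1-\epsilon/4)})$. Applying \cref{thm:fluctuations.of.geodesics} at each net point with exponent $\epsilon/4$ and taking a union bound,
\[
\p\big(\exists\, y \in \mathcal{N}_k \colon d_H(\upgamma_x(y), \overline{xy}) > \|y-x\|^{3/4+\epsilon/4}\big) \leq C \cdot 2^{k(d-1)} \exp\big({-c\cdot 2^{k\epsilon/2}}\big),
\]
which is summable in $k$. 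Borel--Cantelli then ensures almost surely that this good event holds for all $k \geq k_0(\omega)$. A continuity argument based on comparing passage times of nearby targets via short paths in the locally finite $\Hh$ promotes the net-level control to the uniform bound $d_H(\upgamma_x(y), \overline{xy}) \leq C\|y-x\|^{3/4 + \epsilon/3}$ for every $y$ with $\|y-x\| \in [2^k, 2^{k+1}]$ and every valid choice of geodesic $\upgamma_x(y)$.

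Second, the two ingredients combine as follows. Let $A_k := \{v \in \Hh \colon 2^k \leq \|v-x\| < 2^{k+1}\}$. For $v \in A_k$ and any witness $u \in \mathlcal{T}_x^{\mathsf{out}}(v)$ of a violation, combining $\mathrm{dist}(v, \overline{xu}) \leq C\|u-x\|^{3/4+\epsilon/3}$ with the reformulation above forces $\|u-x\| \geq c\, s^{1+\gamma}$ for an explicit $\gamma = \gamma(\epsilon) > 0$. Thus, on the good event, only faraway witnesses may produce a violation at $v$, while moderate witnesses are automatically excluded.

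The main obstacle is controlling the faraway witnesses $u$ with $\|u-x\| \geq s^{1+\gamma}$, for which the raw fluctuation bound becomes too loose. For these, I would exploit the triangle identity $T(x,u) = T(x,v) + T(v,u)$ forced by $v \in \upgamma_x(u)$. The Euclidean law of cosines and the angular condition produce a Euclidean defect $s + \|u-v\| - \|u-x\| \geq c\, s^{1/2+2\epsilon}$, which via the asymptotic expectation \eqref{eq_asymp_exp} translates into a strictly positive mean first-passage gap of order $s^{1/2+2\epsilon}$, up to $O(\sqrt{\|u-x\|}\log\|u-x\|)$ corrections. The tail bound of \cref{thm_new_moderate_deviations} applied to each first-passage term then controls the probability that the gap collapses to zero, with decay exponential in a positive power of $s$, as long as $\|u-x\|$ does not exceed $s^{1+4\epsilon}$. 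The remaining range of ultra-far witnesses is handled by iterating the application of \cref{thm:fluctuations.of.geodesics} with progressively smaller exponent and exploiting the polynomial-in-scale sparsity of $\Hh$, yielding summable bad-event probabilities. Summing over $v \in A_k$ and $u$, and then over $k$, Borel--Cantelli delivers almost surely $\mathlcal{V}_{x, f_\epsilon} \cap A_k = \varnothing$ for all but finitely many $k$; local finiteness of $\Hh$ then yields the desired finite random set $\mathlcal{F}_{x, f_\epsilon}$.
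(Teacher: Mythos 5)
There is a genuine gap at the ``ultra-far witness'' step, and it is precisely the step your two-scale scheme cannot close. Your reformulation gives, for a violation at $v$ with $s=\|v-x\|$ witnessed by $u\in\mathlcal{T}_x^{\mathsf{out}}(v)$, a chord deviation of order $s^{3/4+\epsilon}$ \emph{measured at scale $s$}, while every fluctuation bound of the form $d_H(\upgamma_x(u),\overline{xu})\le \|u-x\|^{3/4+\epsilon'}$ is measured at scale $S=\|u-x\|$. Whatever $\epsilon'>0$ you choose, this only excludes witnesses with $S\lesssim s^{(3/4+\epsilon)/(3/4+\epsilon')}$, and letting $\epsilon'\downarrow 0$ pushes the threshold no further than roughly $s^{1+4\epsilon/3}$; so ``iterating \cref{thm:fluctuations.of.geodesics} with progressively smaller exponent'' cannot reach arbitrarily distant $u$. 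Your moderate-deviation argument has the same ceiling: the Euclidean defect forced by the angle condition is of order $s^{1/2+2\epsilon}$ \emph{independently of $S$}, whereas the fluctuations of $T(x,u)$ are of order $\sqrt{S}\log S$, so the gap in means is swamped once $S\gtrsim s^{1+4\epsilon}$. You also cannot replace $u$ by an intermediate vertex $w\in\upgamma_x(u)$ at a moderate scale, because $w$ may well lie inside the cone even though $u$ does not. Thus witnesses with $\|u-x\|$ larger than any fixed power $s^{1+c(\epsilon)}$ are simply not controlled by your argument as written.

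The paper resolves exactly this point by quoting the geometric chaining lemma of Howard and Newman, stated here as \cref{lm:3.7.howard.newman}: once one knows that, outside an a.s.\ finite exceptional set $\mathsf{X}_x^{\epsilon}$ of vertices, every geodesic $\upgamma_x(u)$ to a vertex $u\in\Hh$ stays within $\|u-x\|^{3/4+\epsilon/2}$ of its chord, the lemma (applied with $\uppsi=1/4-\epsilon/2$ along the geodesic through $v$, using that consecutive vertices are within distance $r$) converts chord-proximity at \emph{all} scales simultaneously into the angle bound $\uptheta(v-x,u-x)\le c_\uppsi\|v-x\|^{-\uppsi}\le \|v-x\|^{\epsilon-1/4}$, uniformly over arbitrarily distant $u$; this is what delivers $\mathlcal{V}_{x,f_\epsilon}\subseteq \mathsf{X}_x^{\epsilon}\cup\big(B(x,r^{4/3}\vee c_\uppsi^{2/\epsilon})\cap V(\Hh)\big)$. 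The remaining work is then only a first-moment bound: since the witnesses and bad points are vertices of $\Hh$, one sums $\p\big(d_H(\upgamma_x(y),\overline{xy})\ge\|y-x\|^{3/4+\epsilon/2}\big)$ over Poisson points in dyadic (in the paper, unit) annuli via Mecke/Campbell together with \cref{thm:fluctuations.of.geodesics} and \cref{prop:Hn.growth}, getting $\E|\mathsf{X}_x^{\epsilon}|<\infty$ and hence a.s.\ finiteness. In particular your angular net and the ``continuity'' promotion to all targets $y$ are unnecessary (the witnesses are countable vertices), and that promotion is itself dubious as stated, since closeness of passage times to nearby targets does not by itself constrain where the geodesic to the nearby target goes. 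To repair your proof you would either have to prove a chaining statement equivalent to \cref{lm:3.7.howard.newman} or invoke it directly, at which point the moderate-deviation case analysis becomes superfluous.
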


Now that we have established certain properties of the spanning trees, we will now delve into the properties of the asymptotic behavior of their semi-infinite paths. Let $\mathlcal{S}_x$ be the set of semi-infinite paths in $\mathlcal{T}_x$ starting from $q(x)$, \textit{i.e.},
\[\mathlcal{S}_x:=\Big\{\big(q(x), q_1, q_2, \dots\big) \in(\R^d)^{\N_0} \colon \forall n \in \N\big( \upgamma_x(q_n) = (q(x), q_1, \dots, q_n)\big)\Big\}.\]
Now, let's consider $\upsigma \in \mathlcal{S}_x$, where $\upsigma=(q(x), q_1, q_2, \dots)$.  We define $\eth$ as the asymptotic direction for the semi-infinite paths, which is given by
\[\eth(\upsigma) := \lim_{n \uparrow +\infty} \frac{q_n}{\|q_n\|}.\]
It is readily apparent that if the asymptotic direction $\eth(\upsigma)$ exists, then $\eth(\upsigma) \in\partial B(o,1)$.

\begin{corollary} \label{cor:asymp.dir}
    Consider $d \ge 2$ and $r> r_c(\lambda)$. Let the FPP be defined on $\Hh$ of $\mathcal{G}_{\lambda,r}$ such that \eqref{A1p} and \eqref{A2p} hold true. Then one has $\p$-a.s. that, for all $x \in \R^d$:
    \begin{enumerate}[(a)]
        \item Every semi-infinite path $\upsigma\in\mathlcal{S}_x$ has an asymptotic direction $\eth(\upsigma) \in \partial B(o,1)$.
        \item For all directions $\Hat{x} \in \partial B(o,1)$, there exist at least one semi-infinite path $\upsigma\in\mathlcal{S}_x$ with $\eth(\upsigma)=\Hat{x}$.
        \item The set of asymptotic directions with non-unique associated semi-infinite paths
        \[
            \mathlcal{D}_x =\big\{\Hat{x} \in \partial B(o,1)\colon\exists \upsigma_1,\upsigma_1 \in \mathlcal{S}_x \text{ with } \upsigma_1\neq\upsigma_2 \text{ s.t. } \eth(\upsigma_1)=\eth(\upsigma_2)=\Hat{x}\big\}
        \]
        is dense in $\partial B(o,1)$
    \end{enumerate}
\end{corollary}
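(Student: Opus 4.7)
The plan is to establish each part in turn, leveraging the $f_\epsilon$-straightness guaranteed by \cref{thm:f.straight.spanning.trees} together with the $\p$-a.s.\ local finiteness of $\mathlcal{T}_x$, which holds because each vertex of $\Hh$ has finitely many $\G$-neighbors.

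Part \textit{(a)} will follow directly from $f_\epsilon$-straightness: fix $\upsigma = (q_0, q_1, q_2, \ldots) \in \mathlcal{S}_x$; local finiteness forces $\|q_n - x\| \uparrow +\infty$, so eventually $q_n \notin \mathlcal{F}_{x, f_\epsilon}$. Applying the inclusion $\mathlcal{T}_x^{\mathsf{out}}(q_n) \subseteq x + \operatorname{Cone}\big(q_n - x, f_\epsilon(\|q_n - x\|)\big)$ to the tail $(q_m)_{m \ge n}$ shows that $(q_n - x)/\|q_n - x\|$ is Cauchy in $\partial B(o,1)$, and the limit coincides with $\eth(\upsigma) = \lim_{n} q_n/\|q_n\|$ since $x$ is fixed.

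For \textit{(b)}, given $\hat{x} \in \partial B(o,1)$, I set $y_n = x + n\hat{x}$ and apply a diagonal extraction to the finite geodesics $\upgamma_x(q(y_n))$, using local finiteness of $\mathlcal{T}_x$, to obtain a subsequence $n_k \uparrow +\infty$ and $\upsigma = (q_0, q_1, q_2, \ldots) \in \mathlcal{S}_x$ such that $(q_0, \ldots, q_k)$ is the initial segment of $\upgamma_x(q(y_{n_k}))$ for every $k \in \N$. Since $q(y_{n_k}) \in \mathlcal{T}_x^{\mathsf{out}}(q_k)$, once $q_k \notin \mathlcal{F}_{x, f_\epsilon}$ the $f_\epsilon$-straightness gives $\angle(q_k - x, q(y_{n_k}) - x) \le f_\epsilon(\|q_k - x\|)$. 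Combined with $q(y_{n_k})/\|q(y_{n_k})\| \to \hat{x}$ and $\|q_k - x\| \uparrow +\infty$, this forces $\eth(\upsigma) = \hat{x}$.

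The main obstacle is \textit{(c)}, which I plan to resolve by a topological argument. The space $\mathlcal{S}_x$, viewed as a closed subset of $\Hh^{\N_0}$ with the product topology, is compact (closed in a product of finite discrete sets, using local finiteness of $\mathlcal{T}_x$ at each level from $q(x)$), Hausdorff, and totally disconnected (distinct paths are separated by clopen cylinders based on the first level at which they differ). The map $\eth \colon \mathlcal{S}_x \to \partial B(o,1)$ is continuous, since two paths agreeing up to step $n$ have asymptotic directions within $2f_\epsilon(\|q_n - x\|)$ of each other, a bound that vanishes as $n \uparrow +\infty$. By \textit{(b)}, $\eth$ is surjective, and by the definition of $\mathlcal{D}_x$ it is injective on $\eth^{-1}(\partial B(o,1)\setminus \mathlcal{D}_x)$. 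Suppose, for contradiction, that $\mathlcal{D}_x$ is not dense; pick a non-degenerate closed arc $A' \subseteq \partial B(o,1) \setminus \mathlcal{D}_x$. Then $\eth^{-1}(A')$ is closed in $\mathlcal{S}_x$, hence compact, and $\eth|_{\eth^{-1}(A')} \colon \eth^{-1}(A') \to A'$ is a continuous bijection between compact Hausdorff spaces, therefore a homeomorphism; but $\eth^{-1}(A')$ inherits total disconnectedness from $\mathlcal{S}_x$, while $A'$ is connected, a contradiction. Hence $\mathlcal{D}_x$ must be dense in $\partial B(o,1)$.
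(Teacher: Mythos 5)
Your proposal is correct, but it takes a genuinely different route from the paper: the paper's proof is a short reduction --- it checks that $\mathlcal{T}_x$ is locally finite (via \cref{prop:Hn.growth}), that $V(\Hh)$ is asymptotically omnidirectional (via the hole-size bound of \cref{prop:holes.H}), and that $\mathlcal{T}_x$ is $f_\epsilon$-straight (\cref{thm:f.straight.spanning.trees}), and then invokes Proposition 2.8 of Howard and Newman as a black box --- whereas you reprove the three items from scratch. Your arguments for (a) and (b) are the standard cone/diagonal-extraction mechanism behind that proposition (the consistency of prefixes in the definition of $\upgamma_x(\cdot)$ does guarantee that the extracted path lies in $\mathlcal{S}_x$), and your proof of (c), viewing $\mathlcal{S}_x$ as a compact, totally disconnected boundary of the tree and ruling out a continuous bijection onto a connected cap, is a clean self-contained replacement for the citation; what your version buys is transparency of the mechanism, while the paper's buys brevity by outsourcing to the literature. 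One point you should make explicit to close part (b): the assertion that $q(x+n\hat{x})/\|q(x+n\hat{x})\| \to \hat{x}$ is not free --- it needs a quantitative control of the distance from arbitrary points to $\Hh$ (so that $\|q(y_n)-y_n\|=o(n)$, uniformly enough to hold on a single almost sure event for all directions), which is exactly the ``asymptotic omnidirectionality'' ingredient the paper extracts from \cref{prop:holes.H} (alternatively \cref{lm:ball.intersecting.infinite.comp} together with a Borel--Cantelli argument); with that reference added, your plan is complete.
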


\section{Approximation Scheme for First-Passage Times}
To study the first-passage times on $\Hh$, we introduce a new random variable $T^t$ and a random graph $\G^t$, which builds upon $\G$ by incorporating additional vertices and edges. Subsequently, the first-passage times will be approximated by $T^t$ for a given $t>0$. We define $\G^t= (V^t, \mathcal{E}^t)$ with $t>0$ as follows:
\[
    V^t := V \cup (t\Z^d) \quad \text{and} \quad \mathcal{E}^t := \mathcal{E} \cup \mathcal{E}'(t).
\]
where
\[
    \mathcal{E}'(t):=\left\{ \{u,v\} \ \colon \ u \in t\Z^d \text{ and }\begin{array}{c}
    v \in \left(u + [t/2, ~t/2)^d\right) \cap V\\ 
    \text { or }\\
    v \in t\Z^d \text { with } \|u-v\|=t
    \end{array}\right\}.
\]

The vertices corresponding to $t\Z^d$ are referred to as extra vertices, while $\mathcal{E}'(t)$ represents the set of extra edges. Since $V \cap (t\Z^d) = \varnothing$ with probability one, the unions determining $V^t$ and $\mathcal{E}^t$ are $\p$-a.s. disjoint. Analogous to $q(x)$ and $\thickbar{q}(x)$, we define, for all $x \in \R^d$,
~$q^t(x) := \argmin_{y \in V^t}\big\{ \| y-x \| \big\}$ and note that $q^t(o)=o$.

\begin{figure}[htb!]
    \centering
    \includegraphics[scale=0.3,trim={50 0 250 250},clip]{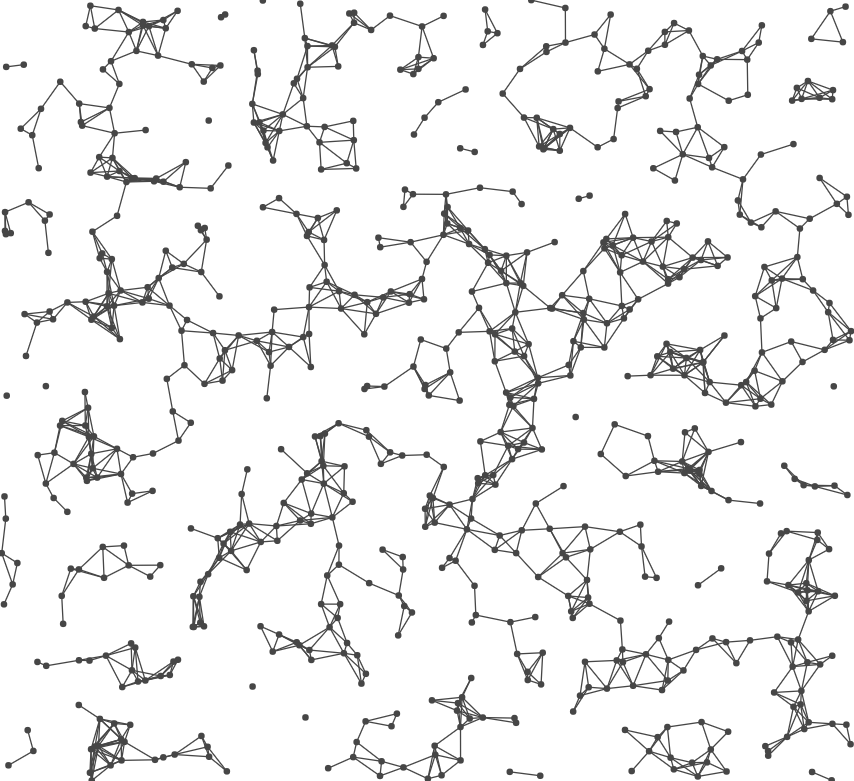} \hspace{0.4cm}\includegraphics[scale=0.3,trim={50 0 250 250},clip]{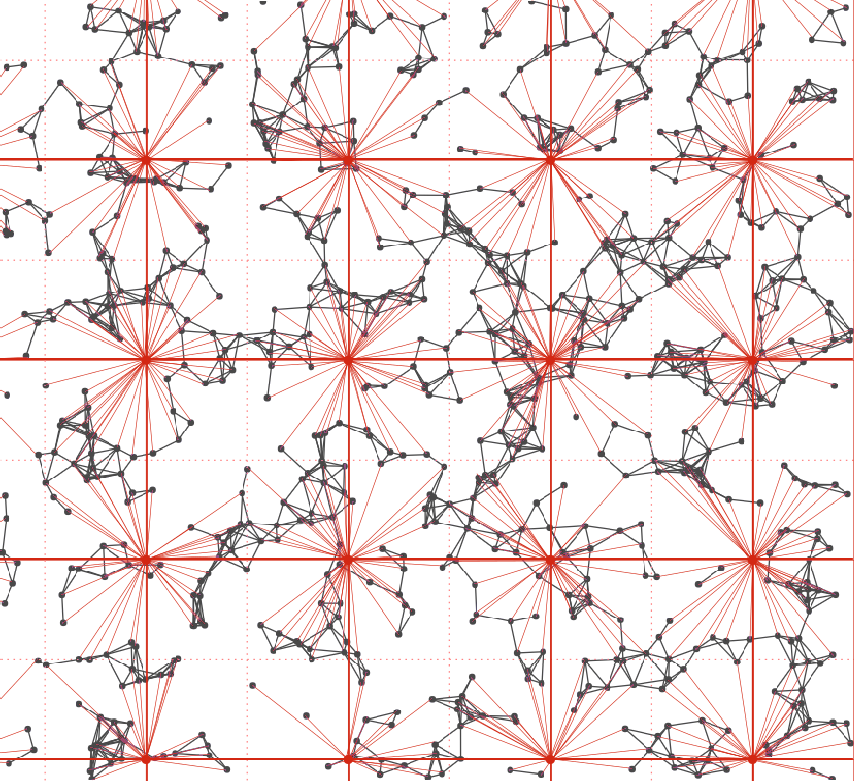}
    \caption{The image depicts the same region of a standard RGG, denoted as $\mathcal{G}$ (left), alongside an RGG with extra vertices and extra edges  $\mathcal{G}^t$ (right).}
    \label{fig:rgg_extra}
\end{figure}

The passage times along the extra edges are considered deterministically determined. Let $\K > 4 d(\thickbar{\beta} \vee \beta)$ be a fixed constant, and define the passage times for the edges of $\G^t$ as
\[\tau_e^t := \left\{ \begin{array}{ll}
     \tau_e,& \text{ if }~e \in \mathcal{E},  \\ 
     \K t,& \text{ if }~e \in \mathcal{E}'(t).
\end{array}\right. \]

Let $\gamma$ be a (self-avoiding) path in $\mathcal G^t$. The passage time along $\gamma$ is given by
\[
    T^t(\gamma) := \sum_{e \in \gamma} \tau_e^t.
\]
We define the modified first-passage time~$T^t(x,y)$ between vertices~$x$ and~$y$ by~$T^t(x,y):= \inf_\gamma T^t(\gamma)$, where the infimum is taken over all paths $\gamma$ from $q^t(x)$ to $q^t(y)$ in $\G^t$. Henceforth, we assume that \eqref{A1p} and \eqref{A2p} hold true.

Now, we present the first results on the passage times on $\G^t$.

\begin{lemma}\label{lem_cheap_hop}
	There exists~$\delta > 0$  such that  for any~$n \in \N$, any~$\ell > 0$, and any~$t \ge 1$, 
	\begin{equation*}
		\p \left( \begin{array}{l} \text{there is a path $\gamma$ in $\mathcal{G}^t$  starting in  } B(o,\ell)\\[.1cm] \text{ with } |\gamma| = n \text{ and } \sum_{e \in \gamma} \tau^t_e \le \delta n
		\end{array}\right) \le  \frac{\max\{\ell^d,1\}}{2^{n}}.
	\end{equation*}
\end{lemma}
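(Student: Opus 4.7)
The proof follows the classical ``first-moment plus Chernoff'' template for counting cheap paths, but it must be adapted to accommodate two novel features of $\mathcal{G}^t$: unbounded vertex degrees (both at PPP and extra vertices) and the $t$-dependence of the extra-edge cost. The strategy is to apply Markov's inequality to the number of length-$n$ paths $\gamma$ starting in $B(o,\ell)$ with $\sum_{e \in \gamma}\tau_e^t \le \delta n$. Two a priori observations drive everything. First, by~\eqref{A1p} we have $\p(\tau = 0) = 0$, so by dominated convergence $\E[e^{-s\tau}] \downarrow 0$ as $s\to\infty$; pick $s_0$ so that $\alpha_0 := \E[e^{-s_0\tau}]$ is as small as we need. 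Second, since $t \ge 1$ every extra edge costs at least $\K t \ge \K$, so any path $\gamma$ with $T^t(\gamma) \le \delta n$ contains at most $k \le \delta n/(\K t)$ extra edges, and hence at least $n-k \ge n(1 - \delta/\K)$ independent regular edges whose cumulative passage time is still bounded by $\delta n$. For the latter, Chernoff gives $\p(\sum_{i=1}^{n-k}\tau_i \le \delta n) \le e^{s_0 \delta n}\alpha_0^{n-k}$.

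Next, I would count paths by type. Fix a label sequence in $\{\text{reg},\text{ext}\}^n$ with exactly $k$ extra labels; there are $\binom{n}{k}$ such sequences. Using the Slivnyak--Mecke formula to integrate out successive PPP neighbours, the expected number of length-$n$ realisations starting in $B(o,\ell)$ with that label sequence is at most
\[
C\max\{\ell^d,1\}\cdot (\lambda \upupsilon_d r^d)^{n-k} \cdot \bigl(\lambda t^d + 2d\bigr)^{k},
\]
where $\lambda \upupsilon_d r^d$ is the expected local volume of PPP-neighbours for a regular step and $\lambda t^d+2d$ bounds the number of choices from an extra vertex (either to one of the $2d$ neighbouring extra vertices, or integrated over PPP points in its cell). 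The factor $\max\{\ell^d,1\}$ accounts for the Poisson intensity of starting PPP vertices plus the at most $C\max\{1,\ell^d/t^d\}$ extra vertices in $B(o,\ell)$.

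The main obstacle is that $(\lambda t^d + 2d)^k$ could blow up in $t$. It is precisely at this point that one exploits the passage-time cost of extra edges: the constraint $k \le \delta n/(\K t)$ yields
\[
(\lambda t^d + 2d)^{k} \le \exp\!\left(\tfrac{\delta n}{\K}\cdot\tfrac{\log(\lambda t^d+2d)}{t}\right) \le e^{M\delta n/\K},
\]
since $t \mapsto t^{-1}\log(\lambda t^d + 2d)$ is bounded on $[1,\infty)$ by some $M$ independent of $t$. Combining the Mecke count with the Chernoff bound and summing over $k \in \{0,\ldots,\lfloor \delta n/\K\rfloor\}$, the expected number of cheap paths is at most
\[
C\max\{\ell^d,1\}\cdot\! \sum_{k=0}^{\lfloor \delta n/\K\rfloor}\!\binom{n}{k}\bigl(\lambda\upupsilon_d r^d\alpha_0 e^{s_0\delta}\bigr)^{n-k} e^{M\delta n/\K}.
\]
Now choose $\alpha_0$ first so that $\lambda \upupsilon_d r^d \alpha_0 \le 1/16$, and then choose $\delta$ small enough that both $e^{s_0 \delta + M\delta/\K}$ and the entropy factor $e^{nH(\delta/\K)}$ arising from $\binom{n}{k}$ are small enough to make the whole sum at most $\max\{\ell^d,1\}/2^n$. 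Markov's inequality then gives the stated bound.
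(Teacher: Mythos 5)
Your argument is correct and rests on the same three pillars as the paper's proof: a first-moment bound over paths combined with Markov's inequality, an exponential Chernoff-type estimate for sums of regular passage times made possible by $\p(\tau=0)=0$, and the observation that each extra edge costs $\K t$, so a path of total cost at most $\delta n$ uses at most $\delta n/(\K t)\le \delta n/\K$ extra edges. Where you genuinely diverge is in the counting step. The paper bounds the expected number of \emph{all} length-$n$ paths starting in $B(o,\ell)$ by $\mathsf{v}_t(\ell)\,(\mathsf{v}_t(r))^n$, with $\mathsf{v}_t(s):=\sup_x\E[|V^t\cap B(x,s)|]\le(\upupsilon_d+2^d)s^d+1$ uniformly in $t\ge1$, and uses the scarcity of extra edges only inside the probability estimate (to guarantee at least $n/2$ regular edges, so that $F_{n-m}(\delta(n-m))\le\varepsilon^{n-m}\le\varepsilon^{n/2}$). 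You instead stratify by the number $k$ of extra steps, accept the branching factor $\lambda t^d+2d$ at extra vertices, and kill its $t$-dependence via $k\le\delta n/(\K t)$; this explicitly handles hops from an extra vertex into its box of side $t$, which are not within distance $r$ of the current vertex and hence are not visibly covered by the paper's single per-step factor $\mathsf{v}_t(r)$, at the modest price of binomial bookkeeping whose entropy factor your final choice of $\delta$ absorbs. Two minor slips are harmless given your freedom to take $\alpha_0$ as small as needed: you dropped the factor $e^{s_0\delta k}$ when writing $(\lambda\upupsilon_d r^d\alpha_0 e^{s_0\delta})^{n-k}$, and the fixed threshold $\lambda\upupsilon_d r^d\alpha_0\le 1/16$ should really be ``small enough compared with the Mecke constant and the number of terms'' so the bound also holds for small $n$ --- the same kind of final adjustment the paper makes when it takes $\varepsilon$ small enough.
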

\begin{proof}
	Throughout this proof, we write, for~$k \in \N$ and~$s \in \R$,
	\[F_k(s):=\p(Z_1 + \cdots + Z_k \le s), \quad \text{where } Z_1,\ldots,Z_k \text{ are i.i.d. } \sim \tau,\]
	that is,~$F_k$ is the cumulative distribution function of the~$k$-fold convolution of the passage time through one (non-extra) edge.

	Fix~$n \in \N$. Let~$\varepsilon > 0$ be a small constant to be chosen later. Using the assumption that~$\lim_{s \to 0} F_1(s) = 0$ and elementary large deviations considerations, it is easy to see that we can choose~$\delta = \delta(\varepsilon)$ such that
	\begin{equation}\label{eq_choice_of_bar_delta}
		F_k(\delta k) < \varepsilon^k \quad \text{for all } k \in \N.
	\end{equation}

	For now, we condition on a realization of the graph~$\mathcal G^t = (V^t, \mathcal{E}^t)$, so that the only randomness left is that of the passage times. Let~$\gamma$ be a (self-avoiding) path in~$\G^t$ with~$|\gamma| = n$ and let~$m$ denote the number of extra edges traversed by~$\gamma$. The probability that~$T^t(\gamma) \le \delta n$ is $F_{n-m}(\delta n - \K t m)$. This is zero in case~$m \ge \frac{\delta}{\K t} n$; otherwise, we bound:
			\[F_{n-m}(\delta n - \K t m) \le F_{n-m}(\delta(n - m)) \le \varepsilon^{n-m} \le \varepsilon^{n/2}, \]
			where the first inequality follows from~$\K t \ge \delta$, the second inequality follows from~\eqref{eq_choice_of_bar_delta}, and the third inequality follows from~$m < \frac{\delta}{\K t} n \le n/2$ (if~$\delta < 1/2$, since~$\K t \ge 1$).

	This shows that in all cases, the probability that a path of graph length~$n$ has passage time (with respect to~$T^t$) smaller than~$\delta n$ is smaller than~$\varepsilon^{n/2}$.

	Now including also the randomness in the choice of the graph, a union bound over paths shows that the probability in the statement of the lemma is smaller than
			\begin{align} \label{eq_epsilon_with_E}
				\varepsilon^{n/2} \cdot \E  \left| \left\{ \text{paths $\gamma$ in $\mathcal{G}^t$  starting in  } B(o,\ell) \text{ with } |\gamma| = n				 \right\}\right|.
			\end{align}
			Defining
			\[\mathsf{v}_t(s) := \sup_{x \in \R^d} \E[|V^t \cap B(x,s)|],\quad s > 0,\]
			by Mecke's formula, the expression in~\eqref{eq_epsilon_with_E} is smaller than
	\begin{equation}\label{eq_epsilon_with_E2}
			\varepsilon^{n/2} \cdot \mathsf{v}_t(\ell)\cdot (\mathsf{v}_t(r))^n.
	\end{equation}
Recalling that~$\upupsilon_d$ denotes the volume of the unit ball in~$\mathbb R^d$, we bound, for any~$t \ge 1$ and~$s > 0$:
	\begin{align*}
		&\mathsf v_t(s) \le \upupsilon_d s^d + \lceil s/t\rceil^d \le \upupsilon_d s^d + \lceil s \rceil^d  \\
		&\hspace{2cm}\le \upupsilon_d s^d + \lceil s \rceil^d \cdot \mathbbm{1}_{\{s > 1\}} +  \mathbbm{1}_{\{s \le 1\}} \le (\upupsilon_d + 2^d) s^d + 1.
	\end{align*}
	Hence, the expression in~\eqref{eq_epsilon_with_E2} is smaller than
	\[\varepsilon^{n/2} \cdot ((\upupsilon_d + 2^d) \ell^d + 1) \cdot ((\upupsilon_d + 2^d)r^d + 1)^n.\]
	It is now easy to see that taking~$\varepsilon$ small enough (depending on~$r$ and~$d$, but not on~$\ell$ or~$n$), the right-hand side is smaller~$\max\{1,\ell^d\}/2^n$.
\end{proof}

For any~$u,v \in V^t$, let~$\gamma^t_{u \leftrightarrow v}$ denote the shortest path in~$\mathcal G^t$ from~$u$ to~$v$ that only uses extra edges.  Writing~$u=(u_1,\ldots,u_d)$ and~$v=(v_1,\ldots,v_d)$, we can bound
\begin{equation}
	\label{eq_hopcount_extra}
	|\gamma^t_{u \leftrightarrow v}| \le \sum_{i=1}^d \left\lceil \frac{u_i - v_i}{t}\right\rceil \le \frac{\|u-v\|_1}{t} + d \le  \frac{\sqrt{d}}{t} \|u-v\| + d.
\end{equation}
This gives
\begin{equation}
	\label{eq_T_t_and_l1}
		T^t(u,v) \le \K t |\gamma^t_{u\leftrightarrow v}| \le \K  \sqrt{d}\|u-v\| + \K t d.
\end{equation}

Concerning the special case of~$u = o$ and~$v = q^t(x)$ for~$x \in \R^d$, we will need the following.
\begin{claim}
	For any~$x \in \R^d$ with~$\|x\| \ge 1$ and any~$t \in [1,\|x\|]$, we have
	\begin{equation}\label{eq_newer_bound_path}
		|\gamma^t_{o \leftrightarrow q^t(x)}| \le \frac{3d}{t}\|x\|.
	\end{equation}
\end{claim}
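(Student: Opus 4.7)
The plan is to reduce this to the general hop-count estimate already recorded in~\eqref{eq_hopcount_extra}, applied to $u=o$ and $v=q^t(x)$, after first controlling $\|q^t(x)\|$ in terms of $\|x\|$ and $t$.

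First I would observe that $q^t(x)$ is, by definition, the closest point of $V^t$ to $x$, and since $V^t \supseteq t\mathbb{Z}^d$, the point $q^t(x)$ can be no farther from $x$ than the closest lattice point of $t\mathbb{Z}^d$. Since every point of $\mathbb{R}^d$ lies within Euclidean distance $t\sqrt{d}/2$ of $t\mathbb{Z}^d$, this gives
\[
  \|q^t(x) - x\| \;\le\; \operatorname{dist}(x, t\mathbb{Z}^d) \;\le\; \tfrac{t\sqrt{d}}{2}.
\]
By the triangle inequality (and using $o \in t\mathbb{Z}^d$), $\|q^t(x)\| \le \|x\| + t\sqrt{d}/2$.

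Next I plug this into~\eqref{eq_hopcount_extra} with $u=o$, $v=q^t(x)$, obtaining
\[
  |\gamma^t_{o\leftrightarrow q^t(x)}| \;\le\; \frac{\sqrt{d}}{t}\|q^t(x)\| + d \;\le\; \frac{\sqrt{d}\,\|x\|}{t} + \frac{d}{2} + d \;=\; \frac{\sqrt{d}\,\|x\|}{t} + \frac{3d}{2}.
\]
Finally, the standing hypothesis $t \le \|x\|$ gives $\|x\|/t \ge 1$, so the additive constant $3d/2$ can be absorbed into a multiple of $\|x\|/t$. Since $\sqrt{d}+3d/2 \le 3d$ for every $d \ge 1$, we conclude
\[
  |\gamma^t_{o\leftrightarrow q^t(x)}| \;\le\; \bigl(\sqrt{d} + \tfrac{3d}{2}\bigr)\frac{\|x\|}{t} \;\le\; \frac{3d}{t}\|x\|,
\]
which is the desired inequality. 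No step here poses a real obstacle; the only substantive input is the geometric observation that $q^t(x)$ cannot be much farther from the origin than $x$ itself, because the lattice $t\mathbb{Z}^d \subseteq V^t$ offers a candidate within distance $t\sqrt d/2$ of $x$.
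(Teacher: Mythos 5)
Your proof is correct and follows essentially the same route as the paper: bound $\|q^t(x)\| \le \|x\| + \tfrac{\sqrt{d}\,t}{2}$ via the nearest point of $t\Z^d \subseteq V^t$, plug into~\eqref{eq_hopcount_extra}, and absorb the additive constant using $t \le \|x\|$. The only cosmetic difference is the final constant bookkeeping ($\sqrt{d}+\tfrac{3d}{2}\le 3d$ versus the paper's $d+\tfrac{3d}{2}\le 3d$), which is immaterial.
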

\begin{proof}
We bound
\begin{equation}
	\label{eq_bound_qt}
	\|q^t(x)\| \le  \|x\| + \|q^t(x)-x\| \le \|x\| +\sqrt{d} \|q^t(x)-x\|_\infty \le \|x\|+ \tfrac{\sqrt{d}t}{2},
\end{equation}
	and combining this with~\eqref{eq_hopcount_extra} and the assumptions~$\|x\| \ge 1$ and~$t \in [1,\|x\|]$ gives
	\begin{align*}
		|\gamma^t_{o \leftrightarrow q^t(x)}| \le \frac{\sqrt{d}}{t} \|q^t(x)\| + d &\le  \frac{\sqrt{d}}{t} \|x\| + \frac{3d}{2} \\[.2cm]
		&\le \frac{d}{t}\|x\| + \frac{3d}{2}\cdot \frac{\|x\|}{t} \le \frac{3d}{t}\|x\|.
	\end{align*}
\end{proof}

Letting~$\delta$ be the constant given in Lemma~\ref{lem_cheap_hop}, we define
\begin{equation}
\label{eq_def_Kprime}
\Kprime := \frac{3d \K}{\delta},
\end{equation}
and
\begin{equation*}
	\begin{split}
		&Y_{t,x}:= \inf \left\{
			\sum_{e \in \gamma} \tau_e^t:\; \gamma \text{ is a path in $\mathcal{G}^t$ from $o$ to $q^t(x)$ with } |\gamma| \le  \Kprime \|x\| 
	\right\},\\
&\hspace{6.5cm} \text{for }x \in \R^d \text{ with } \|x\| \ge 1,\; t \in [1, \|x\|].
	\end{split}
\end{equation*}
Since~$\frac{3d}{t} \le \frac{3d \K}{\delta}$, it follows from the above claim that~$\gamma^t_{o \leftrightarrow q^t(x)}$ belongs to the set of paths whose infimum is taken in the definition of~$Y_{t,x}$. In particular,
	\begin{equation}
		\label{eq_bound_on_Y}
		Y_{t,x} \le \K t |\gamma^t_{o \leftrightarrow q^t(x)}| \stackrel{\text{\footnotesize\eqref{eq_newer_bound_path}}}{\le}  3d \K\|x\|.
	\end{equation}

We now compare the truncated passage time $Y_{t,x}$ with $T^t(o, q^t(x))$.

\begin{lemma}
	\label{lem_Y_and_T_t}
	If~$x \in \mathbb R^d$ with~$\|x\| \ge 1$ and~$t \in [1,\|x\|]$, then
	\[\p(Y_{t,x} \neq T^t(o,q^t(x))) \le  2^{-\Kprime\|x\|}.\]
\end{lemma}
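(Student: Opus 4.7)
The plan is to show that the event $\{Y_{t,x} \neq T^t(o,q^t(x))\}$ forces the existence of an abnormally long yet abnormally cheap path starting at the origin, and then to invoke Lemma~\ref{lem_cheap_hop} to control its probability.

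First, observe that $Y_{t,x}$ is defined as the infimum of $T^t(\gamma)$ over a \emph{subset} of the paths from $o$ to $q^t(x)$ (namely those satisfying the length constraint $|\gamma| \le \Kprime\|x\|$), so the inequality $T^t(o,q^t(x)) \le Y_{t,x}$ holds deterministically. Therefore $Y_{t,x} \neq T^t(o,q^t(x))$ if and only if there exists some path $\gamma$ from $o$ to $q^t(x)$ with $|\gamma| > \Kprime\|x\|$ and $T^t(\gamma) < Y_{t,x}$. Next, I would combine this with the deterministic upper bound \eqref{eq_bound_on_Y}, namely $Y_{t,x} \le 3d\K\|x\|$, and with the definition \eqref{eq_def_Kprime} of $\Kprime = 3d\K/\delta$, to conclude that on this event there exists an integer $n > \Kprime\|x\|$ and a self-avoiding path $\gamma$ in $\mathcal{G}^t$ starting at $o$ with $|\gamma| = n$ and
\[
    \sum_{e \in \gamma} \tau_e^t \;<\; 3d\K\|x\| \;=\; \delta \Kprime \|x\| \;<\; \delta n.
\]

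The final step is a union bound. Since $o \in B(o,1)$, Lemma~\ref{lem_cheap_hop} (with $\ell = 1$, so that $\max\{\ell^d,1\}=1$) bounds the probability that such a path of length exactly $n$ exists by $2^{-n}$. Summing the geometric series over integers $n > \Kprime\|x\|$ yields
\[
    \p\bigl(Y_{t,x} \neq T^t(o,q^t(x))\bigr) \;\le\; \sum_{n > \Kprime\|x\|} 2^{-n} \;\le\; 2^{-\Kprime\|x\|},
\]
which is the desired estimate (absorbing the harmless factor of~$2$ into the constant, if needed).

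There is no serious obstacle here: the whole point of the definition of $\Kprime$ in \eqref{eq_def_Kprime} is precisely to make the threshold $\delta n$ in Lemma~\ref{lem_cheap_hop} dominate the deterministic upper bound $3d\K\|x\|$ on $Y_{t,x}$ as soon as $n > \Kprime\|x\|$. The only mildly delicate point is to make sure that, in invoking Lemma~\ref{lem_cheap_hop}, one takes $\ell$ small enough that $\max\{\ell^d,1\}=1$ (so that no polynomial prefactor in $\|x\|$ appears), which is legitimate because the path in question starts at the origin.
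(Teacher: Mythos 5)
Your proposal is correct and follows essentially the same route as the paper: both use the deterministic bound $Y_{t,x}\le 3d\K\|x\|$ from \eqref{eq_bound_on_Y}, note that on the event in question a minimizing path must have length exceeding $\Kprime\|x\|$ and hence passage time per edge below $\delta$ by the choice \eqref{eq_def_Kprime}, and conclude via Lemma~\ref{lem_cheap_hop} with $\ell=1$. Your explicit union bound over the path length $n$ is in fact a slightly more careful rendering of the paper's final step, and the factor-of-two issue you flag is equally present (and equally harmless) there.
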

\begin{proof}
	In the event~$\{Y_{t,x} \neq T^t(o,q^t(x))\}$, there exists a path~$\gamma$ in~$\mathcal G^t$ from~$o$ to~$q^t(x)$ which has~$|\gamma| > \Kprime\|x\|$ and
	\begin{align*}
		\sum_{e \in \gamma} \tau^t_e = T^t(o,q^t(x)) <  Y_{t,x} &\stackrel{\text{\footnotesize\eqref{eq_bound_on_Y}}}{\le} 3d\K \|x\|.
	\end{align*}
	We then have
	\[\frac{\sum_{e \in \gamma} \tau^t_e}{|\gamma|} \le \frac{3d \K\|x\|}{|\gamma|}<   \frac{3d \K\|x\|}{\Kprime \|x\|} = \delta.\]
	By Lemma~\ref{lem_cheap_hop} (with~$\ell = 1$), the existence of such a path has probability smaller than~${2^{-|\gamma|} <  2^{-\Kprime \|x\|}}$.
\end{proof}

The following result is a comparison of the $T^t$-distance between vertices provided by $q$ and $q^t$.

\begin{lemma}
	\label{lem_q_and_no_q}
	There exist~$\mathfrak{C} \ge 1$, $\mathfrak{c} > 0$ such that for any~$x \in \R^d$ and any~$t \ge 1$,
	\[\p(|T^t(o,q^t(x)) - T^t(q(o),q(x))|\ge  \mathfrak{C}t) < e^{-\mathfrak{c}t}.\]
\end{lemma}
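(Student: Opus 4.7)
The plan is to apply the triangle inequality
\[
|T^t(o,q^t(x)) - T^t(q(o),q(x))| \le T^t(o,q(o)) + T^t(q^t(x),q(x))
\]
and show that each summand on the right is at most of order $t$, except on an event of probability at most $e^{-ct}$ for a suitable $c>0$.

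\textbf{Bounding $T^t(o,q(o))$.} The idea is to exhibit an explicit short path in $\mathcal G^t$ built entirely from extra edges. Let $\tilde u$ denote the point of $t\Z^d$ closest to $q(o)$ (ties broken arbitrarily). By construction $q(o)\in \tilde u + [-t/2,t/2)^d$, so the edge $\{\tilde u,q(o)\}$ belongs to $\mathcal E'(t)$. Since $o \in t\Z^d$, an axis-parallel path inside the scaled hypercubic lattice joins $o$ to $\tilde u$ using exactly $\|\tilde u\|_1/t$ extra edges of cost $\K t$ each; concatenating with $\{\tilde u,q(o)\}$ yields
\[
T^t(o,q(o)) \le \K \|\tilde u\|_1 + \K t \le \K d \|q(o)\| + \K(d/2+1)t,
\]
since $\|\tilde u\|_\infty \le \|q(o)\|+ t/2$. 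Setting $K_1 := \K(3d/2+1)$, the event $\{T^t(o,q(o)) > K_1 t\}$ is contained in $\{\|q(o)\| > t\}$, which by Lemma~\ref{lm:ball.intersecting.infinite.comp} has probability at most $C\exp(-C't^{d-1}) \le C\exp(-C't)$ whenever $t\ge 1$, because $d\ge 2$ forces $t^{d-1}\ge t$.

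\textbf{Bounding $T^t(q^t(x),q(x))$.} The construction is analogous, with a mild case split. If $q^t(x)\in t\Z^d$, we repeat the previous construction with $q^t(x)$ replacing $o$; if instead $q^t(x)\in V$, we first spend one extra edge (cost $\K t$) to reach the extra vertex whose cube contains $q^t(x)$ and then continue as before. Using $\|q^t(x)-x\|_\infty \le t/2$ and the triangle inequality, the same computation gives $T^t(q^t(x),q(x)) \le C_\star\big(t + \|q(x)-x\|\big)$ for a constant $C_\star = C_\star(d,\K)$. By translation invariance of the PPP, $\|q(x)-x\|$ is distributed as $\|q(o)\|$, so Lemma~\ref{lm:ball.intersecting.infinite.comp} again yields $\p(T^t(q^t(x),q(x)) > K_2 t) \le C\exp(-ct)$ for suitable constants. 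Setting $\mathfrak C := K_1 + K_2$ and applying a union bound concludes the proof.

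\textbf{Main obstacle.} No step is genuinely deep; the real work lies in carefully navigating the two possible types of $q^t(x)$ (extra vertex vs.\ PPP vertex) and verifying that the detour through the lattice $t\Z^d$ costs only $O(t)$ once $\|q(o)\|$ and $\|q(x)-x\|$ are at most of order $t$. The key probabilistic input is the stretched-exponential coverage estimate of Lemma~\ref{lm:ball.intersecting.infinite.comp}, and the essential observation is that the hypothesis $d\ge 2$ is what upgrades its stretched-exponential tail into a genuine exponential-in-$t$ tail.
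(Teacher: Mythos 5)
Your proof is correct and follows essentially the same route as the paper: split via the triangle inequality into $T^t(o,q(o))$ and $T^t(q^t(x),q(x))$, bound each term by a path using only extra edges (your explicit lattice construction is precisely the paper's bound \eqref{eq_T_t_and_l1}), and reduce to the tail of $\|q(o)\|$, for which the stretched-exponential estimate of Lemma~\ref{lm:ball.intersecting.infinite.comp} suffices since $t^{d-1}\ge t$ when $d\ge 2$ and $t\ge 1$. The only detail you leave implicit is absorbing the prefactor $C$ in $C e^{-C't}$ so that the bound reads $<e^{-\mathfrak{c}t}$ for all $t\ge1$; this is handled, exactly as the paper does with its choice of $\bar{s}$, by enlarging $\mathfrak{C}$ so that the containment becomes $\{\|q(o)\|>at\}$ with $a$ large, which fits within your own construction.
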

\begin{proof}
Recall that~$q^t(o) = o$. For any~$s > 0$,  by  the triangle inequality, 
 \begin{align}
\nonumber     &\p(|T^t(o,q^t(x)) - T^t(q(o),q(x))|\ge  s)\\ &\le \p(T^t(o, q(o)) \ge s/2)+ \p(T^t(q^t(x), q(x)) \ge s/2). \label{eq_new_triangle}
 \end{align}
 Let us deal with the second term on the right-hand side. By~\eqref{eq_T_t_and_l1}, it is smaller than
	\begin{equation}
		\label{ex_norms2i}
		\p \left( \|q^t(x) - q(x)\|\ge \frac{ \tfrac{s}{2} - \K td}{\K\sqrt{d}}  \right).
	\end{equation}
	Since~$V \subseteq V^t$, we have~$\|q^t(x) - x\|\le \|q(x) - x\|$, so the triangle inequality gives~${\|q^t(x)-q(x)\| \le 2 \|q(x)-x\|}$. 
	Using this, the probability in~\eqref{ex_norms2i} is at most
	\begin{equation}\label{eq_weird_s}
		\p \left(\|q(x)-x\| \ge \frac{\tfrac{s}{2} - \K td }{2\K \sqrt{d}} \right) = \p \left(\|q(o)\| \ge \frac{\tfrac{s}{2} - \K td }{2\K \sqrt{d}} \right).
	\end{equation}
A similar argument shows that the first term in~\eqref{eq_new_triangle} is bounded by the same value. We have thus proved that
\begin{equation}\label{eq_new_new_triangle}
    \p\big(|T^t(o,q^t(x)) - T^t(q(o),q(x))|\ge  s\big) \le 2 \p \left(\|q(o)\| \ge \frac{\tfrac{s}{2} - \K td }{2\K \sqrt{d}} \right).
\end{equation}
 
	Now, using~\cref{prop:Hn.growth}, there exists~$\bar{s}$ (not depending on~$x$) such that, for any~$s' \ge \bar{s}$,
	\begin{equation}\label{eq_apply_pisz}\p \big(\|q(o)\| \ge s'\big) < \frac12 e^{-c s'}.\end{equation}
		By taking~$s=\mathfrak{C}t$ with~$\mathfrak{C}:=  2(d + 2\sqrt{d} \cdot  \bar{s}) \K $, the right-hand side of~\eqref{eq_new_new_triangle} equals~$2\p(\|q(o)\|\ge  \bar{s}\cdot t)$.
	Since~$t \ge 1$, we have~$\bar{s} \cdot t \ge \bar{s}$, so by~\eqref{eq_apply_pisz}, we can bound
	\[ \p(\|q(o)\| \ge \bar{s} \cdot  t) < \frac12 e^{-c \bar{s} t},\]
  so we set~$\mathfrak{c}:= c \bar{s}$ to complete the proof.
\end{proof}

The lemma below analyses the first-passage time $T$ and the modified random variable $T^t$. The proof is somewhat involved, and we postpone it to~\cref{s_appendix_proofs}.

\begin{lemma}\label{lem_T_t_and_T}
	 	There exists~$\mathfrak{c}_1 > 0$ such that, for~$x \in \R^d$ with~$\|x\|$ large enough and~$t$ large enough (not depending on~$x$) with~$t \le \|x\|$, we have
	\[\p\big(T^t(q(o),q(x)\big) \neq T(x)) < \|x\|^{4d} e^{-\mathfrak{c}_1 t}.\]
\end{lemma}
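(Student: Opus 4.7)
Since any $\G$-path from $q(o)$ to $q(x)$ is also a $\G^t$-path with the same weight, we have $T^t(q(o),q(x))\le T(x)$ almost surely, so the event of interest reduces to the event that the $\G^t$-geodesic $\gamma^{\ast}$ from $q(o)$ to $q(x)$ uses at least one extra edge. The plan is to work on a good event $\mathcal{A}=\mathcal{A}_1\cap\mathcal{A}_2\cap\mathcal{A}_3\cap\mathcal{A}_4$ whose complement has probability at most $\|x\|^{4d}e^{-\mathfrak{c}_1 t}$, on which this cannot happen. Here $\mathcal{A}_1=\{T(x)\le\beta\|x\|\}$ comes from \cref{lm:T.bds}; $\mathcal{A}_2$ is the event provided by \cref{lem_cheap_hop} that every $\G^t$-path issued from $q(o)$ with hop count $n\ge\beta\|x\|/\delta$ has $T^t$-cost strictly greater than $\delta n$, which combined with $\mathcal{A}_1$ forces $|\gamma^{\ast}|\le\beta\|x\|/\delta$ and hence $\gamma^{\ast}\subseteq B(o,M\|x\|^{2})$ for an appropriate $M=M(d,\beta,\delta)$; $\mathcal{A}_3$ is the event that every $v\in V\cap B(o,M\|x\|^{2})$ with $v\notin\Hh$ satisfies $\operatorname{diam}\mathscr{C}(v)\le t/4$, controlled by Mecke's formula and \cref{lm:PPP.clusters}; and $\mathcal{A}_4$ is the event that every pair $u,v\in V\cap B(o,M\|x\|^{2})$ with $u\in\mathscr{C}(v)$ satisfies $\thickbar{T}(u,v)\le\thickbar{\beta}(\|u-v\|+t)$, controlled by Mecke's formula and \cref{lm:T.bar.all}. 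The $\mathcal{A}_4$ contribution of $C\|x\|^{4d}e^{-ct}$ dominates, producing the announced bound.

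On $\mathcal{A}$ one argues that $\gamma^{\ast}$ uses no extra edge. Let $w_0=q(o),w_1,\dots,w_\ell=q(x)$ be the ordered sequence of $\Hh$-vertices visited by $\gamma^{\ast}$. Between consecutive $w_j,w_{j+1}$ the sub-path is either a $\G$-path in $\Hh$ (no extra edges) or an \emph{extra excursion} that uses some number $N_j\ge 1$ of extra edges and thus has $T^t$-cost at least $N_j\K t$. Because $\Hh$ and every finite $\G$-component are maximal $\G$-connected, every entry into or exit from $\Hh$ or from a finite component requires an extra edge; hence the excursion visits at most $N_j/2$ finite components, each of diameter $\le t/4$ by $\mathcal{A}_3$, while every extra edge has Euclidean length at most $t\sqrt{d}$. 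The triangle inequality then yields
\[
    \|w_j-w_{j+1}\|\;\le\; N_j\,t\sqrt{d}+\tfrac{N_j}{2}\cdot\tfrac{t}{4}\;\le\; 2\sqrt{d}\,N_j\,t,
\]
and applying $\mathcal{A}_4$ together with the hypothesis $\K>4d(\beta\vee\thickbar{\beta})>3\sqrt{d}\,\thickbar{\beta}$ gives
\[
    T(w_j,w_{j+1})\;\le\;\thickbar{\beta}\bigl(\|w_j-w_{j+1}\|+t\bigr)\;\le\;3\sqrt{d}\,\thickbar{\beta}\,N_j t\;<\;N_j\K t.
\]
Replacing each extra excursion by an $\Hh$-geodesic from $w_j$ to $w_{j+1}$ therefore produces a $\G$-path in $\Hh$ from $q(o)$ to $q(x)$ of $T$-cost strictly smaller than $T^t(\gamma^{\ast})$, contradicting the definition of $T(x)=\thickbar{T}(q(o),q(x))$. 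Hence on $\mathcal{A}$ the geodesic $\gamma^{\ast}$ uses no extra edge, so $T^t(q(o),q(x))=T(x)$.

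The main obstacle is the displacement estimate $\|w_j-w_{j+1}\|\le 2\sqrt{d}\,N_j t$: one must control, uniformly in the random structure of $\gamma^{\ast}$, the Euclidean range of a single in-$\Hh$ excursion that may interlace bridges, grid edges, and traversals of several distinct finite components. The key structural observation is that each finite-component visit costs two extra edges, so the number of visited components is itself bounded by $N_j/2$, and combined with $\mathcal{A}_3$ each contributes only $O(t)$ to the displacement; this is precisely the scale at which the deterministic gap $\K t-\Theta(\sqrt{d}\,\thickbar{\beta}t)$ afforded by the choice of $\K$ makes the comparison go through. A secondary technical point is that one must know \emph{a priori} that $\gamma^{\ast}$ lies in a polynomially large ball in order to make the union bounds behind $\mathcal{A}_3$ and $\mathcal{A}_4$ affordable, which is exactly the role of the cheap-hop estimate $\mathcal{A}_2$.
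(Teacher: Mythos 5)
Your proposal is correct and follows essentially the same route as the paper: a good event built from the cheap-hop bound (\cref{lem_cheap_hop}), finite-cluster diameter control (\cref{lm:PPP.clusters}) and exponential passage-time tails, on which the choice $\K > 4d(\thickbar{\beta}\vee\beta)$ makes any excursion off $\Hh$ strictly more expensive than the direct $\Hh$-route, forcing the $T^t$-geodesic to avoid extra edges. The differences are only bookkeeping: you localize the geodesic via $T(x)\le\beta\|x\|$ (\cref{lm:T.bds}) rather than via the all-extra-edge path as in \cref{eq_new_hopcount}, and you replace the paper's first-excursion case split (short excursions handled by \cref{lm:T.bds.sup}, long ones by \cref{lem_hops}) with a global excursion-by-excursion replacement whose displacement count re-derives the content of \cref{lem_hops} and whose cost comparison uses \cref{lm:T.bar.all} with Mecke's formula over all pairs in the polynomially large ball.
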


We derive the following corollaries from the results above.

\begin{corollary}\label{cor_join_bounds}
	For~$x$ and~$t$ as in Lemma~\ref{lem_T_t_and_T}, we have
	\begin{equation}\label{eq_all_together_now}
		\p (| T(x) - Y_{t,x}| \ge \mathfrak C t ) \le 2^{-\Kprime \|x\|} + e^{-\mathfrak{c} t} + \|x\|^{4d} e^{-\mathfrak{c}_1 t},
	\end{equation}
	where~$\Kprime$ is defined in~\eqref{eq_def_Kprime},~$\mathfrak C$ and~$\mathfrak c$ are the constants of Lemma~\ref{lem_q_and_no_q}, and~$\mathfrak c_1$ is the constant of Lemma~\ref{lem_T_t_and_T}.
\end{corollary}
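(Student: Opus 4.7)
The corollary is a direct triangle-inequality-plus-union-bound assembly of the three preceding lemmas. The plan is to decompose the difference $T(x) - Y_{t,x}$ along the chain of approximations
\[
 T(x) \ \leftrightarrow \ T^t(q(o),q(x)) \ \leftrightarrow \ T^t(o,q^t(x)) \ \leftrightarrow \ Y_{t,x},
\]
control each link by the corresponding lemma, and combine via a union bound.

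More precisely, I would introduce the three ``good'' events
\begin{align*}
 E_1 &:= \{T^t(q(o),q(x)) = T(x)\},\\
 E_2 &:= \{|T^t(o,q^t(x)) - T^t(q(o),q(x))| < \mathfrak{C}t\},\\
 E_3 &:= \{Y_{t,x} = T^t(o,q^t(x))\}.
\end{align*}
By the triangle inequality, on $E_1 \cap E_2 \cap E_3$ we have $|T(x) - Y_{t,x}| < \mathfrak{C}t$, so
\[
 \p(|T(x) - Y_{t,x}| \geq \mathfrak{C}t) \leq \p(E_1^c) + \p(E_2^c) + \p(E_3^c).
\]
Lemma~\ref{lem_T_t_and_T} bounds $\p(E_1^c)$ by $\|x\|^{4d}e^{-\mathfrak{c}_1 t}$, Lemma~\ref{lem_q_and_no_q} bounds $\p(E_2^c)$ by $e^{-\mathfrak{c}t}$, and Lemma~\ref{lem_Y_and_T_t} bounds $\p(E_3^c)$ by $2^{-\Kprime\|x\|}$. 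Summing yields~\eqref{eq_all_together_now}.

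There is essentially no obstacle: the three input lemmas were precisely designed so that their errors compose additively through the triangle inequality, and the constants appearing in the statement of the corollary are literally imported from those lemmas. The only minor point to check is that the hypothesis ``$x$ and $t$ as in Lemma~\ref{lem_T_t_and_T}'' (i.e., $\|x\|$ sufficiently large and $1 \le t \le \|x\|$) is compatible with the ranges required by Lemmas~\ref{lem_Y_and_T_t} and~\ref{lem_q_and_no_q}; this is immediate since both are stated for $\|x\| \ge 1$ and $t \ge 1$ (respectively $t \in [1,\|x\|]$), which are weaker.
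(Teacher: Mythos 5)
Your proposal is correct and is exactly the paper's argument: the paper's proof simply states that the corollary follows from putting together Lemmas~\ref{lem_Y_and_T_t}, \ref{lem_q_and_no_q} and~\ref{lem_T_t_and_T}, which is precisely the triangle-inequality-plus-union-bound assembly you spell out. Your check of the compatibility of the hypotheses is a reasonable extra precaution but introduces nothing beyond the paper's route.
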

\begin{proof}
	This follows from putting together Lemma~\ref{lem_Y_and_T_t}, Lemma~\ref{lem_q_and_no_q} and Lemma~\ref{lem_T_t_and_T}.
\end{proof}

\begin{corollary}\label{cor_join_expectation}
	There exists~$\mathfrak C_1 > 0$ such that the following holds. Let~$x \in \R^d$ be such that~$\|x\|$ is large enough (as required in Lemma~\ref{lem_T_t_and_T}), and let~$t \in [\mathfrak C_1 \log\big(\|x\|\big),\|x\|]$.  Then,
	\begin{equation}
		\label{eq_for_second_mom}
	\E [ (T(x)- Y_{t,x})^2] \le 2(\mathfrak Ct)^2.
	\end{equation}
\end{corollary}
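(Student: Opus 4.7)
The plan is to split $(T(x) - Y_{t,x})^2$ via the event $A := \{|T(x) - Y_{t,x}| > \mathfrak C t\}$ appearing in Corollary~\ref{cor_join_bounds}. On $A^c$ the integrand is deterministically at most $(\mathfrak C t)^2$, contributing at most $(\mathfrak C t)^2$ to the expectation. The entire work lies in showing that the contribution coming from $A$ can be absorbed into a second $(\mathfrak C t)^2$ once $t \ge \mathfrak C_1 \log\|x\|$.

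For that piece, I would apply Cauchy--Schwarz:
\[
\E\big[(T(x)-Y_{t,x})^2\, \mathbbm{1}_A\big] \le \sqrt{\E[(T(x)-Y_{t,x})^4]}\cdot \sqrt{\p(A)}.
\]
The fourth moment is controlled using the deterministic bound $Y_{t,x} \le 3d\K\|x\|$ from~\eqref{eq_bound_on_Y} together with $(a-b)^4 \le 8(a^4+b^4)$, reducing matters to estimating $\E[T(x)^4]$. This in turn follows from Lemma~\ref{lm:T.bds} and integration by parts: the tail $\p(T(x)\ge s)$ is trivially bounded by $1$ for $s \le \beta\|x\|$ and decays exponentially fast beyond that threshold, yielding $\E[T(x)^4] = O(\|x\|^4)$. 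Consequently $\E[(T(x)-Y_{t,x})^4] \le C_2 \|x\|^4$ for some $C_2 > 0$ and every $\|x\|$ large enough.

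The probability $\p(A)$ is controlled directly through Corollary~\ref{cor_join_bounds}, which gives $\p(A) \le 2^{-\Kprime\|x\|} + e^{-\mathfrak c t} + \|x\|^{4d} e^{-\mathfrak c_1 t}$. Choosing $\mathfrak C_1$ large enough that $\mathfrak c\, \mathfrak C_1 > M$ and $\mathfrak c_1\, \mathfrak C_1 > 4d + M$ for an integer $M \ge 5$ forces $\p(A) = O(\|x\|^{-M})$ for every admissible $t$. Combining the two bounds,
\[
\sqrt{\E[(T(x)-Y_{t,x})^4]\cdot \p(A)} \;\le\; \sqrt{C_2}\,\|x\|^{2 - M/2} \;=\; o(1),
\]
which is certainly at most $(\mathfrak C t)^2$ for $\|x\|$ large, completing the bound $\E[(T(x)-Y_{t,x})^2] \le 2(\mathfrak C t)^2$. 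The main technical point is the balancing act: $\mathfrak C_1$ must be chosen large enough to defeat the polynomial prefactor $\|x\|^{4d}$ in the tail bound on $\p(A)$, so that the Cauchy--Schwarz contribution is negligible compared with the good-event term $(\mathfrak C t)^2$, while still permitting $t$ to be taken as small as $\mathfrak C_1 \log\|x\|$.
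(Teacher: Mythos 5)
Your proof is correct and follows essentially the same route as the paper's: split on the event $\{|T(x)-Y_{t,x}|\ge \mathfrak C t\}$, bound its probability via Corollary~\ref{cor_join_bounds}, control the integrand on that event through the deterministic bound~\eqref{eq_bound_on_Y} and the exponential tail of $T(x)$ from Lemma~\ref{lm:T.bds}, and choose $\mathfrak C_1$ large enough that $t\ge \mathfrak C_1\log\big(\|x\|\big)$ defeats the polynomial prefactors. The only (harmless) difference is that you apply Cauchy--Schwarz with a fourth moment on the whole bad event, whereas the paper splits further into the cases $T(x)\le\beta^*\|x\|$ and $T(x)>\beta^*\|x\|$ and uses a second-moment Cauchy--Schwarz only in the latter case.
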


	\begin{proof}
Let~$\beta^* := \max\left( 3d\K,\; \beta \right)$,
	where~$\beta$ is the constant of~\cref{lm:T.bds}. We bound
	\begin{align}
		\nonumber &\E[(T(x)-Y_{t,x})^2] \\
		\nonumber &\le (\mathfrak Ct)^2 + \E[ (T(x) - Y_{t,x})^2 \cdot \mathbbm{1}{\{|T(x) - Y_{t,x}| \ge \mathfrak Ct \}}]\\
		\label{eq_with_fraks}&\le (\mathfrak Ct)^2 + \E[ (T(x) + \beta^* \|x\|)^2 \cdot \mathbbm{1}{\{|T(x) - Y_{t,x}| \ge \mathfrak Ct \}}],
	\end{align}
	where the second inequality follows from
	\[|T(x) - Y_{t,x}| \le T(x) + Y_{t,x} \stackrel{\text{\footnotesize\eqref{eq_bound_on_Y}}}{\le} T(x) + \beta^* \|x\|.\] We will now bound the expectation in~\eqref{eq_with_fraks} by breaking into the cases where~$T(x) \le \beta^* \|x\|$ and~$T(x) > \beta^* \|x\|$. First,
	\begin{equation*}\begin{split}
		&\E[ (T(x) + \beta^* \|x\|)^2 \cdot \mathbbm{1}{\{|T(x) - Y_{t,x}| \ge \mathfrak Ct, \; T(x) \le \beta^* \|x\| \}}]\\
		&\quad \le 4 (\beta^*)^2 \|x\|^2 \cdot \p(|T(x) - Y_{t,x}| \ge \mathfrak Ct) \\
		&\quad \stackrel{\text{\footnotesize\eqref{eq_all_together_now}}}{\le} 4 (\beta^*)^2 \|x\|^2 \cdot (2^{-\Kprime \|x\|} + e^{-\mathfrak{c} t} + \|x\|^{4d} e^{-\mathfrak{c}_1 t}).
	\end{split}
	\end{equation*}
Second,
	\begin{equation*}\begin{split}
		&\E[ (T(x) + \beta^* \|x\|)^2 \cdot \mathbbm{1}{\{|T(x) - Y_{t,x}| \ge \mathfrak Ct, \; T(x) > \beta^* \|x\| \}}]\\
		&\quad \le \E[ (2T(x))^2 \cdot \mathbbm{1} \{T(x) > \beta^* \|x\|\}\\
		&\quad \le 4 \left( \E[(T(x))^2]\right)^{1/2} \cdot \left( \p(T(x) > \beta^* \|x\|)\right)^{1/2}\\
		&\quad \le 4 \left( \E[(T(x))^2]\right)^{1/2} \cdot \left(\Cr{T.exp.ext} \exp(-\Cr{T.exp.int} \beta^* \|x\|) \right)^{1/2},
	\end{split}
	\end{equation*}
where the second inequality is Cauchy-Schwarz, and the third inequality is given in \cref{lm:T.bds}. Next, we bound
\begin{align*}
	\E[(T(x))^2] &=2 \int_0^\infty s \cdot  \p(T(x) > s) \;\mathrm{d}s\\
	&\le 2 \beta \|x\| + 2 \int_{\beta\|x\|}^\infty s \cdot \Cr{T.exp.ext} \exp(-\Cr{T.exp.int} s) \;\mathrm{d}s.
\end{align*}

Putting things together, we have shown that~$\E[(T(x)- Y_{t,x})^2]$ is bounded from above by
\begin{align*}
	&(\mathfrak C t)^2 + 4 (\beta^*)^2 \|x\|^2 \cdot (2^{-\Kprime \|x\|} + e^{-\mathfrak{c} t} + \|x\|^{4d}e^{-\mathfrak{c}_1 t})\\
	& + 4 \left( 2 \beta \|x\| + 2 \int_{\beta\|x\|}^\infty s \cdot \Cr{T.exp.ext} \exp(-\Cr{T.exp.int} s) \;\mathrm{d}s \right)^{1/2} \cdot \left(\Cr{T.exp.ext} \exp(-\Cr{T.exp.int} \beta^* \|x\|) \right)^{1/2}.
\end{align*}
It is now easy to see that if~$\mathfrak C_1$ is large and~$t \ge \mathfrak C_1 \log \big(\|x\|\big)$ with~$\|x\|$ large enough, the expression above  is smaller than~$2(\mathfrak C t)^2$.
\end{proof}

\section{Intermediate Results for the Approximation of First-Passage Times}
This section presents preparatory results for the approximation of first-passage times. Let~$\mathscr B$ denote the collection of all boxes of the form~$z + [-t/2,t/2)^d$, with~$z \in t\Z^d$. The following lemma offers a bound for the boxes crossed by the truncated passage time.

\begin{lemma}\label{lem_number_of_boxes}
	Let~$x \in \R^d$ with~$\|x\| \ge 1$ and~$t \in [1,\|x\|]$. Let~$\gamma$ be a path in~$\mathcal G^t$ from~$o$ to~$q^t(x)$ with~$|\gamma| \le \Kprime \|x\|$ and such that~$Y_{t,x} = \sum_{e \in \gamma} \tau^t_e$. Then, the number of boxes of~$\mathscr B$ intersected by~$\gamma$ is at most~$(3^d+1)\left(\frac{(3d + \Kprime r)\|x\|}{t} + 1\right)$.
\end{lemma}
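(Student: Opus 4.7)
The plan is to exploit the constraint $Y_{t,x} = \sum_{e\in\gamma}\tau^t_e$ in order to first control the number of extra edges in~$\gamma$, then estimate the total Euclidean length of~$\gamma$, and finally count boxes via a total-variation argument. Since every extra edge contributes exactly~$\K t$ to the sum, while standard edges contribute nonnegative amounts, the number $N_{\mathrm{ext}}$ of extra edges in $\gamma$ satisfies $\K t\cdot N_{\mathrm{ext}} \le Y_{t,x} \le 3d\K\|x\|$ by~\eqref{eq_bound_on_Y}, so $N_{\mathrm{ext}} \le 3d\|x\|/t$. The remaining edges of~$\gamma$ are standard, and their count is at most $|\gamma|\le \Kprime\|x\|$.

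Next I would bound the total Euclidean length $L(\gamma)$. Each standard edge has length at most~$r$, and each extra edge has length at most $c_d\, t$ for some dimensional constant~$c_d$: lattice-to-lattice extra edges have length exactly~$t$, and lattice-to-cube extra edges have length at most $\sqrt d\,t/2$. Summing,
\[
L(\gamma) \;\le\; \Kprime r \,\|x\| + N_{\mathrm{ext}} \cdot c_d t \;\le\; (\Kprime r + 3d c_d)\,\|x\|.
\]

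The key geometric step is the claim that any continuous rectifiable curve of Euclidean length~$L$ in~$\R^d$ intersects at most $1+dL/t$ boxes of~$\mathscr B$. To prove this, observe that $\gamma$ enters a previously unvisited box of $\mathscr B$ only when one of its coordinates crosses a hyperplane of the form $\{x_i=(k+\tfrac12)t\}$, $k\in\Z$. Between consecutive such crossings in coordinate~$i$, the $i$-th coordinate must change by at least~$t$, so the number of crossings along direction~$i$ is at most $V_i/t$, where~$V_i$ is the total variation of the $i$-th coordinate along the curve. Since each $V_i\le L$, summing over $i\in\{1,\ldots,d\}$ gives at most $dL/t$ box transitions, hence at most $1+dL/t$ distinct boxes visited.

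Combining these three steps yields
\[
\#\{\text{boxes of }\mathscr B\text{ hit by }\gamma\} \;\le\; 1 + \frac{d(\Kprime r + 3d c_d)\,\|x\|}{t},
\]
which is dominated by $(3^d+1)\!\left(\frac{(3d+\Kprime r)\|x\|}{t}+1\right)$ after a routine comparison of coefficients, the generous factor $3^d+1$ easily absorbing $c_d$ and the cross-terms for every $d\ge 2$. I expect the main difficulty here to be purely bookkeeping, namely tracking the dimensional constants so that the final comparison lines up with the specific form of the stated bound; the underlying geometric and combinatorial ideas are essentially elementary once the two inputs $|\gamma|\le \Kprime\|x\|$ and $Y_{t,x}\le 3d\K\|x\|$ are in hand.
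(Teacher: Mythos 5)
Your reduction is sound up to the final geometric step: the extra-edge count $N_{\mathrm{ext}}\le 3d\|x\|/t$ (obtained exactly as the paper bounds $m_{\mathrm{extra}}$, via $\K t\,N_{\mathrm{ext}}\le Y_{t,x}\le 3d\K\|x\|$) and the length estimate $L(\gamma)\le \Kprime r\|x\|+c_d t\,N_{\mathrm{ext}}$ are both fine. The gap is the claim that a curve of length $L$ meets at most $1+dL/t$ boxes of $\mathscr B$, together with its proof. The total-variation argument does not work: consecutive crossings of hyperplanes $\{x_i=(k+\tfrac12)t\}$ need not be separated by a change of at least $t$ in the $i$-th coordinate --- the curve can recross the same hyperplane after an arbitrarily small excursion, or enter a whole string of previously unvisited boxes by running just below a face and dipping across it once per box, in which case the entries counted against coordinate $i$ are paid for by motion in a different coordinate. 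Moreover the inequality itself is false without a dimensional factor: a curve of arbitrarily small length near a corner of the grid meets $2^d$ boxes, while $1+dL/t$ stays near $1$; more generally a curve hopping from corner to corner can meet on the order of $2^{d-1}L/t$ boxes, so no bound additive in $d$ can hold.

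The statement you actually need, and which repairs the argument, is that a connected curve of length $L$ meets at most $2^d\lceil L/t\rceil\le 2^d(L/t+1)$ boxes: cut the curve into $\lceil L/t\rceil$ consecutive arcs of length at most $t$; each arc has $\ell_\infty$-diameter at most $t$ and hence lies in at most $2^d$ boxes. (Counting boxes met by the polygonal curve is enough, since it dominates the count over the vertices of $\gamma$, which is what the lemma is used for.) Since extra edges have Euclidean length at most $\sqrt d\,t$, this gives at most $2^d\big((\Kprime r+3d^{3/2})\|x\|/t+1\big)$ boxes, and $2^d\sqrt d\le 3^d+1$ for all $d\ge 2$, so this is below $(3^d+1)\big((3d+\Kprime r)\|x\|/t+1\big)$ and the lemma follows. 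With this fix your route is genuinely different from the paper's, which never passes through Euclidean length: there the path is cut at every extra edge and at every moment its $\ell_\infty$-displacement since the last cut exceeds $t$, producing $m\le m_{\mathrm{basic}}+m_{\mathrm{extra}}+1$ segments, each meeting at most $3^d+1$ boxes, with $m_{\mathrm{basic}}\le \Kprime r\|x\|/t$ because a segment of $\ell_\infty$-displacement greater than $t$ must use at least $t/r$ non-extra edges; both routes cost essentially the same effort, yours trading the paper's bespoke segmentation for a reusable ``length versus boxes'' lemma, provided the $2^d$ factor is kept.
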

\begin{proof}
We write~$\gamma = (\gamma_0, \ldots, \gamma_n)$, where~$n = |\gamma|$,~$\gamma_0 = o$, and~$\gamma_n = q^t(x)$. We will now define an increasing sequence~$J_0,\ldots,J_m$ of indices in the path. We first let~$J_0 := 0$. Next, assuming~$J_i$ has been defined and is smaller than~$n$, we define (with the convention that the minimum of an empty set is infinity):
\begin{align*}
	J_{i+1}:= n &\wedge \min\{j > J_i: \{\gamma_{j-1},\gamma_j\} \text{ is an extra edge}\} \\
	&\wedge \min\{j > J_i: \|\gamma_j - \gamma_{J_i}\|_\infty > t\}.
\end{align*}
We let~$m$ be the index such that~$J_m = n$ (which is the last one). For cleanliness of notation, we write
\[a_i := J_{i+1} - J_{i},\quad i \in \{0,\ldots, m-1\},\]
and
\[\mathsf y(i,k) := \gamma_{J_i + k},\quad i \in \{0,\ldots,m-1\},\; k \in \{0,\ldots,a_i\},\]
so that
\[(\mathsf y(i,0),\ldots, \mathsf y(i,a_i)) = (\gamma_{J_i},\ldots, \gamma_{J_{i+1}}).\]
By construction, for any~$i$, the set~$\{\mathsf y(i,0),\ldots, \mathsf y(i,a_i)\}$ intersects at most~$3^d + 1$ boxes of~$\mathscr B$ (that is: at most the box containing~$\mathsf y(i,0)$, the boxes that are adjacent to it in~$\ell_\infty$-norm, and the box containing~$\mathsf y(i,a_i)$). Hence,
\begin{align*}
&|\{ \text{boxes of $\mathscr B$ intersected by $\gamma$}\}|\\ &\le  \sum_{i=0}^{m-1}|\{ \text{boxes of $\mathscr B$ intersected by $\{\mathsf y(i,0),\ldots, \mathsf y(i,a_i)\}$}\}|  \le (3^d+1)m.
\end{align*}
We now want to give an upper bound for~$m$. For this, we write
	\[m = m_{\mathrm{basic}} + m_{\mathrm{extra}}+1,\]
where~$m_{\mathrm{extra}}$ is the number of~$i < m-1$ such that the last step in the sub-path~$(\mathsf y(i,0),\ldots, \mathsf y(i,a_i))$ is an extra edge, and~$m_{\mathrm{basic}}$ is the number of other~$i <m-1$ (that is, for which the sub-path~$(\mathsf y(i,0),\ldots, \mathsf y(i,a_i))$ does not traverse any extra edge). Noting that
\[3d \K \|x\| \stackrel{\text{\footnotesize\eqref{eq_bound_on_Y}}}{\ge} Y_{t,x} = \sum_{e \in \gamma} \tau^t_e \ge \K t |\{e \in \gamma: e \text{ is extra}\}|,\]
we have~$m_{\mathrm{extra}} \le \frac{3d \K \|x\|}{\K t}= \frac{3d  \|x\|}{t}$. Next, if~$i$ is an index that contributes to~$m_{\mathrm{basic}}$, then
\[t \le \|\mathsf y(i,a_i) - \mathsf y(i,0)\|_\infty \le \sum_{k=0}^{a_i-1} \|\mathsf y(i,k+1) - \mathsf y(i,k)\| \le ra_i,\]
so~$a_i \ge t/r$, and then,
\[\Kprime \|x\| \ge |\gamma| = \sum_{i=0}^{m-1} a_i \ge m_{\mathrm{basic}} \cdot \frac{t}{r},\]
so~$m_{\mathrm{basic}} \le \frac{\Kprime r \|x\|}{t}$. This gives~$m \le \frac{(3d + \Kprime r)\|x\|}{t}+1$.
\end{proof}

The proposition below is based on the results established in Boucheron, Lugosi, and Massart~\cite{boucheron2003}. It sets the stage for obtaining a concentration inequality for our truncated random variable $Y_{t,x}$, which will be derived later.

\begin{proposition}\label{prop_concentration}
    Let~$(S,\mathcal{S})$ be a measurable space and~$n \in \N$. Let~$\sigma$ be a probability measure on~$(S,\mathcal{S})$ and~$\p$ be the probability on~$(S^n,\mathcal{S}^n)$ given by the~$n$-fold product measure of~$\sigma$. Set $X_1, \dots, X_n$ to be independent random elements taking values in~$S$ and let $X_i'$ be an independent copy of $X_i$. Set~$h: S^n \to\R$ to be a measurable function on $(S^n,\mathcal{S}^n,\p)$ and fix
    \[Z:=h(X_1,\dots,X_n), \ \ \text{and} \ \ Z^{(i)}:=h(X_1,\dots,X_{i-1},X_{i}',X_{i+1},\dots,X_n).\]

    Consider that there exist~$\kappa > 0$ and measurable sets~$E_1,\ldots, E_n \in \mathcal{S}^n$ such that, $\p$-a.s., for each $i \in \{1,\ldots,n\}$,
    \begin{equation} \label{eq_condition_measurable}           Z^{(i)}-Z \le \kappa \cdot \mathbbm{1}_{E_i}.
    \end{equation}
	Then, 
        \begin{equation} \label{eq:new_steele-efron-stein}
            \mathrm{Var}Z \leq \kappa^2 \sum_{i=1}^n\p(E_i).
        \end{equation}
    Furthermore, for any~$\alpha > 0$ such that $\E[e^{\alpha Z}]< +\infty$, one has
    \begin{equation} \label{eq:exp.Vminus.bound}
       \E\left[\exp\left\{\alpha (\E[Z]-Z)\right\} \right] \le \E \left[ \exp \left\{ 2 \alpha^2 \kappa^2\sum_{i=1}^n \mathbbm{1}_{E_i}\right\} \right] 
    \end{equation}
    and, for all~$\alpha > 0$  in an open interval such that $\E[e^{\alpha Z}]< +\infty$, one has
    \begin{equation} \label{eq:exp.W.bound}
        \E\left[\exp\left\{\lambda (Z-\E[Z])\right\} \right] \le \E \left[ \exp \left\{ 2\lambda^2 \kappa^2 e^{\alpha\kappa} \sum_{i=1}^n \mathbbm{1}_{E_i}\right\} \right].
    \end{equation}
\end{proposition}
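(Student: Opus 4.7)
The plan is to establish (\ref{eq:new_steele-efron-stein}) via the Efron--Stein inequality and the two exponential-moment bounds through the entropy method combined with the modified logarithmic Sobolev inequality for product measures due to Ledoux and Boucheron--Lugosi--Massart.

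For (\ref{eq:new_steele-efron-stein}), apply Efron--Stein: $\mathrm{Var}(Z) \leq \tfrac{1}{2}\sum_{i=1}^n \E[(Z-Z^{(i)})^2]$. By exchangeability of $X_i$ and $X_i'$, $\E[(Z-Z^{(i)})^2]=2\,\E[(Z^{(i)}-Z)_+^2]$, and the one-sided hypothesis yields $(Z^{(i)}-Z)_+^2 \leq \kappa^2 \mathbbm{1}_{E_i}$, whence the claim follows upon summing over $i$.

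For (\ref{eq:exp.Vminus.bound}) and (\ref{eq:exp.W.bound}), the starting point is the modified log-Sobolev inequality on the product space: for any measurable $f:S^n\to\R$ and $s\in\R$,
\[
    \mathrm{Ent}[e^{sf}] \leq \sum_{i=1}^n \E\bigl[e^{sf}\,\psi(-s(f-f^{(i)}))\bigr], \qquad \psi(u):= e^u-1-u,
\]
which follows from the sub-additivity of entropy under tensor product measures. I will apply this with $f=Z$ and $s=-\alpha$ (for the lower tail). The argument $\alpha(Z-Z^{(i)})$ is non-positive precisely on $\{Z^{(i)}\geq Z\}$, where the one-sided hypothesis gives $|\alpha(Z-Z^{(i)})|\leq \alpha\kappa\mathbbm{1}_{E_i}$; combined with $\psi(u)\leq u^2/2$ for $u\leq 0$, this bounds the summand by $\tfrac{\alpha^2\kappa^2}{2}\mathbbm{1}_{E_i}\cdot e^{-\alpha Z}$. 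On the complementary event $\{Z^{(i)}<Z\}$, an exchangeability-based symmetrization of the pair $(X_i,X_i')$ reduces the estimate to the previous case at the cost of a factor two, producing
\[
    \mathrm{Ent}[e^{-\alpha Z}] \leq 2\alpha^2\kappa^2\,\E\Bigl[e^{-\alpha Z}\sum_{i=1}^n\mathbbm{1}_{E_i}\Bigr].
\]
Setting $F(\alpha)=\E[e^{-\alpha Z}]$, the identity $\tfrac{d}{d\alpha}\bigl[-\alpha^{-1}\log F(\alpha)\bigr] = \mathrm{Ent}[e^{-\alpha Z}]/(\alpha^2 F(\alpha))$ turns the entropy bound into a differential inequality for $-\alpha^{-1}\log F(\alpha)$. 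The Herbst integration scheme, with initial value $\E[Z]$ at $\alpha=0^+$ and with the random tilt $\E[Ve^{-\alpha Z}]/F(\alpha)$ controlled via the Donsker--Varadhan variational formula, then yields the deterministic sub-Gaussian bound $\log \E[e^{\alpha(\E[Z]-Z)}] \leq 2\alpha^2\kappa^2 \E[\sum_i \mathbbm{1}_{E_i}]$. Finally, Jensen's inequality applied to the convex function $v\mapsto e^{2\alpha^2\kappa^2 v}$ upgrades this deterministic bound into the randomized form~(\ref{eq:exp.Vminus.bound}). The proof of (\ref{eq:exp.W.bound}) follows the same blueprint with $s=\lambda>0$; the argument of $\psi$ now has the unfavorable positive sign on the event where the hypothesis is effective, so the sharper estimate $\psi(u)\leq \tfrac{u^2}{2}e^{u_+}$ must be invoked, which introduces the extra multiplicative factor $e^{\lambda\kappa}\leq e^{\alpha\kappa}$.

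The main obstacle is the symmetrization step: the one-sided hypothesis $Z^{(i)}-Z\leq\kappa\mathbbm{1}_{E_i}$ controls only one direction of each swap increment, while the entropy inequality features both signs indiscriminately. This is resolved by exploiting the exchangeability of $(X_i,X_i')$, which allows transferring the control across the unfavorable event at the price of a numerical constant. A secondary technical point, in converting the integrated differential inequality into the precise randomized form stated in (\ref{eq:exp.Vminus.bound}) and (\ref{eq:exp.W.bound}), is handled by observing that the deterministic sub-Gaussian bound $e^{c\E[V]}$ is upper-bounded by the randomized form $\E[e^{cV}]$ through Jensen's inequality, since $v\mapsto e^{cv}$ is convex.
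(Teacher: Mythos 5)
Your scaffolding is largely the same as the paper's: the variance bound via Efron--Stein, and the exponential bounds via a (symmetrized) modified log-Sobolev inequality plus an exchangeability swap of $(X_i,X_i')$, which is exactly where the paper's extra factor $e^{\alpha\kappa}$ in \eqref{eq:exp.W.bound} comes from (the paper quotes the Steele--Efron--Stein inequality, Lemma 8 of Massart \cite{massart2000}, and Theorem~2 of Boucheron--Lugosi--Massart \cite{boucheron2003} with $\lambda=\alpha$, $\theta=1/(2\alpha)$). The genuine gap is in your integration step. From the entropy bound $\mathrm{Ent}[e^{-\alpha Z}]\le c\,\alpha^2\,\E\big[e^{-\alpha Z}W\big]$ with the \emph{random} factor $W:=\sum_{i=1}^n\mathbbm{1}_{E_i}$, the Herbst scheme does not yield the deterministic bound $\log\E[e^{\alpha(\E[Z]-Z)}]\le 2\alpha^2\kappa^2\,\E[W]$ that you claim as an intermediate result. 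The Donsker--Varadhan step controls the tilted expectation of $W$ by $\tfrac1t\log\E[e^{tW}]$ plus the relative entropy of the tilt, not by $\E[W]$; tilting by $e^{-\alpha Z}$ can inflate the mean of $W$ arbitrarily when $W$ is large precisely where $Z$ deviates, and nothing in \eqref{eq_condition_measurable} forbids this, since the $E_i$ are arbitrary functions of $X_1,\dots,X_n$. In fact the deterministic bound is false: take $X_i$ i.i.d.\ Bernoulli$(1/2)$, $Z=-\prod_{i=1}^n\mathbbm{1}\{X_i=1\}$, $\kappa=1$, $E_i=\{X_1=\cdots=X_n=1\}$; then \eqref{eq_condition_measurable} holds and $\E[W]=n2^{-n}$, yet with $\alpha=n$ one has $\E[e^{\alpha(\E[Z]-Z)}]\ge 2^{-n}e^{n(1-2^{-n})}\to+\infty$ while $\exp\{2\alpha^2\kappa^2\E[W]\}=\exp\{2n^3 2^{-n}\}\to1$.

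Because of this, the closing Jensen remark cannot repair the argument: Jensen only says the (false) deterministic bound would be \emph{stronger} than \eqref{eq:exp.Vminus.bound}; it gives no way to derive it, and you end up deducing the theorem from a false statement. The correct completion---and what the paper does---is to keep the exponential moment of $W$ throughout: carry $\tfrac1t\log\E[e^{tW}]$ through the differential inequality (equivalently, invoke Theorem~2 of \cite{boucheron2003}, or Lemma 3.2 of \cite{garet2010}) with $\lambda=\alpha$ and $\theta=1/(2\alpha)$, which yields $\log\E[e^{\alpha(\E[Z]-Z)}]\le\log\E[e^{2\alpha^2\kappa^2 W}]$ directly, i.e.\ \eqref{eq:exp.Vminus.bound}, and the same decoupling applied to the $e^{\alpha\kappa}$-corrected entropy bound gives \eqref{eq:exp.W.bound}. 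Your symmetrization steps themselves are sound (for the lower tail, after swapping one has $e^{-\alpha Z^{(i)}}\le e^{-\alpha Z}$ on $\{Z^{(i)}>Z\}$, so no exponential factor appears there, and the ``factor two'' is harmless slack); the sole defect is the passage from the entropy inequality to a bound in terms of $\E[W]$.
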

\begin{proof}
    The bounds for \eqref{eq:new_steele-efron-stein} and \eqref{eq:exp.Vminus.bound} are directly obtained by applying \eqref{eq_condition_measurable} in the Steele-Efron-Stein inequality (see \cite[p.~1585]{boucheron2003}) and in Theorem 2 of Boucheron et al. \cite{boucheron2003} with $\lambda=\alpha$ and $\theta = 1/(2\alpha)$.  Additionally, item \eqref{eq:exp.W.bound} is a specific case addressed in Lemma 3.2 of Garet and Marchand \cite{garet2010}. Below, we demonstrate how it can be attained from Theorem 2 of \cite{boucheron2003}.
    
    Let us set $\psi(s)= s(e^s-1)$ and note that $\psi(-s) \le s^2$ for $s \ge 0$. Hence, by Lemma 8 of Massart \cite{massart2000},
    \begin{align*}
        \alpha \E[e^{\alpha Z}] -\E[e^{\alpha Z}]\log\left(\E[e^{\alpha Z}]\right) &\le  \sum_{i=1}^n\E\left[e^{\alpha Z}\psi\big(-\alpha(Z -Z^{(i)})\big)\mathbbm{1}_{Z-Z^{(i)} \ge 0}\right] \\
        &\le  \alpha^2\sum_{i=1}^n\E\left[e^{\alpha Z}(Z -Z^{(i)})_+^2\right] \\
        &=  \alpha^2\sum_{i=1}^n\E\left[e^{\alpha Z^{(i)}} (Z^{(i)}-Z)_+^2\right] \\
        &\le   \alpha^2\kappa^2\sum_{i=1}^n\E\left[e^{\alpha Z} e^{\alpha( Z^{(i)}-Z)} \mathbbm{1}_{E_i}\right] \\
        &\leq \E\left[ e^{\alpha Z} \left(\alpha^2 \kappa^2 e^{\alpha\kappa}\sum_{i=1}^n\mathbbm{1}_{E_i}\right)\right]
    \end{align*}
    and the remaining steps of the proof follow the same methodology as outlined in Theorem 2 of Boucheron et al. \cite{boucheron2003}.
\end{proof}

We use the inequalities obtained in the proposition above for the truncated random variable resulting in the following lemma.

\begin{lemma}\label{lem_apply_conc}
	There exists~$C_{\mathrm{dev}} \ge 1$ such that the following holds. Let~$x \in \R^d$ with~$\|x\| \ge 1$ and~$t \in [1,\|x\|]$. Then, 
	\begin{equation*}
		\E\left[\exp \left\{ \alpha \cdot | \E[Y_{t,x}]-Y_{t,x} |\right\}\right] \le   \exp \left\{C_{\mathrm{dev}} \alpha^2 t~\|x\| \right\} \quad \text{for any }\alpha \in \left(0,\tfrac{1}{4\K t}\right).
	\end{equation*}
\end{lemma}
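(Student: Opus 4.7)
The plan is to apply Proposition \ref{prop_concentration} by realizing $Y_{t,x}$ as a function of independent coordinates indexed by a box decomposition of $\R^d$. For each $z \in t\Z^d$, let $X_z$ encode the restriction of $\mathcal{P}$ to the box $B_z = z + [-t/2,t/2)^d$ together with an independent collection of passage-time marks for every edge incident to the Poisson points in $B_z$ (using, e.g., a lexicographic rule on endpoints to assign each edge to a unique box so that no double-counting occurs). By the independence of $\mathcal{P}$ on disjoint Borel sets and the i.i.d.~nature of the passage times, the $X_z$ are mutually independent. Since any path of graph length at most $\Kprime\|x\|$ starting at $o$ can reach only boxes within Euclidean distance $O(\|x\|)$ of the origin, I truncate the product to the finite collection of relevant boxes, obtaining the product structure required by Proposition~\ref{prop_concentration}.

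The core step is a bounded-differences estimate: there exists a constant $C_0 = C_0(d,\K)$ such that, with $\kappa := C_0 t$,
\[Y_{t,x}^{(z)} - Y_{t,x} \le \kappa \cdot \mathbbm{1}_{E_z},\]
where $E_z$ is the event that some minimizer $\gamma^\ast$ for the original configuration (fixed by a deterministic tie-breaking rule) visits $B_z$. On $E_z^c$, the path $\gamma^\ast$ is measurable with respect to $(X_{z'})_{z' \neq z}$ and remains feasible after the resampling, yielding $Y_{t,x}^{(z)} \le Y_{t,x}$. On $E_z$, let $p$ be the first entry of $\gamma^\ast$ into $B_z$ and $q$ its last exit from $B_z$. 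I construct a replacement path $\tilde\gamma$ in the resampled graph by following $\gamma^\ast$ up to $p$, taking an extra edge from $p$ to the extra vertex of the box containing $p$, traversing a bounded number of extra edges in $t\Z^d$ (with bound depending only on $d$) to reach the extra vertex of the box containing $q$ while avoiding $B_z$, using an extra edge from there to $q$, and finally following $\gamma^\ast$ from $q$ to $q^t(x)$. Each extra edge has cost $\K t$, so the detour cost is $O(\K t) \le C_0 t$; the new path avoids $B_z$, has total length at most $|\gamma^\ast| + O(1) \le \Kprime \|x\|$ (after a harmless enlargement of $\Kprime$), and thus provides a feasible competitor for $Y_{t,x}^{(z)}$. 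Replacing the \emph{entire} excursion chain of $\gamma^\ast$ inside $B_z$ by a single detour is crucial, as it keeps $\kappa$ independent of the number of excursions.

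With the bounded-differences estimate in hand, Lemma \ref{lem_number_of_boxes} gives the deterministic bound $\sum_z \mathbbm{1}_{E_z} \le C_1 \|x\|/t$. Applying \eqref{eq:exp.Vminus.bound} of Proposition \ref{prop_concentration} with $\kappa = C_0 t$ yields the lower-tail estimate
\[\E\!\left[\exp\!\left(\alpha\,(\E[Y_{t,x}] - Y_{t,x})\right)\right] \le \exp\!\left(2\alpha^2 C_0^2 C_1\cdot t\|x\|\right) \le \exp(C_2 \alpha^2 t \|x\|)\]
for all $\alpha > 0$. For the upper tail, \eqref{eq:exp.W.bound} applies: the constraint $\alpha < 1/(4\K t)$ ensures $\alpha \kappa \le C_0/(4\K)$ is bounded by an absolute constant, so $e^{\alpha \kappa}$ is also bounded and the same computation gives $\E[\exp(\alpha(Y_{t,x} - \E[Y_{t,x}]))] \le \exp(C_3 \alpha^2 t \|x\|)$. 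Summing the two one-sided MGFs via $e^{\alpha|w|} \le e^{\alpha w} + e^{-\alpha w}$ and absorbing the resulting multiplicative constant into the exponent gives the claimed bound with $C_{\mathrm{dev}}$ equal to $C_2 + C_3$ plus a constant.

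The main obstacle will be the bounded-differences step, specifically obtaining a uniform $\kappa$ of order $t$ that is insensitive to how many times $\gamma^\ast$ enters and leaves $B_z$. This is achieved by exploiting the extra-vertex grid $t\Z^d$: any two extra vertices adjacent to $B_z$ are joined by a path in $t\Z^d$ of length bounded only in terms of $d$, which allows the full excursion chain of $\gamma^\ast$ into $B_z$ to be bypassed by a single detour rather than one detour per excursion. Subsidiary care is also needed in the definition of the coordinates $X_z$ to ensure that resampling $X_z$ leaves the configuration in every other box genuinely unchanged (no shared passage-time marks across boxes), and in verifying that the minimizer $\gamma^\ast$ can be chosen measurably via the tie-breaking rule.
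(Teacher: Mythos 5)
Your proposal follows essentially the same route as the paper's proof: the same box-indexed product-space encoding of the Poisson points and passage times, the same events $E_z$ (the chosen minimizer meets the box $z+[-t/2,t/2)^d$), a bounded-differences constant of order $\K t$ obtained by rerouting through the deterministic grid $t\Z^d$, the deterministic bound $\sum_z \mathbbm{1}_{E_z} \le C\|x\|/t$ from Lemma~\ref{lem_number_of_boxes}, and Proposition~\ref{prop_concentration} applied with $\alpha \in \left(0,\tfrac{1}{4\K t}\right)$. One caution: anchor your detour at the last vertex of $\gamma^\ast$ \emph{before} its first entrance into $B_z$ and the first vertex \emph{after} its last exit (both outside $B_z$), since the entry and exit vertices themselves may vanish when the box is resampled; with that reading your construction is exactly the verification the paper leaves implicit in its ``easy to check'' increment bound $4\K t\cdot\mathbbm{1}_{E_i}$.
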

\begin{proof}
In order to apply Proposition~\ref{prop_concentration} to~$Y_{t,x}$, we need to define this random variable in a probability space with a suitable product measure. We achieve this by constructing~$\mathcal G$ (and consequently also~$\mathcal G^t$) in a probability space where the randomness is given in blocks, each block corresponding to a box of~$\mathscr B$. This has to be done with some care, because we need to describe how to encode the randomness corresponding to the passage times of all edges, including those that touch distinct boxes.

We take a probability space with probability measure~$\p$ where we have defined a family of independent random elements
\[
	X_i := (\mathscr V_i, (\mathscr T_i(x,y): x \in [-t/2,t/2)^d,\; y \in \mathbb R^d)),\quad  i \in \N,
\]
where~$\mathscr V_i$ is a Poisson point process on~$[-t/2,t/2)^d$ with intensity~$1$, and independently for each pair~$(x,y) \in [-t/2,t/2)^d \times \R^d$ (and independently of~$\mathscr V_i$),~$\mathscr T_i(x,y)$ is a random variable with the law of the passage time~$\tau$.

Now, fix an arbitrary enumeration~$\{z_1,z_2,\ldots\}$ of~$\Z^d$. We use the above random elements to construct~$\mathcal G = (V,\mathcal{E})$ and the passage times~$T(\cdot,\cdot)$ as follows. First construct the set of vertices as
\[V= \{t z_i + u:\; i \in \N,\; u \in \mathscr{V}_i\}.\]
Next, construct the set of edges as prescribed for a random geometric graph with range parameter~$r$: an edge is included between any two vertices within Euclidean distance~$r$ of each other.

For the definition of the passage times, fix two vertices of~$V$, written as
\[u = tz_i + x \quad \text{ and }\quad v = tz_j + y.\]  First assume that~$i = j$, that is,~$u$ and~$v$ are in the same box, and without loss of generality, also assume that~$x$ is smaller than~$y$ in the lexicographic order in~$[-t/2,t/2)^d$. Then, set~$T(x,y) = \mathscr{T}_i(x,y-x)$. Now assume that~$i < j$ (without loss of generality); in that case, set~$T(u,v) = \mathscr T_i(x,v-u) = \mathscr{T}_i(x, tz_j +y - tz_i - x)$. This completes the construction of~$\mathcal G$, and then~$\mathcal G^t$ is obtained from it as before (by adding extra vertices and edges).

We have exhibited an infinite product space, but in fact, by the definition of~$Y_{t,x}$, there exists a deterministic constant~$ N_{t,x}$ such that we can write
\begin{equation*}
Y_{t,x} = h(X_1,\ldots, X_{N_{t,x}})
\end{equation*}
for some function~$h$.

The next step is to exhibit a constant~$\kappa$ and events~$E_i$ for~\eqref{eq_condition_measurable}. Let~$\gamma$ be a path in~$\mathcal G^t$ from~$o$ to~$q^t(x)$ such that~$|\gamma| \le \Kprime \|x\|$ and such that~$Y_{t,x} = \sum_{e \in \gamma} \tau^t_e$ (in case there are multiple paths with this property, we choose one using some arbitrary procedure). Then, let~$E_i$ be the event that~$\gamma$ intersects~$z_i + [-t/2,t/2)^d$, for~$i \in \{1,\ldots, N_{t,x}\}$. Set
\[X:=(X_1,\dots, X_{N_{t,x}}), \quad X_{(i)}:=(X_1,\dots,X_{i-1},X_i',X_{i+1},\dots, X_{N_{t,x}}),\]
and recall that $Y_{t,x}^{(i)} = h(X_{(i)})$ for any independent copy $X_i'$ of $X_i$. It is easy to check that, if~$X$ and $X_{(i)}$ are two elements of our probability space that agree in all entries except possibly the~$i$-th, then
\begin{equation} \label{eq:new_V-minus}
    Y_{t,x}^{(i)} - Y_{t,x} \le 4\K t \cdot \mathbbm{1}_{E_i}.
\end{equation}
Moreover,
	\begin{equation} \label{eq:sum.indicators_Y}
            \sum_{i=1}^{N_{t,x}} \mathbbm{1}_{E_i} \le 3^{d+1}(3d + \Kprime r)\frac{\|x\|}{t} + 3^{d+1} \le 2\cdot3^{d+1}(3d+\Kprime r)\frac{\|x\|}{t},
        \end{equation}
	where the first inequality follows from Lemma~\ref{lem_number_of_boxes}, and the second from the choice of~$t$. Now, Proposition~\ref{prop_concentration} states that, for any~$\alpha \in \left(0,~1/(4\K t)\right)$, and
		\begin{equation*}
			\E[\exp\{ \alpha \cdot |Y_{t,x} - \E[Y_{t,x}]|\}] \le   \exp \left\{32 \cdot 3^{d+1} \K^2 e~(3d + \Kprime r) \alpha^2 \cdot  t~\|x\| \right\}
	\end{equation*}
    Hence, given the conditions on $\|x\|$ and $t$, our assertion holds for $C_{\mathrm{dev}}$ sufficiently large, uniformly across all $x$ and $t$, thereby establishing the lemma.
\end{proof}

By combining the above lemma with Jensen's and Markov's inequality, we have
\begin{equation}\label{eq_Markov_for_dev}
	\p\big(|Y_{t,x} - \E[Y_{t,x}]| > s \big) \le 2 \inf_{\alpha \in \left(0,~1/(4\K t)\right)} \exp\left\{ C_{\mathrm{dev}} \alpha^2 t~\|x\| - s \alpha\right\}
\end{equation}
for any~$x$ with~$\|x\| \ge 1$,~$t \in [1,\|x\|]$ and~$s > 0$.

\section{Moderate Deviations of First-Passage Times}

In this section, we present the proof of our theorem regarding the moderate deviations of first-passage times, utilizing the preparatory results established earlier.

\begin{proof}[Proof of Theorem~\ref{thm_new_moderate_deviations}]
	Using Jensen's inequality and~\eqref{eq_for_second_mom}, for~$x$ with~$\|x\|$ large enough and any~$t \in [\mathfrak{C}_1 \log(\|x\|),\|x\|]$ we have
	\begin{align*}
		|\E[T(x)] - \E[Y_{t,x}]| \le \E[|T(x)-Y_{t,x}|] \le (\E[(T(x) - Y_{t,x})^2])^{1/2} \le \mathfrak{C}\sqrt{2} t.
	\end{align*}
	In particular, if~$\mathfrak{C}\sqrt{2}t \le s/2$, then~$|\E[T(x)] - \E[Y_{t,x}]|\le s/2$ and
	\begin{equation*}
		\p(|T(x) - \E[T(x)]| > s) \le \p(|T(x) - \E[Y_{t,x}]| > s/2).
	\end{equation*}

	Next, in case~$ \mathfrak{C} t \le s/4$ we can bound
	\begin{align*}
		&\p\big(|T(x) - \E[Y_{t,x}]| > s/2\big) \\[.2cm]
		&\le \p\big(|T(x) - Y_{t,x}| > \mathfrak{C} t\big) + \p\big(|Y_{t,x} - \E[Y_{t,x}]| > s/4\big)\\[.2cm]
		&\le 2^{-\Kprime \|x\|} + e^{-\mathfrak{c}t} + \|x\|^{4d} e^{-\mathfrak{c}_1 t} + 2 \inf_{\alpha \in (0,1/(4 \K t))} \exp\{C_{\mathrm{dev}} \alpha^2 t~\|x\| - s \alpha/4\},
	\end{align*}
	by~\eqref{eq_all_together_now} and~\eqref{eq_Markov_for_dev}. Therefore, we have proved that
\begin{equation}\label{eq_dev_almost}
	\begin{split}
		&\p\big(|T(x) - \E[T(x)]| > s\big) \\[.2cm]
		&\le 2^{-\Kprime \|x\|} + e^{-\mathfrak{c}t} + \|x\|^{4d}e^{-\mathfrak{c}_1 t} + 2 \inf_{\alpha \in (0,1/(4 \K t))} \exp\{C_{\mathrm{dev}} \alpha^2 t~\|x\|- s \alpha/4\},
	\end{split}
\end{equation}
	under the restrictions
\begin{equation}
	\label{eq_to_optimize}
	\|x\| \text{ large},\quad s > 0,\quad \mathfrak C_1 \log\big(\|x\|\big) \le t \le \min \left\{ \|x\|, \frac{s}{4\mathfrak{C}}\right\}.
\end{equation}
    Let us now fix $t=s/\sqrt{\|x\|}$, then, for sufficiently large $\|x\|$,
    \begin{equation*}
	\mathfrak C_1 \log\big(\|x\|\big) \sqrt{\|x\|}\le s \le \|x\|.
    \end{equation*}
    We also set~$\alpha = \frac{1}{4\K C_{\mathrm{dev}} \sqrt{\|x\|}}$ and note that $\alpha<\tfrac{1}{4\K t}$. With these choices for~$t$ and~$\alpha$, the right-hand side of~\eqref{eq_dev_almost} becomes
\[2^{-\Kprime \|x\|} + e^{-\mathfrak{c}s/\sqrt{\|x\|}} + e^{-\mathfrak{c}_1 s/\sqrt{\|x\|}} + 2  \exp\left\{  - ~\frac{\K-1}{16 \K^2 C_{\mathrm{dev}} } \cdot \frac{s}{\sqrt{\|x\|}} \right\}.\]
Clearly, there exist~$C,c > 0$ such that the expression above is smaller than~$Ce^{-cs/\sqrt{\|x\|}}$, uniformly over~$s \in [ \mathfrak{C}_1 \log\big(\|x\|\big)\sqrt{\|x\|}, ~ \|x\|]$. 
\end{proof}

\section{Expectation and Variance Bounds}
In this section, we prove Theorem~\ref{thm:moderate.dev.FPP}. We start with the proof of the variance bound.
\begin{proof}[Proof of Theorem~\ref{thm:moderate.dev.FPP}, item \eqref{eq:VarT}]
	First, observe that, by \cref{cor_join_expectation} for all $t>0$ and sufficiently large $\|x\|$,
    \begin{eqnarray*}
        \operatorname{Var} T(x) &\leq 2 \operatorname{Var} Y_{t,x} &+ ~2\operatorname{Var} \big(Y_{t,x} - T(x)\big)\\
        &\leq 2 \operatorname{Var}Y_{t,x} &+~ 4 (\mathfrak C t)^2.
    \end{eqnarray*}
    
    By \eqref{eq:new_steele-efron-stein}, \eqref{eq:new_V-minus}, and \eqref{eq:sum.indicators_Y}, one has
    \[\operatorname{Var} Y_{t,x} \leq 16 \K^2 t^2 \cdot \E\left[\sum_{i=1}^{N_{t,x}}\mathbbm{1}_{E_i}\right] \leq \left(16 \K^2 3^{d+1}(3d+\Kprime r) \right)\cdot t~ \|x\|.\] 
    
    Let us fix $t=\mathfrak C_1 \log\big(\|x\|\big)$ for sufficiently large $\|x\|$ and it completes the proof.
\end{proof}

We now turn to the proof of the expectation bound~\eqref{eq_asymp_exp}. For this, we follow Howard and Newman~\cite{howard2001} and Yao~\cite{yao2013}. We will need the following preliminary result.
\begin{lemma} \label{lm_expectation.double}
	There exists~$\bar{C} > 0$ such that for all sufficiently large $t>0$ and all ${x \in \partial B(o, 1)}$,
    \[
	    2 \E[T(t\cdot x)] \leq \E[T(2t \cdot  x)] + \bar{C} \sqrt{t} \log(t).
    \]
\end{lemma}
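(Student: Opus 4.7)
The plan is to use a geodesic splitting argument combined with the concentration estimates established earlier in the chapter.

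Let $\upgamma$ be a $T$-geodesic in $\Hh$ from $q(o)$ to $q(2tx)$, and let $z^*$ be a vertex of $\upgamma$ minimizing the Euclidean distance $\|v - tx\|$ over $v \in \upgamma$. Since every sub-path of a geodesic is itself a geodesic, one has the exact decomposition
\[
T(2tx) = T(o, z^*) + T(z^*, 2tx).
\]
Combining this with the triangle inequality and the symmetry of $T$ (inherited from the fact that $\Hh$ is undirected),
\[
T(tx) + T(tx, 2tx) \le T(2tx) + 2\,T(tx, z^*).
\]
Taking expectations and using translation invariance of the homogeneous Poisson process to identify $\E[T(tx, 2tx)] = \E[T(o, tx)] = \E[T(tx)]$, the lemma reduces to the estimate
\[
\E[T(tx, z^*)] \le \bar C \sqrt{t}\,\log(t)/2.
\]

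To establish this bound, fix $R = C_0 \sqrt{t}\,\log(t)$ for a sufficiently large constant $C_0$ and split according to the event $A_R = \{\|z^* - tx\| \le R\}$. On $A_R$, the vertex $z^*$ lies in $B(tx, R) \cap \Hh$, so by \cref{lm:T.bds.sup} the supremum $\sup_{y \in B(tx, R) \cap \Hh} T(tx, y)$ has expectation of order $R$, yielding a contribution of order $\sqrt{t}\,\log(t)$. On the complement $A_R^c$, Cauchy–Schwarz gives
\[
\E[T(tx, z^*)\mathbbm{1}_{A_R^c}] \le \sqrt{\E[T(tx, z^*)^2]\cdot \p(A_R^c)},
\]
and by bounding $\|z^* - tx\| \le 3t$ crudely and invoking the exponential tail bound of \cref{lm:T.bds}, one controls the second moment by $O(t^2)$; this reduces the task to showing $\p(A_R^c) = O((\log t)^2/t)$.

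The principal obstacle is precisely this tail bound on $\p(A_R^c)$ at the scale $R = O(\sqrt t \log t)$, which is strictly finer than what the geodesic-fluctuation estimate of \cref{thm:fluctuations.of.geodesics} provides (the fluctuation scale there is $t^{3/4+\epsilon}$). To overcome this, the idea is to apply the moderate-deviation bound of \cref{thm_new_moderate_deviations} to the decomposed first-passage times $T(o, y) + T(y, 2tx)$ for $y$ ranging over a finite net of candidate crossing points of the hyperplane $H_{tx} := \{v \in \R^d : \langle v - tx, x\rangle = 0\}$. A union bound over this net, using the exponential concentration on scale $\sqrt{t}$, will force the optimal crossing vertex of $H_{tx}$ on $\upgamma$ to lie within distance $\sqrt{t}\,\log(t)$ of $tx$ with probability at least $1 - O((\log t)^2/t)$. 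This hyperplane-crossing argument is the technically most delicate step; once it is in place, the rest of the proof is routine bookkeeping.
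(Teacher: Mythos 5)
Your reduction is sound up to the point you yourself flag as delicate, and that is exactly where the argument breaks. The only mechanism available for excluding a geodesic crossing of the hyperplane $H_{tx}$ at transversal distance $h$ from $tx$ is that such a detour increases the Euclidean length, hence the typical passage time, by roughly $h^2/(2\upvarphi t)$ (the Pythagorean estimate already used in the proof of \cref{thm:fluctuations.of.geodesics}). A union bound over a net of $H_{tx}$ using \cref{thm_new_moderate_deviations}, whose concentration scale is $\sqrt{\|x\|}\log\|x\|$, therefore only rules out crossings with $h \gg t^{3/4}(\log t)^{1/2}$; it can never reach $h\sim\sqrt{t}\log t$. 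So the claimed tail $\p(A_R^c)=O\big((\log t)^2/t\big)$ at scale $R=C_0\sqrt{t}\log t$ is not obtainable from the estimates in this chapter (and, in view of the expected transversal-fluctuation behaviour of FPP, is most likely false). What your scheme actually yields is $\|z^*-tx\|\lesssim t^{3/4+\epsilon}$ with high probability, hence $\E[T(tx,z^*)]=O(t^{3/4+\epsilon})$ and only $2\E[T(t\cdot x)]\le \E[T(2t\cdot x)]+Ct^{3/4+\epsilon}$, which is too weak to produce the $\sqrt{t}\log t$ error term needed for \eqref{eq_asymp_exp} and \cref{thm:speed.FPP}.

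The paper's proof avoids locating the crossing point altogether, and this is the idea your proposal is missing. It splits the geodesic from $q(o)$ to $q(2te_1)$ at its first vertex $u^*$ in the annulus $B(o,t)\setminus B(o,t-r)$ and its first vertex $v^*$ in the corresponding annulus around $2te_1$, giving $T(2te_1)\ge T(u^*)+T(v^*,2te_1)$ on a good event. The key point is rotation invariance: every point of $\partial B(o,t)$ has the same expected passage time $\E[T(te_1)]$, so one covers the sphere by $n_t\le t^d$ balls of radius $t^{1/4}$ centered at $x_1,\dots,x_{n_t}$, applies \cref{thm_new_moderate_deviations} with a union bound to get $\min_i T(x_i)\ge \E[T(te_1)]-C\sqrt{t}\log t$ with high probability, and controls the $O(\sqrt{t})$ corrections inside the small balls and the complement of the good event via \cref{lm:T.bds.sup}, \cref{prop:Hn.growth} and Cauchy--Schwarz. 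No transversal information about the geodesic is required, which is precisely what makes the $\sqrt{t}\log t$ error achievable. If you want to repair your argument, replace the net of $H_{tx}$ near $tx$ by a net of the whole sphere of radius $t$ around the origin (and around $2tx$) and use equality of expectations on the sphere instead of proximity to $tx$; that is essentially the paper's proof.
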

\begin{proof}
    Since the distribution of $T$ is rotation invariant, it suffices to verify the result for $T(t \cdot e_1)$ with $e_1 \in \partial B(o,1)$ an element of the canonical basis of $\R^d$. We take~$t \ge 0$ which will be assumed large throughout the proof.

	It is straightforward to see that, if~$t$ is large, there exist~$n_t \le t^d$ and (deterministic) points~$x_1,\ldots,x_{n_t} \in \partial B(o,t)$ such that
	\begin{equation}
		\label{eq_cover1}
		B(o,t)\backslash B(o,t-r) \subseteq \bigcup_{i=1}^{n_t} B(x_i,t^{1/4}).
	\end{equation}
	We set~$y_i:=2te_1 + x_i \in \partial B(2te_1,t)$, so that
	\begin{equation}
		\label{eq_cover2}
		B(2te_1,t)\backslash B(2te_1,t-r) \subseteq \bigcup_{i=1}^{n_t} B(y_i,t^{1/4}).
	\end{equation}
	Define~$\mathsf F_t$ as the event that all of the following happen:
	\begin{itemize}
		\item $q(o) \in B(o,t)$,
		\item $q(2te_1) \in B(2te_1,t)$,
		\item for every~$z \in \{x_1,\ldots,x_{n_t},y_1,\ldots,y_{n_t}\}$ and all~$w \in B(z,t^{1/4})$, we have~$T(z,w) \le t^{1/2}$.
	\end{itemize}

	Now, fix a realization of the random graph and the passage times, and let~$\gamma$ be a path from~$q(o)$ to~$q(2te_1)$ that minimizes the passage time between these two vertices. On~$\mathsf F_t$, this path starts in~$B(o,t)$ and ends in~$B(2te_1,t)$. Traversing~$\gamma$ from~$q(o)$ to~$q(2te_1)$, let~$u^*$ be the first vertex in the annulus~$B(o,t)\backslash B(o,t-r)$, and let~$v^*$ be the first vertex in the annulus~$B(2te_1,t)\backslash B(2te_1,t-r)$. Then, by~\eqref{eq_cover1} and~\eqref{eq_cover2}, there exist~$i_*$ such that~$u^* \in B(x_{i^*},\sqrt{t})$ and~$v^* \in B(y_{j^*},\sqrt{t})$. We then have
	\begin{align*}
		T(2te_1) \cdot \mathbbm{1}_{\mathsf F_t} &\ge  (T(u^*) + T(v^*,2te_1)) \cdot  \mathbbm{1}_{\mathsf F_t} \\
		&\ge (T(x_{i^*}) + T(y_{j^*},2te_1) - 2\sqrt{t}) \cdot \mathbbm{1}_{\mathsf F_t}.
	\end{align*}
	Taking the expectation and using symmetry, this gives
	\begin{equation}\label{eq_split_two_xs}
		\begin{split}
			\E[T(2te_1)] &\ge 2 \E\left[\mathbbm{1}_{\mathsf F_t} \cdot  \min_{1 \le i \le n_t} T(x_i)  \right] - 2\sqrt{t} \\
			&= 2\E\left[  \min_{1 \le i \le n_t} T(x_i)  \right] - 2 \E\left[\mathbbm{1}_{\mathsf F_t^c} \cdot  \min_{1 \le i \le n_t} T(x_i)  \right] - 2\sqrt{t}.
		\end{split}
	\end{equation}

	We will deal with the two expectations on the right-hand side above separately. Define~$\E[T(x_i)] = \E[T(te_1)] =: \nu_t$ for every~$i$. We will need the fact that there exists some~$C> 0$ such that, for~$t$ large enough,
	\begin{equation}\label{eq_fast_for_nu}
		\nu_t \le C t.
	\end{equation}
	This can be easily obtained from~\cref{prop:Hn.growth}.

	Let us give a lower bound for the first expectation on the right-hand side of~\eqref{eq_split_two_xs}. Using Theorem~\ref{thm_new_moderate_deviations}, we can choose~$C > 0$ such that, for~$t$ large enough, we have~$\p(T(te_1) \le \nu_t - C\sqrt{t}\log(t)) < e^{-t}$. We then bound
	\begin{align*}
		\E\left[  \min_{1 \le i \le n_t} T(x_i)  \right] &\ge (\nu_t - C\sqrt{t}\log(t))\cdot  \p(T(x_i) > \nu_t - C \sqrt{t} \log(t) \text{ for all }i)\\
		&\ge (\nu_t - C\sqrt{t}\log(t))\cdot (1-n_t\cdot \p(T(te_1) \le \nu_t - C\sqrt{t}\log(t))\\
		&\ge (\nu_t - C\sqrt{t}\log(t))\cdot (1-t^d\cdot e^{-t}).
	\end{align*}
	Using~\eqref{eq_fast_for_nu}, we see that the right-hand side above is larger than~$\nu_t - C'\sqrt{t}\log(t)$ for some constant~$C' > 0$ and~$t$ large enough.

	To give an upper bound for the second expectation on the right-hand side of~\eqref{eq_split_two_xs}, we first bound
	\[\E\left[\mathbbm{1}_{\mathsf F_t^c} \cdot  \min_{1 \le i \le n_t} T(x_i)  \right] \le \p(\mathsf F_t^c)^{1/2}\cdot  \E[T(te_1)^2]^{1/2}\]
	using Cauchy-Schwarz. Next, we have
	\begin{align*}
		\p(\mathsf F_t^c) &\le 2 \p(\mathcal H \cap B(o,t) = \varnothing) + n_t\cdot \p\left(\max_{w \in B(o,t^{1/4})} T(o,w) > \sqrt{t}\right)\\
		&\le 2Ce^{-ct} + t^d \cdot C e^{-ct^{1/4}}
	\end{align*}
for some~$C,c> 0$, by~\cref{prop:Hn.growth,lm:T.bds.sup}. We also bound
	\begin{align*}
		\E[T(te_1)^2] = \mathrm{Var}(T(te_1)) + \E[T(te_1)]^2 \le Ct^2
	\end{align*}
	for some~$C > 0$, by~\eqref{eq:VarT} and~\eqref{eq_fast_for_nu}. 
	
	Putting things together, we have proved that the right-hand side of~\eqref{eq_split_two_xs} is larger than
	\begin{align*}
		&2(\nu_t - C'\sqrt{t}) - 2\cdot (2Ce^{-ct} + t^d \cdot C e^{-ct^{1/4}})^{1/2} \cdot (Ct^2)^{1/2} -2\sqrt{t} \\[.2cm]
		&\ge 2\nu_t - C'' \sqrt{t}\log(t)
	\end{align*}
	for some~$\bar{C} > 0$ and~$t$ large enough.
\end{proof}

We can now conclude the proof of Theorem~\ref{thm:moderate.dev.FPP} by applying the lemmas above.

\begin{proof}[Proof of Theorem~\ref{thm:moderate.dev.FPP}, item \eqref{eq_asymp_exp}]
	Recall the isotropic properties and subadditivity of $T(x)$. By Kingman's subadditive ergodic theorem
    \[
        \lim_{n \uparrow +\infty}\frac{T(nx)}{n} = \lim_{n \uparrow +\infty}\frac{\E\big[T(nx)\big]}{n} = \inf_{n \in \N}\frac{\E\big[T(nx)\big]}{n} = \phi(x)
    \]
	where $\phi(x)= \|x\|/\upvarphi$ with $\upvarphi>0$ (see \cite[Eq.~(3.6)]{coletti2023} for details). 

	Since $\E\big[T(x)\big] = \E\big[T(te_1)\big]$ for $t=\|x\|$, it suffices to prove the result for $\E\big[T(te_1)\big]$ with large $t>0$. Let $\bar{C}>0$ be the constant of Lemma~\ref{lm_expectation.double}. One can obtain the result by applying Lemma 4.2 of Howard and Newman~\cite{howard2001}. To be self-contained and more precise, let us define
	\[f(t):=\E\big[ T(te_1) \big]\quad  \text{and} \quad \Tilde{f}(t):= f(t)-10\bar{C}\sqrt{t}\log(t), \quad t > 0.\]
	We have
	\[\Tilde{f}(2t) = f(2t) - 10 \bar{C} \sqrt{2t} \log(2t) \ge 2f(t) - \bar{C}\sqrt{t}\log(t) - 10\bar{C}\sqrt{2t}\log(2t),\]
	where the inequality holds for~$t$ large enough, by~Lemma~\ref{lm_expectation.double}. When~$t$ is large we have~$10\sqrt{2t}\log(2t) \le 19\sqrt{t}\log(t)$ (since~$10\sqrt{2} < 19$ and~$\log(2t)/\log(t) \to 1$ as~$t \uparrow +\infty)$, so we obtain
	\[\Tilde{f}(2t) \ge 2 f(t) -  \bar{C}\sqrt{t}\log(t) - 19\bar{C}\sqrt{t}\log(t) = 2f(t) - 20\bar{C}\sqrt{t}\log(t) = 2\Tilde{f}(t).\]
	Iterating~$\Tilde{f}(2t) \ge 2 \Tilde f(t)$, we get
    \[
        \frac{\Tilde{f}(2^n t)}{2^n t} \geq \frac{\Tilde{f}(t)}{t} \quad \text{for all large } t \text{ and all }n \in \N.
    \]
    Taking a limit of the left-hand side as~$n \uparrow +\infty$ we obtain that, for all large~$t$,
	\[ \frac{1}{\upvarphi} \ge \frac{\Tilde{f}(t)}{t} = \frac{\E\big[T(t\cdot e_1)\big] - 10 \bar{C} \sqrt{t}\log(t)}{t},\]
    which yields the desired conclusion.
\end{proof}

\section{Quantitative Shape Theorem}\label{s_quantitative}

Building upon the results established in Theorems \ref{thm_new_moderate_deviations} and \ref{thm:moderate.dev.FPP}, this section focuses on the speed of convergence of the asymptotic shape presented in Theorem \ref{thm:speed.FPP}.

\begin{proof}[Proof of Theorem~\ref{thm:speed.FPP}]
	By putting together Theorem~\ref{thm_new_moderate_deviations} and~\eqref{eq_asymp_exp}, we can obtain~$C^* > 0$ such that $\p$-almost surely, for~$x$ with~$\|x\|$ large enough we have
	\begin{equation}\label{eq_choice_of_R}
		\left| T(x) - \frac{\|x\|}{\upvarphi}\right| \le C^* \sqrt{\|x\|}\log\big(\|x\|\big).
	\end{equation}
	Writing
	\[g_+(a):= \frac{a}{\upvarphi} + C^* \sqrt{a} \log(a),\quad g_-(a):= \frac{a}{\upvarphi} - C^* \sqrt{a} \log(a),\quad a > 0,\]
	the above inequality can be written as
	\begin{equation}
		\label{eq_becomes}
	g_-\big(\|x\|\big) \le T(x) \le g_+\big(\|x\|\big).
	\end{equation}

	Fix a realization of the random graph and passage times, and let~$R$ be large enough that~\eqref{eq_becomes} holds for all~$x$ with~$\|x\| \ge R$. We can take~$R' \ge R$ such that~$g_-$ is increasing on~$[R',\infty)$.  Now fix~$t$ large enough that
	\begin{equation*}
		t \ge \max\left\{\max_{x \in B(o,R')} T(x),\; \frac{R'}{\upvarphi}\right\}.
	\end{equation*}
We will now prove that, letting~$b := 4\upvarphi \sqrt{\upvarphi} C^*$, we have, $\p$-almost surely,
	\begin{equation}\label{eq_two_inclusions} 
		B(o, \upvarphi t - b \sqrt{t}\log(t)) \subseteq H_t \subseteq B(o, \upvarphi t + b \sqrt{t}\log(t)).
	\end{equation}

	To prove the first inclusion, we fix
	\begin{equation}
		\label{eq_my_choice_x}
	x \in B(o,\upvarphi t - b \sqrt{t}\log(t))
	\end{equation}
	and show that~$T(x) \le t$. This inequality is automatic from the choice of~$t$ if~$x \in B(o,R')$, so assume that~$\|x\| > R'$. Then, since~$g_+$ is increasing,
	\begin{align*}
		T(x) &\stackrel{\eqref{eq_becomes}}{\le} g_+\big(\|x\|\big)  \stackrel{\eqref{eq_my_choice_x}}{\le} g_+(\upvarphi t - b\sqrt{t}\log(t)) \\[.2cm]
		&= t - \frac{b}{\upvarphi} \sqrt{t}\log(t)+ C^* \sqrt{\upvarphi t - b\sqrt{t} \log(t)} \cdot \log(\upvarphi t - b \sqrt{t}\log(t)).
		 	\end{align*}
	Increasing~$t$ if necessary, we have
	\[\sqrt{\upvarphi t - b\sqrt{t} \log(t)} \cdot \log(\upvarphi t - b \sqrt{t}\log(t)) \le 2\sqrt{\upvarphi} \cdot \sqrt{t}\log(t),\]
so we get
	\[T(x)\le t - \frac{b}{\upvarphi}\sqrt{t} \log(t) + 2C^* \sqrt{\upvarphi} \cdot \sqrt{t} \log(t) \le t\]
	by the choice of~$b$.

	To prove the second inclusion in~\eqref{eq_two_inclusions}, fix~$x \notin B(o,\upvarphi t + b \sqrt{t}\log(t))$, and let us show that~$T(x) > t$. We start with
	\[T(x) \ge g_-\big(\|x\|\big).\]
	Since~$\|x\| \ge \upvarphi t + b\sqrt{t}\log(t) \ge R'$, and~$g_-$ is non-increasing in~$[R',\infty)$, we have
	\begin{align*}
		g_-\big(\|x\|\big) &\ge g_-(\upvarphi t + b\sqrt{t}\log(t))\\[.2cm]
		&= t + \frac{b}{\upvarphi} \sqrt{t}\log(t)- C^* \sqrt{\upvarphi t + b\sqrt{t} \log(t)} \cdot \log(\upvarphi t + b \sqrt{t}\log(t)).
	\end{align*}
Increasing~$t$ if necessary, we have
	\[\sqrt{\upvarphi t + b\sqrt{t} \log(t)} \cdot \log(\upvarphi t + b \sqrt{t}\log(t)) \le 2\sqrt{\upvarphi} \cdot \sqrt{t}\log(t),\]
	so
	\[T(x) \ge t + \frac{b}{\upvarphi} \sqrt{t}\log(t) - 2C^*\sqrt{\upvarphi} \sqrt{t}\log(t) \ge t\]
	again by the choice of~$b$, completing the proof.
\end{proof}

\section{Fluctuations of the Geodesics and Spanning Trees}

Having established the framework of moderated deviations, we now investigate the probabilistic behavior of the geodesic paths. Specifically, we leverage the derived properties to characterize the probabilities of geodesics residing in specific regions of the space. Additionally, we gain insights regarding the spanning trees generated by the passage times.

\begin{proof}[Proof of \cref{thm:fluctuations.of.geodesics}]
Let us fix $s := \|y-x\|$ for given $x, y \in \R^d$. Recall the notation defined previously $[U]_{\varepsilon}$ for the $\varepsilon$-neighbourhood of $U \subseteq \R^d$. To obtain an upper bound for the probability in first part of the theorem, consider the events 
\begin{align*}
    \mathsf{A}_{x,y}^\epsilon &:= \left\{v \in V(\Hh) \setminus [\overline{xy}]_{s^{3/4+\epsilon}} \colon d_H(v, [\overline{xy}]_{s^{3/4+\epsilon}})< r\right\} \quad\text{and}\\
    \mathsf{E}_{x,y}^\epsilon &:= \big\{\exists v \in \mathsf{A}_{x,y}^\epsilon \colon T(x,y) = T(x, v) +T(v,y) \big\},
\end{align*}
then $\p\left(d_H(\upgamma_x(y), \overline{xy}) \geq s^{\frac{3}{4}+\epsilon}\right) \leq \p\big(\mathsf{E}_{x,y}^\epsilon\big)$.

Set $\Delta_{x,y}^v := \left(\|v-x\|+\|y-v\|-s\right)/\upvarphi$ with $\upvarphi>0$ the constant determining the asymptotic shape in \cref{thm:speed.FPP}. Observe that, if $T(x,y) = T(x,v)+T(v,y)$, then
\[
    T(x,y) -s/\upvarphi - T(x,v) + \|v-x\|/\upvarphi - T(v,y) + \|y-v\|/\upvarphi = \Delta_{x,y}^v. 
\]
Thence,
\[
    \vert T(x,y) -s/\upvarphi\vert + \vert T(x,v) - \|v-y\|/\upvarphi\vert + \vert T(v,y) - \|y-v\|/\upvarphi \vert \geq \Delta_{x,y}^v. 
\]
It thus follows that
\begin{align*}
    \p\big(\mathsf{E}_{x,y}^\epsilon\big)&\leq \p\big(\left\vert T(x, y) - s/\upvarphi \right\vert \geq \Delta_{x,y}^v/3\big)\\
    & \quad\quad  + \p\big(\exists v \in \mathsf{A}_{x,y}^\epsilon \colon \left\vert T(x, v) - {\|v-x\|}/{\upvarphi} \right\vert \geq {\Delta_{x,y}^v}/{3}\big)\\
    & \quad\quad + \p\big(\exists v \in \mathsf{A}_{x,y}^\epsilon \colon \left\vert T(v, y) - {\|y-v\|}/{\upvarphi} \right\vert \geq {\Delta_{x,y}^v}/{3})\\
    &\leq \p\big(\left\vert T(x, y) - s/\upvarphi \right\vert \geq \Delta_{x,y}^v/3\big)\\
    & \quad\quad +2\p\left(\bigcup_{v \in \mathsf{A}_{x,y}^\epsilon}\left\{\left\vert T(x, v) - {\|v-x\|}/{\upvarphi} \right\vert \geq {\Delta_{x,y}^v}/{3}\right\}\right).
\end{align*}

By the Pythagorean Theorem, there exist $\Tilde{c}_0,\Tilde{c}_1>0$ such that, if $s>r$, then
\[\Tilde{c}_0 s^{\frac{1}{2}+2\epsilon} \le \Delta_{x,y}^v \le \Tilde{c}_1 s^{\frac{3}{4}+\epsilon}.\]
Therefore \eqref{eq:geodesic.fluctuations} is verified as a direct consequence of \cref{prop:Hn.growth}, \cref{thm_new_moderate_deviations,thm:moderate.dev.FPP}, and the translation invariance of $T$.
\end{proof}

Before proving \cref{thm:f.straight.spanning.trees}, we state below Lemma 3.7 of \cite{howard2001}.

\begin{lemma} \label{lm:3.7.howard.newman}
    Consider $d \geq 2$, $\uppsi \in (0,1/4)$, and $\{q_i\}_{i \in \N}$ such that $\|q_j\| \uparrow +\infty$ as $j\to+\infty$. Suppose that, for all large $j \in \N$,
    \begin{equation} \label{eq:lm.qn}
        \|q_j-q_{j+1}\| \le \|q_j\|^{3/4} \quad\text{and}\quad \inf\big\{\|y -q_k\| \colon y \in \overline{o q_j}\big\}\leq \|q_j\|^{1-\uppsi} \ \ \text{for } k<j. 
    \end{equation}
    Then there exist $c_\uppsi,\Tilde{k}>0$ such that, for all $k \in \big\{\Tilde{k},\Tilde{k}+1,\dots,j-1\big\}$, one has
    \begin{equation} \label{eq:angle.theta.ineq}
        \uptheta(q_k,q_j) \le c_\uppsi~\|q_k\|^{-\uppsi}.
    \end{equation}
\end{lemma}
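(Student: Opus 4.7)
My plan is to reduce the claim to a single ``one-level'' angular estimate and then chain this estimate along a geometrically growing subsequence of $\{q_i\}$. The starting point is the elementary planar fact that, whenever the point of the segment $\overline{oq_j}$ closest to $q_k$ is not the origin, the hypothesis $\inf\{\|y-q_k\|: y\in\overline{oq_j}\}\le \|q_j\|^{1-\uppsi}$ forces
\[ \sin\uptheta(q_k,q_j)\le \|q_j\|^{1-\uppsi}/\|q_k\|, \]
via the orthogonal-projection formula. A short case analysis (interior foot; foot equal to $q_j$) covers both non-degenerate subcases. The degenerate case (foot equal to $o$, i.e.\ obtuse angle) will be excluded automatically in the chain constructed below, where $\|q_k\|>\|q_j\|^{1-\uppsi}$ holds at every scale once $\Tilde k$ is large enough.

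This estimate is only useful when $\|q_j\|$ and $\|q_k\|$ are comparable, so I would interpolate between $k$ and $j$ by a doubling subsequence $k=k_0<k_1<\cdots<k_N\le j$ defined recursively by
\[ k_{i+1}:=\min\{m>k_i : \|q_m\|\ge 2\|q_{k_i}\|\}, \]
with $N$ the largest index for which $k_N\le j$. The first part of \eqref{eq:lm.qn} gives $\bigl|\|q_{m+1}\|-\|q_m\|\bigr|\le \|q_m\|^{3/4}$, so minimality of $k_{i+1}$ yields $\|q_{k_{i+1}-1}\|<2\|q_{k_i}\|$ and hence
\[ 2\|q_{k_i}\|\le \|q_{k_{i+1}}\|\le 2\|q_{k_i}\|+(2\|q_{k_i}\|)^{3/4}. \]
An induction then pins $\|q_{k_i}\|$ between $2^i\|q_k\|$ and $C\cdot 2^i\|q_k\|$ with $C$ absolute, and the same reasoning shows $\|q_j\|\le 4\|q_{k_N}\|$.

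Since angles form a metric on $\partial B(o,1)$ and $\uptheta\le \tfrac{\pi}{2}\sin\uptheta$ on $[0,\tfrac{\pi}{2}]$, the triangle inequality on the sphere yields
\[ \uptheta(q_k,q_j)\le \tfrac{\pi}{2}\Bigl(\sum_{i=0}^{N-1}\sin\uptheta(q_{k_i},q_{k_{i+1}})+\sin\uptheta(q_{k_N},q_j)\Bigr). \]
Plugging the one-level bound into each summand produces $\sin\uptheta(q_{k_i},q_{k_{i+1}})\le \|q_{k_{i+1}}\|^{1-\uppsi}/\|q_{k_i}\|\le C'\cdot 2^{-i\uppsi}\|q_k\|^{-\uppsi}$, with an analogous bound for the terminal term involving $2^{-N\uppsi}$. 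Summing the resulting geometric series in $\uppsi>0$ gives the claimed inequality with $c_\uppsi:=C''/(1-2^{-\uppsi})$.

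The main obstacle is the dual role played by the step-size hypothesis: it must simultaneously (i) guarantee that $\|q_{k_i}\|$ stays tightly pinned near $2^i\|q_k\|$ at every scale, so that the one-level bounds form a convergent geometric sum in $i$, and (ii) secure the non-degenerate regime $\|q_{k_i}\|>\|q_{k_{i+1}}\|^{1-\uppsi}$ at every scale, which is the only case where the one-level bound itself is valid. Both requirements are quantitative and jointly determine the threshold $\Tilde k=\Tilde k(\uppsi)$, which must exceed both the index from which the hypotheses in \eqref{eq:lm.qn} hold and a value at which $\|q_k\|^{\uppsi}$ dominates the universal constants coming from (i)–(ii).
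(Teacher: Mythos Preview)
The paper does not give its own proof of this lemma: it merely states the result and attributes it to Howard and Newman~\cite{howard2001}, Lemma~3.7. So there is nothing in the paper to compare against at the level of argument.

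Your sketch is sound and is essentially the standard dyadic chaining argument one expects here. A couple of minor remarks that would tighten the write-up: (i) in your chain you always have $\|q_{k_i}\|\le\|q_{k_{i+1}}\|$ (monotonicity of $\|q_j\|$) and in fact $\|q_{k_{i+1}}\|\ge 2\|q_{k_i}\|$, so the foot of the perpendicular from $q_{k_i}$ to the line through $o$ and $q_{k_{i+1}}$ can never lie beyond $q_{k_{i+1}}$; the ``foot equal to $q_j$'' subcase is thus unnecessary in the actual application, and only the interior-foot and foot-at-$o$ cases arise; (ii) the inequality $\uptheta\le\tfrac{\pi}{2}\sin\uptheta$ is applied term-by-term, so you should record explicitly that each $\uptheta(q_{k_i},q_{k_{i+1}})\le\pi/2$ once the degenerate (obtuse) case is ruled out, which follows from $\|q_{k_i}\|>\|q_{k_{i+1}}\|^{1-\uppsi}$ for $\|q_k\|$ large. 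With these clarifications the argument goes through as you describe.
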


\begin{proof}[Proof of \cref{thm:f.straight.spanning.trees}]
    First, let $\mathcal{A}(x,r_1,r_2)$ stand for an annulus $B(x,r_2)\setminus B(x, r_1)$. Consider now, for $n\in \N$,
    \begin{gather*}
        \mathsf{X}_{x,n}^{\epsilon} := \left\{ y \in \mathcal{A}(x, n-1,n) \cap V(\Hh) \colon d_H\big( \upgamma_x(y),~\overline{xy} \big) \geq \|y-x\|^{\frac{3}{4}+ \frac{\epsilon}{2}} \right\}, \\
        \text{and} \quad \mathsf{X}_x^{\epsilon} := \Dot{\bigcup_{n\in \N}}\mathsf{X}_{x,n}^{\epsilon}.
    \end{gather*}
    Observe that, by \cref{lm:3.7.howard.newman} with $\uppsi= 1/4-\epsilon/2$, it suffices to define
    \[\mathlcal{F}_{x,f_\epsilon}= \mathsf{X}_x^\epsilon \cup \left( 
B\big(x, r^{4/3} \vee c_\uppsi^{2/\epsilon}\big) \cap V(\Hh)\right)\]
    to obtain $\mathlcal{V}_{x, f_\epsilon} \subseteq \mathlcal{F}_{x, f_\epsilon}$ for any choice of $\mathlcal{T}_x$. One can easily see from \cref{prop:Hn.growth} that, $\p$-a.s., any finite region of $\R^d$ contains a finite number of $v \in V(\Hh)$. Therefore, it remains to verify that $|\mathsf{X}_x^\epsilon|$ is $\p$-a.s. finite.

    It follows from \cref{thm:fluctuations.of.geodesics}, \cref{prop:Hn.growth}, and Lebesgue's Dominated Convergence Theorem that there exists $\Tilde{c}>0$ such that
    \[
        \E\big[\vert\mathsf{X}_x^{\epsilon}\vert\big] = \sum_{n \in \N}\E\big[\vert\mathsf{X}_{x,n}^{\epsilon}\vert\big]\leq \sum_{n \in \N} \frac{\Tilde{c}~ \cdot n^d}{\exp(n^{\epsilon})} < +\infty
    \] 
    Hence, by Markov inequality, there exists $\Bar{c}>0$, for all $x\in\R^d$, and $s \geq 0$,
    \begin{equation} \label{eq:Markov.number.non.out.sites}
        \p\big(|\mathsf{X}_x^\epsilon| \geq s^2 \big) \leq \frac{\Bar{c}}{s^2},
    \end{equation}
    which completes the proof by an application of Borel-Cantelli Lemma.
\end{proof}

The corollary naturally arises as a straightforward consequence of the results above, with the application of a proposition found in Howard and Newman \cite{howard2001}.

\begin{proof}[Proof of \cref{cor:asymp.dir}]
    It follows from \cref{prop:Hn.growth} that any spanning tree $\mathlcal{T}_x$ of $\Hh$ is locally finite. By \cref{prop:holes.H}, the spherical holes $\mathcal{D}(s) \in \mathcal{O}\big(\log(s)\big)$ $\p$-a.s. as $s \uparrow +\infty$. Thus $V(\Hh)$ is asymptotic omnidirectional, \textit{i.e.}, for all $n \in \N$, 
    \[\left\{\frac{v}{\|v\|} \colon v \in V(\Hh) \setminus B(o,n) \right\} \text{ is dense in } \partial B(o,1).\]
    
    Since \cref{thm:f.straight.spanning.trees} implies that $\mathlcal{T}_x$ is  $\p$-a.s. $f_\epsilon$-straight for all $\epsilon \in (0, 1/4)$ with ${ f_\epsilon(s)= s^{\epsilon-\frac{1}{4}} }$, \cref{cor:asymp.dir} is a straightforward application of Proposition 2.8 of Howard and Newman \cite{howard2001}.
\end{proof}

\section{A Two-Species Competition Model on RGGs} \label{ch:competition}

Using the theorems above, we investigate a two-species competition model originally formulated by Kordzakhia and Lalley \cite{kordzakhia2005} for the hypercubic lattice $\Z^d$.

Designating the two competing species as red and blue, each establishes territories within the spatial domain $\Hh$. The occupancy of sites by the red species at any given time $t \in [0, +\infty)$ is symbolized by $\upxi(t)$, while $\upzeta(t)$ represents the analogous territory held by the blue species.

At the outset, $\upchi(t) := \upxi(t) \dot{\cup} \upzeta(t) \subseteq \Hh$ is defined as the combined inhabited territory. The dynamics governing growth and competition are determined by Richardson's and voter's models. Within this framework, the competition unfolds as follows:
\begin{itemize}
    \item Unoccupied sites at time $t$, denoted as $x \not\in \upchi(t)$, are subject to occupation by either species. The rate of occupation by the red or blue species is determined by the presence of neighboring sites already occupied by each species. More specifically, the rates are
    \[\sum_{y \sim x}\mathbbm{1}_{\upxi(t)}(y) \quad \text{and} \quad \sum_{y \sim x}\mathbbm{1}_{\upzeta(t)}(y).\]

    \item Occupied sites at time $t$, specifically $x \in \upchi(t)$, witness potential invasion. Here, the transition to a different species occurs at rates contingent upon neighboring sites' current occupants.
    \begin{itemize}
        \item If $x \in \upxi(t)$, it undergoes a color shift to blue at rate $\sum_{y \sim x}\mathbbm{1}_{\upzeta(t)}(y)$.
        \item Likewise, if $x \in \upzeta(t)$, the site is becomes inhabited by the red species at rate $\sum_{y \sim x}\mathbbm{1}_{\upxi(t)}(y)$.
    \end{itemize}
\end{itemize}

\begin{figure}[htb!]
    \centering
    \includegraphics[width=0.8\linewidth]{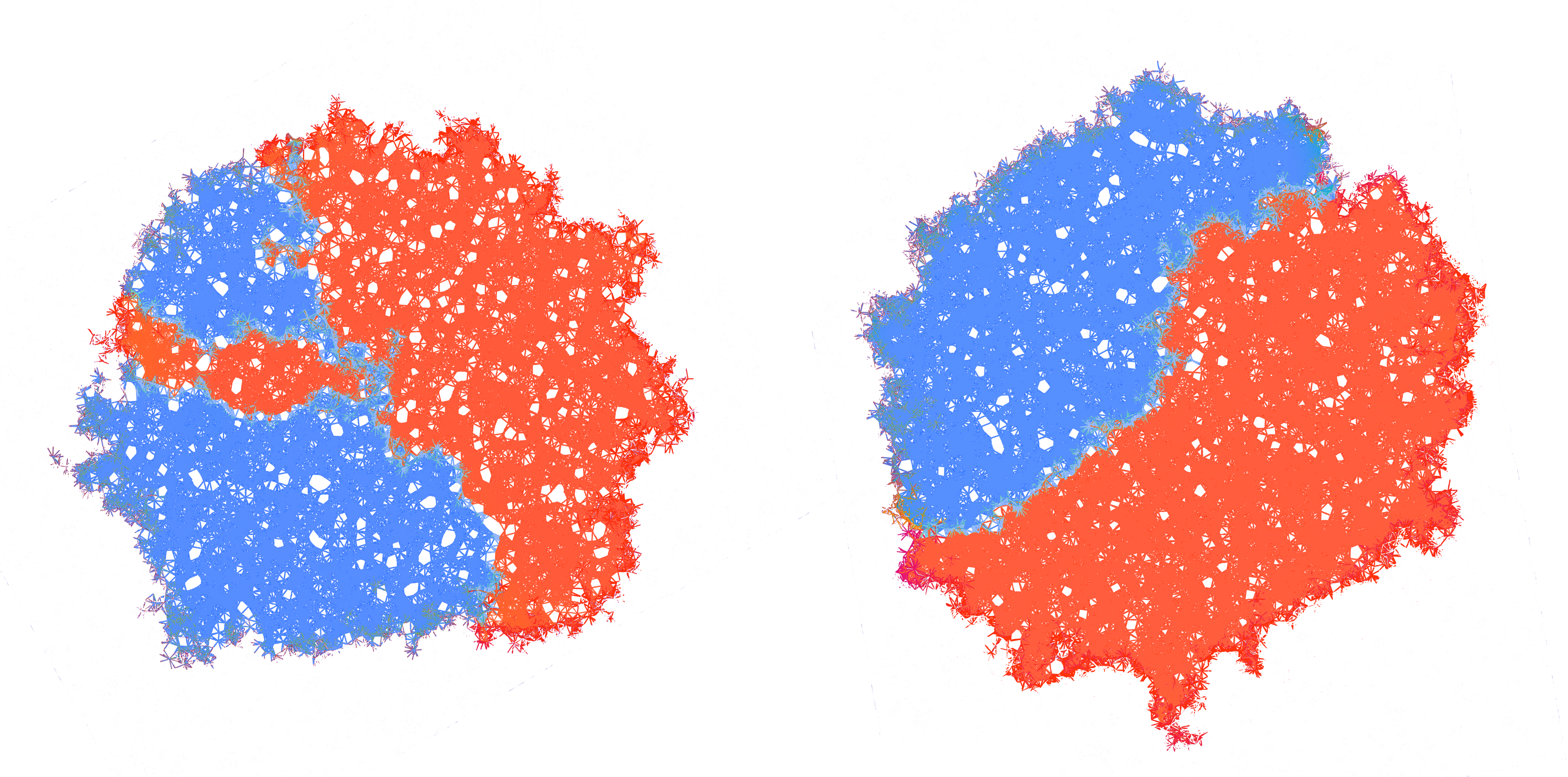}
    \caption{Two simulations of the competition model on a RGG.}
\end{figure}

The initial conditions for $\upxi(0)$ and $\upzeta(0)$ are established by two given sets in the $d$-dimensional space. The two-species competition model is considered to have a \emph{finite initial configuration} if there exist two non-empty disjoint sets $W$ and $W'$ in a finite region of $\R^d$ such that
\[\upxi(0) \subseteq q(W), \ \upzeta(0) \subseteq q(W'), \ \text{and} \ \upchi(0)=q(W \cup W').\]
The configuration of the set $q(W) \cap q(W')$ can be chosen arbitrarily. Our primary objective with this model is to determine whether both species coexist with positive probability. To this end, we introduce the event
\[\mathrm{Coex}(\upxi,\upzeta):=\left\{\text{for all }t\geq 0, \ \upxi(t) \neq \varnothing \text{ and } \upzeta(t) \neq \varnothing \right\}.\]

We then present the following result:
\begin{theorem} \label{thm:coexistence}
    Let $d \geq 2$ and $r>r_c(\lambda)$, and consider the two-species competition model defined above. Then, for any finite initial configuration, one has
    \[\p\left(\operatorname{Coex}(\upxi,\upzeta)\right)>0.\]
\end{theorem}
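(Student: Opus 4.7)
The plan is to adapt the coexistence strategy of Kordzakhia and Lalley \cite{kordzakhia2005} to the RGG setting, with the spherical symmetry of the limiting shape (\cref{thm:shape.FPP}) and the quantitative shape theorem (\cref{thm:speed.FPP}) serving as the main analytic inputs.

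A first observation is that the combined occupied set $\upchi(t) = \upxi(t) \cup \upzeta(t)$ evolves autonomously as a single-species Richardson process on $\Hh$, since the rate at which an unoccupied site becomes occupied depends only on the total number of occupied neighbours, not on their colours. Consequently $\upchi(t)$ satisfies the quantitative shape theorem, and $\upchi(\infty) = \Hh$ almost surely. By a standard monotone coupling argument (enlarging the initial territory of either species can only help its survival), it is enough to establish the theorem for the two-point configuration $\upxi(0) = \{q(x_R)\}$, $\upzeta(0) = \{q(x_B)\}$ with $L := \|x_R - x_B\|$ chosen sufficiently large.

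Next I would introduce a passage-time representation of the initial colouring. For each $z \in \Hh$ let $T^R(z)$ and $T^B(z)$ denote the first-passage times in the Richardson cluster from $q(x_R)$ and $q(x_B)$ to $z$ respectively. Absent voter flips, $z$ is first coloured red exactly when $T^R(z) < T^B(z)$. Letting $H$ be the perpendicular bisector of $\overline{x_R x_B}$ and $\Pi_R \ni x_R$, $\Pi_B \ni x_B$ the associated open half-spaces, the isotropy induced by \cref{rmk:isometry} together with \cref{thm_new_moderate_deviations} implies that $T^R(z)$ and $T^B(z)$ concentrate sharply around $\|z-x_R\|/\upvarphi$ and $\|z-x_B\|/\upvarphi$. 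A union bound over a grid of $z$ in a ball of radius $L^{5/4}$ then gives that the event
\[
\mathsf{E}_L = \big\{\text{every } z \in \Hh \text{ with } \operatorname{dist}(z,H) \geq L^{3/4} \text{ is initially coloured according to its half-space}\big\}
\]
has probability bounded below uniformly in $L$ large, and in fact tending to one as $L \uparrow +\infty$.

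The principal obstacle is to control the voter-type flips that continue to occur after the initial colouring of each site: a priori, a local incursion of blue into $\Pi_R$ could propagate inwards and erase red. To handle this, I would set up a multi-scale Borel--Cantelli scheme. At dyadic time scale $2^n$, define $\mathsf{G}_n$ to be the event that no blue site ever appears inside $B(x_R, (1-\varepsilon)\upvarphi\cdot 2^n) \cap \Pi_R$ during $[2^n, 2^{n+1}]$, and symmetrically for red in $\Pi_B$. Using the geodesic fluctuation bound (\cref{thm:fluctuations.of.geodesics}) applied to the spanning trees of $\upchi(t)$ from $x_R$ and $x_B$ (which are well-defined by \cref{cor:asymp.dir}), any chain of flips reaching such a deep site must be traced back through a path whose endpoint lies near the interface, forcing its length to be of order $2^n$ and its probability to be at most $\exp(-c\cdot 2^{n\varepsilon})$. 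Summability then yields, via Borel--Cantelli combined with $\mathsf{E}_L$, that on a set of positive probability both species permanently contain balls of radius $\Theta(t)$, giving $\p(\operatorname{Coex}(\upxi,\upzeta)) > 0$ for $L$ large and hence, by the reduction of the first step, for every finite initial configuration.
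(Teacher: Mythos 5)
Your proposal contains two genuine gaps. First, the reduction of an arbitrary finite initial configuration to a two-point configuration with large separation $L$ via a ``monotone coupling'' does not work: in two-type competition models with invasion dynamics, monotonicity in the initial sets is not available (enlarging red helps red but harms blue, and no coupling moves the two starting sets far apart), and the theorem explicitly demands coexistence for \emph{every} finite configuration, including ones where $W$ and $W'$ are adjacent or interleaved. This is precisely the content of the paper's \cref{lm:intermediate.condition}, which is a nontrivial step: it uses Harris' graphical construction and a case analysis on the connecting graph $\mathcal{W}$ to show that, with strictly positive (possibly tiny) probability, the dynamics itself reaches a favorable configuration at time $t_0$ in which red fills one cone region and blue an opposite one. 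Your argument simply assumes such a separated configuration can be taken as the starting point.

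Second, and more fundamentally, your control of the voter-type flips rests on the wrong mechanism. \cref{thm:fluctuations.of.geodesics} and \cref{cor:asymp.dir} concern the FPP geodesics of the \emph{growth} of $\upchi(t)$; the invasion of already-occupied territory does not propagate along these geodesics, so ``tracing a chain of flips back through a path of length of order $2^n$'' with probability $\exp(-c\,2^{n\varepsilon})$ is asserted without any supporting estimate, and it is not clear why a long invasion chain occurring over a time interval of comparable length should be exponentially unlikely. The paper's route is duality: the voter flips are dominated by coalescing random walks on $\Hh$, and \cref{lm:control.invasion} uses the heat-kernel bound of \cref{lm:heat.kernel} (together with degree and point-count controls) to show that a site at depth $\rho^{\mathfrak{b}}$, $\mathfrak{b}>3/4$, inside one species' territory is invaded within time $\rho$ only with probability $\exp(-C'\rho^{2\mathfrak{b}-3/2})$; this exit-time/moderate-deviation estimate for the dual walk is what makes the depth exponent work. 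Moreover, your dyadic events $\mathsf{G}_n$ ignore the possibility that the other colour creeps inward through the interface over successive scales; the paper's iteration with the shrinking cones $\operatorname{Cone}(w,r_n)$, the annuli $\Upphi_n$, and \cref{lm:control.species.growth} (adapted from Kordzakhia--Lalley, using the Euclidean limiting ball from \cref{thm:speed.FPP} and the moderate deviations of \cref{thm_new_moderate_deviations}) is exactly the bookkeeping needed to rule this out, and the exponents $\mathfrak{a},\mathfrak{b}\in(3/4,1)$ with $\mathfrak{b}<2\mathfrak{a}-1$ encode the required balance between invasion depth, shape fluctuation, and angular decrement. As written, your scheme does not engage with this balance, so the summability claim and the final Borel--Cantelli step are unsupported.
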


\subsection{The Intermediate Condition}

To investigate the coexistence of two species within this competition model over time, we will demonstrate that for a given sequence $t_n$ of time instances, it is possible to observe both species inhabiting specific regions of the space. This phenomenon is ensured by what we refer to as the \emph{intermediate condition}.

Let $\mathfrak{a}, \mathfrak{b} \in \left(\frac{3}{4}, ~1\right)$ be such that $\mathfrak{b}<2\mathfrak{a}-1$, and fix $\mathfrak{d} \in (0,1)$. Consider $\{t_n\}_{n\in\N_0}$ and $\{r_n\}_{n\in\N_0}$ to be sequences of times and angles, respectively, taking values in $(0,+\infty)$ such that, for all $n \in \N_0$, \[t_{n+1}-t_n = \mathfrak{d} t_n \quad \text{and} \quad r_n-r_{n+1}= (t_n)^{\mathfrak{a}-1}\] with $t_0>0$ to be determined by \eqref{intermediate.condition}. Then,  $t_n$ is given by $t_n :=t_0(1+\mathfrak{d})^n$ and we set \[r_n:=\frac{1+(1+\mathfrak{d})^{(\mathfrak{a}-1)\cdot n}}{1-(1+\mathfrak{d})^{(\mathfrak{a}-1)}}(t_0)^{\mathfrak{a}-1}.\]
Observe that $r_n$ decreases to $r_0/2>0$ as $n \uparrow +\infty$.

Fix $\bar{s} := \left(\frac{2(1+\mathfrak{d})}{\mathfrak{d}}\right)^{\frac{1}{1-\mathfrak{a}}}$. Recall the definition of the annulus $\mathcal{A}$ given in the proof of \cref{thm:f.straight.spanning.trees}. For $z \in \partial B(o,1)$ and $t_0 > \bar{s}$, define $\Upphi_n(z)$ as the random set of vertices in a region given by 
\[
    \Upphi_n(z) := \upvarphi\cdot\mathcal{A}\left(o, ~\tfrac{1}{1+\mathfrak{d}}t_n,~t_n-(t_n)^{\mathfrak{b}}\right) \cap \operatorname{Cone}(z, r_n)\cap \Hh.
\]
Let $\Uptheta_{w,w'}$ be the  event that guarantees the existence of vertices in regions of interest, defined as follows:
\[\Uptheta_{w,w'}:=\bigcap_{n\in\N_0} \big\{\Upphi_n(w)\neq\varnothing \ \text{ and } \ \Upphi_n(w')\neq \varnothing\big\}.\]
Consider $\widehat{H}$ to be the set $q^{-1}(H)$ for $H \subseteq \Hh$. Define $\Gamma_n$ as the event associated with convergence to the limiting shape, adjusted for $\upchi(0)$:
\[\Gamma_n :=\left\{B\left(o, \upvarphi(t_n - (t_n)^{\mathfrak{b}})\right) \subseteq \widehat{\upchi}_n \subseteq B\left(o, \upvarphi(t_n + (t_n)^{\mathfrak{b}})\right)\right\}.\]

Before stating the intermediate condition, we will restrict our attention to a specific part of the occupied sites $\upchi$. Note that, since $t_0>\bar{s}$, it follows that  $r_n \in(0, \pi/2)$ for $n\in \N_0$. Let us write, for all $n \in \N_0$, 
\[\upchi_{z,n}:=\upchi(t_n)\cap \left(\mathrm{Cone}(z,r_n)\left\backslash B\left(o,\tfrac{\upvarphi}{1+\mathfrak{d}}t_n\right)\right.\right).\] 

\begin{figure}[htb!]
    \centering
    \includegraphics[trim={70pt 70pt 70pt 70pt},clip,width=0.72\linewidth]{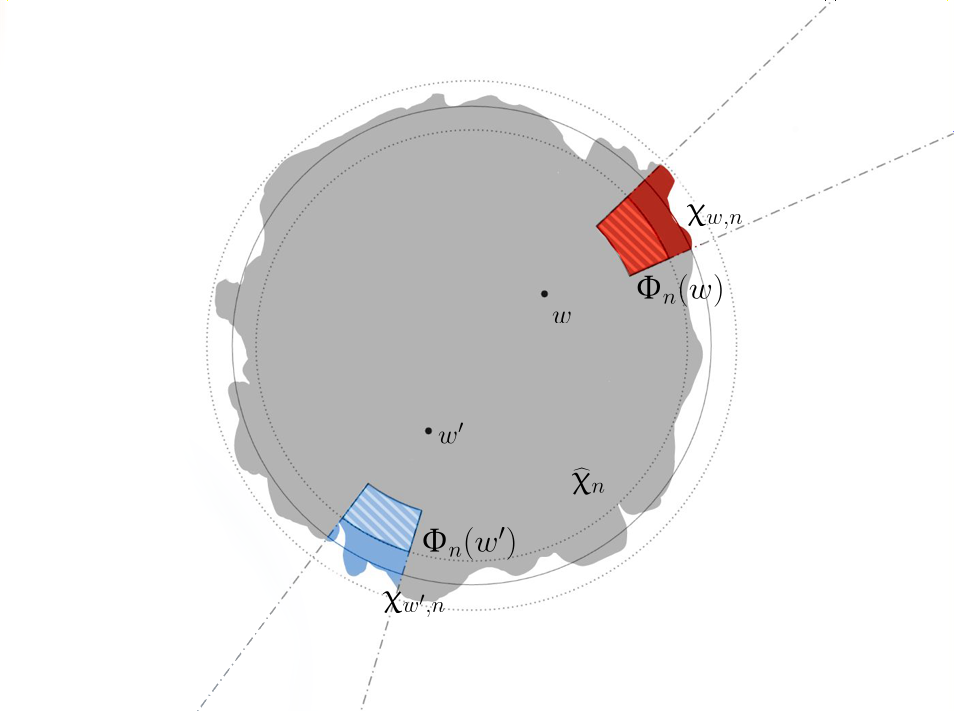}
    \caption{The regions $\upchi_{w,n}$, $\upchi_{w',n}$, $\Upphi_n(w)$, $\Upphi_n(w')$ on the event $\Gamma_n$.}
    \label{fig:comp_intermediate_condition}
\end{figure}

Now, we define $\Uppsi_{w,w'}$ by combining the events above, focusing on the occupation of selected regions of space by a unique species in each region. Specifically,
\[
    \Uppsi_{w,w'}:=\left\{
   \upchi_{w,0}\subseteq\upxi(t_0) \ \text{ and } \ \upchi_{w',0}\subseteq\upzeta(t_0)\right\} \cap \Uptheta_{w,w'} \cap \Gamma_0.
\]

Let the constant $s_0>\bar{s}$ be determined later (see \cref{lm:intermediate.condition} and the Proof of \cref{thm:coexistence} for details). Recall that $\uptheta(u,v)$ denotes the angle between $\vec{ou}$ and $\vec{ov}$. We can now state the intermediate condition for the competition model as follows: There exist $w,w' \in \partial B(o,1)$ and $t_0 \geq s_0$ such that $\uptheta(w,w')>2r_0$ and 
\begin{equation} \tag{\small \scshape I$_0$} \label{intermediate.condition}
    \p\big(\Uppsi_{w,w'}\big)>0.
\end{equation}

From now on, we consider the event inside the probability in \eqref{intermediate.condition} to denote the conditional probability of $\Uppsi_{w,w'}$ as
\[P(\cdot):=\p(~\cdot \mid\Uppsi_{w,w'}\big).\]
We will show that we can control the coexistence of both species within suitable regions of space on this event. Additionally, observe that $\Upphi_n(w) \subseteq\upchi_{w,n}$ and $\Upphi_n(w') \subseteq\upchi_{w',n}$ (see \cref{fig:comp_intermediate_condition}).

The following lemma ensures that the intermediate condition is reasonable for finite initial configurations of the competition model.

\begin{lemma} \label{lm:intermediate.condition}
    Let $W,W'$ be two non-empty disjoint sets of $~\R^d$ determining a finite initial configuration of the competition model. Then,for all $\varepsilon\in(0,1)$, there exists $s_0 > \bar{s}$ such that
    \[\p\left(\text{item } \eqref{intermediate.condition} \text{ holds true for } t_0 \geq s_0 \mid \upxi(0)\neq \varnothing \text{ and }\upzeta(0)\neq \varnothing\right)>1-\varepsilon.\]
\end{lemma}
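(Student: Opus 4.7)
The idea is to pick $w, w'$ based on the geometry of $W, W'$ and decompose $\Uppsi_{w,w'}$ into the three events $\{\upchi_{w,0} \subseteq \upxi(t_0),\upchi_{w',0} \subseteq \upzeta(t_0)\}$, $\Uptheta_{w,w'}$, and $\Gamma_0$, estimating each via results of this chapter. Since $W$ and $W'$ are finite and disjoint, a hyperplane strictly separates them; I would take $w$ as a unit normal to this hyperplane pointing into the half-space of $W$, and set $w' := -w$, so $\uptheta(w,w') = \pi$. Since $r_0 = \frac{2}{1 - (1+\mathfrak d)^{\mathfrak a - 1}}\, t_0^{\mathfrak a - 1} \downarrow 0$ as $t_0 \uparrow +\infty$, the requirement $\uptheta(w,w') > 2 r_0$ holds for every $t_0 \geq s_0$ with $s_0$ large. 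The conditioning on $\{\upxi(0) \neq \varnothing,~\upzeta(0) \neq \varnothing\}$ only costs a positive multiplicative constant by \cref{lm:ball.intersecting.infinite.comp} applied to $W$ and $W'$ separately.

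For the two geometric events: the coverage process $\upchi(t)$ coincides in law with a Richardson growth from the finite set $\upchi(0)$ (the voter dynamics do not alter occupancy), so \cref{thm:speed.FPP} gives $\p(\Gamma_0) \geq 1 - \varepsilon/4$ for $s_0$ large, using $t_0^{\mathfrak b} \gg \sqrt{t_0}\log t_0$ as $\mathfrak b > 3/4 > 1/2$. Each region defining $\Upphi_n(z)$ with $z \in \{w, w'\}$ has Lebesgue volume at least $c\, t_n^d$ because its angular opening $r_n \ge r_0/2$ stays bounded below, so a tiling by boxes of side $\Theta(\log t_n)$ combined with \cref{prop:Hn.growth} gives $\p(\Upphi_n(z) = \varnothing) \leq c_1 \exp(-c_2 t_n^{d-1})$; summing the geometric series over $n \in \N_0$ and over $z \in \{w, w'\}$ yields $\p(\Uptheta_{w,w'}) \geq 1 - \varepsilon/4$ for $s_0$ large.

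The main obstacle is bounding the dominance event $\{\upchi_{w,0} \subseteq \upxi(t_0)\} \cap \{\upchi_{w',0} \subseteq \upzeta(t_0)\}$, where voter flips preclude a direct reduction to two independent Richardson processes as in Kordzakhia–Lalley. The approach I would take is to construct all dynamics from a single Harris graphical representation with i.i.d.\ rate-one Poisson arrivals on the directed edges of $\Hh$, and to analyse the reverse-time random walk at $v \in \upchi_{w,0}$ that determines the colour of $v$ at time $t_0$: this backward walk starts inside $\operatorname{Cone}(w, r_0)$, and using the isotropy of the limiting shape from \cref{thm:speed.FPP} together with the moderate deviations of \cref{thm_new_moderate_deviations} and \cref{thm:moderate.dev.FPP} for the first-passage times from $\upxi(0)$ and $\upzeta(0)$, it remains on the $W$-side of the separating hyperplane and is absorbed by $\upxi(0)$ with a probability bounded away from zero, uniformly in $v$. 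The delicate point will be upgrading this site-by-site lower bound to a simultaneous one on the entire $\upchi_{w,0}$, which I would address by an FKG-type correlation estimate on the graphical representation combined with a block-decomposition argument that discards a vanishing fraction of bad sites; the symmetric analysis on $\upchi_{w',0}$ and Blue then yields $\p(\Uppsi_{w,w'}) > 0$, and hence \eqref{intermediate.condition} holds with conditional probability at least $1 - \varepsilon$.
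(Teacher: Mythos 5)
Your set-up (choice of $w$, $w'=-w$, the bound $\uptheta(w,w')>2r_0$ for large $t_0$, paying the conditioning with the constant $\p(\upxi(0)\neq\varnothing,\upzeta(0)\neq\varnothing)>0$, and making $\p(\Gamma_0^c)$ and $\p(\Uptheta_{w,w'}^c)$ small via \cref{thm:speed.FPP}, \cref{thm_new_moderate_deviations} and \cref{prop:Hn.growth}) is essentially the same as the paper's, up to a minor quantitative slip: the angular opening $r_n\ge r_0/2$ is not bounded below uniformly in $t_0$ (it decays like $t_0^{\mathfrak a-1}$), so the inradius of the region defining $\Upphi_n(z)$ is of order $t_n^{\mathfrak a}$ rather than $t_n$; the series still sums, so this is repairable.

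The genuine gap is in the step you yourself flag as delicate: the event $\{\upchi_{w,0}\subseteq\upxi(t_0)\}\cap\{\upchi_{w',0}\subseteq\upzeta(t_0)\}$. First, the backward-walk heuristic does not deliver the per-site bound you want: both $\upxi(0)$ and $\upzeta(0)$ sit in one bounded region, so the directional bias coming from your separating hyperplane is of constant order, while the first-passage fluctuations are of order $\sqrt{t_0}\log t_0$ by \cref{thm_new_moderate_deviations}; moreover the colour of a site under the combined Richardson-plus-voter dynamics is not determined by a single dual walk (the paper's \cref{lm:control.invasion} only uses a one-sided stochastic domination in a very special initial configuration). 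Second, and more seriously, the upgrade from per-site bounds to the simultaneous event cannot work as described: the event $\Uppsi_{w,w'}$ requires the inclusions to hold at \emph{every} vertex of $\upchi_{w,0}$ and $\upchi_{w',0}$, so ``discarding a vanishing fraction of bad sites'' is not available; and no FKG inequality is established for this model — indeed ``$\upchi_{w,0}$ all red'' is increasing and ``$\upchi_{w',0}$ all blue'' is decreasing in any natural red-favouring order, so positive association would bound their intersection in the wrong direction. The paper avoids all of this by exploiting that only \emph{strict positivity} of $\p(\Uppsi_{w,w'})$ is needed: it runs a finite-energy forcing argument in the Harris graphical construction, prescribing arrows along $T$-geodesic subgraphs $\mathcal{W}_{\upxi,w}$, $\mathcal{W}_{\upzeta,w'}$ joining $\upxi(0)$, $\upzeta(0)$ to $\Upphi_0^+(w)$, $\Upphi_0^+(w')$ within a finite region up to time $t_0$, with a combinatorial case analysis (using a vertex of degree at least $3$) when these subgraphs intersect, and it absorbs the degenerate configuration — every site of $\Upphi_0(w)\cup\Upphi_0(w')$ having degree at most $2$ — into an event $E_{w,w'}$ of probability less than $\mathsf{p}_0\varepsilon/2$ for $s_0$ large. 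To close your argument you would either need to prove a correlation inequality and a genuine simultaneous lower bound (much stronger than what the lemma asks), or switch to a local-modification argument of the paper's type.
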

\begin{proof}
    First, we verify that $\mathsf{p}_0:=\p\big(\upxi(0)\neq \varnothing \text{ and }\upzeta(0)\neq \varnothing\big)>0$ by fixing any $x \in W$ and $y\in W'$, then the event $q(x) \neq q(y)$ has strictly positive probability.

    Let us fix, without loss of generality, an arbitrary $w \in\partial B(o,1)$ and let $w'=-w$. Define $\Gamma$ to be the event $\bigcap_{n\in\N}\Gamma_n$.
    Since $W$ and $W'$ are in a finite region of $\R^d$, \cref{thm:speed.FPP,thm_new_moderate_deviations,prop:Hn.growth} ensure that we can approximate $\p(\Gamma \cap \Uptheta_{w,w'})$ to 1 as closely as we want as $s_0 \uparrow+\infty$. Let $s_0$ be large so that $\p(\Gamma^c \cup \Uptheta_{w,w'}^c) < \mathsf{p}_0\cdot\varepsilon /2$.

    It remains to verify that $\upchi_{w,0}\subseteq\upxi(t_0)$ and $\upchi_{w',0}\subseteq\upzeta(t_0)$ occur with positive probability. Set
    \[
    \Upphi_n^+(z) := \upvarphi\cdot\mathcal{A}\left(o, ~\tfrac{1}{1+\mathfrak{d}}t_n,~t_n+(t_n)^{\mathfrak{b}}\right) \cap \operatorname{Cone}(z, r_n)\cap \Hh.
\]

    Note that $\Upphi_n(z) \subseteq \upchi_{z,n} \subseteq \Upphi_n^+(z)$ on $\Gamma_n$. Consider $\upxi(0)$ and $\upzeta(0)$ to be non-empty. Let $\mathcal{W}_{\upxi,w}$ to be a connected subgraph of $\Hh$ that connects $\upxi(0)$ to $\Upphi_0^+(w)$ with $T$-geodesics. Similarly, define $\\mathcal{W}_{\upzeta,w'}$ to be a subgraph connecting $\upzeta(0)$ to $\Upphi_0^+(w')$.

    If $\mathcal{W}_{\upxi,w}$ and $\mathcal{W}_{\upzeta,w'}$ do not intersect, then $\upchi_{w,0}\subseteq\upxi(t_0)$ and $\upchi_{w',0}\subseteq\upzeta(t_0)$ happens with positive probability by forbidding the invasion dynamics from occurring  with the elements of $\mathcal{W}_{\upxi,w}$ and $\mathcal{W}_{\upzeta,w'}$ before time $t_0$. Let $\mathcal{W}$ be the graph union of $\mathcal{W}_{\upxi,w}$ and $\mathcal{W}_{\upzeta,w'}$. Consider now that case where $\mathcal{W}$ is connected.

    The worst case scenario is when $\mathcal{W}$ is a line graph that can prevent a invasion/occupation path from $\upxi(0)$ and $\upzeta(0)$ to $\upchi_{w,0}$ and $\upchi_{w',0}$, respectively (see \cref{fig:comp_line}). The referred paths can be obtained using, Harris' graphical construction with the invasion and occupation being determined by arrows that appear with distribution $\operatorname{Exp}(1)$. We can restrict or impose the arrows that paint $\upchi_{w,0}$ red and $\upchi_{w',0}$ blue when there exists at least one vertex in $\mathcal{W}$ with degree greater than $2$.

    \begin{figure}[htb!]
        \centering
        \includegraphics[width=0.5\linewidth]{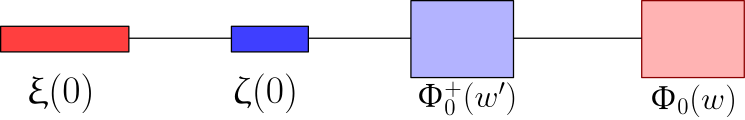}
        \caption{Schematics for worst case of the invasion/occupation dynamics in $\mathcal{W}$.}
        \label{fig:comp_line}
    \end{figure}

    To verify the claim above, consider that there exists $x \in \mathcal{W}$ with degree greater than $2$. Fix $x_a$, $x_b$, and $x_c$ to be three distinct neighbours of $x$. Choose one of $x_a$ and $x_b$ to be painted red and the other blue. If we remove the edges $\{x,x_a\}$ and $\{x,x_b\}$, $\mathcal{W}$ is decomposed into at most three connected components $\mathcal{K}_a$, $\mathcal{K}_b$, and $\mathcal{K}_c$ (see \cref{fig:comp_W}) containing $x_a$, $x_b$, and $x_c$, respectively. In the case with three distinct connected components, there is an invasion/occupation path from $x_a$ and $x_b$ to any configuration $\mathcal{K}_c$. Observe that the vertices $x$, $x_a$, $x_b$, and $x_c$ can assume any configuration. Consider $x_c$ red or blue to obtain any configuration of $\mathcal{K}_a$ and $\mathcal{K}_b$. This entire process can occur up to the given time $t_0$. The probability of such event may be particularly small, but it guarantees the strictly positive probability of $\upchi_{w,0}\subseteq\upxi(t_0)$ and $\upchi_{w',0}\subseteq\upzeta(t_0)$.

    \begin{figure}[htb!]
        \centering
        \includegraphics[width=0.35\linewidth]{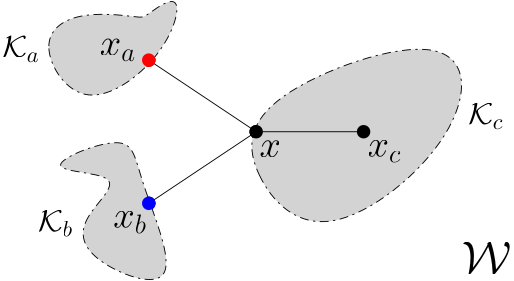}
        \caption{An example of $\mathcal{W}$ with $x$ having a degree greater than $2$.}
        \label{fig:comp_W}
    \end{figure}

    In the case with less than three connected components, consider a spanning tree in the component that contains more than one site among $\{x_a,x_b,x_c\}$. Suppose that $\mathcal{K}_c$ is equal to $\mathcal{K}_a$ (or $\mathcal{K}_b$), if when we cut the branches at $x_a$ (and/or at $x_b$) the vertices $x$ and $x_c$ are not in the branch cut, proceed just as in the case with three distinct connected components with the branches cut and a disjoint connected subgraph of $\mathcal{K}_c$ of the remaining vertices of $\mathcal{W}$. Otherwise, if $x$ or $x_c$ is removed of the spanning tree with a branch cut, we simply take cut the branch at $x$ and/or $x_c$ and keep the edge ${x,x_c}$, then the rest of the sites in $\mathcal{K}_c$ will be reached through $x_a$ (or $x_b$). Similar arguments apply in the case where $\mathcal{K}_a=\mathcal{K}_b\neq\mathcal{K}_c$.

    Let $E_{w,w'}$ be the event that every site in $\Upphi_0(w) \cup \Upphi_0(w')$ has degree at most $2$. Due to the properties of the homogeneous Poisson point process, one can choose $s_0> \bar{s}$ such that, for $t_0 \geq s_0$, $\p(E_{w,w'})< \mathsf{p}_0\cdot\varepsilon/2$. Therefore, 
\begin{align*}
  &\p\left(\text{item } \eqref{intermediate.condition} \text{ does not hold true for a }t_0 \geq s_0 \mid \upxi(0)\neq \varnothing \text{ and }\upzeta(0)\neq \varnothing\right) \\ & \hspace{5cm}
  \leq \p(\Gamma^c \cup \Uptheta_{w,w'})/\mathsf{p}_0 + \p(E_{w,w'})/\mathsf{p}_0 < \varepsilon ,
\end{align*}
which yields the desired conclusion.
\end{proof}

\subsection{Control of invasion times}

One of the key aspects of the competition model is controlling the invasion dynamics within the graph. We will utilize its correspondence to random walks to derive the necessary bounds that enable effective control of the invasion.

Consider $S^u_t$ to be a continuous-time simple random walk on a graph ${\G'=(V',\mathcal{E}')}$ with . Let $\deg_{\max}(\mathcal{G}')$ be the maximum vertex degree of $\mathcal{G}'$ and denote by $D'(u, v)$ the graph metric between $u, v \in V'$. The following lemma is a straightforward consequence of Corollary 11 of Davies \cite{davies1993} by using heat kernel techniques (see \cite{barlow2017} for details).

\begin{lemma} \label{lm:heat.kernel} 
    Let $\G'$ be a graph with bounded vertex degree, then
        \[\p(S^u_t=v) \leq \operatorname{deg}_{\max}(\mathcal{G}')\cdot\exp\left(-\frac{D'(u,v)^2}{2t} \left(1-\frac{D'(u,v)^2}{10t^2}\right)\right).\]
\end{lemma}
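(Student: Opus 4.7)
The plan is to invoke a Davies-type Gaussian heat kernel estimate for continuous-time random walks on bounded-geometry graphs, namely Corollary~11 of \cite{davies1993} (with the detailed write-up given in \cite{barlow2017}), and then specialize to the announced closed form. The starting point is to view $(S_t^u)_{t\ge 0}$ as the reversible continuous-time Markov chain on $V'$ whose generator $L$ is bounded and self-adjoint on $\ell^2(V',\mu)$ for an appropriate reversible measure $\mu$. Because $\deg_{\max}(\G') < \infty$, one has $\mu(v) \le \deg_{\max}(\G')$ after the standard normalization, and the announced prefactor $\deg_{\max}(\G')$ arises from the conversion $\p(S_t^u = v) = \mu(v)\,p_t(u,v) \le \deg_{\max}(\G')\,p_t(u,v)$ between the transition probability and the $\ell^2$ heat kernel $p_t$.

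Next, I would apply Davies' exponential perturbation method inside Corollary~11 of \cite{davies1993}: for any $\alpha > 0$ and the $1$-Lipschitz function $\phi(w):=-D'(w,v)$ on $(V',D')$, conjugate $e^{tL}$ by $M_\alpha := e^{\alpha\phi}$, bound the operator norm of the symmetrized conjugated generator by a Schur estimate combined with the bounded-degree hypothesis, and apply Cauchy--Schwarz to reach
\[p_t(u,v)\;\le\;\exp\!\bigl(-\alpha\,D'(u,v)+t\,[\cosh\alpha-1]\bigr)\qquad\text{for every }\alpha>0.\]

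Third, I would optimize in $\alpha$ by the standard choice $\sinh\alpha = D'(u,v)/t$; the exponent then becomes $t\,\Psi(D'(u,v)/t)$ with $\Psi(\xi) := \sqrt{1+\xi^2} - 1 - \xi\operatorname{arcsinh}(\xi) = -\xi^2/2 + \xi^4/24 + O(\xi^6)$. Substituting $\xi = D'(u,v)/t$, the exponent equals $-D'(u,v)^2/(2t) + D'(u,v)^4/(24\,t^3) + O(D'(u,v)^6/t^5)$, and rewriting this as $-\tfrac{D'(u,v)^2}{2t}\bigl(1 - \tfrac{D'(u,v)^2}{10t^2}\bigr)$ after relaxing the Taylor coefficients delivers the announced form.

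The main obstacle I expect is this last step of analytic massaging: one must verify that replacing the sharp Taylor coefficient $1/12$ (coming from $\xi^4/24$) by the looser $1/10$ absorbs the full higher-order remainder of the Legendre-transformed exponent in the relevant range. In the regime where $D'(u,v)^2$ exceeds a suitable multiple of $t^2$, the right-hand side of the lemma already exceeds $\deg_{\max}(\G') \ge 1$ and dominates any probability trivially, so the analytic check only needs to be carried out in the near-Gaussian range, where it reduces to a single-variable inequality on the explicit function $\Psi$.
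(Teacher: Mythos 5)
Your proposal is correct and takes essentially the same route as the paper: the paper gives no argument beyond citing Corollary~11 of Davies \cite{davies1993} (with \cite{barlow2017} for details), which is exactly the tilted heat-kernel bound $\exp(-\alpha D'(u,v)+t(\cosh\alpha-1))$ you reconstruct, optimized at $\sinh\alpha=D'(u,v)/t$, together with the measure conversion producing the $\deg_{\max}(\G')$ prefactor. Your final ``massaging'' step is in fact unconditional: the inequality $\sqrt{1+\xi^2}-1-\xi\operatorname{arcsinh}(\xi)\le-\tfrac{\xi^2}{2}\bigl(1-\tfrac{\xi^2}{10}\bigr)$ holds for every $\xi\ge 0$ (its derivative $\xi-\xi^3/5-\operatorname{arcsinh}(\xi)$ is negative for $\xi>0$), so no case split on the size of $D'(u,v)/t$ is needed.
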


We apply this result in the proof of the lemma below. First, we define the random ball $\bar{B}_D(x, s):=\{y\in\Hh \colon D(x,y) \leq s\}$. The following result bounds the probability that a region occupied by one species will be invaded within a given time, in relation to its volume.

\begin{lemma} \label{lm:control.invasion}
    Let $x \in \R^d$ and $\rho > 0$ with $B(x,\rho^{\mathfrak{b}}) \cap \Hh \subseteq \upxi(0)~$. Then there exist constants $C,C'>0$ such that, for all $t \in [0,\rho]$,
    \begin{equation} \label{eq:invasion.q(x)}
        \p\big(q(x) \in \upzeta (t)\big) \leq C \exp\left( -C' ~\rho^{2{\mathfrak{b}}-3/2} \right).
    \end{equation}
\end{lemma}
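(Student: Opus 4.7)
The plan is to prove this in three steps: couple the competition with a pure voter model that is easier to analyze, invoke random walk duality, and estimate the resulting tail via the heat kernel bound of \cref{lm:heat.kernel}. I will first construct both processes on the same Harris graphical representation, using independent rate-$1$ Poisson processes of arrows on each directed edge of $\Hh$. In parallel with the competition, I run a pure voter model $\upchi^{\mathrm{V}}$ driven by the same arrows, initialized with blue set $\Hh\setminus\upxi(0)$ (so that empty sites of the competition are identified with blue sites of the voter) and red set $\upxi(0)$. A case-by-case inspection of the nine combinations of source/target states at an arrow firing shows that the only discrepancy between the two dynamics occurs when an arrow originates from an empty source onto a red target: the voter then turns the target blue while the competition leaves it red. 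All other cases are compatible, so inductively the voter always carries at least as much blue as the competition, giving
\[\p\big(q(x)\in\upzeta(t)\big)\leq\p\big(q(x)\text{ is blue in }\upchi^{\mathrm{V}}(t)\big).\]

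Next I will apply the classical coalescing random walk duality for the voter model on $\Hh$ to obtain
\[\p\big(q(x)\text{ is blue in }\upchi^{\mathrm{V}}(t)\big)=\p\big(S^{q(x)}_t\in\Hh\setminus\upxi(0)\big),\]
where $(S^{q(x)}_\tau)_{\tau\geq 0}$ is the continuous-time random walk on $\Hh$ started at $q(x)$ that traverses each incident edge at rate $1$. Since $B(x,\rho^{\mathfrak b})\cap\Hh\subseteq\upxi(0)$, this reduces the problem to bounding $\p\big(S^{q(x)}_t\notin B(x,\rho^{\mathfrak b})\big)$ uniformly in $t\in[0,\rho]$.

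To bound this random walk tail I will rely on \cref{lm:heat.kernel}. Every jump of $S^{q(x)}$ has Euclidean length at most $r$, so $D(q(x),v)\geq\rho^{\mathfrak b}/(2r)$ for every $v\notin B(x,\rho^{\mathfrak b})$ on a high-probability event (using \cref{lm:ball.intersecting.infinite.comp} to guarantee $\|q(x)-x\|\leq\rho^{\mathfrak b}/2$). The obstacle is that $\Hh$ has unbounded degrees, so I introduce a truncation: fix a small $\delta>0$, set $R:=2\rho^{\mathfrak b}$ and $d^\ast:=\rho^{\,2\mathfrak b-3/2+\delta}$, and define
\[\mathcal G:=\big\{\deg_\Hh(v)\leq d^\ast\text{ for every }v\in\Hh\cap B(x,R)\big\}.\]
A Poisson concentration bound on the degrees, union-bounded over the $O(R^d)$ points of $\Hh$ in $B(x,R)$, should give $\p(\mathcal G^c)\leq\exp(-c\rho^{\,2\mathfrak b-3/2})$ for the appropriate choice of $\delta$. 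On $\mathcal G$ I couple $S^{q(x)}$ with a walk on the truncated subgraph $\Hh\cap B(x,R)$ (of maximum degree $d^\ast$) up to its exit time from $B(x,R)$, and apply \cref{lm:heat.kernel} to the truncated walk. Using $D^2/(2t)\geq\rho^{\,2\mathfrak b-1}/(8r^2)$ and $D^2/t^2=o(1)$, summing the resulting Gaussian tails over $v\in(B(x,R)\setminus B(x,\rho^{\mathfrak b}))\cap\Hh$ should yield a bound of order $\exp(-c'\rho^{\,2\mathfrak b-1})$, which dominates the target $\exp(-c\rho^{\,2\mathfrak b-3/2})$ because $2\mathfrak b-1>2\mathfrak b-3/2$. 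The event that the walk escapes $B(x,R)$ before time $t$ is handled by iterating the same heat kernel bound over dyadic shells and contributes a term of comparable or smaller order.

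The main obstacle will be the coupling in the first step, because the competition has three states while the voter has only two and one must check, across the full case analysis of arrow firings, that the identification of empty with blue really produces a one-sided inequality throughout the trajectory. A secondary technical point is tuning the parameters $d^\ast$ and $R$ in the third step so that both the degree-failure probability and the truncated heat kernel sum fit under $\exp(-c\rho^{\,2\mathfrak b-3/2})$, which is precisely what fixes the exponent in the stated bound.
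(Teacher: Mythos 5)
Your overall route matches the paper's: reduce, by monotonicity/coupling, to a fully occupied voter configuration with $\upzeta(0)=\Hh\setminus B(x,\rho^{\mathfrak{b}})$, pass to random--walk duality, and bound the walk's displacement by the Davies estimate of \cref{lm:heat.kernel} after a degree truncation, absorbing the auxiliary Poisson and volume estimates. Your first two steps are fine (the Harris coupling you sketch is essentially the stochastic domination the paper invokes), and your use of the exact single-site dual identity, which reduces everything to the fixed-time probability $\p\big(S^{q(x)}_t\notin B(x,\rho^{\mathfrak{b}})\big)$, is actually tidier than the paper's treatment, which dominates the invasion by a family of at most $e\rho^{2}$ time-reversed walks emanating from $q(x)$ (events $E_1',E_2'$) and therefore also needs an exit-time estimate obtained by a strong Markov reflection argument in \cref{cl:first.exit}.

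The genuine gap is in your third step. \cref{lm:heat.kernel} requires a graph of bounded degree, so it can only be applied after the whole trajectory up to time $t\le\rho$ has been confined to a region where degrees are controlled, and your region $B(x,R)$ with $R=2\rho^{\mathfrak{b}}$ is far too small for any a priori confinement: even with bounded degrees the walk makes of order $\rho$ jumps by time $\rho$, and on your degree event of order $\rho\, d^\ast$ jumps, each of Euclidean length up to $r$, while only about $\rho^{\mathfrak{b}}/r$ jumps suffice to cross the annulus (recall $\mathfrak{b}<1$). Hence the coupling with the truncated walk up to the exit time of $B(x,R)$, with the escape handled by iterating over dyadic shells, is circular as stated: controlling the escape from each shell is exactly the displacement estimate being proved, at every scale the Davies bound again needs a degree bound on a growing region, and for vertices whose graph distance exceeds a constant multiple of $t$ the quoted bound degenerates. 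The paper's proof supplies the missing device: it works in the much larger box $x+c''\rho^{2}[-1,1]^{d}$, where a degree bound $\sqrt{\rho}$ (event $E_2$) together with a jump count $<e\rho^{2}$ (event $E_3$) confines the walk to the box deterministically on a high-probability event; only then is \cref{lm:heat.kernel} applied, combined with \cref{prop:Hn.growth} and the comparison of graph and Euclidean balls, to get a rate $\exp(-c\rho^{2\mathfrak{b}-1})$, and all failure probabilities are absorbed since $2\mathfrak{b}-\tfrac32<\min\{\tfrac12,\mathfrak{b}\}$. Relatedly, your cutoff $d^\ast=\rho^{2\mathfrak{b}-3/2+\delta}$ is a red herring: degrees are Poisson with constant mean, so a cutoff like $\sqrt{\rho}$ already fails with probability far below the target, the prefactor $\deg_{\max}$ in the heat kernel costs only a polynomial factor, and the exponent $2\mathfrak{b}-\tfrac32$ in the statement is not produced by the truncation but is a weakening of the rate $\rho^{2\mathfrak{b}-1}$ coming from $D^{2}/t\gtrsim\rho^{2\mathfrak{b}}/\rho$.
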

\begin{proof}
     It suffices to consider by stochastic domination that $\upxi(0) = B(x,\rho^{\mathfrak{b}}) \cap \Hh$ and $\upzeta(0) = \Hh\setminus \upxi(0)$. Recall that the dual process of the voter model is the coalescent random walk. Therefore, one can prove the lemma verifying properties of random walks on the infinite component. Let us denote by $S_t^{x'}$ a continuous-time simple random walk on $\Hh$ starting at $q(x')$. Set 
     \[\uptau^x_s := \inf \{t>0 : S_t^x \not \in B(x,s^{\mathfrak{b}})\}.\]
     We will verify \eqref{eq:invasion.q(x)} applying the following claim.
     
    \begin{claim} \label{cl:first.exit}
         There exist $c_1',c_2'>0$ such that
        \begin{equation} \label{eq:first.exit.p.bound}
            \p\left(\uptau^x_\rho < t\right) \leq c_1' \exp\left( -c_2' ~\rho^{2{\mathfrak{b}}-1} \right).
        \end{equation}
    \end{claim}
    \begin{proof} \renewcommand\qedsymbol{$\blacksquare$}
    Consider w.l.o.g. that $\rho \geq 1$, we will treat the case $\rho <1$ will separately. In order to verify \eqref{eq:first.exit.p.bound}, we will first study $S_t^x$ on events with suitable properties for $\Hh$. The strategy is to control the distribution of points and vertex degrees within a given region of the graph.
    
    By Theorem 2.2 of \cite{yao2011} and Palm calculus, there exist $c'_0>1$ and $c''_1,c''_2>0$ such that, for all $s\geq1$,
    \begin{equation} \label{eq:D.Euclid.Lm.Yao}
        \p\big(B(x,s)\cap\Hh \not\subseteq \bar{B}_D(x, c'_0s)\big) \leq c''_1\exp(-c''_2 s).
    \end{equation}
    Set
    \[
        E_1 := \left\{\|q(x)-x\|<\rho^{\mathfrak{b}}/2,~ B(x,\rho^{\mathfrak{b}})\cap\Hh \subseteq \bar{B}_D(x, c'_0~\rho^{\mathfrak{b}}) \right\}.
    \]
    Let $c'':= \frac{2e+1}{2}c_0'r$ and set $\widetilde{\mathbbm{B}}(x,\rho) := x+c''\rho^2[-1,1]^d$ and denote by $\deg(u)$ the vertex degree of $u \in \Hh$. Write $\Hh_{x,\rho}$ for the subgraph of $\Hh$ restricted to the vertices $\widetilde{\mathbbm{B}}(x,\rho) \cap \Hh$. 
    Define 
    \[
        E_2 := \big\{\deg_{\max}(\Hh_{x,\rho}) \leq \sqrt{\rho}~\big\}.
    \]
    Note that $\widetilde{\mathbbm{B}}(x,\rho)$ can be embedded in a partition with $\left\lceil \frac{2e+1}{4}c_0'\rho^2\right\rceil^d$ hypercubes whose side has length $2r$. We obtain an upper bound for $\p(E_2^c)$ by considering the event in which the PPP assigns more than $\sqrt{\rho}/3^d$ points to at least one of the above-referred hypercubes. By Chernoff bound, if $X \sim \operatorname{Poi}(\lambda')$ with respect to $P$, then $P(X \geq s) \leq \exp\left(\lambda'(e^{s'}-1)-s's\right)$. Let $s'=\log(s/\lambda')>0$ in the previous inequality, then $P(X \geq s)< (e \cdot \lambda'/s)^s$ for all $s>0$. Hence,
    \begin{equation} \label{eq:p.E2.comp.bound}
        \p\big(E_2^c\big) \leq (3c_0')^d \rho^{2d} \left( \frac{e \cdot 6^d r^d }{\sqrt{\rho}}\right)^{\sqrt{\rho}}.
    \end{equation}

    It is a well known fact that $\deg_{\max}(\Hh)$ is $\p$-a.s. unbounded. Let $S_t^{x,\rho}$ stand for $S_t^x$ restricted to $\Hh_{x,\rho}$. Denote by $\mathsf{N}^{x}_\rho$ the number of jumps performed by $S^{x,\rho}_t$ up to time $\rho$. Let $\mathsf{N}(\rho)$ be the counting of the homogeneous PPP on $[0,\rho]$ with rate $\sqrt{\rho}$ such that $\mathsf{N}^x_\rho \le \mathsf{N}(\rho)$ on $E_2$. 
    
    Define $E_3 := \{\mathsf{N}(\rho) < \rho^2 \cdot e\}$ and let $E := E_1\cap E_2 \cap E_3$. Then, by the same inequality used above for the Poisson distributions, one has that
    \begin{equation} \label{eq:p.E3.c.bound}
        \p(E_3^c) \leq \rho^{-\rho^2/2}.
    \end{equation}
    Observe that, since $B(x, \rho^{\mathfrak{b}}/2+r\cdot e\cdot c_0' \rho^2 +r) \subseteq \widetilde{\mathbbm{B}}(x,\rho)$, it follows that $S_t^x = S_t^{x,\rho}$ on $E$ for all $t \in [0, \rho]$. We will estimate
    \begin{equation} \label{eq:St.ball.bound}
        \p\left(S_t^x \not \in B\left(x,\frac{\rho^{\mathfrak{b}}}{2}\right)\right) \le \p\left(\left\{S_t^{x,\rho} \not \in \bar{B}_D\left(x,\frac{c_0'}{2}\rho^{\mathfrak{b}}\right)\right\} \cap E_1 \cap E_2 \right) + \p(E^c).
    \end{equation}
    
    By \cref{lm:heat.kernel}, one has that, for all $u,v \in \Hh_{x, \rho}$,
    \[
        \p(S_t^{u,\rho}=v \mid E_1\cap E_2) \leq \deg_{\max}(\Hh_{x,\rho})\mathsf{q}_t\big(q(x),v\big)
    \]
    with
    \[
        \mathsf{q}_t\big(q(x),v\big) \le 2\sqrt{10}c_0'\rho^{\mathfrak{b}} \cdot\exp\left( - \frac{D\big(q(x),v\big)^2}{4t}\right)
    \]
    Let us write $\mathlcal{S}(x,\rho):= \left\{S_t^{x,\rho} \not \in \bar{B}_D\left(x,\frac{c_0'}{2}\rho^{\mathfrak{b}}\right)\right\} \cap E_1 \cap E_2$  and set ${\mathlcal{V}_{x,\rho} := \Hh_{x,\rho}\setminus \bar{B}_D(x,\frac{c_0'}{2}\rho^{\mathfrak{b}})}$. Then
    \begin{align*}
        \p\big(\mathlcal{S}(x,\rho) \big) &\le \E\left[\#(\mathlcal{V}_{x,\rho}) \cdot \sqrt{\rho} \cdot \sup_{v \in \mathlcal{V}_{x,\rho}}\big\{ \mathsf{q}_t(x,v)\big\} \cdot \mathbbm{1}_{(E_1\cap E_2)}\right]\\
        &\leq 2\sqrt{10}(e+1)^d \rho^{2d+{\mathfrak{b}}+1/2}\cdot\exp\left( -\frac{c_0'}{8} ~\rho^{2{\mathfrak{b}}-1} \right).
    \end{align*}
    
    We combine the result above with \cref{prop:Hn.growth}, \eqref{eq:D.Euclid.Lm.Yao}, \eqref{eq:p.E2.comp.bound}, \eqref{eq:p.E3.c.bound}, and \eqref{eq:St.ball.bound} to verify the existence of $c_1',c_2'>0$ such that, for all $x \in \R^d$ and $\rho \geq 1$ fixed,
    \[
        \p\left(S_t^x \not \in B\left(x,\frac{\rho^{\mathfrak{b}}}{2}\right)\right) \le  \frac{c_1'}{2} \exp\left(-c_2' \rho^{2{\mathfrak{b}}-1}\right) =: \mathsf{p}_\rho.
    \]
    
    We now turn to the proof of \eqref{eq:first.exit.p.bound}. Let $S_\uptau:= S^x_{\uptau_\rho^x}$. Note that, since $S_\uptau$ is in the boundary of the subgraph $ \Hh \cap B(x,\rho^{\mathfrak{b}})$,
    \begin{align*}
        \p\left(\uptau^x_\rho < t\right) &= \p\big(\uptau^x_\rho < t, S^x_t \not\in B(x,\rho^{\mathfrak{b}}/2) \big) + \p\big(\uptau^x_\rho < t, S^x_t \in B(x,\rho^{\mathfrak{b}}/2)\big)\\
        &\leq \mathsf{p}_\rho + \E\left[ \mathbbm{1}_{(\uptau^x_\rho < t)} \p\big(S^{S_\uptau}_{t-\uptau^x_\rho} \in B(x,\rho^{\mathfrak{b}}/2)\big) \right]\\
        &\leq \mathsf{p}_\rho + \E\left[ \mathbbm{1}_{(\uptau^x_\rho < t)} \max_{y \in \partial B(x,\rho^{\mathfrak{b}}) \cap \Hh} \sup_{s \in [0, \rho]}\p\big(S^y_{s} \not\in B(y,\rho^{\mathfrak{b}} /2)\big) \right]\\
        &\leq \mathsf{p}_\rho + \mathsf{p}_\rho \cdot \p\left(\uptau^x_\rho < t\right).
    \end{align*}
    
    It then follows that $\p\left(\uptau^x_\rho < t\right) \le 2\mathsf{p}_\rho$ which proves \eqref{eq:first.exit.p.bound} for $\rho\geq 1$.
    The case $\rho<1$ is covered by adjusting $c_1'$ and the proof is complete.
    \end{proof}
    
    Consider now the coalescent random walks dominating the invasion dynamics by duality. Our aim is to determine if the blue species invades $q(x)$ up to time $t\le\rho$. Observe that any admissible path of invasion is stochastically dominated by the existence of a time-reversed random walk $S^x_{t'}$ with $t'\in[0,t]$ reaching $\upzeta$.
    
    Regard $q(x)$ as a source of random walks $\{S^{x, (i)}_{t'}: t' \in [0,t], i \in \N\}$. Set $\mathbf{n}(x)$ to be the set of neighbouring sites of $q(x)$. Write $\mathsf{N}_{x,y}^{[t]}$  for the number of points of the PPP with respect to the edge $\{q(x),y\}$ on $[0,t]$. The number of offspring of random walks starting at $q(x)$ is $\sum_{y \in \mathbf{n}(x)}\mathsf{N}^{[t]}_{x,y}$. Define the events
    \[
        E_1':=\big\{\deg\big(q(x)\big)< \sqrt{\rho}\big\}, \quad \text{and} \quad E_2':=\left\{\forall y \in \mathbf{n}\big(q(x)\big)\left(\mathsf{N}^{[t]}_{x,y}< e\cdot \rho^{3/2}\right)\right\}
    \]
    
    By applying the same upper-bounds for Poisson distributions used in the proof of \cref{cl:first.exit}, 
    \[
        \p\left((E_1')^c\right)\leq \left(\frac{ 2^d r^d }{\sqrt{\rho}}e\right)^{\sqrt{\rho}}, \quad \p\left((E_2')^c\cap E_1\right)\le \rho^{-\rho^{3/2}}.
    \]
    
    Let $E' := E_1' \cap E_2'$. then one has by \cref{cl:first.exit} that
    \[
        \p\left(\{x \in \upzeta(t)\} \cap E'\right) \le e \cdot \rho^2 \cdot c_1' \exp\left( -c_2' \rho^{2 {\mathfrak{b}} -1}\right).
    \]
    We complete the proof of \eqref{eq:invasion.q(x)} by choosing suitable $C, C'>0$.
\end{proof}

The established lemma above provides a method to control the invasion times in a region occupied by another species. The next result offers a more refined approach to studying the competition dynamics of the model. First, let us introduce some notation. Let $\bar{\upxi}, \bar{\upzeta} \subseteq \R^d$ be two disjoint sets and denote
\[P_{\bar{\upxi}, \bar{\upzeta}}(\cdot):=\p\left(~\cdot \mid\left\{\upxi(0)=\bar{\upxi}\cap\Hh, ~\upzeta(0)=\bar{\upzeta}\cap\Hh\right\}\cap\Uptheta_{w,w'}\right)\]

\begin{figure}[htb!]
    \centering
    \includegraphics[width=0.65\linewidth]{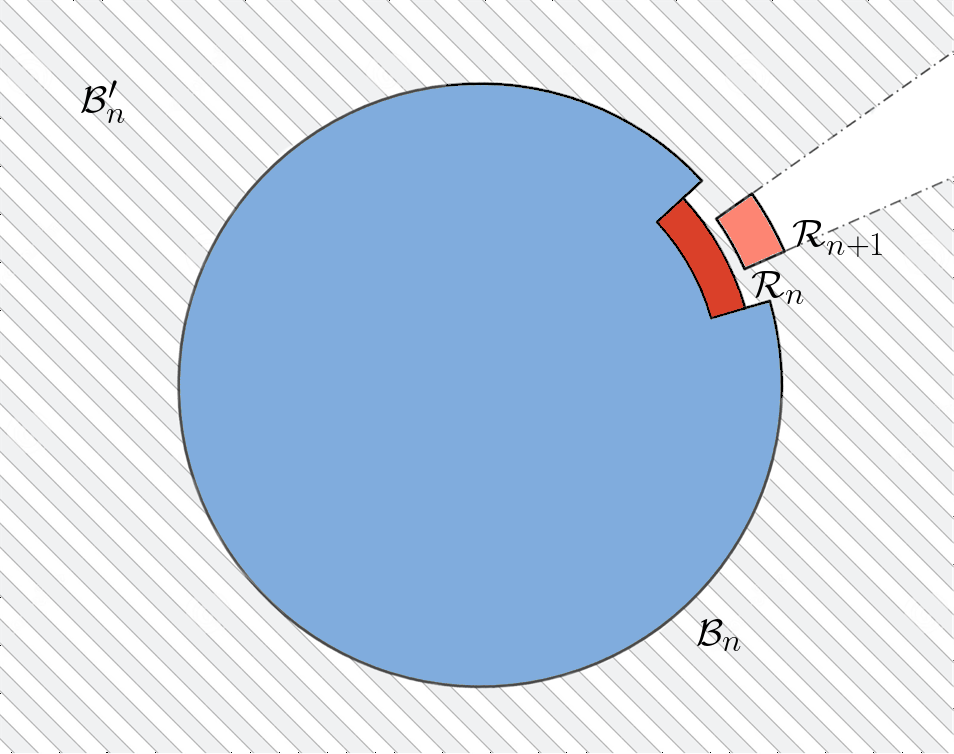}
    \caption{The regions $\mathcal{R}_n$, $\mathcal{R}_{n+1}$, $\mathcal{B}_n$, and $\mathcal{B}_n'$ before intersecting $\Hh$.}
\end{figure}
Furthermore, let us consider the following regions of $\Hh$:
\begin{align*}
\mathcal{R}_n&:=\Upphi_n(w)\\
\mathcal{B}_n &:= \left(\upvarphi \cdot B\left(o,\tfrac{1}{1+\mathfrak{d}}t_n\right) \cup\left( \left.\upvarphi \cdot B\left(o, t_n+ (t_n)^{\mathfrak{b}}\right)\right\backslash \operatorname{Cone}(w, r_n)\right)\right)\cap\Hh\text{, and}\\
    \mathcal{B}_n' &:= \left(\upvarphi \cdot B(o,t_n) \cup \big(\operatorname{Cone}(w, r_{n+1})\big)^c\right) \cap \Hh.
\end{align*}

Now, we can obtain the following property regarding the competition model.

\begin{lemma} \label{lm:control.species.growth}
    There exist constants $C,C'>0$  such that the following holds for all  $t_0>\bar{s}$. If ~$~\mathcal{R}_n \subseteq \bar{\upxi}$ and $\bar{\upzeta} \subseteq \mathcal{B}_n$, then
    \begin{equation} \label{eq:upper.bdn.blue.steps}
        P_{\bar{\upxi},\bar{\upzeta}}\big(\upzeta(\mathfrak{d}t_n)\not\subseteq \mathcal{B}_n'\big) \leq C t_n^{3d}\exp\big(-C'(t_n)^{2\mathfrak{b}-3/2}\big).
    \end{equation}
\end{lemma}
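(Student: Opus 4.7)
The bad event $\{\upzeta(\mathfrak{d}t_n)\not\subseteq\mathcal{B}_n'\}$ occurs iff some vertex of the forbidden region $\mathcal{F}_n:=\operatorname{Cone}(w,r_{n+1})\cap(\upvarphi B(o,t_n))^{c}\cap\Hh$ is blue at time $\mathfrak{d}t_n$. My plan is to run a union bound over $\mathcal{F}_n$, feeding each term through a mild generalization of \cref{lm:control.invasion}. The geometric input we need is that, for every $v\in\mathcal{F}_n$, the ball $B(v,(\mathfrak{d}t_n)^{\mathfrak{b}})$ is disjoint from $\mathcal{B}_n$ (and therefore from $\bar{\upzeta}\subseteq\mathcal{B}_n$) once $t_{0}$ is large.

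For the geometric estimate I would split $\mathcal{B}_n$ into its two pieces. The distance from $v$ to the inner ball $\upvarphi B(o,t_n/(1+\mathfrak{d}))$ is at least $\upvarphi\mathfrak{d}\,t_n/(1+\mathfrak{d})$, which is of order $t_n$ and hence dominates $(\mathfrak{d}t_n)^{\mathfrak{b}}$. For the second piece $\upvarphi B(o,t_n+(t_n)^{\mathfrak{b}})\setminus\operatorname{Cone}(w,r_n)$, the nearest point of $\mathcal{B}_n$ lies either on the cone boundary at angle $r_n$---in which case the separation is at least the angular gap $\|v\|\sin(r_n-r_{n+1})\gtrsim(t_n)^{\mathfrak{a}}$---or on the corner where the cone meets the outer sphere, in which case a direct computation gives a separation of order $(t_n)^{2\mathfrak{a}-1}$. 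The hypotheses $\mathfrak{a}\in(3/4,1)$ and $\mathfrak{b}<2\mathfrak{a}-1$ (which force $\mathfrak{b}<\mathfrak{a}$) then make each of these lower bounds exceed $(\mathfrak{d}t_n)^{\mathfrak{b}}$ for $t_{0}$ large.

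Given the geometric input, I would follow the scheme of the proof of \cref{lm:control.invasion}, replacing its hypothesis ``$B(v,\rho^{\mathfrak{b}})\cap\Hh\subseteq\upxi(0)$'' by the weaker condition ``$B(v,\rho^{\mathfrak{b}})\cap\bar{\upzeta}=\varnothing$''. Using stochastic domination on the Harris graphical construction---an empty site only adopts a coloured neighbour's colour, so enlarging the initial blue population at empty or red sites can only enlarge $\upzeta(t)$---we reduce to the extremal case $\upxi(0)=B(v,(\mathfrak{d}t_n)^{\mathfrak{b}})\cap\Hh$ and $\upzeta(0)=\Hh\setminus\upxi(0)$. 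The duality with coalescing random walks then dominates the invasion event by the exit of the random walk from $q(v)$ out of $B(v,(\mathfrak{d}t_n)^{\mathfrak{b}})$ within time $\mathfrak{d}t_n$, so that \cref{cl:first.exit} applied with $\rho=\mathfrak{d}t_n$, combined with the random-walk offspring count at the end of the proof of \cref{lm:control.invasion}, yields
\[
    P_{\bar{\upxi},\bar{\upzeta}}\bigl(q(v)\in\upzeta(\mathfrak{d}t_n)\bigr)\leq C\exp\!\bigl(-C'(t_n)^{2\mathfrak{b}-3/2}\bigr)
\]
for every $v\in\mathcal{F}_n$.

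To finish I would localize the union bound. Since $\upzeta$ is stochastically dominated by a single-species Richardson process started from $\bar{\upzeta}\subseteq\upvarphi B(o,t_n+(t_n)^{\mathfrak{b}})$, \cref{thm:speed.FPP} together with \cref{thm_new_moderate_deviations} places $\upzeta(\mathfrak{d}t_n)$ inside $\upvarphi B(o,(1+2\mathfrak{d})t_n)$ off an event whose probability is dominated by the target bound; by \cref{prop:Hn.growth} the set $\mathcal{F}_n\cap\upvarphi B(o,(1+2\mathfrak{d})t_n)\cap\Hh$ contains at most $O(t_n^{d})$ vertices, and union-bounding the per-site estimate absorbs all remaining polynomial losses inherited from \cref{lm:control.invasion} and \cref{prop:Hn.growth} into the prefactor $t_n^{3d}$. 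The hard part I expect is carefully justifying the Harris-graphical coupling and the attendant monotonicity at empty sites for the mixed Richardson-voter dynamics; this coupling is precisely what lets us replace the all-red ball hypothesis of \cref{lm:control.invasion} by the much weaker blue-free ball condition that the geometric estimate supplies.
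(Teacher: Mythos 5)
There is a genuine gap, and it sits exactly where you anticipated trouble: the ``mild generalization'' of \cref{lm:control.invasion} with the all-red ball replaced by a blue-free ball is false as a standalone statement, and the proposed stochastic-domination reduction does not prove it. In \cref{lm:control.invasion} the protection comes from the fact that blue can only enter the ball by \emph{invading} red territory, which by voter duality forces a dual random walk to cross the ball within time $\rho$. If the ball around $v$ is merely free of blue but largely \emph{unoccupied}, blue does not need to invade anything: it colonizes vacant sites at Richardson speed, i.e.\ it advances a Euclidean distance of order $\upvarphi\mathfrak{d}t_n$ in time $\mathfrak{d}t_n$, while the separation between $\mathcal{F}_n$ and $\bar{\upzeta}\subseteq\mathcal{B}_n$ is only of order $t_n^{\mathfrak{a}}\ll \mathfrak{d}t_n$ (and the hypotheses $\mathcal{R}_n\subseteq\bar\upxi$, $\bar\upzeta\subseteq\mathcal{B}_n$ leave the annular corridor at radii beyond $\upvarphi(t_n-t_n^{\mathfrak{b}})$ possibly vacant). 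A concrete counterexample to the generalized lemma: take $\bar\upzeta$ to be a blue shell at distance between $\rho^{\mathfrak b}$ and $2\rho^{\mathfrak b}$ from $v$ and no red nearby; the ball is blue-free, yet $q(v)$ is blue by time $\ll\rho$ with probability close to one, while the ``extremal'' configuration (ball all red, complement all blue) gives an exponentially small probability. This also shows the coupling step fails: passing from the actual configuration to the extremal one requires recoloring vacant sites in the ball red, which is \emph{not} monotone-increasing for blue under the Harris order (red can fight back and colonize, vacancies cannot), so $P_{\text{actual}}\le P_{\text{extremal}}$ is simply not available.

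What is missing is the \emph{race} argument, which is the actual content of the paper's proof (an adaptation of Lemma~3 of Kordzakhia and Lalley \cite{kordzakhia2005}): one must first show that, with high probability, red — starting from $\mathcal{R}_n$, which is strictly closer to every point of $\mathcal{F}_n$ than blue is — occupies a protective layer of width of order $t_n^{\mathfrak b}$ around each forbidden site \emph{before} blue can arrive there. This is where the uniform curvature of the Euclidean limit ball (KL's Claim~1, automatic here since the shape is a ball), the exponent constraints $\mathfrak a>3/4$, $\mathfrak b<2\mathfrak a-1$, $0<2\mathfrak b-\tfrac32<\mathfrak b-\tfrac12$, the polynomial growth bound \cref{prop:Hn.growth}, and the moderate deviations \cref{thm_new_moderate_deviations} enter: the deterministic gap between red's and blue's distances to the forbidden sites must dominate the passage-time fluctuations $\sqrt{t_n}\log t_n$. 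Only after this race is won can \cref{lm:control.invasion} be invoked \emph{as stated}, with a genuinely all-red ball, to rule out invasion in the remaining time (KL's Claim~4), and then your union bound over the $O(t_n^{d})$ relevant vertices closes the argument. Your geometric separation estimates and the localization/union-bound step are fine, but without the race component the per-site bound you assert has no proof.
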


\begin{proof}
    The demonstration closely follows the proof of Lemma 3 in \cite{kordzakhia2005}, with an emphasis on the key differences and necessary adjustments for our model with random structures.
    
    The sets $\mathcal{R}_n$, $\mathcal{R}_{n+1}$, $\mathcal{B}_n$, $\mathcal{B}_n'$ correspond to $\mathscr{R}_0^n$, $\mathscr{R}_1^n$, $\mathscr{B}_0^n$, $\mathscr{B}_1^n$, from \cite{kordzakhia2005}, respectively, scaled by $\upvarphi$ and intersected with $\Hh$. Similarly, the constants $\mathfrak{a}$, $\mathfrak{b}$, and $\mathfrak{d}$ represent $\alpha$, $\beta$, and $\delta$ in the same lemma.

    Their proof consists of five claims. Since the limiting shape is a Euclidean ball, which is uniformly curved, Claim 1 is immediately satisfied. The remaining four claims depend on the polynomial growth of $\Hh$, estimated by \cref{prop:Hn.growth}, and the moderate deviation given by \cref{thm_new_moderate_deviations}. Noting that $0 < 2\mathfrak{b} - \frac{3}{2} < \mathfrak{b} - \frac{1}{2}$ ensures that the upper bounds for the probabilities in Claims 2 and 3 hold true. In Claim 4, we replace the application of Lemma 1 in \cite{kordzakhia2005} with \cref{lm:control.invasion}, resulting in \eqref{eq:upper.bdn.blue.steps}.
\end{proof}

\subsection{The coexistence of the species}

In this subsection, we address the proof of the theorem on the coexistence of species in the competition model. This result follows from the intermediate condition, along with the lemmas obtained in the previous subsection, moderate deviations, and the shape being a Euclidean ball.

\begin{proof}[Proof of \cref{thm:coexistence}]
    First, since the sets $W$ and $W'$ that determine the initial configuration are non-empty and disjoint, $\upxi(0)$ and $\upzeta(0)$ are non-empty with strictly positive probability. Moreover, \cref{lm:intermediate.condition} ensures that the intermediate condition \eqref{intermediate.condition} holds true. We begin by finding a lower bound for the probability  $\p\big(\mathrm{Coex}(\upxi,\upzeta)\big) = P\big(\mathrm{Coex}(\upxi,\upzeta)\big) \cdot \p(\Uppsi_{w,w'})$. Suppose, w.l.o.g., that \[P(\text{red dies out})\geq P(\text{blue dies out}),\] where `\emph{red dies out}' is the event when there exists a time $t>0$ with $\upxi(t)=\varnothing$ (similar for `\emph{blue dies out}'). Note that Boole's inequality yields
    \begin{align}
        P\big(\mathrm{Coex}(\upxi,\upzeta)\big) & \geq 1 - P\left( \exists t \geq 0 \ \text{s.t.} \ \upxi(t) = \varnothing \ \text{or} \ \upzeta(t) = \varnothing \right) \nonumber\\
        & \geq 1 - 2 P\left( \exists t \geq 0 \ \text{s.t.} \ \upxi(t) = \varnothing \right). \label{eq:coex.bound}
    \end{align}
    Let us prove that, for a $\varepsilon \in (0,1/4)$,
    \begin{equation*}
        P(\text{red dies out})=P\left( \exists t \geq 0 \ \text{s.t.} \ \upxi(t) = \varnothing \right) < 2 \varepsilon
    \end{equation*}
    to establish the theorem. Let $\Gamma:=\bigcap_{n\in\N_0} \Gamma_n$ and repeat the arguments in the proof of \cref{lm:intermediate.condition} to select a large $s_0>\bar{s}$ such that, for all $t_0\geq s_0$,
    \begin{equation} \label{eq:first.small.compr.bdn}
        P(\Gamma^c) \leq \sum_{n \in \N} P\big(\Gamma_n^c\big) < \varepsilon.
    \end{equation}
    Next, by applying \cref{lm:control.species.growth}, we find a possibly larger $s_0$ so that, for all $t_0 \geq s_0$,
    \begin{equation} \label{eq:second.small.compr.bdn}
    \sum_{n \in \N_0}P_{\mathcal{R}_n,\mathcal{B}_n}\big(\upzeta(\mathfrak{d}t_n)\not\subseteq \mathcal{B}_n'\big) <\varepsilon.
    \end{equation}
   
    By stochastic domination, \eqref{eq:first.small.compr.bdn} and \eqref{eq:second.small.compr.bdn} imply
    \begin{equation*}
        P\left( \text{red dies out} \right) \leq P(\Gamma^c)+\sum_{n\in \N_0}  P\big(\{\upzeta(t_{n+1})\not\subseteq \mathcal{B}_n'\}\cap\Gamma \mid \upzeta(t_{n})\subseteq \mathcal{B}_n\big)< 2 \varepsilon.
    \end{equation*}
    
   The inequality above leads to  $\p\big(\mathrm{Coex}(\upxi,\upzeta)\big) >(1-4\varepsilon)\p(\Uppsi_{w,w'})>0$, which is the statement of the theorem.
\end{proof}

\part{Concluding Remarks and Comparative Discussion} \label{part:conclusion}

\chapter{Conclusion}

In this concluding chapter, we reflect on the exploration undertaken in this doctoral thesis, focusing on limiting shape theorems across diverse mathematical structures. Throughout our research, we have derived significant theorems that providing insights into different aspects of subadditive processes, virtually nilpotent groups, and First-Passage Percolation (FPP) models. As we delve into the main results and discussions presented in the preceding chapters,  we gain a deeper understanding of the behavior of these mathematical phenomena and their implications. Furthermore, we consider the avenues for future research, contemplating the potential advancements and challenges that lie ahead in this area of study.

\section{Limiting Shape of Subadditive Cocycles on Groups}

In \cref{part:groups}, we have successfully established the Asymptotic Shape Theorem for random subadditive processes on both nilpotent and virtually nilpotent groups (see \cref{shape.thm,thm:shape.polynomial}). By extending existing results in the literature, we have achieved a comprehensive understanding of the behavior of these processes under more relaxed growth conditions—both at least and at most linear growth. This broadening of applicability enhances the utility of our results in diverse mathematical contexts.

A noteworthy contribution of our work lies in the exploration of FPP models, a crucial class of processes meeting the considered conditions, especially the innerness property. Leveraging this, we were able to derive \cref{cor:fpp.virt.nil} for the limiting shape in FPP models, thereby extending the reach of our results to encompass this important and widely studied class of random processes.

Moreover, our presentation of examples generalizes previously known results in shape theorems. These examples illustrate scenarios where the strong restriction of $L^\infty$ cocycles is alleviated, emphasizing the versatility of our established theorems in capturing a broader range of applications.

Looking forward, possible future research may involve the exploration of other types of random variables exhibiting almost subadditive behavior. Additionally, considering point processes on nilpotent Lie groups to define random graphs opens up intriguing possibilities for further investigation. An interesting direction for future works could involve refining our theorems based on the generating set, recognizing the crucial role it plays in certain key aspects. Such refinements could leverage quasi-isometric properties, offering a more nuanced understanding of the interplay between the generating set and the behavior of random subadditive processes.

\section{Asymptotic Behaviour of FPP Models on RGGs}

In this thesis, we have made significant strides in understanding First Passage Percolation models on Random Geometric Graphs (RGGs). Our research has revealed several key findings that describe the behavior of random growth in these graph structures.

First, we established the existence of the limiting shape of FPP models on RGGs using standard techniques derived from the subadditive ergodic theorem. Building upon this foundation, we advanced our understanding by obtaining an improvement in the form of a quantitative shape theorem, or speed of convergence. This approach utilizes moderate deviations estimates, eschewing reliance on ergodic techniques. This methodology not only provided deeper insights into the convergence behavior but also paved the way for the study of geodesics within this context.

Moreover, our investigation into the fluctuations of geodesic paths and their spanning trees, inspired by the work of Howard and Newman \cite{howard2001}. The application of these results in the study of the competition model highlights the relevance of the obtained theorems and their potential implications. For instance, we applied these results to study a two-species competition model, determining the positive probability of both species coexisting indefinitely.

Looking ahead, our research suggests several avenues for future exploration. It would be particularly intriguing to explore the possibility of weakening the hypotheses governing our models, perhaps by leveraging subadditive cocycles or relaxing assumptions regarding the common distribution or independence of random variables.

In the broader context of mathematics, our findings underscore the significance of randomness in the study of RGGs. These graph structures find applications across various areas, from network analysis to spatial statistics. The techniques developed in this thesis derive as natural extensions of the study of infinite connected components, akin to bond percolation in $\Z^d$, to RGGs, contributing to our understanding of random growth.

\section{Final Remarks and Discussion}

Comparing the two main themes explored in this thesis, we observe intriguing parallels and distinctions. Both investigations contribute to our understanding of random growth processes, albeit in different mathematical contexts. The study of subadditive cocycles on groups offers insights into the behavior of random processes in algebraic structures. On the other hand, the analysis of FPP models on RGGs delves into the behavior of random growth within the framework of random graphs.

The avenues for future research outlined present exciting opportunities for further exploration and advancement. Whether it be the refinement of theorems based on generating sets in the context of subadditive cocycles on groups or the exploration of alternative growth models and hypotheses in FPP models on RGGs, there remains ample room for innovation and discovery.

For instance, one can follow the studies of Coletti, Miranda, and Mussini \cite{coletti2016boolean} and Coletti, Miranda, and Grynberg \cite{coletti2020boolean} to investigate RGGs on a Carnot group $G_\infty$ and verify if similar techniques are applicable in this context. Furthermore, Tessera \cite{tessera2018} utilized the main approach employed in \cite{benjamini2015} for nilpotent groups to study an improved version for the speed of convergence in $\mathbb{Z}^d$. This highlights the continued potential for further exploration and refinement of results in these distinguished structures.

In conclusion, this thesis contributes to the broader landscape of mathematical research by elucidating fundamental principles underlying random growth processes. By bridging the gap between algebraic structures and random graph-theoretic frameworks, we aim to facilitate interdisciplinary collaborations and foster the development of novel methodologies in the study of stochastic systems.

\appendix
\chapter{Appendix}

\section{Proof of the Approximation Bounds of FPP Models on RGGs} \label{s_appendix_proofs}

This section is dedicated to proving \cref{lem_T_t_and_T}. We assume the conditions specified in the lemma. To commence, we focus on the event where the $T^t$-geodesics are confined within a given ball.

\begin{lemma}\label{eq_new_hopcount}
	There exists~$c > 0$ such that, for any~$x \in \R^d$ with~$\|x\|$ large enough and~$t$ large enough (not depending on~$x$) with~$t \le \|x\|$, we have
	\begin{align*}
		\p \left(\begin{array}{l} \text{any path in $\mathcal G^t$ from $q(o)$ to $q(x)$ that minimizes}\\ \text{the $T^t$-passage time is contained in $B(o,\|x\|^3)$} \end{array} \right) > 1- e^{-ct}.
	\end{align*}
\end{lemma}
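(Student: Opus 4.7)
The plan is to combine a deterministic upper bound for $T^t(q(o),q(x))$ with the long-path deviation estimate of Lemma~\ref{lem_cheap_hop}: any path that wanders outside $B(o,\|x\|^3)$ must be so long that Lemma~\ref{lem_cheap_hop} forbids it from having small passage time, while a $T^t$-geodesic cannot have large passage time by the upper bound.

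First I would show that $T^t(q(o),q(x))\le C\|x\|$ holds with failure probability at most $e^{-c_1 t}$, for a suitable constant $C>0$. Using \eqref{eq_bound_qt}, one has $\|q^t(x)\|\le \|x\|+\sqrt{d}\,t/2$, so \eqref{eq_T_t_and_l1} gives the deterministic bound $T^t(o,q^t(x))\le \K\sqrt{d}\,\|x\|+C't$. Combining this with Lemma~\ref{lem_q_and_no_q} and using $t\le\|x\|$ yields the desired bound on $T^t(q(o),q(x))$. Separately, \cref{prop:Hn.growth} implies that $\|q(o)\|\le t$ with probability at least $1-e^{-c_2 t}$ for $t$ large enough.

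Next I would exploit geometry: every edge in $\mathcal{G}^t$ has Euclidean length at most $\max(r,\sqrt{d}\,t)$. Hence, on the event $\{\|q(o)\|\le t\}$, any path starting at $q(o)$ that leaves $B(o,\|x\|^3)$ uses at least $N := (\|x\|^3-t)/(\sqrt{d}\,t)$ edges, and for $\|x\|$ large with $t\le\|x\|$ this gives $N\ge \|x\|^2/(2\sqrt{d})$. If a $T^t$-minimizing path $\gamma$ from $q(o)$ to $q(x)$ exits $B(o,\|x\|^3)$, then the initial segment $\gamma'$ of $\gamma$ up to the first vertex outside this ball is a path starting in $B(o,t)$, of some length $n\ge N$, with $T^t(\gamma')\le T^t(\gamma)\le C\|x\|$. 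For $\|x\|$ large, the constant $\delta$ of Lemma~\ref{lem_cheap_hop} satisfies $\delta n\ge \delta N \ge C\|x\|$ for every $n\ge N$, so the existence of such a $\gamma'$ is bounded by a union bound over $n\ge N$:
\[
\sum_{n\ge N}\,\p\bigl(\exists\text{ path }\tilde\gamma\text{ from }B(o,t)\text{ with }|\tilde\gamma|=n,\; T^t(\tilde\gamma)\le \delta n\bigr)\;\le\; \sum_{n\ge N}\frac{t^d}{2^n}\;\le\; \frac{2\,t^d}{2^N}.
\]

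The main obstacle is the arithmetic of matching exponents: the three failure probabilities $e^{-c_1 t}$, $e^{-c_2 t}$, and $2t^d\,2^{-N}$ must each be bounded by $e^{-ct}$, which requires $N\log 2 \gg t + d\log t$. This is automatic from $t\le \|x\|$ with $\|x\|$ large, since then $N\gtrsim \|x\|^2\ge t^2$. Adding up these three contributions yields the claimed bound $e^{-ct}$ for a suitable $c>0$, completing the proof.
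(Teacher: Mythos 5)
Your proof is correct and follows essentially the same route as the paper's: a high-probability linear bound $T^t(q(o),q(x))\le C\|x\|$ combined with Lemma~\ref{lem_cheap_hop} to rule out long cheap paths starting near the origin. The only differences are bookkeeping — the paper obtains the linear bound deterministically on the event $\{\|q(o)\|\le t/2,\ \|q(x)-x\|\le t/2\}$ via the all-extra-edge path between $q(o)$ and $q(x)$ and applies Lemma~\ref{lem_cheap_hop} at the single length $\lceil 3\K d\|x\|/\delta\rceil$ to bound the geodesic's graph length, whereas you route through Lemma~\ref{lem_q_and_no_q} and a union bound over exit-segment lengths $n\ge N\sim\|x\|^2$ — and both versions close correctly.
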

\begin{proof}
	Recall that~$\gamma^t_{u \leftrightarrow v}$ is the shortest path from~$u$ to~$v$ in~$\mathcal G^t$ that only uses extra edges. We bound
	\[T^t(q(o),q(x)) \le \K t |\gamma^t_{q(o) \leftrightarrow q(x)}| \stackrel{\text{\footnotesize\eqref{eq_hopcount_extra}}}{\le} \K t \left(\frac{\sqrt{d}}{t}\|q(o) - q(x)\| +d	\right).\]
	Define the event
	\begin{align*}
		E_1 &:= \{\|q(o)\| \le t/2,\; \|q(x) - x\| \le t/2\}.
	\end{align*}
	 On~$E_1$, we have~$\|q(o)-q(x)\| \le \|q(o)\| + \|x\| + \|q(x) - x\| \le  \|x\| + t$, so
	\begin{align}\label{eq_for_minimizer}
		T^t(q(o),q(x)) \le \K t \left( \frac{\sqrt{d}}{t}(\|x\| + t) + d\right) \le 3 \K d \|x\|,
	\end{align}
	where we used~$\sqrt{d} \le d$ and~$t \le \|x\|$.

	Next, let
	\begin{align*}
		E_2 := \left\{ \begin{array}{l}\text{there is no path $\gamma$ in $\mathcal G^t$ starting in $B(o,t)$}\\[.2cm] \text{with $|\gamma| = \Big\lceil 3\K d \|x\| / \delta\Big\rceil$ and $\sum_{e \in \gamma} \tau^t_e \le 3\K d \|x\|$} \end{array} \right\},
	\end{align*}
	where~$\delta$ is the constant of Lemma~\ref{lem_cheap_hop}.

	Assume that~$E_1 \cap E_2$ occurs and let~$\gamma^*$ be a path in~$\mathcal G^t$ from~$q(o) = x_0$ to~$q(x) = x_m$ minimizing the~$T^t$-passage time, that is, such that~$T^t(q(o),q(x)) = \sum_{e \in \gamma^*} \tau^t_e$. Then,
	\[\sum_{e \in \gamma^*} \tau^t_e \le \K t |\gamma_{q(o) \leftrightarrow q(x)}^t| \stackrel{\text{\footnotesize\eqref{eq_for_minimizer}}}{\le} 3 \K d \|x\|,\]
	so by the definition of~$E_2$, we have that~$|\gamma^*| \le \lceil 3\K d \|x\| / \delta\rceil$. Then, writing~$\gamma^* = (x_0,\ldots,x_m)$ (so that~$x_0 = q(o)$,~$x_m = q(x)$ and~$m = |\gamma^*|$), for any~$i \in \{0,\ldots, m\}$ we have
	\begin{align*}
		\|x_i\| &\le \|x_0\| + \|x_i - x_0\| \\[.2cm]
		&\le \frac{t}{2} + \sum_{j=0}^{i-1} \|x_{j+1} - x_j\| \\[.2cm]
		&\le \frac{t}{2} + i\cdot \max(r,t) \le \frac{t}{2} + m \cdot  \max(r,t) \le \frac{t}{2} + \lceil 3\K d \|x\| / \delta\rceil \cdot \max(r,t).
	\end{align*}
	Using~$t \le \|x\|$, the right-hand side is smaller than~$\|x\|^3$ if~$\|x\|$ is large enough. We have thus proved that on~$E_1 \cap E_2$,~$\gamma^*$ is entirely contained in~$B(o,\|x\|^3)$.

	To conclude, we note that
	\begin{align*}
		\p(E_1) = \p( \mathcal H \cap B_{t/2}(o) \neq \varnothing,\; \mathcal H \cap B_{t/2}(x) \neq \varnothing) > 1 - e^{-c t}
	\end{align*}
	for some~$c > 0$ and~$t$ large enough, by~\cref{prop:Hn.growth}), and
	\begin{align*}
		\p(E_2) > 1 - \frac{t^d}{2^{\lceil 3\K d \|x\| / \delta\rceil}} > 1 -e^{-c\|x\|}
	\end{align*}
	for some~$c > 0$ if~$\|x\|$ is large enough, by Lemma~\ref{lem_cheap_hop}.
\end{proof}

\begin{lemma}
	\label{lem_hops}
	Let~$\gamma = (x_0,\ldots, x_n)$ be a path in~$\mathcal G^t$. Assume that~$x_0$ and~$x_n$ belong to~$\mathcal H$ (the infinite cluster of~$\mathcal G$), and~$x_1,\ldots,x_{n-1}$ do not belong to~$\mathcal H$ (so that each of them is either in~$t\Z^d$ or in some finite cluster of~$\mathcal G$). Let
	\[b:= \max\left\{\|y-x\|_\infty:\; \begin{array}{l} x, y \in V,\; x \text{ and } y \text{ are in the same finite} \\ \text{cluster of }\mathcal G, \text{ and this cluster is visited by } \gamma \end{array}\right\},\]
		that is,~$b$ is the maximum~$\ell_\infty$-diameter of all finite clusters of~$\mathcal G$ intersected by~$\gamma$. Then,
	\begin{equation*}
		\sum_{e \in \gamma} \tau^t_e \ge \frac{\K t}{2t+b}\cdot \|x_n -x_0\|_\infty.
	\end{equation*}
\end{lemma}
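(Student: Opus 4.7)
The plan is to decompose $\gamma$ according to its edge types and to compare the $\ell_\infty$-displacement along $\gamma$ with the passage-time cost. The crucial structural observation is that any edge of $\gamma$ whose endpoints lie respectively in $\mathcal{H}$ and outside of $\mathcal{H}$ must be an extra edge; otherwise, being a regular edge of $\mathcal{G}$, its endpoints would belong to the same connected component, forcing the outside vertex into $\mathcal{H}$. Applied to the edges $\{x_0,x_1\}$ and $\{x_{n-1},x_n\}$, this forces the first and last edges of $\gamma$ to be extra (the degenerate cases $n \le 1$ being trivial). By the same token, every regular edge of $\gamma$ joins two vertices lying in a common finite cluster of $\mathcal{G}$.

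Next, I would group the edges of $\gamma$ into maximal chains of extra edges alternating with maximal sub-paths of consecutive regular edges. Let $m$ denote the total number of extra edges and $k$ the number of maximal regular sub-paths. Since any two consecutive regular edges share a vertex lying in a single $\mathcal{G}$-cluster (necessarily finite, as it contains some $x_i$ with $1 \le i \le n-1$), each such maximal regular sub-path is entirely contained in one finite cluster. Furthermore, because the edge-type sequence begins and ends with extra chains and regular sub-paths are separated by extra chains, the simple count
\begin{equation*}
    m \geq k+1
\end{equation*}
follows directly.

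Now I estimate the $\ell_\infty$-displacement of $\gamma$ by triangle inequality applied to the decomposition. Each extra edge has $\ell_\infty$-length at most $t$ (either connecting two lattice points at Euclidean distance $t$, or an extra-lattice vertex to a regular vertex in its half-open box of side $t$), and each maximal regular sub-path, lying in one finite cluster of diameter at most $b$, has endpoints whose $\ell_\infty$-difference is at most $b$. Summing over the chains yields
\begin{equation*}
    \|x_n-x_0\|_\infty \;\leq\; mt + kb.
\end{equation*}
On the cost side, every extra edge contributes $\K t$, so $\sum_{e\in\gamma}\tau^t_e \ge m\K t$.

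To conclude, I combine the two bounds with the counting $m \ge k+1$: this gives $m(t+b) \ge kb$, and hence $m(2t+b) \ge mt + kb \ge \|x_n-x_0\|_\infty$, which rearranges exactly to the claim after multiplication by $\K t/(2t+b)$. I do not see any real obstacle here; the only mildly subtle step is the structural claim that a maximal run of regular edges must live inside a single finite cluster, which is what allows the diameter $b$ to control the displacement along such runs. Everything else is elementary counting and triangle inequality.
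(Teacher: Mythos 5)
Your argument is correct and is essentially the paper's proof with slightly different bookkeeping: both lower-bound the passage time by $\K t$ times the number of extra edges and upper-bound $\|x_n-x_0\|_\infty$ by charging at most $t$ to each extra edge and at most $b$ to each excursion through a finite cluster, using the key observation that entering or leaving the infinite cluster (or a finite cluster) forces an extra edge. The paper organizes the count by marking $x_0$, $x_n$ and the lattice points visited by $\gamma$ and bounding each inter-marker displacement by $2t+b$, whereas you count maximal regular runs ($k$) against extra edges via $m \ge k+1$; the two counts are equivalent and yield the same constant $\K t/(2t+b)$.
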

\begin{proof}
Let~$\mathcal{X}$ be the set containing $x_0$, $x_n$, and all points of $t \mathbb Z^d$ among $\{x_1,\ldots,x_{n-1}\}$. Take indices~$0=i_0 < i_1 < \cdots < i_m = n$ such that~$\mathcal X = \{x_{i_0},x_{i_1},\ldots, x_{i_m}\}$. It is easy to check that the number of extra edges traversed by~$\gamma$ is at least~$m$, so
	\begin{align}\label{eq_first_nice1}
	\sum_{e \in \gamma} \tau^t_e \ge \K t \cdot m.
\end{align}
	Moreover, for any~$j \in \{0,\ldots,m-1\}$, we have~$\|x_{i_{j+1}}-x_{i_j}\|_\infty \le 2t+b$. This is because the portion of~$\gamma$ from~$x_{i_j}$ to~$x_{i_{j+1}}$ stays inside a single finite cluster of~$\mathcal G$, apart from possibly traversing an extra edge when departing from~$x_{i_j}$, and another extra edge when arriving at~$x_{i_{j+1}}$. Hence,
	\begin{align}\label{eq_second_nice1}
	\|x_n - x_0\|_\infty \le \sum_{j=0}^{m-1} \|x_{i_{j+1}} - x_{i_j}\|_1 \le (2t+b)m.
\end{align}
	Combining~\eqref{eq_first_nice1} and~\eqref{eq_second_nice1}, we obtain the desired inequality.

\end{proof}

We will now proceed to the proof of the lemma employed to establish the approximation bounds:

\begin{proof}[Proof of Lemma~\ref{lem_T_t_and_T}]
	Fix~$x$ and~$t$ large enough, as required by Lemma~\ref{eq_new_hopcount}. Define the events
		\begin{align*}
			&E_1 := \left\{ \begin{array}{l} \text{any path in $\mathcal G^t$ from $q(o)$ to $q(x)$ that minimizes}\\ \text{the $T^t$-passage time is contained in $B(o,\|x\|^3)$} \end{array} \right\}, \\[.2cm]
				&E_2 := \left\{ \begin{array}{l} \text{for any } z \in \mathcal H \cap  B(o,\|x\|^3) \text{ and } y \in \mathcal{H}\cap B(z,t),\\ \text{ we have } \quad T(z,y) \le \thickbar{\beta}\cdot t \end{array} \right\},\\[.2cm]
					&E_3 := \left\{\begin{array}{l}\text{any finite cluster of $\mathcal G$ that intersects $B(o,\|x\|^3)$} \\ \text{ has diameter (in $\ell^\infty$ norm) smaller than $t$} \end{array}\right\}.
		\end{align*}
	We claim that if~$E_1$,~$E_2$,~$E_3$ all occur, then any path in~$\mathcal G^t$ from~$q(o)$ to~$q(x)$ that minimizes the~$T^t$-passage time does not traverse any extra edge, so that
	\[E_1 \cap E_2 \cap E_3 \subseteq \{T^t(q(o),q(x)) = T(x)\}.\]
	Let us prove this. Assume that the three events occur, and fix a path~$\gamma^*=(x_0,\ldots,x_m)$ with the properties stated in the claim. Assume for a contradiction that~$\gamma^*$ traverses some extra edge, and let
	\[\mathcal I:= \min\{i: x_{i+1} \notin \mathcal H\},\quad \mathcal J:= \min\{j > \mathcal I: x_j \in \mathcal H\}.\]
	Note that these are well defined with~$0 \le \mathcal I < \mathcal J \le m$, since~$x_0 = q(o) \in \mathcal H$ and~$x_m = q(x) \in \mathcal H$. Define the sub-path~$\gamma := (x_{\mathcal{I}}, x_{\mathcal{I}+1},\ldots, x_{\mathcal J})$. Now there are two cases. First, if~$\|x_{\mathcal J} - x_{\mathcal I}\|\le t$, then (since~$\gamma$ traverses at least one extra edge) we have
	\[T^t(x_{\mathcal I},x_{\mathcal J}) = \sum_{e \in \gamma} \tau^t_e \ge \K t > \thickbar{\beta} t = \thickbar{\beta} \cdot \max(\|x_{\mathcal I},x_{\mathcal J}\|,t),\]
	contradicting the assumption that~$E_2$ occurs. Second, if~$\|x_{\mathcal I} - x_{\mathcal J}\| > t$, then we apply Lemma~\ref{lem_hops} (with~$b \le t$) to obtain
	\[T^t(x_{\mathcal I},x_{\mathcal J}) = \sum_{e \in \gamma} \tau^t_e \ge \frac{\K t}{3t}\cdot \|x_{\mathcal I} - x_{\mathcal J}\| > \thickbar{\beta}\|x_{\mathcal{I}} - x_{\mathcal J}\|,\]
	again contradicting the occurrence of~$E_3$. This completes the proof of the claim.

	It remains to show that the three events occur with high probability. First, by Lemma~\ref{eq_new_hopcount},
	\[\p(E_1) > 1 - e^{-ct}.\]
	Second, by~\cref{lm:T.bds.sup}, 
	\[\p(E_2) > 1 - C\|x\|^{4d} \cdot e^{-C't}\]
	if~$\|x\|$ is large enough. Third, by~\cref{lm:PPP.clusters} and Mecke's formula,
	\[\p(E_3) > 1 - (2\|x\|^3)^d\cdot e^{-ct} > 1 - \|x\|^{4d}\cdot e^{-ct}\]
	when~$\|x\|$ is large enough.
\end{proof}

\selectlanguage{english}
\bibliographystyle{acm}
\bibliography{references.bib}

\clearpage
\phantomsection
\addcontentsline{toc}{chapter}{Index}
\printindex
\end{document}